\newtheorem{definition}{Definition}
\newtheorem{proposition}{Proposition}
\newtheorem{example}{Example}
\newtheorem{theorem}{Theorem}
\newtheorem{corollary}{Corollary}
\newtheorem{lemma}{Lemma}
\newtheorem{observation}{Observation}
\newtheorem{note}{Note}
\newtheorem{appendices}{Appendix}
\newcommand{\bit}{\begin{itemize}}
	\newcommand{\eit}{\end{itemize}}
\newcommand{\ben}{\begin{enumerate}}
	\newcommand{\een}{\end{enumerate}}
\newcommand{\bds}{\begin{description}}
	\newcommand{\eds}{\end{description}}
\newcounter{romc}
\newcounter{alphc}
\newcommand{\blr}{\begin{list}{~(\roman{romc})~} {\usecounter{romc}
			\setlength{\topsep}{0pt} \setlength{\itemsep}{0pt}}}
	\newcommand{\elr}{\end{list}}
\newcommand{\bla}{\begin{list}{~(\alph{alphc})~} {\usecounter{alphc}
			\setlength{\topsep}{0pt} \setlength{\itemsep}{0pt}}}
	\newcommand{\ela}{\end{list}}
\begin{document}

\begin{frontmatter}


 \title{A non-distributive logic for semiconcepts of a context and \\ its modal extension with semantics based on Kripke contexts
 }
 \author[IITI]{Prosenjit Howlader}
 \ead{prosen@iitk.ac.in}
 \author[IITI]{Mohua Banerjee\corref{cor2}}
 \ead{mohua@iitk.ac.in}

 \cortext[cor2]{Corresponding author}

 \address[IITI]{Department of Mathematics and Statistics,
 	Indian Institute of Technology
 	Kanpur,  India}




\begin{abstract}
A non-distributive two-sorted hypersequent calculus \textbf{PDBL}  and its modal extension \textbf{MPDBL} are proposed for  the classes of pure double Boolean algebras and pure double Boolean algebras with operators respectively. A relational semantics for \textbf{PDBL}  is next proposed, where any formula is interpreted as a semiconcept of a context. For \textbf{MPDBL}, the relational semantics is based on Kripke contexts, and a formula is interpreted as a semiconcept of the underlying context. The systems are shown to be sound and complete with respect to the relational semantics. Adding appropriate sequents to \textbf{MPDBL} results in logics with semantics based on reflexive, symmetric or transitive Kripke contexts. One of these systems  is  a logic for topological pure double Boolean algebras. It is demonstrated that, using \textbf{PDBL}, the basic notions and relations of conceptual knowledge can be expressed and inferences involving negations can be obtained. Further, drawing a connection with rough set theory, lower and upper approximations of semiconcepts of a context are defined. It is then shown that, using  the formulae and sequents involving modal operators in \textbf{MPDBL},  these approximation operators and their properties can be captured.
\end{abstract}



\begin{keyword}
Formal concept analysis\sep Rough set theory\sep  Double Boolean algebra \sep Non-distributive Modal logic \sep Conceptual knowledge.
\MSC[2020] 06E25 \sep 03G10 \sep 03B45 \sep 03B47



\end{keyword}

\end{frontmatter}


\section{Introduction}
\label{intro}
Rough set theory  \cite{pawlak1982rough} and formal concept analysis (FCA) \cite{FCA} are both well-established domains of study with applications in a wide range of fields, including knowledge representation and logic. 
In FCA,  a {\it context} (also called a {\it polarity}) \cite{FCA, ganter2012formal}   is a triple $\mathbb{K}:=(G, M, R)$, where $G$, $M$ are the sets of {\it objects} and {\it properties} respectively, and $R\subseteq G\times M$. 
For $g\in G$ and $m\in M$, $gIm$ is read as ``the object $g$ has the property $m$''. A context $\mathbb{K}$ induces a Galois connection \cite{BG, davey2002introduction}: $()^{\prime}: \mathcal{P}(G) \rightleftharpoons \mathcal{P}(M):()^{\prime}$, where  for any $A\subseteq G, B\subseteq M$, 
$A^{\prime}:=\{m \in M: \mbox{for all}~ g\in G(g\in A  \Longrightarrow{gRm})\}$, and
$ B^{\prime}:=\{g\in G:\mbox{for all}~ m\in M(m\in B \Longrightarrow {gRm})\}$. A pair $(A,B)$ is called a {\it concept} of $\mathbb{K}$ provided $A^{\prime}=B$ and $B^{\prime}=A$; 
$A$ is then called its {\it extent}, denoted as $ext((A, B))$, and $B$  its {\it intent}, denoted as $int((A, B))$.
In this work, we are interested in formulating   logics with which one can reason about  objects and properties in a context, concepts of a context as well as `negations of concepts'.

As seen in  \cite{lpwille}, if the `negation of a concept' is formalized using set-complement, there is a problem of closure. 
The notion of a concept was then generalized to that of a {\it semiconcept} by Wille \cite{lpwille,WILLE1992493}: 

\begin{definition}
	{\rm  \cite{wille} A pair $(A,B)$ is called a {\it  semiconcept} of $\mathbb{K}$ if and only if $A^{\prime}=B$ or $B^{\prime}=A$. }
\end{definition}
\noindent  For a context $\mathbb{K}$, $\mathcal{B}(\mathbb{K})$ and $\mathfrak{H}(\mathbb{K})$  denote the sets of all concepts and semiconcepts of $\mathbb{K}$, respectively.
It can be shown that $\mathcal{B}(\mathbb{K})\subseteq \mathfrak{H}(\mathbb{K})$.  
Our focus is on defining a logic for semiconcepts of a context. 

$\mathcal{B}(\mathbb{K})$  forms a complete lattice, called the {\it concept lattice} of $\mathbb{K}$, where 
the order relation $\leq$ giving the lattice structure  on $\mathcal{B}(\mathbb{K})$ is obtained as follows:  $\mbox{for}~(A_{1},B_{1}),(A_{2},B_{2})\in\mathcal{B}(\mathbb{K}), (A_{1},B_{1}) \leq (A_{2},B_{2})~\mbox{if and only if}~ A_{1}\subseteq A_{2}~
(\mbox{equivalent to}~ B_{2}\subseteq B_{1})$.
Furthermore, every complete lattice is isomorphic to  the concept lattice of some context.
On the other hand, the following operations are  defined on  $\mathfrak{H}(\mathbb{K})$ \cite{wille}. For $(A_{1},B_{1})$ and $(A_{2},B_{2})$ in $\mathfrak{H}(\mathbb{K})$,

\begin{center}
	$(A_{1},B_{1})\sqcap(A_{2},B_{2}) :=(A_{1}\cap A_{2}, (A_{1}\cap A_{2})^{'})$\\
	$(A_{1},B_{1})\sqcup(A_{2},B_{2}) :=((B_{1}\cap B_{2})^{'}, B_{1}\cap B_{2})$\\
	$\neg(A,B) :=(G\setminus A,(G\setminus A)^{'})$\\
	$\lrcorner(A,B) :=((M\setminus B)^{'}, M\setminus B)$\\
	$\top :=(G,\emptyset)$\\
	$\bot :=(\emptyset, M)$.
\end{center}

\noindent 
$\neg$ and $\lrcorner$ are  negation operators in  $\mathfrak{H}(\mathbb{K})$. A semiconcept $x\in \mathfrak{H}(\mathbb{K})$ is called a {\it left semiconcept} \cite{BALBIANI2012260} if it is idempotent with respect to $\sqcap$, i.e. $x\sqcap x=x$. A {\it right semiconcept} \cite{BALBIANI2012260} is defined dually. In general, the set $\mathfrak{H}(\mathbb{K})$ does not form a lattice with respect to the operations  $\sqcup$ and $\sqcap$. However, $\underline{\mathfrak{H}}(\mathbb{K}):= (\mathfrak{H}(\mathbb{K}), \sqcap,\sqcup,\neg,\lrcorner,\top,\bot)$ is an instance of  a rich algebraic structure called  {\it pure double Boolean algebra}; it is called the {\it algebra of semiconcepts} of the context $\mathbb{K}$. Further, every  pure double Boolean algebra is embeddable in the algebra of semiconcepts of some context, as proved in  \cite{BALBIANI2012260}.
Considering these facts, our goal in this work is to formulate a logic that is sound and complete with respect to the class of all pure double Boolean algebras. 
The language of the proposed logic is taken to consist of two  disjoint countably infinite sets of propositional variables - the set  $\textbf{OV}$ of {\it object variables} and the set $\textbf{PV}$ of {\it property variables}. More precisely, we propose a  hypersequent calculus $\textbf{PDBL}$ whose language $\mathfrak{L}$
consists of the following: propositional constants $\top,\bot$, logical connectives $\sqcup,\sqcap, \neg,\lrcorner$, and the two  sets $\textbf{OV}$, $\textbf{PV}$ as mentioned above. Formulae are built over  $\textbf{OV}\cup\textbf{PV}$. $\textbf{PDBL}$ is a non-distributive logic, that is, $\sqcap$ does not  necessarily distribute over $\sqcap$ and vice versa.  It is shown that $\textbf{PDBL}$ is sound and complete with respect to the class of all pure double Boolean algebras. 

Relational semantics of non-distributive logics  where models are two-sorted and defined using contexts, have been extensively investigated.
Concepts of a context are potential interpretants for logical formulae over  models in such studies. 
Gehrke \cite{Gehrke} 
introduced a two-sorted approach to the relational semantics 
for the implication-fusion fragment of various sub-structural logics.
The interpretation  is encoded by two relations: satisfaction ($\models$) and ``a part of" relation ($\succ$). Satisfaction is defined at a world (an object) and ``a part of" relation at a co-world (a property). Conradie  et al. \cite{CWFSPAPMTAWN, Conradie2017167,CONRADIE2019923}  investigated a two-sorted approach to the relational semantics for non-distributive modal logic. The definition of interpretation in \cite{CWFSPAPMTAWN} is the same as in \cite{Gehrke}, and the ``a  part of" relation ($\succ$) is renamed as co-satisfaction.  Later, Hartonas \cite{HCgame, CH} also studied two-sorted approaches to relational semantics for non-distributive modal systems. 

Along similar lines, a two-sorted approach to relational semantics is also defined for \textbf{PDBL} in this work -- only a formula is interpreted here as a semiconcept of a context. In particular, an object variable is interpreted as a left semiconcept, while a property variable is interpreted as a right semiconcept. The interpretation of any formula is then defined inductively with the help of relations of satisfaction ($\models$) and co-satisfaction ($\succ$). 

Using the concept lattice of a context,  a mathematical representation is given to conceptual knowledge; such a representation  is termed as a {\it conceptual knowledge system} \cite{lpwille,WILLE1992493}.
Wille adopted the idea from traditional philosophy that there are three basic notions of conceptual knowledge, namely, objects, attributes, and concepts. Moreover, these three are linked by four basic relations:
``an object has an attribute'', ``an object belongs to a concept'', ``an attribute abstracts from a concept'',
and ``a concept is a subconcept of  a concept''.  
As concepts of a context are semiconcepts, it is expected that a logic for semiconcepts of a context should be one in which the three basic notions and four relations of conceptual knowledge can be expressed.
Indeed, one is able to demonstrate that 
using  \textbf{PDBL} and its relational semantics,  the basic notions and relations of conceptual knowledge are expressible. In particular,  ($\models$) represents the relation ``an object belongs to a concept'' and  ($\succ$)  represents the the relation ``an attribute abstracts from a concept''. 
Wille \cite{lpwille} has shown through an example that the negations $\neg, \lrcorner$  aid conceptual knowledge representation.
Employing sequents of  \textbf{PDBL}  comprising negations, we are also able to capture  conceptual knowledge involving negation. An example is given in this regard as well.




In \cite{howlader2021dbalogic}, we have expanded the notion of a context to that of a {\it Kripke context} which links two Kripke frames by a relation:
\begin{definition}
	\label{K-cntx}
	{\rm \cite{howlader2021dbalogic} A {\it Kripke context} based on a context $\mathbb{K}:=(G,M,I)$ is a triple $\mathbb{KC}:=((G,R),(M,S),I)$, where $R,S$ are relations on $G$ and $M$ respectively.
		
	}
\end{definition}

\noindent The motivation of a Kripke context lies in the intersection of the frameworks  of rough set theory and FCA \cite{RCA, saquer2001concept, hul}. 
Two basic notions of rough set theory are those of approximation  spaces and approximation operators   \cite{pawlak1982rough,pawlak2012rough}. A pair $(W, E)$ is a {\it Pawlakian approximation space}, where $W$ is a set and $E$ is an equivalence relation on $W$.  $(W,E)$ is called a {\it generalised approximation space} \cite{yao1996generalization}, when 
$E$ is any binary relation on $W$. 
For  $x\in W$,  $E(x):=\{y\in W~:~ xRy\}$.
The {\it lower} and {\it upper approximations} of any $A (\subseteq W)$ are defined respectively as $\underline{A}_{E}:=\{x\in W~:~E(x)\subseteq A\}$, and $\overline{A}^{E}:=\{x\in W~:~E(x)\cap A\neq\emptyset\}$.
	%
Kent  \cite{RCA}, Saquer et al. \cite{saquer2001concept} and  Hu et al. \cite{hul} incorporated the idea of approximation space in FCA, and discussed approximations {\it of concepts}, albeit from different  perspectives.
The basic motivation is that in some cases, the objects and properties that define a context are indistinguishable in terms of certain attributes.  For example,  two diseases may have symptoms that are indistinguishable. In this work, we  propose lower and upper approximations  {\it  of semiconcepts}. The base approximation space that we use is the same as the one in  \cite{saquer2001concept}. 

Many researchers \cite{mbmc,yao1996generalization,lcj,mb,mbmak} have worked on various modal logics and their rough set semantics, in which modalities are interpreted as approximation operators. This extensive literature prompted us to look for appropriate  modal systems with semantics based on the Kripke context which could, moreover, express the approximations of semiconcepts. 
For a Kripke context $\mathbb{KC}$, the notion of {\it complex algebra} $\underline{\mathfrak{H}}^{+}(\mathbb{KC})$ of semiconcepts is defined in \cite{howlader2021dbalogic}.
In order to understand the equational theory of the complex algebras, {\it pure double Boolean algebras with operators} and {\it topological pure double Boolean algebras} 
are proposed in \cite{howlader2021dbalogic}. 
Here, the language $\mathfrak{L}$ of $\textbf{PDBL}$ is extended to $\mathfrak{L}_{1}$ by adding 
unary modal connectives $\square$ and $\blacksquare$, and   we get the modal system  $\textbf{MPDBL}$. $ \lozenge$ and $\blacklozenge$ are introduced as duals of $\square$ and $\blacksquare$ respectively. 
$\textbf{MPDBL}$ corresponds to the class of pure double Boolean algebras with operators. 
Further, the  logic $\textbf{MPDBL}\Sigma$ is  defined, where $\Sigma$ is any set of sequents in $\textbf{MPDBL}$. Taking an appropriate $\Sigma$, we get the logic for topological pure double Boolean algebras.
As a natural next course, the relational semantics of $\textbf{PDBL}$ is extended to that for the modal systems. 
Formulae in the language $\mathfrak{L}_{1}$ are interpreted as semiconcepts of the underlying context $\mathbb{K}$ of a Kripke context $\mathbb{KC}$.    It is shown that formulae of the form $\square\alpha, \blacklozenge\alpha$ translate into the lower  approximations of a left semiconcept and  right semiconcept respectively, while $\blacksquare\alpha, \lozenge\alpha$ translate into the upper approximations of a left semiconcept and  right semiconcept respectively.
Moreover, valid sequents  involving modal operators  express  properties of  lower and upper approximations of semiconcepts. The deductive systems are shown to be sound and complete for the respective semantics.


Section \ref{preli} gives the preliminaries required for this work. Results related to pure double Boolean algebras are given in Section \ref{pdBas}. Section \ref{Appropefca} outlines work on approximation operators in rough set theory and approximations of concepts. Pure double Boolean algebras with operators,  topological pure double Boolean algebras and Kripke contexts  and related results are
given in Section \ref{dbawo}.  
In Section \ref{logic}, the logics corresponding to the algebras
are studied. $\textbf{PDBL}$ for the class of all pure double Boolean algebras is discussed in Section \ref{ppdBl}. In Section \ref{logicmpdbl}, the modal systems are given. Section \ref{semisemantics} presents the relational semantics  for the logics, with the semantics for $\textbf{PDBL}$ discussed in Section \ref{gsempdbl}, and that for the modal systems given in Section \ref{modalsemantic}. A study of $\textbf{PDBL}$ and conceptual knowledge is done in Section \ref{sematics to meaning1}. Section \ref{rough set approx} describes the approximations of semiconcepts and demonstrates how  fundamental properties of the approximations are captured by using one of  the proposed modal systems, \textbf{MPDBL5}.  Section \ref{conclusion} concludes the  paper.

In our presentation, the symbols $\Longleftarrow,\Longleftrightarrow$, and, or and not will be used with
the usual meanings in the metalanguage. Throughout, $P(X)$ denotes the power set
of any set $X$, and the complement of $A \subseteq X$ in a set $X$ is denoted $A^{c}$.

\section{Preliminaries}
\label{preli}
Our key references for this section are \cite{ganter2012formal,wille,saquer2001concept,BALBIANI2012260}.
\subsection{ Pure double Boolean algebra}
\label{pdBas}
Let us first give the definition of a double Boolean algebra. 
\begin{definition}
	\label{DBA}
	{\rm \cite{wille}  An  algebra $ \textbf{D}:= (D,\sqcup,\sqcap, \neg,\lrcorner,\top,\bot)$,  satisfying the following properties for any $x,y,z \in D$, is called a {\it double Boolean algebra} (dBa). \\
		$\begin{array}{ll}
		(1a)  (x \sqcap x ) \sqcap  y = x \sqcap  y  &
		(1b)  (x \sqcup x)\sqcup  y = x \sqcup y \\
		(2a) x\sqcap y = y\sqcap  x  &
		(2b)  x \sqcup   y = y\sqcup   x  \\
		(3a)  x \sqcap ( y \sqcap  z) = (x \sqcap  y) \sqcap  z  &
		(3b)  x \sqcup (y \sqcup  z) = (x \sqcup  y)\sqcup  z \\
		(4a) \neg (x \sqcap  x) = \neg  x  &
		(4b)  \lrcorner(x \sqcup   x )= \lrcorner x \\
		(5a)  x  \sqcap (x \sqcup y)=x \sqcap  x  &
		(5b)  x \sqcup  (x \sqcap y) = x \sqcup   x \\
		(6a) x \sqcap  (y \vee z ) = (x\sqcap  y)\vee (x \sqcap  z) &
		(6b)  x \sqcup  (y \wedge z) = (x \sqcup  y) \wedge  (x \sqcup  z) \\
		(7a)  x \sqcap (x\vee y)= x \sqcap  x  &
		(7b)  x\sqcup  (x \wedge  y) =x \sqcup   x \\
		(8a)  \neg \neg (x \sqcap  y)= x \sqcap  y &
		(8b)  \lrcorner\lrcorner(x \sqcup  y) = x\sqcup  y \\
		(9a)  x  \sqcap \neg  x= \bot &
		(9b)  x \sqcup \lrcorner x = \top  \\
		(10a) \neg \bot = \top \sqcap   \top  &
		(10b) \lrcorner\top =\bot \sqcup  \bot \\
		(11a)  \neg \top = \bot  &
		(11b) \lrcorner\bot =\top \\
		(12)  (x \sqcap  x) \sqcup (x \sqcap x) = (x \sqcup x) \sqcap (x \sqcup x), &
		\end{array}$

		\noindent where $ x\vee y := \neg(\neg x \sqcap\neg y)$, and 
		$ x \wedge y :=\lrcorner(\lrcorner x \sqcup \lrcorner y)$.
		\vskip 3pt 
		\noindent	$\textbf{D}$ is  {\it pure} if for all $x\in D$, either $x\sqcap x=x$ or $x\sqcup x=x$. 
				\vskip 3pt 

		\noindent	A binary relation $\sqsubseteq$ on $\textbf{D}$ is defined as follows:  
		$x\sqsubseteq y ~\mbox{if and only if}~  x\sqcap y=x\sqcap x~\mbox{and}~x\sqcup y=y\sqcup y,$ for any $x,y \in D$.  $\sqsubseteq$ is a quasi-order.
				\vskip 3pt 
		\noindent \textbf{D} is {\it contextual} if the quasi-order is a partial order.}
\end{definition}
\noindent The abbreviations pdBa and cdBa stand for pure dBa and contextual dBa, respectively. 
\begin{proposition} 
	\label{order pure}
	{\rm \cite{BALBIANI2012260} In every pdBa $\textbf{D}$, $\sqsubseteq$ is a partial order.} 
\end{proposition}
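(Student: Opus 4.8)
The plan is to prove antisymmetry, since reflexivity and transitivity come for free: the definition already records that $\sqsubseteq$ is a quasi-order (and reflexivity is in any case immediate, as $x\sqcap x=x\sqcap x$ and $x\sqcup x=x\sqcup x$ hold trivially in every dBa). So the entire content of the proposition is the passage from quasi-order to partial order, and purity is what must supply it.

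First I would unfold the two hypotheses. Suppose $x\sqsubseteq y$ and $y\sqsubseteq x$. By definition this gives $x\sqcap y=x\sqcap x$, $x\sqcup y=y\sqcup y$, $y\sqcap x=y\sqcap y$ and $y\sqcup x=x\sqcup x$. Using commutativity (axioms (2a) and (2b)) to identify $x\sqcap y$ with $y\sqcap x$ and $x\sqcup y$ with $y\sqcup x$, these four equalities collapse to the two key identities
\[
x\sqcap x=y\sqcap y \quad\text{and}\quad x\sqcup x=y\sqcup y .
\]
Thus $x$ and $y$ share both their $\sqcap$-square and their $\sqcup$-square; the remaining task is to upgrade this to $x=y$, and this is where purity enters.

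Next I would invoke purity, which says that each of $x,y$ equals either its own $\sqcap$-square or its own $\sqcup$-square, and split into cases. If $x=x\sqcap x$ and $y=y\sqcap y$, then the first key identity gives $x=x\sqcap x=y\sqcap y=y$; symmetrically, if $x=x\sqcup x$ and $y=y\sqcup y$, the second identity gives $x=x\sqcup x=y\sqcup y=y$. These two ``aligned'' cases are thus immediate.

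The main obstacle is the two mixed cases, e.g. $x=x\sqcap x$ while $y=y\sqcup y$, where neither key identity directly forces $x=y$. Here I would bring in axiom (12), $(x\sqcap x)\sqcup(x\sqcap x)=(x\sqcup x)\sqcap(x\sqcup x)$. Applied to $x$ and using $x\sqcap x=x$, its left side becomes $x\sqcup x$, yielding $x\sqcup x=(x\sqcup x)\sqcap(x\sqcup x)$; since $x\sqcup x=y\sqcup y=y$, this reads $y=y\sqcap y$, and combined with $x=x\sqcap x=y\sqcap y$ we conclude $x=y$. The other mixed case ($x=x\sqcup x$, $y=y\sqcap y$) is handled by the dual computation: applying axiom (12) to $x$ with $x\sqcup x=x$ gives $x\sqcap x=(x\sqcap x)\sqcup(x\sqcap x)$, i.e. $y=y\sqcup y$, which compared with $x=x\sqcup x=y\sqcup y$ again yields $x=y$. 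Hence all four cases give $x=y$, establishing antisymmetry, so $\sqsubseteq$ is a partial order.
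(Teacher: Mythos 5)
Your proof is correct. The paper does not prove this proposition itself---it simply cites Balbiani \cite{BALBIANI2012260}---but your argument is the standard one and is complete: reducing $x\sqsubseteq y$ and $y\sqsubseteq x$ to $x\sqcap x=y\sqcap y$ and $x\sqcup x=y\sqcup y$ via commutativity, dispatching the aligned idempotence cases directly, and correctly identifying that the compatibility axiom $(12)$ is exactly what resolves the mixed cases (e.g.\ $x=x\sqcap x$ together with $x\sqcup x=(x\sqcup x)\sqcap(x\sqcup x)$ forces $y=y\sqcap y$ when $y=y\sqcup y$). Appealing to the quasi-order assertion in Definition~\ref{DBA} for reflexivity and transitivity is legitimate, so nothing is missing.
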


\begin{corollary}
\label{jh}
    {\rm Every pdBa $\textbf{D}$ is a cdBa.}
\end{corollary}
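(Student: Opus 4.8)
The plan is to unwind the definitions and appeal directly to Proposition \ref{order pure}, since the corollary is essentially a repackaging of that result. Recall from Definition \ref{DBA} that a dBa $\textbf{D}$ is called \emph{contextual} precisely when the quasi-order $\sqsubseteq$ is in fact a partial order. As $\sqsubseteq$ is already known to be a quasi-order on any dBa (hence reflexive and transitive), the sole additional requirement for contextuality is antisymmetry of $\sqsubseteq$. A pdBa is by definition a dBa that additionally satisfies the purity condition, so to establish that every pdBa is a cdBa it suffices to check that $\sqsubseteq$ is antisymmetric on every pdBa.

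First I would observe that a pdBa is in particular a dBa, so the first defining clause of a cdBa (being a dBa) holds trivially. Then I would invoke Proposition \ref{order pure}, which asserts exactly that $\sqsubseteq$ is a partial order on every pdBa $\textbf{D}$. Since the only gap between a quasi-order and a partial order is antisymmetry, this proposition supplies precisely the missing ingredient, and the second defining clause of a cdBa (that $\sqsubseteq$ be a partial order) is thereby met. Combining the two clauses yields that $\textbf{D}$ is a cdBa.

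I expect no genuine obstacle here: all the substantive work has already been carried out in the proof of Proposition \ref{order pure}, where antisymmetry is derived from the dBa axioms together with purity. The only point requiring care is to make the logical dependency explicit, namely that ``contextual'' is by definition the property ``$\sqsubseteq$ is a partial order'', so that Proposition \ref{order pure} and Corollary \ref{jh} are two phrasings of the same underlying fact, with the corollary merely restating it in the pdBa/cdBa terminology.
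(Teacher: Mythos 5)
Your proposal is correct and matches the paper's reasoning exactly: the corollary is stated without proof precisely because it follows immediately from Proposition~\ref{order pure} together with the definition of a cdBa as a dBa whose quasi-order $\sqsubseteq$ is a partial order. Your explicit unwinding of the definitions and identification of antisymmetry as the only gap is the same (implicit) argument the paper intends.
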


In the following, let $ \textbf{\textit{D}}:=(D,\sqcup,\sqcap,\neg,\lrcorner,\top,\bot)$ be a dBa.
Let us give some notations that shall be used:
$D_{\sqcap}:=\{x\in D~:~x\sqcap x=x\}$,  $D_{\sqcup}:=\{x\in D~:~x\sqcup x=x\}$, $D_{p}:=D_{\sqcap}\cup D_{\sqcup}$.
For $x\in D$, $x_{\sqcap}:=x\sqcap x$ and $x_{\sqcup}:=x\sqcup x$.

\begin{proposition} 
	\label{pro1}
	{\rm \cite{vormbrock} \noindent 
		\begin{enumerate}
			\item $\textbf{D}_{\sqcap}:=(D_{\sqcap},\sqcap,\vee,\neg,\bot,\neg\bot)$ is a {\rm Boolean algebra} whose order relation is the restriction of $\sqsubseteq$ to $D_{\sqcap}$ and is denoted by $\sqsubseteq_{\sqcap}$.
			\item $\textbf{D}_{\sqcup}:=(D_{\sqcup},\sqcup,\wedge,\lrcorner,\top,\lrcorner\top)$ is a {\rm Boolean algebra} whose order relation is the restriction of $\sqsubseteq$ to $D_{\sqcup}$ and it is denoted by $\sqsubseteq_{\sqcup}$.
			\item $x\sqsubseteq y$ if and only if $x\sqcap x\sqsubseteq y\sqcap y$ and $x\sqcup x\sqsubseteq y\sqcup y$ for $x,y\in D$, that is, $x_{\sqcap}\sqsubseteq_{\sqcap} y_{\sqcap}$ and $x_{\sqcup}\sqsubseteq y_{\sqcup}$.
	\end{enumerate}}
\end{proposition}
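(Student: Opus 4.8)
The plan is to treat the three parts in order: establish (1) in detail, obtain (2) from the built-in $\sqcap$/$\sqcup$ duality of the axiom list, and derive (3) from two structural identities produced along the way. Throughout I would write $x_{\sqcap}:=x\sqcap x$ and $x_{\sqcup}:=x\sqcup x$, and I would first isolate the closure facts that make the operations of $\textbf{D}_{\sqcap}$ well defined.

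First I would prove two ``landing'' facts: for all $x,y\in D$ one has $x\sqcap y\in D_{\sqcap}$, and for all $a\in D$ one has $\neg a\in D_{\sqcap}$. The first follows by rearranging $(x\sqcap y)\sqcap(x\sqcap y)$ into $(x\sqcap x)\sqcap(y\sqcap y)$ using commutativity (2a) and associativity (3a), then collapsing it to $x\sqcap y$ by two applications of (1a); the same computation yields the identity $x\sqcap y=x_{\sqcap}\sqcap y_{\sqcap}$, which I will reuse in part (3). For the second, applying (4a) gives $\neg(\neg a\sqcap\neg a)=\neg\neg a$, and since $\neg a\sqcap\neg a$ has the form $u\sqcap v$, axiom (8a) lets me recover $\neg a\sqcap\neg a=\neg\neg\neg a=\neg a$. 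Together with (8a) these show that $\neg$ is an involution on $D_{\sqcap}$, so $D_{\sqcap}$ is closed under $\sqcap$, under $\neg$, and hence under $\vee=\neg(\neg(\cdot)\sqcap\neg(\cdot))$, and it contains $\bot=x\sqcap\neg x$ and $\neg\bot=\top\sqcap\top$ by (9a),(10a). The involution also makes the De Morgan laws $\neg(x\sqcap y)=\neg x\vee\neg y$ and $\neg(x\vee y)=\neg x\sqcap\neg y$ immediate from the definition of $\vee$.

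Next I would verify the Boolean-algebra axioms on $(D_{\sqcap},\sqcap,\vee,\neg,\bot,\neg\bot)$. On $D_{\sqcap}$ the operation $\sqcap$ is idempotent by definition and commutative/associative by (2a),(3a); $\vee$ inherits idempotence, commutativity and associativity through the De Morgan translation and the involution. Absorption $x\sqcap(x\vee y)=x$ is exactly (7a) for $x\in D_{\sqcap}$, while the dual absorption $x\vee(x\sqcap y)=x$ drops out by applying De Morgan and (7a) to $\neg x$; distributivity of $\sqcap$ over $\vee$ is (6a), and the complementary distributive law follows by De Morgan duality. Finally $\bot$ is least and $\neg\bot$ greatest (checked from $\bot=x\sqcap\neg x$ and $\neg\bot=x\vee\neg x$), and the complement laws are $x\sqcap\neg x=\bot$ (9a) and $x\vee\neg x=\neg\bot$ (from (9a) and the involution). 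This makes $\textbf{D}_{\sqcap}$ a Boolean algebra.

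The step I expect to be the main obstacle is showing that the intrinsic meet order of this Boolean algebra is precisely $\sqsubseteq$ restricted to $D_{\sqcap}$, since the global $\sqsubseteq$ secretly carries $\sqcup$-data. The inclusion that is not immediate is: for $x,y\in D_{\sqcap}$, $x\sqcap y=x$ must force $x\sqcup y=y\sqcup y$, the second defining clause of $\sqsubseteq$. Here I would invoke (5b): from $x\sqcap y=x$ and commutativity $y\sqcap x=x$, so $y\sqcup(y\sqcap x)=y\sqcup x$, whereas (5b) equates the left-hand side with $y\sqcup y$; hence $x\sqcup y=y\sqcup y$, as needed. Part (2) I would then obtain verbatim by the dual argument, exchanging $\sqcap\leftrightarrow\sqcup$, $\neg\leftrightarrow\lrcorner$, $\vee\leftrightarrow\wedge$, $\bot\leftrightarrow\top$ and replacing each axiom $(n\text{a})$ by its mirror $(n\text{b})$ (in particular (5a) in place of (5b)). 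For part (3), using the identity $x\sqcap y=x_{\sqcap}\sqcap y_{\sqcap}$ from the first step together with its dual $x\sqcup y=x_{\sqcup}\sqcup y_{\sqcup}$, the two defining clauses $x\sqcap y=x\sqcap x$ and $x\sqcup y=y\sqcup y$ of $x\sqsubseteq y$ become exactly $x_{\sqcap}\sqcap y_{\sqcap}=x_{\sqcap}$ and $x_{\sqcup}\sqcup y_{\sqcup}=y_{\sqcup}$, that is $x_{\sqcap}\sqsubseteq_{\sqcap}y_{\sqcap}$ and $x_{\sqcup}\sqsubseteq_{\sqcup}y_{\sqcup}$, which is the claimed equivalence.
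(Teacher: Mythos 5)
The paper offers no proof of this proposition: it is imported wholesale from Vormbrock (the citation \cite{vormbrock} attached to the statement), so there is no in-house argument to compare yours against. Your blind derivation from the axioms of Definition \ref{DBA} is correct and self-contained, and it isolates exactly the points that need care: the closure facts $x\sqcap y\in D_{\sqcap}$ and $\neg a\in D_{\sqcap}$, the involutivity of $\neg$ on $D_{\sqcap}$ (which drives the De Morgan translation between $\sqcap$ and $\vee$), and above all the verification that $x\sqcap y=x$ already forces the second defining clause $x\sqcup y=y\sqcup y$ of $\sqsubseteq$ via axiom (5b) --- this is the step that makes the Boolean order coincide with the restriction of the global quasi-order, and it is the one most write-ups would be tempted to wave at. Two spots are terser than they should be but are easily repaired: the equality $\neg\neg\neg a=\neg a$ rests on the intermediate identity $\neg\neg x=x\sqcap x$ (obtained from $\neg x=\neg(x\sqcap x)$ by (4a) followed by (8a); cf.\ Proposition \ref{pro2}(3)), which you use only implicitly; and ``$\bot$ is least'' needs the one-line computation $x\sqcap\bot=(x\sqcap x)\sqcap\neg x=x\sqcap\neg x=\bot$ for $x\in D_{\sqcap}$. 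With those filled in, parts (1)--(3) all go through as you describe, and part (3) in particular follows cleanly from your identities $x\sqcap y=x_{\sqcap}\sqcap y_{\sqcap}$ and $x\sqcup y=x_{\sqcup}\sqcup y_{\sqcup}$ once part (1) has identified $\sqsubseteq_{\sqcap}$ with the Boolean meet-order on $D_{\sqcap}$.
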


\begin{proposition}
	\label{pro1.5}
	{\rm \cite{kwuida2007prime}
		Let 
		$x,y,a\in D$. Then the following hold.
		\begin{enumerate}
			\item $x\sqcap\bot=\bot$ and $x\sqcup\bot=x\sqcup x$ that is $\bot\sqsubseteq x$.
			\item $x\sqcup \top=\top$ and $x\sqcap \top=x\sqcap x$ that is $x\sqsubseteq \top$.
			\item $x=y$ implies that $x\sqsubseteq y$ and $y\sqsubseteq x$.
			\item $x\sqsubseteq y$ and $y \sqsubseteq x$ if and only if $x\sqcap x=y\sqcap y$ and $x\sqcup x = y\sqcup y$.
			\item $x\sqcap y\sqsubseteq x,y\sqsubseteq x\sqcup y, x\sqcap y\sqsubseteq y,x\sqsubseteq x\sqcup y$.
			\item $x\sqsubseteq y$ implies $x\sqcap a\sqsubseteq y\sqcap a$ and $x\sqcup a\sqsubseteq y\sqcup a$. 
	\end{enumerate}}
\end{proposition}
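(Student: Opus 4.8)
The plan is to prove all six items by direct equational reasoning from the dBa axioms, using throughout the commutativity and associativity laws (2a)--(3b) together with the reduction laws (1a)/(1b), which compensate for the fact that $\sqcap$ and $\sqcup$ need not be idempotent. Whenever a $\sqcap$- or $\sqcup$-term has to be compared under $\sqsubseteq$, I would fall back on Proposition \ref{pro1}, which identifies the two Boolean reducts $\textbf{D}_\sqcap$ and $\textbf{D}_\sqcup$ and asserts that $\sqsubseteq$ restricts to their Boolean orders. For (1) I would write $\bot = x \sqcap \neg x$ by (9a); then (3a) and (1a) give $x \sqcap \bot = (x \sqcap x) \sqcap \neg x = x \sqcap \neg x = \bot$, while absorption (5b) with $y := \neg x$ gives $x \sqcup \bot = x \sqcup (x \sqcap \neg x) = x \sqcup x$. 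The relation $\bot \sqsubseteq x$ then drops out of the definition (using the instance $x := \bot$ to get $\bot \sqcap \bot = \bot$). Item (2) is the exact dual, starting from $\top = x \sqcup \lrcorner x$ by (9b) and using (3b), (1b) and (5a).

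Items (3) and (4) are nearly definitional. For (3) it suffices to observe that $\sqsubseteq$ is reflexive. For (4), the forward direction follows by commutativity: from $x \sqcap y = x \sqcap x$ and $y \sqcap x = y \sqcap y$ one reads off $x \sqcap x = y \sqcap y$, and dually for $\sqcup$; conversely, given $x \sqcap x = y \sqcap y$ and $x \sqcup x = y \sqcup y$, one substitutes these equalities and collapses with (1a)/(1b) to obtain $x \sqcap y = x \sqcap x$ and $x \sqcup y = y \sqcup y$, i.e. $x \sqsubseteq y$, and symmetrically $y \sqsubseteq x$.

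For (5) I would verify the two defining clauses of $x \sqcap y \sqsubseteq x$ directly: repeated use of (3a), (2a), (1a) gives both $(x \sqcap y) \sqcap x = x \sqcap y$ and $(x \sqcap y) \sqcap (x \sqcap y) = x \sqcap y$ (so $x \sqcap y$ is $\sqcap$-idempotent, i.e. lies in $D_\sqcap$), while (5b) with (2b) gives $(x \sqcap y) \sqcup x = x \sqcup x$. The claim $x \sqsubseteq x \sqcup y$ is the dual computation via (5a), (3b), (1b), and the remaining two inequalities follow from the commutativity of $\sqcap$ and $\sqcup$.

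Item (6) is where the real work lies, and is the step I expect to be the main obstacle. The first clause of $x \sqcap a \sqsubseteq y \sqcap a$ is routine: since $x \sqcap a, y \sqcap a \in D_\sqcap$ and $x \sqcap y = x \sqcap x$, reordering and reducing gives $(x \sqcap a) \sqcap (y \sqcap a) = (x \sqcap x) \sqcap (a \sqcap a) = (x \sqcap a) \sqcap (x \sqcap a) = x \sqcap a$. The awkward clause is the $\sqcup$-condition $(x \sqcap a) \sqcup (y \sqcap a) = (y \sqcap a) \sqcup (y \sqcap a)$, which mixes $\sqcup$ with $\sqcap$-terms where no distributive law is available. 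Rather than force this by hand, I would invoke Proposition \ref{pro1}: both $x \sqcap a$ and $y \sqcap a$ lie in the Boolean algebra $\textbf{D}_\sqcap$, on which $\sqsubseteq$ restricts to the Boolean order $\sqsubseteq_\sqcap$; hence the single meet-equation $(x \sqcap a) \sqcap (y \sqcap a) = x \sqcap a$ already forces $x \sqcap a \sqsubseteq_\sqcap y \sqcap a$, and therefore $x \sqcap a \sqsubseteq y \sqcap a$ in $\textbf{D}$. The statement $x \sqcup a \sqsubseteq y \sqcup a$ is the precise dual, carried out in $\textbf{D}_\sqcup$ using $x \sqcup y = y \sqcup y$. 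The one point to watch throughout is never to silently use idempotence of $\sqcap$ or $\sqcup$, but only the reduction laws (1a)/(1b); this lack of idempotence is the recurring source of friction in these otherwise mechanical calculations, and Proposition \ref{pro1} is precisely what lets me bypass the non-distributive mixed clause.
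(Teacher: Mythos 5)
Your proof is correct, but note that the paper itself offers no proof of this proposition: it is imported verbatim from Kwuida's paper \cite{kwuida2007prime} as a preliminary, so there is no in-paper argument to compare against. Items (1)--(5) are handled exactly as one would expect, by equational manipulation with (1a)/(1b), (2a)--(3b), (5a)/(5b) and (9a)/(9b), and your care in avoiding idempotence of $\sqcap$ and $\sqcup$ is well placed. The only point worth flagging is item (6): you invoke Proposition \ref{pro1} (the Boolean-algebra structure of $D_{\sqcap}$ and $D_{\sqcup}$, itself a cited result of Vormbrock) to dispose of the mixed clause $(x\sqcap a)\sqcup(y\sqcap a)=(y\sqcap a)\sqcup(y\sqcap a)$, which is legitimate within this paper but heavier than necessary and could look circular if one traced the provenance of \ref{pro1}. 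The clause actually follows in one line from the absorption axiom: for $u,v\in D_{\sqcap}$ with $u\sqcap v=u$ one has $u\sqcup v=v\sqcup(v\sqcap u)=v\sqcup v$ by (5b) and commutativity, applied here with $u=x\sqcap a$ and $v=y\sqcap a$; dually, $(x\sqcup a)\sqcap(y\sqcup a)=(x\sqcup a)\sqcap(x\sqcup a)$ follows from (5a). Substituting this computation for the appeal to Proposition \ref{pro1} makes your argument entirely self-contained from the dBa axioms.
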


\begin{proposition}
	\label{pro2}
	{\rm \cite{howlader3} 
	For any $ x,y\in D$, the following hold.
		\begin{enumerate}
			\item $\neg x\sqcap \neg x=\neg x$ and $\lrcorner x\sqcup \lrcorner x=\lrcorner x$ that is $\neg x=(\neg x)_{\sqcap}\in D_{\sqcap}$ and $\lrcorner x=(\lrcorner x)_{\sqcup}\in D_{\sqcup}$.
			\item $x\sqsubseteq y$  if and only if $ \neg y\sqsubseteq \neg x $ and $ \lrcorner y\sqsubseteq \lrcorner x $.
			\item $\neg\neg x=x\sqcap x$ and $\lrcorner\lrcorner x=x\sqcup x$.
			\item $x\vee y\in D_{\sqcap}, x\wedge y\in D_{\sqcup}$.
			\item $\neg\neg\bot=\bot$, and $\lrcorner\lrcorner \top=\top$.
	\end{enumerate}}
\end{proposition}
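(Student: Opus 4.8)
The plan is to establish the five parts from the dBa axioms together with the Boolean-algebra structure of $\textbf{D}_{\sqcap}$ and $\textbf{D}_{\sqcup}$ recorded in Proposition \ref{pro1}, exploiting throughout the left--right symmetry of the axioms: every claim about $\sqcap,\neg,\bot$ has a mirror claim about $\sqcup,\lrcorner,\top$ whose proof is obtained by swapping each axiom (na) for its dual (nb), so I would write out only the $\sqcap$-side in detail. Although the parts are numbered $1$--$5$, I would prove them in the order $3,1,4,5,2$, since the later items depend on the earlier ones in that order.

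For part 3, the identity $\neg\neg x=x\sqcap x$ drops out by applying $\neg$ to axiom $(4a)$, i.e. to $\neg(x\sqcap x)=\neg x$, obtaining $\neg\neg(x\sqcap x)=\neg\neg x$, and then invoking $(8a)$ with $y=x$, which gives $\neg\neg(x\sqcap x)=x\sqcap x$; comparing the two yields the claim. Part 1 is then immediate: applying part 3 to the element $\neg x$ gives $\neg\neg(\neg x)=\neg x\sqcap\neg x$, while applying $\neg$ to the part-3 identity and using $(4a)$ gives $\neg\neg\neg x=\neg(x\sqcap x)=\neg x$; hence $\neg x\sqcap\neg x=\neg x$, that is $\neg x\in D_{\sqcap}$. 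Part 4 now follows from the definition $x\vee y:=\neg(\neg x\sqcap\neg y)$, since every element of the form $\neg z$ lies in $D_{\sqcap}$ by part 1; dually $x\wedge y\in D_{\sqcup}$. For part 5, part 3 gives $\neg\neg\bot=\bot\sqcap\bot$, and Proposition \ref{pro1.5}(1) with $x=\bot$ gives $\bot\sqcap\bot=\bot$, so $\neg\neg\bot=\bot$; the statement $\lrcorner\lrcorner\top=\top$ is the dual, using Proposition \ref{pro1.5}(2) with $x=\top$.

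The real work is in part 2, the antitonicity $x\sqsubseteq y\Longleftrightarrow(\neg y\sqsubseteq\neg x~\text{and}~\lrcorner y\sqsubseteq\lrcorner x)$. Here I would pass through the component description of $\sqsubseteq$ furnished by Proposition \ref{pro1}(3): $x\sqsubseteq y$ iff $x_{\sqcap}\sqsubseteq_{\sqcap}y_{\sqcap}$ and $x_{\sqcup}\sqsubseteq_{\sqcup}y_{\sqcup}$. By Proposition \ref{pro1}(1), $\textbf{D}_{\sqcap}$ is a Boolean algebra in which $\neg$ is the complement, so the standard fact that complementation reverses the Boolean order applies: $x_{\sqcap}\sqsubseteq_{\sqcap}y_{\sqcap}$ iff $\neg y_{\sqcap}\sqsubseteq_{\sqcap}\neg x_{\sqcap}$. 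It then remains to rewrite these complements by $(4a)$, namely $\neg x_{\sqcap}=\neg(x\sqcap x)=\neg x$ and $\neg y_{\sqcap}=\neg y$, and to note by part 1 that $\neg x,\neg y\in D_{\sqcap}$, so that $\sqsubseteq_{\sqcap}$ agrees with the ambient $\sqsubseteq$ on them; this converts the $\sqcap$-component equivalence into $\neg y\sqsubseteq\neg x$. The $\sqcup$-component is treated dually in $\textbf{D}_{\sqcup}$ via Proposition \ref{pro1}(2), yielding $x_{\sqcup}\sqsubseteq_{\sqcup}y_{\sqcup}$ iff $\lrcorner y\sqsubseteq\lrcorner x$, and conjoining the two component equivalences reproduces exactly the right-hand side of part 2.

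The main obstacle I anticipate is bookkeeping rather than depth: keeping the ambient quasi-order $\sqsubseteq$ on $D$, its restrictions $\sqsubseteq_{\sqcap}$ and $\sqsubseteq_{\sqcup}$, and the two Boolean complements correctly matched, and verifying that the complements computed on the components $x_{\sqcap},x_{\sqcup}$ coincide on the nose with $\neg x,\lrcorner x$. Once these identifications are made explicit, part 2 is simply antitonicity of complement applied coordinatewise, and the remaining parts are short axiom manipulations.
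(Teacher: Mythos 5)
Your proof is correct. Note that the paper itself offers no proof of this proposition --- it is quoted as a preliminary from an earlier work --- so there is nothing internal to compare against; judged on its own, your derivation is sound and is the natural one. The ordering $3\to1\to4\to5\to2$ respects the actual dependencies: part 3 from axioms $(4a)$ and $(8a)$ with $y=x$, part 1 by applying part 3 to $\neg x$ and $(4a)$, part 4 from the definition of $\vee$, part 5 from part 3 together with Proposition \ref{pro1.5}(1), and part 2 by passing through the componentwise description of $\sqsubseteq$ in Proposition \ref{pro1}(3) and using that $\neg$ (resp. $\lrcorner$) is the order-reversing Boolean complement of $\textbf{D}_{\sqcap}$ (resp. $\textbf{D}_{\sqcup}$), with $(4a)$/$(4b)$ identifying $\neg x_{\sqcap}$ with $\neg x$ and $\lrcorner x_{\sqcup}$ with $\lrcorner x$, and part 1 guaranteeing these complements live in the subalgebras where $\sqsubseteq_{\sqcap}$, $\sqsubseteq_{\sqcup}$ coincide with the ambient $\sqsubseteq$. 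No gaps.
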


\begin{lemma}
	\label{botom neg and ordered}
	{\rm \cite{howlader2021dbalogic} 
	For all $a,b\in D$, the following hold.
		\begin{enumerate}
			\item If $a\sqcap b=\bot$ then $a\sqcap a\sqsubseteq \neg b$.
			\item If $a\sqcap a\sqsubseteq \neg b$ then $a\sqcap b\sqsubseteq \bot$.
			\item If $a\sqcup b=\top$ then $\lrcorner b\sqsubseteq a\sqcup a$.
			\item If $\lrcorner b\sqsubseteq a\sqcup a$ then $\top\sqsubseteq a\sqcup b$.
		\end{enumerate}
		In particular, if $\textbf{D}$ is a cdBa then $a\sqcap b=\bot$ if and only if $a\sqcap a\sqsubseteq \neg b$, and  $a\sqcup b=\top$ if and only if $\lrcorner b\sqsubseteq a\sqcup a$.}
\end{lemma}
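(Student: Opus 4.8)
The plan is to reduce all four implications to the Boolean algebra structure carried by $D_{\sqcap}$ and $D_{\sqcup}$, exploiting Proposition \ref{pro1}. Items (1) and (2) concern only the ``$\sqcap$-side'' and I would prove them via the Boolean algebra $\textbf{D}_{\sqcap}=(D_{\sqcap},\sqcap,\vee,\neg,\bot,\neg\bot)$; items (3) and (4) are exactly dual and I would obtain them via $\textbf{D}_{\sqcup}=(D_{\sqcup},\sqcup,\wedge,\lrcorner,\top,\lrcorner\top)$ by the same argument with $\sqcap,\neg,\bot,(9a),(1a),(4a)$ replaced by $\sqcup,\lrcorner,\top,(9b),(1b),(4b)$. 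The final ``in particular'' statement would then follow by combining the two implications in each direction with the always-valid inequalities $\bot\sqsubseteq a\sqcap b$ and $a\sqcup b\sqsubseteq\top$ (Proposition \ref{pro1.5}(1),(2)) and the antisymmetry of $\sqsubseteq$ that holds by definition in a cdBa.

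For (1), the key normalisation is that, by axiom (1a) (applied twice) and commutativity, $a_{\sqcap}\sqcap b_{\sqcap}=(a\sqcap a)\sqcap(b\sqcap b)=a\sqcap b$, so the hypothesis $a\sqcap b=\bot$ says precisely that $a_{\sqcap}$ and $b_{\sqcap}$ meet to the bottom element $\bot$ of $\textbf{D}_{\sqcap}$. Since $\neg b=\neg(b\sqcap b)=\neg b_{\sqcap}$ by axiom (4a) is the Boolean complement of $b_{\sqcap}$ in $\textbf{D}_{\sqcap}$, the Boolean law ``$x\wedge y=0$ iff $x\le y'$'' gives $a_{\sqcap}\sqsubseteq_{\sqcap}\neg b$, hence $a\sqcap a\sqsubseteq\neg b$ because $\sqsubseteq_{\sqcap}$ is the restriction of $\sqsubseteq$. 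For (2), I would instead argue directly in $\textbf{D}$: from $a_{\sqcap}\sqsubseteq\neg b$ the monotonicity of $\sqcap$ (Proposition \ref{pro1.5}(6)) yields $a_{\sqcap}\sqcap b\sqsubseteq\neg b\sqcap b$, and $\neg b\sqcap b=\bot$ by axiom (9a) and commutativity; since $a_{\sqcap}\sqcap b=a\sqcap b$ by axiom (1a), this is exactly $a\sqcap b\sqsubseteq\bot$.

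Items (3) and (4) run identically in $\textbf{D}_{\sqcup}$: one has $a_{\sqcup}\sqcup b_{\sqcup}=a\sqcup b$ by axiom (1b), $\lrcorner b=\lrcorner b_{\sqcup}$ is the complement of $b_{\sqcup}$ by (4b), and the Boolean law ``$x\vee y=1$ iff $y'\le x$'' together with axiom (9b) delivers (3), while monotonicity of $\sqcup$ together with (9b) delivers (4). Gathering things for the cdBa case: (1) and (2) give $a\sqcap b=\bot\Rightarrow a_{\sqcap}\sqsubseteq\neg b$ and $a_{\sqcap}\sqsubseteq\neg b\Rightarrow a\sqcap b\sqsubseteq\bot$, so combining with $\bot\sqsubseteq a\sqcap b$ and antisymmetry one obtains $a\sqcap b=\bot\Leftrightarrow a_{\sqcap}\sqsubseteq\neg b$, and dually for $\sqcup$. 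I do not anticipate a serious obstacle; the only point demanding care is the bookkeeping about which elements lie in $D_{\sqcap}$ versus $D_{\sqcup}$ and invoking the Boolean complement law in the correct orientation, so that, e.g., in (3) one reads $a_{\sqcup}\sqcup b_{\sqcup}=\top$ as ``$\lrcorner b_{\sqcup}\sqsubseteq a_{\sqcup}$'' rather than the reverse.
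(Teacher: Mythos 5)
Your argument is correct. Note that the paper itself does not prove this lemma — it is recalled verbatim from \cite{howlader2021dbalogic} — so there is no in-text proof to compare against; but your reduction to the Boolean algebras $\textbf{D}_{\sqcap}$ and $\textbf{D}_{\sqcup}$ is a sound and essentially canonical way to establish it. All the normalisation steps check out: $(a\sqcap a)\sqcap(b\sqcap b)=a\sqcap b$ via axioms (1a) and (2a), $\neg b=\neg b_{\sqcap}$ via (4a), and the identification of $\sqsubseteq_{\sqcap}$ with the restriction of $\sqsubseteq$ via Proposition \ref{pro1} is exactly what licenses transporting the Boolean complement law back to $\textbf{D}$. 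Your choice to prove (2) and (4) directly in $\textbf{D}$ by monotonicity plus (9a)/(9b), rather than inside the Boolean reducts, is also the right instinct for matching the stated conclusions ($a\sqcap b\sqsubseteq\bot$ and $\top\sqsubseteq a\sqcup b$ rather than equalities): in a general dBa $\sqsubseteq$ is only a quasi-order, which is precisely why the lemma reserves the biconditional-with-equality form for the contextual case, where antisymmetry together with $\bot\sqsubseteq a\sqcap b$ and $a\sqcup b\sqsubseteq\top$ (Proposition \ref{pro1.5}) closes the loop as you describe.
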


\begin{definition}
	{\rm A subset $F$ of $D$ is a  {\it filter} in  $\textbf{D}$ if and only if  $x\sqcap y\in F$ for all $x,y \in F$, and for all $z\in D$ and $ x \in F, x\sqsubseteq z$ implies that $z\in F$. An {\it ideal} in a dBa is defined dually.\\
		A filter $F$ (ideal $I$) is  {\it proper} if and only if  $F\neq D$ ($I\neq D$).  A proper filter $F$ (ideal $I$) is called {\it primary} if and only if $x\in F~ \mbox{or} ~\neg x \in F ~(x\in I ~\mbox{or}~ \lrcorner x\in I)$, for all $x\in D$.\\
		The set of  primary filters is denoted by $\mathcal{F}_{pr}(\textbf{D})$;  the set of all primary ideals is denoted by $\mathcal{I}_{pr}(\textbf{D})$.\\
		A {\it base} $F_{0}$ for a filter  $F$ is a subset of $D$ such that $F=\{x\in D~:~ z\sqsubseteq x~\mbox{for some}~ z\in F_{0}\}$. A {\it base} for an ideal is defined similarly. \\
		For a subset $X$ of $D$, $F(X)$ and $I(X)$ denote the filter and ideal generated by $X$ respectively.}
\end{definition}

\noindent To prove representation theorems for dBas, the following are introduced in   \cite{wille}. \\
$\mathcal{F}_{p}(\textbf{D}):=\{F \subseteq D : F ~\mbox{is a  filter of}~\textbf{D}~\mbox{and}~F\cap D_{\sqcap} ~\mbox{is a prime filter in}~ \textbf{D}_{\sqcap}\}$. \\
$\mathcal{I}_{p}(\textbf{D}):=\{I \subseteq D : I ~\mbox{is an ideal of}~\textbf{D}~\mbox{and}~I\cap D_{\sqcup}~\mbox{ is a prime ideal in}~ \textbf{D}_{\sqcup}\}$. 

\noindent In fact, we have
\begin{proposition}
	\label{compar-ideal}
	{\rm \cite{howlader3} 
		$\mathcal{F}_{p}(\textbf{D})=\mathcal{F}_{pr}(\textbf{D})$ and $\mathcal{I}_{p}(\textbf{D})=\mathcal{I}_{pr}(\textbf{D})$.}
\end{proposition}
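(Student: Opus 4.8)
The plan is to establish the filter equality $\mathcal{F}_{p}(\textbf{D})=\mathcal{F}_{pr}(\textbf{D})$ in full and then obtain the ideal equality $\mathcal{I}_{p}(\textbf{D})=\mathcal{I}_{pr}(\textbf{D})$ by the order-dual argument (interchanging $\sqcap$ with $\sqcup$, $\neg$ with $\lrcorner$, $\bot$ with $\top$, filters with ideals, and $\textbf{D}_{\sqcap}$ with $\textbf{D}_{\sqcup}$). For the filter case I would prove the two inclusions separately, throughout exploiting three facts: that $\textbf{D}_{\sqcap}$ is a Boolean algebra whose bottom is $\bot$ (Proposition \ref{pro1}), so that a filter of $\textbf{D}_{\sqcap}$ is prime exactly when it is an ultrafilter, i.e. it contains, for each $a\in D_{\sqcap}$, either $a$ or its Boolean complement $\neg a$; that $\neg x\in D_{\sqcap}$ for every $x\in D$ (Proposition \ref{pro2}(1)) together with $\neg(x\sqcap x)=\neg x$ (axiom (4a)); and that $x\sqcap x\sqsubseteq x$ (Proposition \ref{pro1.5}(5) with $y=x$).

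For the inclusion $\mathcal{F}_{pr}(\textbf{D})\subseteq\mathcal{F}_{p}(\textbf{D})$, let $F$ be a primary filter. First I would check that $F\cap D_{\sqcap}$ is a filter of $\textbf{D}_{\sqcap}$: it is closed under $\sqcap$ because $F$ is and because $D_{\sqcap}$ is closed under $\sqcap$ (a short computation using associativity and commutativity of $\sqcap$), and it is upward closed for $\sqsubseteq_{\sqcap}$ since $F$ is upward closed for $\sqsubseteq$. Properness follows because $\bot\in D_{\sqcap}$ while $\bot\notin F$ (otherwise $\bot\sqsubseteq z$ for all $z$ by Proposition \ref{pro1.5}(1) would force $F=D$, contradicting that a primary filter is proper). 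Finally, given $a\in D_{\sqcap}$, primariness of $F$ gives $a\in F$ or $\neg a\in F$; in the first case $a\in F\cap D_{\sqcap}$, and in the second $\neg a\in F\cap D_{\sqcap}$ since $\neg a\in D_{\sqcap}$. Hence $F\cap D_{\sqcap}$ is an ultrafilter, so prime.

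For the reverse inclusion, suppose $F$ is a filter with $F\cap D_{\sqcap}$ prime in $\textbf{D}_{\sqcap}$. Properness of $F$ is immediate, since $\bot\notin F\cap D_{\sqcap}$ (a prime filter omits the bottom $\bot$) forces $\bot\notin F$, whence $F\neq D$. The crux is verifying the primary condition for an arbitrary $x\in D$, the main obstacle being that primeness yields information only on $D_{\sqcap}$, whereas $x$ need not lie in $D_{\sqcap}$. I would bridge this by applying the ultrafilter condition to $x\sqcap x\in D_{\sqcap}$: either $x\sqcap x\in F\cap D_{\sqcap}$, and then $x\in F$ because $x\sqcap x\sqsubseteq x$ and $F$ is upward closed; or $\neg(x\sqcap x)\in F$, and then $\neg x\in F$ by axiom (4a). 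Thus $x\in F$ or $\neg x\in F$, so $F$ is primary. This completes the filter equality, and the dual computation---using $x\sqsubseteq x\sqcup x$, $\lrcorner(x\sqcup x)=\lrcorner x$ (axiom (4b)), $\lrcorner x\in D_{\sqcup}$, and $\top$ as the top of $\textbf{D}_{\sqcup}$---yields the ideal equality.
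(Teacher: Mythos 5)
Your argument is correct: both inclusions for the filter equality are sound, the key bridging step (applying the ultrafilter dichotomy in $\textbf{D}_{\sqcap}$ to $x\sqcap x$ and then using $x\sqcap x\sqsubseteq x$ and $\neg(x\sqcap x)=\neg x$ to pass back to arbitrary $x\in D$) is exactly the right idea, and the dual argument for ideals goes through verbatim. The paper itself imports this proposition from \cite{howlader3} without reproducing a proof, so there is nothing to compare against line by line, but your proof is the natural one and uses only facts available in the paper (Propositions \ref{pro1}, \ref{pro1.5}, \ref{pro2} and axiom (4a)/(4b)).
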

\begin{lemma}
	\label{lema1}
	{\rm \cite{wille} 
		\begin{enumerate}
			\item For any filter $F$ of $\textbf{D}$, $F\cap D_{\sqcap}$ and $F\cap D_{\sqcup}$ are filters of the Boolean algebras $\textbf{D}_{\sqcap}$, $\textbf{D}_{\sqcup}$ respectively.
			\item Each filter $F_{0}$ of the Boolean algebra $\textbf{D}_{\sqcap}$ is the base of some filter $F$ of $\textbf{D}$ such that $F_{0}=F\cap D_{\sqcap}$. Moreover if $F_{0}$ is prime, $F\in\mathcal{F}_{p}(\textbf{D})$. 
		\end{enumerate}
		Similar results can be proved for ideals of dBas. }	
\end{lemma}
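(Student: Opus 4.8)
The plan is to prove the two numbered claims separately and to obtain the ideal versions by the order-dual argument, using throughout that $\sqsubseteq$ is a quasi-order (Definition~\ref{DBA}), that $\sqsubseteq_{\sqcap},\sqsubseteq_{\sqcup}$ are its restrictions, and Propositions~\ref{pro1} and~\ref{pro1.5}. For the first claim, the case $F\cap D_{\sqcap}$ is immediate: the meet of the Boolean algebra $\textbf{D}_{\sqcap}$ is $\sqcap$ itself, so if $x,y\in F\cap D_{\sqcap}$ then $x\sqcap y\in F$ (as $F$ is $\sqcap$-closed) and $x\sqcap y\in D_{\sqcap}$ (as $\sqcap$ is the internal meet of $\textbf{D}_{\sqcap}$ by Proposition~\ref{pro1}(1)), while upward closure under $\sqsubseteq_{\sqcap}$ is inherited from that of $F$ under $\sqsubseteq$; any nonempty $F$ contains $\top$ and hence $\neg\bot=\top\sqcap\top\in F\cap D_{\sqcap}$, the top of $\textbf{D}_{\sqcap}$. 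The only non-routine point is closure of $F\cap D_{\sqcup}$ under the meet $\wedge$ of $\textbf{D}_{\sqcup}$, since $\wedge\neq\sqcap$. Here I would take $x,y\in F\cap D_{\sqcup}$, note $x\sqcap y\in F$, and prove the auxiliary inequality $x\sqcap y\sqsubseteq x\wedge y$; upward closure of $F$ then gives $x\wedge y\in F$, while $x\wedge y\in D_{\sqcup}$ by Proposition~\ref{pro1}(2). To establish the inequality, set $w:=(x\sqcap y)\sqcup(x\sqcap y)\in D_{\sqcup}$. From $x\sqcap y\sqsubseteq x$ and $x\sqcap y\sqsubseteq y$ (Proposition~\ref{pro1.5}(5)) together with Proposition~\ref{pro1}(3) one gets $w\sqsubseteq x_{\sqcup}=x$ and $w\sqsubseteq y_{\sqcup}=y$, so $w$ is a lower bound of $\{x,y\}$ in $\textbf{D}_{\sqcup}$; since $x\wedge y$ is their greatest lower bound there, $w\sqsubseteq x\wedge y$, and as $x\sqcap y\sqsubseteq w$ (Proposition~\ref{pro1.5}(5)), transitivity yields $x\sqcap y\sqsubseteq x\wedge y$.

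For the second claim, given a filter $F_{0}$ of $\textbf{D}_{\sqcap}$ I would define $F:=\{x\in D~:~z\sqsubseteq x~\mbox{for some}~z\in F_{0}\}$, so that $F_{0}$ is a base of $F$ by construction and $F_{0}\subseteq F$ by reflexivity of $\sqsubseteq$. To verify that $F$ is a filter of $\textbf{D}$: upward closure is just transitivity of $\sqsubseteq$; for $\sqcap$-closure, given $x_{1},x_{2}\in F$ choose $z_{i}\in F_{0}$ with $z_{i}\sqsubseteq x_{i}$, observe $z_{1}\sqcap z_{2}\in F_{0}$ (as $F_{0}$ is $\sqcap$-closed) and $z_{1}\sqcap z_{2}\sqsubseteq x_{1}\sqcap x_{2}$ by monotonicity of $\sqcap$ (Proposition~\ref{pro1.5}(6)), whence $x_{1}\sqcap x_{2}\in F$. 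For $F_{0}=F\cap D_{\sqcap}$: the inclusion $\subseteq$ follows from $F_{0}\subseteq F$ and $F_{0}\subseteq D_{\sqcap}$; conversely, if $x\in F\cap D_{\sqcap}$ then some $z\in F_{0}\subseteq D_{\sqcap}$ satisfies $z\sqsubseteq x$, i.e. $z\sqsubseteq_{\sqcap}x$, and upward closure of $F_{0}$ inside $\textbf{D}_{\sqcap}$ gives $x\in F_{0}$. Finally, if $F_{0}$ is prime then $F\cap D_{\sqcap}=F_{0}$ is prime, so $F\in\mathcal{F}_{p}(\textbf{D})$ directly by the definition of $\mathcal{F}_{p}(\textbf{D})$.

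The main obstacle is the single subtle step above, namely the $\wedge$-closure of $F\cap D_{\sqcup}$; everything else is bookkeeping with the quasi-order and the cited propositions. The dual results for ideals follow by interchanging $(\sqcap,\vee,\bot,\mbox{filter},\mbox{prime filter})$ with $(\sqcup,\wedge,\top,\mbox{ideal},\mbox{prime ideal})$ and running the order-dual argument, the dual subtlety being $\vee$-closure of $I\cap D_{\sqcap}$, which is handled symmetrically via the inequality $x\vee y\sqsubseteq x\sqcup y$ for $x,y\in D_{\sqcap}$.
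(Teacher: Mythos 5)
The paper offers no proof of this lemma at all --- it is imported verbatim from Wille \cite{wille} --- so there is no in-paper argument to compare against; I can only assess your proof on its own terms, and it is correct. You correctly isolate the one genuinely non-routine point, namely that $F\cap D_{\sqcup}$ must be closed under the meet $\wedge$ of $\textbf{D}_{\sqcup}$ rather than under $\sqcap$, and your derivation of the auxiliary inequality $x\sqcap y\sqsubseteq x\wedge y$ for $x,y\in D_{\sqcup}$ is sound: $w:=(x\sqcap y)\sqcup(x\sqcap y)$ lies in $D_{\sqcup}$, is a lower bound of $\{x,y\}$ there by Proposition~\ref{pro1}(3), hence sits below the greatest lower bound $x\wedge y$, and $x\sqcap y\sqsubseteq w$ closes the chain; upward closure of $F$ then does the rest. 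The second claim is handled exactly as one would expect (generate $F$ upward from $F_{0}$, use Proposition~\ref{pro1.5}(6) for $\sqcap$-closure, and recover $F_{0}=F\cap D_{\sqcap}$ from upward closure of $F_{0}$ in $\textbf{D}_{\sqcap}$), and the primality transfer is immediate from the definition of $\mathcal{F}_{p}(\textbf{D})$. One small definitional caveat worth a sentence in a final write-up: the paper's notion of filter admits the empty set, so if filters of the Boolean algebras $\textbf{D}_{\sqcap},\textbf{D}_{\sqcup}$ are required to contain the top element, you should either restrict to nonempty $F$ (as you implicitly do when you invoke $\top\in F$) or note that the empty case is degenerate.
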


\noindent Recall the operations on the  the set $\mathfrak{H}(\mathbb{K})$ of all semiconcepts given in the Introduction.
\begin{theorem}
	\label{protconcept algebra}
	{\rm \cite{wille} \noindent 
		$\underline{\mathfrak{H}}(\mathbb{K}):= (\mathfrak{H}(\mathbb{K}), \sqcap,\sqcup,\neg,\lrcorner,\top,\bot)$ is a  pdBa.
		
	}
\end{theorem}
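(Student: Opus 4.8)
The plan is to verify directly that the six operations satisfy the twelve defining identities (1a)--(12) of Definition \ref{DBA} together with purity, and the computations become transparent once one records a structural decomposition of $\mathfrak{H}(\mathbb{K})$. Writing every semiconcept as a pair $(A,B)$, set
\[
H_{\sqcap}(\mathbb{K}):=\{(A,A^{\prime}): A\subseteq G\},\qquad H_{\sqcup}(\mathbb{K}):=\{(B^{\prime},B): B\subseteq M\}.
\]
By the defining condition $A^{\prime}=B$ or $B^{\prime}=A$, we have $\mathfrak{H}(\mathbb{K})=H_{\sqcap}(\mathbb{K})\cup H_{\sqcup}(\mathbb{K})$, the intersection being exactly $\mathcal{B}(\mathbb{K})$. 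The first thing I would prove is the key \emph{compatibility lemma}: every semiconcept $(A,B)$ satisfies $A\subseteq B^{\prime}$ and $B\subseteq A^{\prime}$. This follows in each of the two defining cases from the Galois-connection facts $A\subseteq A^{\prime\prime}$, $B\subseteq B^{\prime\prime}$ and $A^{\prime\prime\prime}=A^{\prime}$.

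Next I would observe that the connectives interact with this decomposition in a controlled way: $\sqcap$, $\neg$ and the derived $\vee$ depend only on first components and always land in $H_{\sqcap}(\mathbb{K})$, while $\sqcup$, $\lrcorner$ and the derived $\wedge$ depend only on second components and land in $H_{\sqcup}(\mathbb{K})$. Concretely $(A,B)\sqcap(A,B)=(A,A^{\prime})$ and $(A,B)\sqcup(A,B)=(B^{\prime},B)$. Since two elements of $H_{\sqcap}(\mathbb{K})$ are equal iff their first components agree, the map $(A,A^{\prime})\mapsto A$ is a bijection $H_{\sqcap}(\mathbb{K})\to\mathcal{P}(G)$ carrying $\sqcap,\vee,\neg,\bot,\neg\bot$ to $\cap,\cup,{}^{c},\emptyset,G$; hence $(H_{\sqcap}(\mathbb{K}),\sqcap,\vee,\neg,\bot,\neg\bot)$ is a Boolean algebra, and dually $(H_{\sqcup}(\mathbb{K}),\sqcup,\wedge,\lrcorner,\top,\lrcorner\top)\cong\mathcal{P}(M)$. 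Purity is then immediate: each semiconcept lies in $H_{\sqcap}(\mathbb{K})$, whence $x\sqcap x=x$, or in $H_{\sqcup}(\mathbb{K})$, whence $x\sqcup x=x$.

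With the two Boolean reducts in hand the bookkeeping splits cleanly. The homogeneous identities (1a),(2a),(3a),(4a),(6a),(7a),(8a),(9a),(10a),(11a) reduce to the corresponding Boolean laws in $\mathcal{P}(G)$ (commutativity, associativity, distributivity, absorption, complementation, de Morgan, double complement), their primed duals reduce to $\mathcal{P}(M)$, and I would verify one representative of each type and invoke duality for the rest. The genuinely mixed identities are the absorption laws (5a),(5b) and the linking law (12), and these are exactly where the compatibility lemma is used. For (5a) one computes that $(A_{1},B_{1})\sqcap\bigl((A_{1},B_{1})\sqcup(A_{2},B_{2})\bigr)$ has first component $A_{1}\cap(B_{1}\cap B_{2})^{\prime}$, and the lemma together with antitonicity of ${}^{\prime}$ gives $A_{1}\subseteq B_{1}^{\prime}\subseteq(B_{1}\cap B_{2})^{\prime}$, so this collapses to $A_{1}$, matching $x\sqcap x$; (5b) is dual. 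For (12) a direct computation gives $(x\sqcap x)\sqcup(x\sqcap x)=(A^{\prime\prime},A^{\prime})$ and $(x\sqcup x)\sqcap(x\sqcup x)=(B^{\prime},B^{\prime\prime})$, and the two defining cases of a semiconcept, with $A^{\prime\prime\prime}=A^{\prime}$ and $B^{\prime\prime\prime}=B^{\prime}$, force $A^{\prime\prime}=B^{\prime}$ and $A^{\prime}=B^{\prime\prime}$, yielding equality. I expect the main obstacle to be precisely these mixed laws: every identity touching both $\sqcap$ and $\sqcup$ must be reduced, via the compatibility lemma, to a statement about the closure operator ${}^{\prime\prime}$, and keeping the direction of the inclusions straight is the only delicate point, whereas the homogeneous axioms are routine once the Boolean reducts are identified.
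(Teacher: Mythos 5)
Your verification is correct. The paper states this theorem only as a citation from Wille and gives no proof of its own, so there is no in-text argument to compare against; your route --- decomposing $\mathfrak{H}(\mathbb{K})$ into the two Boolean reducts $H_{\sqcap}(\mathbb{K})\cong\mathcal{P}(G)$ and $H_{\sqcup}(\mathbb{K})\cong\mathcal{P}(M)$ (precisely the content the paper records separately in Proposition~\ref{pro1} and Theorem~\ref{cs}), isolating the compatibility inclusions $A\subseteq B^{\prime}$ and $B\subseteq A^{\prime}$, and invoking them only for the mixed axioms (5a), (5b) and (12) --- is the standard direct verification, and each of the reductions you describe checks out.
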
 

\noindent For $\underline{\mathfrak{H}}(\mathbb{K})$, consider the sets $\mathfrak{H}(\mathbb{K})_{\sqcap}$ and $ \mathfrak{H}(\mathbb{K})_{\sqcup}$. Note that the elements of the set $\mathfrak{H}(\mathbb{K})_{\sqcap}$ are of the form $(A, A^{\prime})$ and the elements of the set $\mathfrak{H}(\mathbb{K})_{\sqcup}$ are of the form $(B^{\prime}, B)$.
\begin{theorem}
\label{cs}
\noindent
{\rm \cite{wille} \begin{enumerate}
\item $\mathfrak{H}(\mathbb{K}):=\mathfrak{H}(\mathbb{K})_{\sqcap}\cup \mathfrak{H}(\mathbb{K})_{\sqcup}$.
\item $\mathcal{B}(\mathbb{K}):=\mathfrak{H}(\mathbb{K})_{\sqcap}\cap \mathfrak{H}(\mathbb{K})_{\sqcup}$.
\end{enumerate}}
\end{theorem}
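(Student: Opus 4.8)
The plan is to reduce both equalities to an explicit description of the idempotents of $\sqcap$ and $\sqcup$, which is exactly the content of the remark immediately preceding the statement. First I would compute, for an arbitrary semiconcept $(A,B)\in\mathfrak{H}(\mathbb{K})$, the two self-operations directly from the definitions of $\sqcap$ and $\sqcup$ recalled in the Introduction:
\[
(A,B)\sqcap(A,B)=(A\cap A,(A\cap A)^{\prime})=(A,A^{\prime}),\qquad
(A,B)\sqcup(A,B)=((B\cap B)^{\prime},B\cap B)=(B^{\prime},B).
\]
From these it follows immediately that $(A,B)\in\mathfrak{H}(\mathbb{K})_{\sqcap}$, i.e. $(A,B)\sqcap(A,B)=(A,B)$, holds if and only if $B=A^{\prime}$, and that $(A,B)\in\mathfrak{H}(\mathbb{K})_{\sqcup}$ holds if and only if $A=B^{\prime}$. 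This pair of characterizations is the only computation required; everything else is a matter of matching it against the definitions of semiconcept and concept.

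For part 1, I would argue by double inclusion. Any element of $\mathfrak{H}(\mathbb{K})_{\sqcap}\cup\mathfrak{H}(\mathbb{K})_{\sqcup}$ already lies in $\mathfrak{H}(\mathbb{K})$ by definition, which gives one inclusion for free. Conversely, if $(A,B)$ is a semiconcept then by the defining condition of a semiconcept either $A^{\prime}=B$ or $B^{\prime}=A$; in the first case the characterization above places $(A,B)$ in $\mathfrak{H}(\mathbb{K})_{\sqcap}$, and in the second case in $\mathfrak{H}(\mathbb{K})_{\sqcup}$. Hence $(A,B)\in\mathfrak{H}(\mathbb{K})_{\sqcap}\cup\mathfrak{H}(\mathbb{K})_{\sqcup}$, establishing the reverse inclusion.

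For part 2, I would use that a pair $(A,B)$ is a concept precisely when $A^{\prime}=B$ and $B^{\prime}=A$ both hold, as defined in the Introduction. By the two characterizations, the condition $A^{\prime}=B$ is equivalent to $(A,B)\in\mathfrak{H}(\mathbb{K})_{\sqcap}$ and the condition $B^{\prime}=A$ is equivalent to $(A,B)\in\mathfrak{H}(\mathbb{K})_{\sqcup}$, so their conjunction is equivalent to $(A,B)\in\mathfrak{H}(\mathbb{K})_{\sqcap}\cap\mathfrak{H}(\mathbb{K})_{\sqcup}$, as required.

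I do not expect a genuine obstacle here: the statement is essentially a translation between the order-theoretic (idempotency) description of left and right semiconcepts and the Galois-derivation description of semiconcepts and concepts. The only point needing care is to carry out the self-operation computations honestly, since the definitions of $\sqcap$ and $\sqcup$ are asymmetric, the former using the intersection of extents and the latter the intersection of intents. Once these are in hand, both parts are immediate logical rephrasings, with the disjunction in the definition of a semiconcept yielding the union and the conjunction in the definition of a concept yielding the intersection.
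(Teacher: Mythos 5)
Your proof is correct: the self-operation computations $(A,B)\sqcap(A,B)=(A,A^{\prime})$ and $(A,B)\sqcup(A,B)=(B^{\prime},B)$ are exactly right, and both parts then follow by matching the disjunction/conjunction in the definitions of semiconcept and concept, precisely as you say. The paper itself gives no proof of this theorem (it is cited from Wille), but your characterization of the idempotents coincides with the remark the paper places just before the statement, so this is the intended argument.
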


\noindent Theorem \ref{cs} (1) thus  implies that a semiconcept is either a left semiconcept or a right semiconcept. Next we move to representation theorem for pdBas.
The notations and results listed below are required.  
Let $\textbf{D}$ be a dBa. For any $x \in D$, $F_{x}:=\{F\in\mathcal{F}_{p}(\textbf{D})~:~x\in F\}$  and $I_{x}:=\{I\in\mathcal{I}_{p}(\textbf{D})~:~x\in I\}$.
\begin{lemma}
	\label{complement of Fx}
	{\rm \cite{wille,howlader2020} Let  $x\in D$. Then the following hold.
		\begin{enumerate}
			\item $(F_{x})^{c}=F_{\neg x}$ and $(I_{x})^{c}=I_{\lrcorner x}$.
			\item $F_{x\sqcap y}=F_{x}\cap F_{y}$ and $I_{x\sqcup y}=I_{x}\cap I_{y}$.
	\end{enumerate}}
\end{lemma}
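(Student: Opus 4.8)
The plan is to reduce everything to the defining closure properties of (primary) filters and ideals together with the Boolean-style axioms (9a), (9b) and the order facts in Proposition~\ref{pro1.5}. Throughout, the complement $(\,\cdot\,)^{c}$ is understood relative to the universe $\mathcal{F}_{p}(\textbf{D})$ in the filter case and $\mathcal{I}_{p}(\textbf{D})$ in the ideal case. The first observation I would record is that, by Proposition~\ref{compar-ideal}, these sets coincide with $\mathcal{F}_{pr}(\textbf{D})$ and $\mathcal{I}_{pr}(\textbf{D})$, so every member is \emph{primary} and \emph{proper}; this is exactly what lets me convert set-membership into its negation.

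For the first identity of (1), I would show that for any $F\in\mathcal{F}_{p}(\textbf{D})$ one has $x\notin F$ if and only if $\neg x\in F$, which is precisely the statement $(F_{x})^{c}=F_{\neg x}$. The forward direction is immediate from primariness: since $x\in F$ or $\neg x\in F$, if $x\notin F$ then $\neg x\in F$. For the converse, suppose $\neg x\in F$ and, towards a contradiction, $x\in F$; then $x\sqcap\neg x\in F$ by closure of $F$ under $\sqcap$, and $x\sqcap\neg x=\bot$ by axiom (9a), so $\bot\in F$. Since $\bot\sqsubseteq z$ for every $z\in D$ (Proposition~\ref{pro1.5}(1)) and $F$ is up-closed under $\sqsubseteq$, this forces $F=D$, contradicting properness. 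The identity $(I_{x})^{c}=I_{\lrcorner x}$ is proved dually, using $x\sqcup\lrcorner x=\top$ from axiom (9b), the fact $x\sqsubseteq\top$ from Proposition~\ref{pro1.5}(2), and down-closure of ideals.

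Part (2) is then a routine consequence of the closure and order properties alone, and needs no primariness. For $F_{x\sqcap y}=F_{x}\cap F_{y}$: if $x,y\in F$ then $x\sqcap y\in F$ since $F$ is closed under $\sqcap$, giving $\supseteq$; conversely, from $x\sqcap y\sqsubseteq x$ and $x\sqcap y\sqsubseteq y$ (Proposition~\ref{pro1.5}(5)) together with up-closure of $F$, membership of $x\sqcap y$ yields membership of both $x$ and $y$, giving $\subseteq$. The identity $I_{x\sqcup y}=I_{x}\cap I_{y}$ follows by the dual argument, using closure of ideals under $\sqcup$ and $x,y\sqsubseteq x\sqcup y$.

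I do not anticipate a genuine obstacle here; the only point requiring care is bookkeeping about \emph{where} the complement is taken. The equalities in (1) are genuinely about the primary-filter (resp.\ primary-ideal) universe, and both the passage $x\notin F\Rightarrow\neg x\in F$ (primariness) and $\neg x\in F\Rightarrow x\notin F$ (properness, i.e.\ $\bot\notin F$) depend on the identification $\mathcal{F}_{p}(\textbf{D})=\mathcal{F}_{pr}(\textbf{D})$; dropping either hypothesis breaks the argument. Everything else is a direct application of the filter/ideal axioms and Proposition~\ref{pro1.5}.
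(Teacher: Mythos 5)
Your proof is correct, and it is the standard argument: the paper itself states this lemma without proof, citing \cite{wille,howlader2020}, and the intended justification is exactly what you give — primariness plus properness (via axioms (9a)/(9b) and $\bot\sqsubseteq z\sqsubseteq\top$) for part (1), and the filter/ideal closure and order properties from Proposition \ref{pro1.5}(5) for part (2). Your remark that the identification $\mathcal{F}_{p}(\textbf{D})=\mathcal{F}_{pr}(\textbf{D})$ from Proposition \ref{compar-ideal} is what licenses the use of primariness is exactly the right point of care.
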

\noindent To prove the representation theorem, Wille uses the {\it standard context} corresponding to the dBa $\textbf{D}$, defined as $\mathbb{K}(\textbf{D}):=(\mathcal{F}_{p}(\textbf{D}),\mathcal{I}_{p}(\textbf{D}),\Delta)$, where for all $F\in \mathcal{F}_{p}(\textbf{D}) $ and $I\in \mathcal{I}_{p}(\textbf{D})$, $F\Delta I$  if and only if $F\cap I=\emptyset$. Then we have 
\begin{lemma}
	\label{derivation}
	{\rm \cite{wille} For all $x\in \textbf{D}$, $F_{x}^{\prime}=I_{x_{\sqcap\sqcup}}$ and $I_{x}^{\prime}=F_{x_{\sqcup\sqcap}}$.}
\end{lemma}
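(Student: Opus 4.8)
The plan is to establish the first identity $F_x^{\prime}=I_{x_{\sqcap\sqcup}}$ by a double inclusion and to deduce the second, $I_x^{\prime}=F_{x_{\sqcup\sqcap}}$, by the order-dual argument (interchanging $\sqcap$ with $\sqcup$, filters with ideals, and the Boolean algebras $\textbf{D}_{\sqcap}$ and $\textbf{D}_{\sqcup}$). Before starting I would record the order facts that drive both inclusions: for every $x\in D$ one has $x_{\sqcap}\sqsubseteq x\sqsubseteq x_{\sqcup}$ and $x_{\sqcap}\sqsubseteq x_{\sqcap\sqcup}$, all of which are instances of Proposition \ref{pro1.5}(5) (the last with $x:=x_{\sqcap}$). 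Unwinding the derivation in the standard context $\mathbb{K}(\textbf{D})$, the statement $I\in F_x^{\prime}$ means exactly that $F\,\Delta\,I$ holds for every primary filter $F$ with $x\in F$; so the proof splits into the two implications below.

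For the inclusion $I_{x_{\sqcap\sqcup}}\subseteq F_x^{\prime}$, assume $x_{\sqcap\sqcup}\in I$ and take any primary filter $F$ with $x\in F$. Since $F$ is closed under $\sqcap$, $x_{\sqcap}=x\sqcap x\in F$; since $x_{\sqcap}\sqsubseteq x_{\sqcap\sqcup}$ and $F$ is upward closed, $x_{\sqcap\sqcup}\in F$. Thus $F$ and $I$ share the element $x_{\sqcap\sqcup}$, i.e.\ $F\,\Delta\,I$; as $F$ was arbitrary, $I\in F_x^{\prime}$.

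The reverse inclusion $F_x^{\prime}\subseteq I_{x_{\sqcap\sqcup}}$ is the substantive part, and I would prove its contrapositive. Suppose $x_{\sqcap\sqcup}\notin I$; the goal is to exhibit a primary filter $F$ with $x\in F$ and $F\cap I=\emptyset$, so that $F$ is not $\Delta$-related to $I$ and hence $I\notin F_x^{\prime}$. First note $x_{\sqcap}\notin I$, for otherwise $x_{\sqcap\sqcup}=x_{\sqcap}\sqcup x_{\sqcap}\in I$ since $I$ is an ideal. By the ideal-analogue of Lemma \ref{lema1}(1), $K:=I\cap D_{\sqcap}$ is an ideal of the Boolean algebra $\textbf{D}_{\sqcap}$, and $x_{\sqcap}\in D_{\sqcap}\setminus K$. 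By the prime filter theorem for Boolean algebras there is a prime filter $F_{0}$ of $\textbf{D}_{\sqcap}$ with $x_{\sqcap}\in F_{0}$ and $F_{0}\cap K=\emptyset$. By Lemma \ref{lema1}(2), $F_{0}$ is the base of a primary filter $F\in\mathcal{F}_{p}(\textbf{D})$ with $F\cap D_{\sqcap}=F_{0}$; then $x_{\sqcap}\in F$, and since $x_{\sqcap}\sqsubseteq x$ with $F$ upward closed, $x\in F$. It remains to check $F\cap I=\emptyset$: if $z\in F\cap I$, then, $F_{0}$ being a base of $F$, there is $w\in F_{0}\subseteq D_{\sqcap}$ with $w\sqsubseteq z$; since $z\in I$ and $I$ is downward closed, $w\in I$, whence $w\in I\cap D_{\sqcap}=K$, contradicting $w\in F_{0}$ and $F_{0}\cap K=\emptyset$. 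This establishes the contrapositive and completes the first identity.

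I expect this reverse inclusion to be the main obstacle, precisely because it requires producing a genuine primary filter of $\textbf{D}$ that contains $x$ yet is disjoint from $I$, rather than merely a prime filter of $\textbf{D}_{\sqcap}$. The two facts that carry it through are the lifting in Lemma \ref{lema1}(2) (a prime filter of $\textbf{D}_{\sqcap}$ extends to a primary filter of $\textbf{D}$ with the same $\sqcap$-trace) and the base description of the extended filter, which is exactly what propagates the disjointness $F_{0}\cap K=\emptyset$ in $\textbf{D}_{\sqcap}$ up to $F\cap I=\emptyset$ in $\textbf{D}$. Finally, $I_x^{\prime}=F_{x_{\sqcup\sqcap}}$ is obtained by running the same argument in the order-dual, using $x_{\sqcup}\sqsupseteq x\sqsupseteq x_{\sqcap}$ and $x_{\sqcup\sqcap}\sqsubseteq x_{\sqcup}$, the Boolean algebra $\textbf{D}_{\sqcup}$, the prime ideal theorem, and the ideal-versions of Lemma \ref{lema1}.
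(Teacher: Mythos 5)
The paper offers no proof of Lemma \ref{derivation}: it is quoted from \cite{wille}. Judged on its own, your argument is correct and is essentially the standard one — the easy inclusion from closure properties of primary filters together with $x_{\sqcap}\sqsubseteq x_{\sqcap\sqcup}$, and the hard inclusion by separating $x_{\sqcap}$ from $I\cap D_{\sqcap}$ with the Boolean prime filter theorem and lifting the resulting prime filter of $\textbf{D}_{\sqcap}$ to a primary filter of $\textbf{D}$ via Lemma \ref{lema1}(2), with the base description propagating disjointness. Your appeal to the ``ideal-analogue of Lemma \ref{lema1}(1)'' for $K:=I\cap D_{\sqcap}$ being an ideal of $\textbf{D}_{\sqcap}$ is covered by the paper's remark that similar results hold for ideals; if you wanted to make it self-contained, closure of $K$ under $\vee$ reduces to $a\vee b\sqsubseteq a\sqcup b$ for $a,b\in D_{\sqcap}$, which follows from Propositions \ref{pro1}(3) and \ref{pro1.5}(5).

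One point you should make explicit: throughout you read $F\Delta I$ as $F\cap I\neq\emptyset$ (you treat ``$F$ and $I$ share an element'' as establishing $F\Delta I$, and exhibit a filter \emph{disjoint} from $I$ to refute it), whereas the paper literally defines $F\Delta I$ if and only if $F\cap I=\emptyset$. Under the paper's literal definition the lemma is false: any primary filter $F$ containing $x$ also contains $x_{\sqcap\sqcup}$, so $F$ meets every ideal containing $x_{\sqcap\sqcup}$, and $I_{x_{\sqcap\sqcup}}$ would then be disjoint from $F_{x}^{\prime}$ rather than equal to it. The definition in the paper is evidently a typo ($\neq\emptyset$ is the convention in Wille's original and in the authors' earlier papers), and your convention is the only one under which the statement holds; still, a careful write-up should note that it is correcting the stated incidence relation rather than silently inverting it.
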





\begin{theorem} 
	\label{semiconceptembedding}
	{\rm \cite{BALBIANI2012260} Let $\textbf{D}$ be a { \rm pdBa}. Then the map $h:\textbf{D}\rightarrow \underline{\mathfrak{H}}(\mathbb{K}(\textbf{D})) $ defined by $h(x)=(F_{x},I_{x})$ for all $x\in \textbf{D}$ is  an injective homomorphism.}
\end{theorem}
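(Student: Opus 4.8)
The plan is to check three things in turn: that $h$ is well defined, i.e.\ that $(F_x,I_x)$ is always a semiconcept of $\mathbb{K}(\textbf{D})$; that $h$ preserves all the operations; and that $h$ is injective. Before any of this I would record two auxiliary facts, namely $F_x=F_{x_{\sqcap}}$ and $I_x=I_{x_{\sqcup}}$. Indeed, every filter is closed under $\sqcap$ and is upward closed, while $x_{\sqcap}=x\sqcap x\sqsubseteq x$ (Proposition \ref{pro1.5}(5)); hence a primary filter contains $x$ exactly when it contains $x_{\sqcap}$. Dually, since ideals are $\sqcup$-closed and downward closed and $x\sqsubseteq x_{\sqcup}$, a primary ideal contains $x$ exactly when it contains $x_{\sqcup}$. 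Thus $F_x$ depends only on $x_{\sqcap}$ and $I_x$ only on $x_{\sqcup}$.

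For well-definedness I would use purity to split on whether $x\in D_{\sqcap}$ or $x\in D_{\sqcup}$. If $x\in D_{\sqcap}$ then $x_{\sqcap}=x$, so by Lemma \ref{derivation} together with the auxiliary fact, $F_x^{\prime}=I_{x_{\sqcap\sqcup}}=I_{x_{\sqcup}}=I_x$, which exhibits $(F_x,I_x)=(F_x,F_x^{\prime})$ as an element of $\mathfrak{H}(\mathbb{K}(\textbf{D}))_{\sqcap}$. Dually, if $x\in D_{\sqcup}$ then $I_x^{\prime}=F_{x_{\sqcup\sqcap}}=F_{x_{\sqcap}}=F_x$, so $(F_x,I_x)=(I_x^{\prime},I_x)\in\mathfrak{H}(\mathbb{K}(\textbf{D}))_{\sqcup}$. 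Either way $h(x)$ is a semiconcept by Theorem \ref{cs}(1).

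To show $h$ is a homomorphism I would match each operation against its definition on $\mathfrak{H}$. For $\sqcap$, Lemma \ref{complement of Fx}(2) gives $h(x)\sqcap h(y)=(F_x\cap F_y,(F_x\cap F_y)^{\prime})=(F_{x\sqcap y},(F_{x\sqcap y})^{\prime})$; since $x\sqcap y\in D_{\sqcap}$ (a direct consequence of axioms (1a)--(3a)), the computation in the previous paragraph yields $(F_{x\sqcap y})^{\prime}=I_{x\sqcap y}$, so this equals $h(x\sqcap y)$. The case of $\sqcup$ is dual, using $x\sqcup y\in D_{\sqcup}$ and Lemma \ref{complement of Fx}(2). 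For the negations, Lemma \ref{complement of Fx}(1) gives $\mathcal{F}_{p}(\textbf{D})\setminus F_x=F_{\neg x}$, and since $\neg x\in D_{\sqcap}$ (Proposition \ref{pro2}(1)) we get $\neg h(x)=(F_{\neg x},(F_{\neg x})^{\prime})=(F_{\neg x},I_{\neg x})=h(\neg x)$; the case of $\lrcorner$ is dual via $\lrcorner x\in D_{\sqcup}$. Finally, a proper primary filter must contain $\top$ but cannot contain $\bot$, and dually for ideals, so $F_{\top}=\mathcal{F}_{p}(\textbf{D})$, $I_{\top}=\emptyset$, $F_{\bot}=\emptyset$, $I_{\bot}=\mathcal{I}_{p}(\textbf{D})$, giving $h(\top)=\top$ and $h(\bot)=\bot$.

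For injectivity, suppose $h(x)=h(y)$, i.e.\ $F_x=F_y$ and $I_x=I_y$; by the auxiliary facts this reads $F_{x_{\sqcap}}=F_{y_{\sqcap}}$ and $I_{x_{\sqcup}}=I_{y_{\sqcup}}$. I would show primary filters separate distinct elements of $D_{\sqcap}$: if $x_{\sqcap}\neq y_{\sqcap}$ then, $\sqsubseteq_{\sqcap}$ being the Boolean order of $\textbf{D}_{\sqcap}$ (Proposition \ref{pro1}(1)), one of them (say $x_{\sqcap}$) fails to lie below the other, so the Boolean prime filter theorem produces a prime filter $F_0$ of $\textbf{D}_{\sqcap}$ with $x_{\sqcap}\in F_0$ and $y_{\sqcap}\notin F_0$; by Lemma \ref{lema1}(2) this lifts to a primary filter $F$ with $F\cap D_{\sqcap}=F_0$, so $F\in F_x\setminus F_y$, a contradiction. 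Hence $x_{\sqcap}=y_{\sqcap}$, and dually (prime ideals of $\textbf{D}_{\sqcup}$) $x_{\sqcup}=y_{\sqcup}$. These two equalities give $x\sqsubseteq y$ and $y\sqsubseteq x$ by Proposition \ref{pro1.5}(4), and since $\sqsubseteq$ is a partial order on the pdBa $\textbf{D}$ (Proposition \ref{order pure}) we conclude $x=y$. I expect the injectivity step to be the main obstacle: the rest is bookkeeping against the cited lemmas, whereas separating distinct elements genuinely requires the Boolean prime filter theorem inside $\textbf{D}_{\sqcap}$ and $\textbf{D}_{\sqcup}$ and the lifting of Lemma \ref{lema1}, and it is precisely here that purity is indispensable, through the antisymmetry of $\sqsubseteq$ needed to pass from $x_{\sqcap}=y_{\sqcap},\ x_{\sqcup}=y_{\sqcup}$ to $x=y$.
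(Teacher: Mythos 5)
Your proof is correct. The paper itself imports Theorem \ref{semiconceptembedding} from \cite{BALBIANI2012260} without reproving it, so there is no in-paper argument to compare against; your reconstruction is sound and uses exactly the machinery the paper recalls for this purpose (the reduction $F_x=F_{x_\sqcap}$, $I_x=I_{x_\sqcup}$, Lemmas \ref{derivation}, \ref{complement of Fx} and \ref{lema1}, and antisymmetry of $\sqsubseteq$ on a pdBa via Proposition \ref{order pure}), with the separation-by-prime-filters step correctly identified as the place where the Boolean prime filter theorem and the lifting of Lemma \ref{lema1} do the real work.
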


\subsection{Rough set theory and approximations}	
\label{Appropefca}

 In rough set theory, 
 a Pawlakian approximation space $K:=(W, E)$ can be understood to represent a  basic classification skill  of an ``intelligent" agent \cite{pawlak2012rough}. $K$ models ``knowledge", which is this ability to classify objects by agents.
 In the induced quotient set $W/E$ of equivalence classes due to the equivalence relation $E$, each equivalence class is called a {\it basic category}  of the knowledge given by $K$.  A {\it category}, also called a {\it definable set}, is a union of basic categories.  
 It is not always the case that a subset $A$ of $W$ is a category. Then the task in rough set theory is to approximate the set $A$ with respect to $K$. This is done by using the lower and upper approximation operators defined in the Introduction.
In the knowledge $K$, the lower approximation $\underline{A}_E$ of the set $A$ is interpreted as the set of all elements of $W$ that can be classified as elements of $A$ with certainty, while 
 $A$'s upper approximation  $\overline{A}^E$ is the set of elements of $W$ that can be possibly classified as elements of $A$. We will omit the subscript and superscript, and denote $\underline{A}_{E}$ by $\underline{A}$, $\overline{A}^{E}$ by $\overline{A}$, if the relation is evident from the context. Recall that  $(W,E)$ is termed a generalised approximation space, when $E$ is any binary relation on $W$.

\begin{proposition}
	\label{pra}
	{\rm \cite{yao1996generalization,pawlak2012rough}
 Let $(W,E)$ be a generalised approximation space, and $A,B \subseteq W$.
		\noindent \blr  \item[{\bf I.}] 	
		The following  hold.
		\item $\overline{A}=(\underline{(A^{c})})^{c}, \underline{A}=(\overline{(A^{c})})^{c}$.
		\item $\underline{W}= W$.
		\item $\underline{A\cap B}=\underline{A}\cap\underline{B}, \overline{A\cup B}=\overline{A}\cup\overline{B}$.
		\item $A\subseteq B$ implies that $\underline{A} \subseteq \underline{B}, \overline{A} \subseteq \overline{B}$.
		
		\item[{\bf II.}] If $E$ is a reflexive and transitive relation, the following hold.
		\item $\underline{A}\subseteq A$ and $A\subseteq \overline{A}$.
		\item $\underline{(\underline{A})}=\underline{A}$ and $\overline{(\overline{A})}=\overline{A}$.
			
		\item[{\bf III.}]
		If  $(W,E)$ is a Pawlakian approximation space,  the following hold.
		\item $\underline{ A} = A$ implies A is a category.
		\item  $\overline{A} = A$ implies A is a category.
		\item  $A\cap B$ is a category, if A,B are categories.
	\elr}
\end{proposition}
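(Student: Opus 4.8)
The plan is to verify each item directly from the definitions $\underline{A}=\{x\in W: E(x)\subseteq A\}$ and $\overline{A}=\{x\in W: E(x)\cap A\neq\emptyset\}$, organising the work so that the duality in item~1 carries as much of the load as possible. For Part~I no hypothesis on $E$ is needed. First I would establish $\overline{A}=(\underline{(A^{c})})^{c}$ through the chain $x\notin\underline{(A^{c})}\Longleftrightarrow E(x)\not\subseteq A^{c}\Longleftrightarrow E(x)\cap A\neq\emptyset\Longleftrightarrow x\in\overline{A}$; the companion identity $\underline{A}=(\overline{(A^{c})})^{c}$ then follows by substituting $A^{c}$ for $A$ and using $(A^{c})^{c}=A$. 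Item~2 is immediate since $E(x)\subseteq W$ for every $x$. For item~3, the equivalence $E(x)\subseteq A\cap B\Longleftrightarrow E(x)\subseteq A$ and $E(x)\subseteq B$ gives $\underline{A\cap B}=\underline{A}\cap\underline{B}$, and the $\overline{A\cup B}=\overline{A}\cup\overline{B}$ identity drops out by applying item~1 together with De Morgan. Item~4 (monotonicity) is read off directly from the definitions, its upper version again obtained via duality.

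For Part~II I would separate the uses of reflexivity and transitivity. Reflexivity supplies $x\in E(x)$, which yields both inclusions of item~5 at once: if $E(x)\subseteq A$ then $x\in A$, and if $x\in A$ then $x\in E(x)\cap A$. In item~6 the inclusion $\underline{(\underline{A})}\subseteq\underline{A}$ is just item~5 applied to $\underline{A}$, while the reverse inclusion is the one genuine use of transitivity. Assuming $x\in\underline{A}$, i.e. $E(x)\subseteq A$, I must show $E(x)\subseteq\underline{A}$: for any $y\in E(x)$ and $z\in E(y)$, transitivity forces $xEz$, so $z\in E(x)\subseteq A$, whence $E(y)\subseteq A$ and $y\in\underline{A}$. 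The dual equation $\overline{(\overline{A})}=\overline{A}$ then follows by applying item~1 to $\underline{(\underline{A^{c}})}=\underline{A^{c}}$.

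For Part~III I would exploit that when $E$ is an equivalence relation each $E(x)$ is the equivalence class of $x$, and a category is by definition a union of such classes. For item~7, if $\underline{A}=A$ then every $x\in A$ satisfies $E(x)\subseteq A$, so $A=\bigcup_{x\in A}E(x)$ (the reverse inclusion using reflexivity), exhibiting $A$ as a union of basic categories. Item~8 reduces to item~7 via duality, together with the observation that the complement of a category is a category, being the union of the remaining blocks of the partition. For item~9, writing $A$ and $B$ as unions of equivalence classes and using that distinct classes are disjoint, $A\cap B$ equals the union of exactly those classes contained in both, hence is again a category.

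The overwhelming majority of the argument is routine set manipulation; the single place demanding care is the transitivity-driven inclusion $\underline{A}\subseteq\underline{(\underline{A})}$ in item~6, and, to a lesser degree, the bookkeeping in Part~III needed to see that a definable set behaves like a union of partition blocks, so that complementation and intersection keep one within the definable sets.
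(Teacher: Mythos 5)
Your argument is correct throughout: the duality in item~1 does carry most of the load, the one genuine use of transitivity in item~6 is handled properly, and the Part~III reasoning about unions of equivalence classes is sound. Note that the paper gives no proof of this proposition at all --- it is quoted in the Preliminaries from the cited sources of Yao--Lin and Pawlak --- so there is nothing to compare against; your direct verification from the definitions is the standard one. (A cosmetic slip only: in item~7 it is the inclusion $A\subseteq\bigcup_{x\in A}E(x)$ that uses reflexivity, while the reverse inclusion comes from the hypothesis $\underline{A}=A$; both ingredients appear in your write-up, just with the labels swapped.)
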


Motivated by rough set theory, several authors have introduced approximation operators in FCA.  Let  $\mathbb{K}:=(G, M, I)$ be a context.
 Kent \cite{RCA} considered an approximation space $(G, E)$ and defined lower and upper approximation contexts of the relation $I$. Using these contexts, he defined lower and upper approximations of concepts of the context $\mathbb{K}$. 
 
\vskip 2pt 
\noindent On the other hand, Hu et.al.  \cite{hul} defined relations $J_{1}, J_{2}$   on $G$  and $M$ respectively, as follows.
 \bla
\item For $g_{1},g_{2}\in G$, $g_{1}J_{1} g_{2}$ if and only if $I(g_{1})\subseteq I(g_{2})$.
\item For $m_{1}, m_{2}\in M$, $m_{1}J_{2}m_{2}$ if and only if  $I^{-1}(m_{1})\subseteq I^{-1}(m_{2})$.
\ela

\noindent The   relations $J_{1}, J_{2}$ are partial orders. 
 It is shown that $UI:=\{(I(g), I(g)^{\prime}) ~:~g\in G\}$ is the set of join irreducible elements of $\mathcal{B}(\mathbb{K})$ and $MI:=\{(I^{-1}(m)^{\prime}, I^{-1}(m))~:~ m\in M\}$ is the set of meet irreducible elements of $\mathcal{B}(\mathbb{K})$. For $A\subseteq G$ ($B\subseteq M$), the lower and upper approximations  of $A$ ($B$) are defined in terms of members of $UI$ ($MI$). 
\vskip 2pt 
\noindent In \cite{saquer2001concept},   relations $E_{1}, E_{2}$ on $G$  and  $M$ respectively were defined by Saquer et al.   as follows.
\bla
\item For $g_{1},g_{2}\in G$, $g_{1}E_{1} g_{2}$ if and only if $I(g_{1})=I(g_{2})$.
\item For $m_{1}, m_{2}\in M$, $m_{1}E_{2}m_{2}$ if and only if  $I^{-1}(m_{1})= I^{-1}(m_{2})$.
\ela

 \noindent $E_{1}, E_{2}$  are equivalence relations. $A\subseteq G$ and $B\subseteq M$ are called  {\it feasible}, if $A^{\prime\prime}=A$ and $B^{\prime\prime}=B$.
The {\it concept approximations} of $A,B$  are then defined:


\begin{itemize}
	\item[-]  If  $A$ is feasible, the concept approximation of $A$ is $(A, A^{\prime})$.
	\item[-] If $A$ is not feasible, it is treated as a rough set of the approximation space $(G, E_{1})$, and its concept approximations are constructed using its lower approximation $\underline{A}_{E_{1}}$ and upper approximation $\overline{A}^{E_{1}}$, respectively. The pair $((\underline{A}_{E_{1}})^{\prime\prime}, (\underline{A}_{E_{1}})^{\prime})$ is the  lower concept approximation of $A$, whereas  $((\overline{A}^{E_{1}})^{\prime\prime}, (\overline{A}^{E_{1}})^{\prime})$ is  the upper concept approximation of $A$.

	
\end{itemize}
\begin{itemize}
	\item[-] if $B$ is feasible, the concept approximation of $B$ is $(B^{\prime}, B)$.
	\item[-] if $B$ is not feasible, 
	the lower and upper concept approximations of $B$ are defined as    $((\overline{B}^{E_{2}})^{\prime}, (\overline{B}^{E_{2}})^{\prime\prime})$ and $((\underline{B}_{E_{2}})^{\prime}, (\underline{B}_{E_{2}})^{\prime\prime})$ respectively.
\end{itemize}
If a pair $(A, B)$ is not a concept of the context $\mathbb{K}$, it is called a {\it non-definable} \cite{saquer2001concept} concept. If the extent of a concept approximates $A$ and the intent approximates $B$, it is said to approximate such a pair $(A, B)$.
 The following are the four possible scenarios for $(A,B)$: 
(i) both $A$ and $B$ are feasible, 
(ii) $A$ is feasible and $B$ is not, 
(iii)  $B$ is feasible and $A$ is not, and 
(iv) both $A$ and $B$ are not feasible.
If both $A$ and $B$ are feasible and $A^{\prime}=B$, the pair $(A, B)$ is a concept  itself, and no approximations are required. In  the other cases,  the lower (upper) approximation of $(A, B)$ is obtained in terms of the meet (join) of the lower (upper) concept approximations of its individual components.
For example, 
in case (iii) where   $B$ is feasible and $A$ is not, the 
lower  approximation
$\underline{(A, B)}:=((\underline{A}_{E_{1}})^{\prime\prime}\cap B^{\prime}, ((\underline{A}_{E_{1}})^{\prime\prime}\cap B^{\prime})^{\prime} )$, while 
the upper  approximation 
$\overline{(A, B)}: =(((\overline{A}^{E_{1}})^{\prime}\cap B)^{\prime}, (\overline{A}^{E_{1}})^{\prime}\cap B )$.

\subsection{Pure double Boolean algebras with operators and Kripke contexts}
\label{dbawo}
Let us recall the definitions of pdBa with operators and topological pdBa \cite{howlader2021dbalogic} and its representation results \cite{howlader2021dbalogic}, which are used in Sections \ref{logicmpdbl} and \ref{modalsemantic}.

\begin{definition}
	\label{DBA with operators}
	{\rm \cite{howlader2021dbalogic} A  structure $\mathfrak{O}:=(D,\sqcup,\sqcap,\neg,\lrcorner,\top,\bot, \textbf{I},\textbf{C})$ is a {\it pdBa with operators} (pdBao) provided\\
		1.  $(D,\sqcup,\sqcap,\neg,\lrcorner,\top,\bot)$ is a pdBa and\\ 
		2. $\textbf{I},\textbf{C}$ are monotonic operators on $D$ satisfying the following for any $x,y\in D$.
		\vskip 2pt	
		$\begin{array}{ll}
		1a~ \textbf{I}(x \sqcap y) = \textbf{I}(x) \sqcap \textbf{I}(y) &
		1b~ \textbf{C}(x \sqcup y) = \textbf{C}(x)\sqcup \textbf{C}(y)\\ 
		2a~ \textbf{I}(\neg\bot)= \neg\bot  &
	    2b~ \textbf{C}(\lrcorner\top)=\lrcorner\top \\
		3a~ \textbf{I}(x\sqcap x)=\textbf{I}(x) &
		3b~ \textbf{C}(x\sqcup x)=\textbf{C}(x)
			
		\end{array}$
		\vskip 2pt
		
		\noindent Moreover, a pdBao  
		is called a {\it topological pdBa} (tpdBa) if the following hold.
		\vskip 2pt
		$\begin{array}{ll}
		4a~  \textbf{I}(x) \sqsubseteq x &  
		4b~ x\sqsubseteq \textbf{C}(x ) \\
		5a~ \textbf{I}\textbf{I}(x)= \textbf{I}(x)&
		5b~ \textbf{C}\textbf{C}(x)=\textbf{C}(x) 	
		\end{array}$
	
	\noindent The duals of $\textbf{I}$ and $\textbf{C}$ with respect to $\neg,\lrcorner$ are defined as
		$\textbf{I}^{\delta}(a):=\neg \textbf{I}(\neg a)$ and $\textbf{C}^{\delta}(a):=\lrcorner \textbf{C}(\lrcorner a)$ for all $a\in D$.	}
\end{definition}

 \noindent Some essential features of the operators $\textbf{I}^{\delta},\textbf{C}^{\delta}$ for a tpdBa are:
\begin{lemma}
	\label{tdBadual}
	{\rm \cite{howlader2021dbalogic} Let $\mathfrak{D}$ be a tpdBa. Then for all $a\in D$, 
		\begin{enumerate}
			\item $\textbf{I}^{\delta}\textbf{I}^{\delta}(a)=\textbf{I}^{\delta}(a)$ and $\textbf{C}^{\delta}\textbf{C}^{\delta}(a)=\textbf{C}^{\delta}(a)$.
			\item  $a\sqcap a\sqsubseteq \textbf{I}^{\delta}(a)$ and $\textbf{C}^{\delta}(a)\sqsubseteq a\sqcup a$.
		\end{enumerate} 
	}
\end{lemma}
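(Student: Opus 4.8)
The plan is to prove both statements by directly unfolding the definitions $\textbf{I}^{\delta}(a)=\neg\textbf{I}(\neg a)$ and $\textbf{C}^{\delta}(a)=\lrcorner\textbf{C}(\lrcorner a)$ and then simplifying using the structural identities of Proposition \ref{pro2} together with the tpdBa axioms of Definition \ref{DBA with operators}. The two crucial algebraic facts I will lean on throughout are $\neg\neg x=x\sqcap x$ and $\lrcorner\lrcorner x=x\sqcup x$ (Proposition \ref{pro2}(3)), and the order-reversal property that $x\sqsubseteq y$ if and only if $\neg y\sqsubseteq\neg x$ and $\lrcorner y\sqsubseteq\lrcorner x$ (Proposition \ref{pro2}(2)). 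Each of the four claims reduces to a short computation, so no single step is genuinely hard; the only point requiring care is that $\neg\neg$ is \emph{not} the identity but the squaring map $x\mapsto x\sqcap x$, which is precisely what lets the idempotence axioms fire.

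For the idempotence claim (1), I would expand $\textbf{I}^{\delta}\textbf{I}^{\delta}(a)=\neg\textbf{I}\bigl(\neg\neg\textbf{I}(\neg a)\bigr)$ and replace $\neg\neg\textbf{I}(\neg a)$ by $\textbf{I}(\neg a)\sqcap\textbf{I}(\neg a)$ via Proposition \ref{pro2}(3). Axiom (3a) of Definition \ref{DBA with operators}, namely $\textbf{I}(x\sqcap x)=\textbf{I}(x)$, then collapses this to $\neg\textbf{I}\textbf{I}(\neg a)$, and axiom (5a) gives $\textbf{I}\textbf{I}(\neg a)=\textbf{I}(\neg a)$, so the whole expression returns to $\neg\textbf{I}(\neg a)=\textbf{I}^{\delta}(a)$. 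The statement for $\textbf{C}^{\delta}$ is entirely dual: I would use $\lrcorner\lrcorner\textbf{C}(\lrcorner a)=\textbf{C}(\lrcorner a)\sqcup\textbf{C}(\lrcorner a)$, then axiom (3b) to absorb the $\sqcup$-square and axiom (5b) for the outer idempotence.

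For the ordering claim (2), I would start from the contractive and expansive axioms (4a) and (4b). From $\textbf{I}(\neg a)\sqsubseteq\neg a$, applying the $\neg$ direction of Proposition \ref{pro2}(2) yields $\neg\neg a\sqsubseteq\neg\textbf{I}(\neg a)$, and since $\neg\neg a=a\sqcap a$ this is exactly $a\sqcap a\sqsubseteq\textbf{I}^{\delta}(a)$. Dually, from $\lrcorner a\sqsubseteq\textbf{C}(\lrcorner a)$ the $\lrcorner$ direction of Proposition \ref{pro2}(2) gives $\lrcorner\textbf{C}(\lrcorner a)\sqsubseteq\lrcorner\lrcorner a=a\sqcup a$, i.e. $\textbf{C}^{\delta}(a)\sqsubseteq a\sqcup a$. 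The only thing to keep straight here is the order-reversing behaviour of the two negations and the fact that double negation squares rather than cancels; once those are handled correctly the inequalities are immediate.
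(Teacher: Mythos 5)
Your argument is correct: both parts follow exactly as you describe, by unfolding $\textbf{I}^{\delta}(a)=\neg\textbf{I}(\neg a)$ and $\textbf{C}^{\delta}(a)=\lrcorner\textbf{C}(\lrcorner a)$, using $\neg\neg x=x\sqcap x$ and $\lrcorner\lrcorner x=x\sqcup x$ together with axioms (3a)/(3b), (5a)/(5b) for part (1), and (4a)/(4b) with the order-reversal of Proposition 4(2) for part (2). The paper itself states this lemma as a quoted result from \cite{howlader2021dbalogic} without reproducing a proof, so there is nothing to compare against; your direct computation is the natural one and is complete, and you correctly flag the one subtle point, namely that double negation is squaring rather than the identity.
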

Recall Definition \ref{K-cntx} of a Kripke context from Section \ref{intro}.
In \cite{howlader2021dbalogic}, we show that for a Kripke context $\mathbb{KC}:=((G,R),(M,S),I)$, 
we can define two unary operators $f_R$ and $f_S$ on $ \mathfrak{H}(\mathbb{K})$  as follows. 

For any $(A,B)\in \mathfrak{H}(\mathbb{K})$,
\bit
\item 
$f_{R}((A,B)):=(\underline{A}_{R},(\underline{A}_{R})^{\prime})$,
\item 
$f_{S}((A,B)):=((\underline{B}_{S})^{\prime},\underline{B}_{S})$.
\eit

\noindent $f_{R}, f_{S}$ are well-defined, as $(\underline{A}_{R}, (\underline{A}_{R})^{\prime})$ and $((\underline{B}_{S})^{\prime},\underline{B}_{S})$ are both   semiconcepts  of $\mathbb{K}$. This implies that the set $\mathfrak{H}(\mathbb{K})$ of semiconcepts  is closed under the operators $f_{R}, f_{S}$. We have

\begin{definition}
	\label{full-complexalg}
	{\rm \cite{howlader2021dbalogic} Let $\mathbb{KC}:=((G,R),(M,S),I)$ be a Kripke context. The {\it  complex algebra} of $\mathbb{KC}$, $\underline{\mathfrak{H}}^{+}(\mathbb{KC}):=(\mathfrak{H}(\mathbb{K}),\sqcup,\sqcap,\neg,$ $\lrcorner,\top,\bot,f_{R},f_{S})$, is the expansion  of the algebra $\underline{\mathfrak{H}}(\mathbb{K})$ of semiconcepts with the operators $f_{R}$ and $f_{S}$. 
	}
\end{definition}

Let $f_{R}^{\delta}, f_{S}^{\delta}$ denote the operators on $ \mathfrak{P}(\mathbb{K})$ that are {\it dual} to $f_{R}, f_{S}$ respectively. In other words, for each $x:=(A,B)\in \mathfrak{H}(\mathbb{K})$, $f_{R}^{\delta}(x):=\neg f_{R}(\neg x)=\neg f_{R}((A^{c}, A^{c\prime}))=\neg (\underline{A}^{c}_{R},(\underline{A}^{c}_{R})^{\prime})=((\underline{A}^{c}_{R})^{c},(\underline{A}^{c}_{R})^{c\prime})=(\overline{A}^{R},(\overline{A}^{R})^{\prime})$, by Proposition \ref{pra}(i).\\
Similarly  $f_{S}^{\delta}(x):=\lrcorner f_{S}(\lrcorner x)=((\overline{B}^{S})^{\prime}, \overline{B}^{S})$. 
Again, note that $f_{R}^{\delta}(x) = (\overline{A}^{R}, (\overline{A}^{R})^{\prime})$ and $f_{S}^{\delta}(x) = ((\overline{B}^{S})^{\prime},\overline{B}^{S})$ are   semiconcepts of $\mathbb{K}$.



\begin{theorem}
	\label{complex algebra}
	{\rm \cite{howlader2021dbalogic} Let $\mathbb{KC}:=((G,R),(M,S),I)$ be a Kripke context based on the context $\mathbb{K}:=(G, M, I)$, $\underline{\mathfrak{H}}^{+}(\mathbb{KC}):=(\mathfrak{H}(\mathbb{K}),\sqcup,\sqcap,\neg,\lrcorner,\top,\bot,f_{R},f_{S})$ is a pdBao.
		
	}
\end{theorem}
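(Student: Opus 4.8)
The plan is to lean on the earlier representation results and reduce every required condition to a set-theoretic property of the lower approximation operator. Theorem~\ref{protconcept algebra} already tells us that the reduct $(\mathfrak{H}(\mathbb{K}),\sqcup,\sqcap,\neg,\lrcorner,\top,\bot)$ is a pdBa, so by Definition~\ref{DBA with operators} it remains only to verify that $f_R$ and $f_S$ are monotonic and satisfy the six operator axioms (1a)--(3b). The organising observation is that $f_R((A,B))=(\underline{A}_R,(\underline{A}_R)^{\prime})$ depends solely on the extent $A$, and dually $f_S((A,B))=((\underline{B}_S)^{\prime},\underline{B}_S)$ depends solely on the intent $B$. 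Consequently each axiom for $f_R$ (respectively $f_S$) collapses to a single identity about $\underline{(\cdot)}_R$ on subsets of $G$ (respectively $\underline{(\cdot)}_S$ on subsets of $M$) supplied by Proposition~\ref{pra}(I).

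For monotonicity I would first recall from Proposition~\ref{pro1}(3) that, for semiconcepts $x=(A_1,B_1)$ and $y=(A_2,B_2)$, one has $x\sqsubseteq y$ iff $x_{\sqcap}\sqsubseteq_{\sqcap}y_{\sqcap}$ and $x_{\sqcup}\sqsubseteq y_{\sqcup}$. Unwinding $x_{\sqcap}=(A_1,A_1^{\prime})$ together with the fact that the order on $\mathfrak{H}(\mathbb{K})_{\sqcap}$ is extent-inclusion, the first conjunct forces $A_1\subseteq A_2$. Monotonicity of the lower approximation then gives $\underline{A_1}_R\subseteq\underline{A_2}_R$, and since $f_R(x)$ and $f_R(y)$ both lie in $\mathfrak{H}(\mathbb{K})_{\sqcap}$ this inclusion is precisely $f_R(x)\sqsubseteq f_R(y)$. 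The argument for $f_S$ is dual: the order on $\mathfrak{H}(\mathbb{K})_{\sqcup}$ reverses intent-inclusion, so $x\sqsubseteq y$ yields $B_2\subseteq B_1$, whence $\underline{B_2}_S\subseteq\underline{B_1}_S$ and therefore $f_S(x)\sqsubseteq f_S(y)$.

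The six equations I would dispatch by direct computation. For (1a), writing $x\sqcap y=(A_1\cap A_2,(A_1\cap A_2)^{\prime})$ and applying $\underline{A_1\cap A_2}_R=\underline{A_1}_R\cap\underline{A_2}_R$ from Proposition~\ref{pra}(I) gives $f_R(x\sqcap y)=(\underline{A_1}_R\cap\underline{A_2}_R,(\underline{A_1}_R\cap\underline{A_2}_R)^{\prime})=f_R(x)\sqcap f_R(y)$; axiom (1b) is its $\sqcup$/$S$-dual, using that $x\sqcup y$ has intent $B_1\cap B_2$ and that $\underline{(\cdot)}_S$ distributes over intersection. For (2a), I note that $\neg\bot=(G,G^{\prime})$ and invoke $\underline{G}_R=G$ (the identity $\underline{W}=W$ of Proposition~\ref{pra}(I)) to obtain $f_R(\neg\bot)=(G,G^{\prime})=\neg\bot$; axiom (2b) follows dually from $\lrcorner\top=(M^{\prime},M)$ and $\underline{M}_S=M$. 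Finally (3a) and (3b) are immediate from the single-coordinate remark: since $x\sqcap x=(A,A^{\prime})$ has the same extent as $x$, we get $f_R(x\sqcap x)=f_R(x)$, and dually $f_S(x\sqcup x)=f_S(x)$.

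The computations are routine; the only step demanding genuine care is monotonicity, where one must pass correctly through the coordinatewise description of $\sqsubseteq$ in Proposition~\ref{pro1} and, in the $f_S$ case, track the reversal of intent-inclusion in the $\sqcup$-component. Everything else reduces verbatim to the standard behaviour of lower approximations recorded in Proposition~\ref{pra}(I); in particular no reflexivity, transitivity, or symmetry of $R$ or $S$ is invoked, which is exactly as it should be, since a pdBao places no such constraints on the underlying relations.
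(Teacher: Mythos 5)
Your verification is correct. Note that this paper does not actually prove the statement — it is recalled with a citation to the authors' earlier work on Kripke contexts — but the argument you give (pdBa reduct from Theorem~\ref{protconcept algebra}, monotonicity via the coordinatewise description of $\sqsubseteq$, and axioms (1a)--(3b) reduced to the identities $\underline{A\cap B}=\underline{A}\cap\underline{B}$, $\underline{W}=W$, and the fact that $f_R$, $f_S$ depend only on the extent, respectively the intent) is the standard direct verification and matches what the cited source does.
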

Different kinds of Kripke contexts have been defined, depending on the properties of the relations giving the Kripke contexts.
\begin{definition}
	{\rm \cite{howlader2021dbalogic} \noindent Let  $\mathbb{KC}:=((G,R),(M,S),I)$ be a Kripke context. 
		\begin{enumerate} 
			\item $\mathbb{KC}$ is {\it reflexive  from the left}, if $R$ is reflexive. 
			\item $\mathbb{KC}$ is {\it reflexive from the right}, if $S$ is reflexive. 
			\item $\mathbb{KC}$ is {\it reflexive}, if it is reflexive from both left and right.
		\end{enumerate}
		The cases for {\it symmetry} and {\it transitivity} of $\mathbb{KC}$ are similarly defined.}
\end{definition}


\begin{theorem}
	\label{pure complex algebra]}
	{\rm \cite{howlader2021dbalogic} Let $\mathbb{KC}:=((G,R),(M,S),I)$ be a reflexive and transitive Kripke context. Then 
		$\underline{\mathfrak{H}}^{+}(\mathbb{KC})$  is a tpdBa.
	}
\end{theorem}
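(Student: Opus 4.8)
The plan is to build directly on Theorem~\ref{complex algebra}, which already guarantees that $\underline{\mathfrak{H}}^{+}(\mathbb{KC})$ is a pdBao for \emph{any} Kripke context $\mathbb{KC}$. Consequently it suffices to verify only the four additional axioms $4a$, $4b$, $5a$, $5b$ of Definition~\ref{DBA with operators} that promote a pdBao to a tpdBa, where the operators $\textbf{I}$ and $\textbf{C}$ are instantiated as $f_{R}$ and $f_{S}$ respectively. Recall that for $x:=(A,B)\in\mathfrak{H}(\mathbb{K})$ one has $f_{R}(x)=(\underline{A}_{R},(\underline{A}_{R})^{\prime})$ and $f_{S}(x)=((\underline{B}_{S})^{\prime},\underline{B}_{S})$, so every verification reduces to set-theoretic properties of the lower approximation operators together with the derivation $()^{\prime}$.

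The key preliminary step is a concrete description of the quasi-order $\sqsubseteq$ on semiconcepts. Unwinding the definition $x\sqsubseteq y \Leftrightarrow x\sqcap y=x\sqcap x \text{ and } x\sqcup y=y\sqcup y$ with the explicit formulas for $\sqcap$ and $\sqcup$ recalled in the Introduction, I would establish that for $x=(A_{1},B_{1})$ and $y=(A_{2},B_{2})$,
\[
x\sqsubseteq y \iff A_{1}\subseteq A_{2}\ \text{ and }\ B_{2}\subseteq B_{1},
\]
since $x\sqcap y=x\sqcap x$ forces $A_{1}\cap A_{2}=A_{1}$ while $x\sqcup y=y\sqcup y$ forces $B_{1}\cap B_{2}=B_{2}$. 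This two-coordinate criterion is what organizes the remaining checks.

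For axiom $4a$, namely $f_{R}(x)\sqsubseteq x$, the extent inclusion $\underline{A}_{R}\subseteq A$ is immediate from reflexivity of $R$ (Proposition~\ref{pra}, part II). The intent inclusion $B\subseteq(\underline{A}_{R})^{\prime}$ is the delicate point, and I would argue it as follows: in both defining cases of a semiconcept one has $B\subseteq A^{\prime}$ (if $A^{\prime}=B$ this is trivial, and if $B^{\prime}=A$ then $B\subseteq B^{\prime\prime}=A^{\prime}$), whence antitonicity of $()^{\prime}$ applied to $\underline{A}_{R}\subseteq A$ yields $B\subseteq A^{\prime}\subseteq(\underline{A}_{R})^{\prime}$. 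Axiom $4b$, namely $x\sqsubseteq f_{S}(x)$, is handled dually: the intent inclusion $\underline{B}_{S}\subseteq B$ is reflexivity of $S$, and the extent inclusion $A\subseteq(\underline{B}_{S})^{\prime}$ follows from $A\subseteq B^{\prime}$ (again by the semiconcept disjunction) combined with $B^{\prime}\subseteq(\underline{B}_{S})^{\prime}$. I expect these off-coordinate inclusions to be the main obstacle, as they genuinely invoke the defining property of a semiconcept rather than mere monotonicity of the approximation.

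Finally, axioms $5a$ and $5b$, the idempotence identities $f_{R}f_{R}(x)=f_{R}(x)$ and $f_{S}f_{S}(x)=f_{S}(x)$, become routine once both reflexivity and transitivity are assumed: then $\underline{(\underline{A}_{R})}_{R}=\underline{A}_{R}$ and $\underline{(\underline{B}_{S})}_{S}=\underline{B}_{S}$ by Proposition~\ref{pra}, part II, and since each of $f_{R}(x)$ and $f_{S}(x)$ is a semiconcept determined entirely by its lower-approximated coordinate, re-applying the operator reproduces it verbatim. Combining the pdBao structure from Theorem~\ref{complex algebra} with the verified conditions $4a$--$5b$ then yields that $\underline{\mathfrak{H}}^{+}(\mathbb{KC})$ is a tpdBa.
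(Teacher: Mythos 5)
Your proposal is correct: the two-coordinate description of $\sqsubseteq$ on semiconcepts, the use of $B\subseteq A^{\prime}$ and $A\subseteq B^{\prime}$ (valid in both defining cases of a semiconcept) together with antitonicity of $()^{\prime}$ for axioms $4a$, $4b$, and Proposition~\ref{pra}(II) for $4a$, $4b$, $5a$, $5b$ all check out. The paper itself only recalls this theorem from \cite{howlader2021dbalogic} without reproducing a proof, but your argument is the natural direct verification of the tpdBa axioms on top of Theorem~\ref{complex algebra} and matches what that reference does.
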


\noindent The following are introduced in order to prove representation theorems for  pdBaos in \cite{howlader2021dbalogic}. 
For every pdBao $\mathfrak{O}:=(D,\sqcup,\sqcap,\neg,\lrcorner,\top,\bot, \textbf{I}, \textbf{C})$, we construct a Kripke context based on the standard context  $\mathbb{K}(\textbf{D}):=(\mathcal{F}_{p}(\textbf{D}),\mathcal{I}_{p}(\textbf{D}), \Delta)$ corresponding to the underlying pdBa \textbf{D}. For that,  relations $R$ and $S$ are defined  on $\mathcal{F}_{p}(\textbf{D})$ and  $\mathcal{I}_{p}(\textbf{D})$ respectively
as follows.
\begin{itemize}
	\item[-] For all $u,u_{1}\in \mathcal{F}_{p}(\textbf{D})$, $uRu_{1}$ if and only if $\textbf{I}^{\delta}(a)\in u$ for all $a\in u_{1}$.
	\item[-] For all $v,v_{1}\in \mathcal{I}_{p}(\textbf{D})$, $vSv_{1}$ if and only if $\textbf{C}^{\delta}(a)\in v$ for all $a\in v_{1}$.
\end{itemize}
The following results are required to get (Representation) Theorem \ref{rtdBao}.

\begin{lemma}
	\label{canonical relations}
	{\rm \cite{howlader2021dbalogic} Let $\mathfrak{O}:=(D,\sqcup,\sqcap,\neg,\lrcorner,\top,\bot, \textbf{I}, \textbf{C})$ be a pdBao. The following hold.
		\begin{enumerate}
			\item  For all $u,u_{1}\in \mathcal{F}_{p}(\textbf{D})$, $uRu_{1}$ if and only if for all $a\in D$, $\textbf{I}a\in u$ implies that $a\in u_{1}$.
			\item  For all $v,v_{1}\in \mathcal{I}_{p}(\textbf{D})$, $vSv_{1}$ if and only if for all $a\in D$, $\textbf{C}a\in v$ implies that $a\in v_{1}$.
	\end{enumerate}}
\end{lemma}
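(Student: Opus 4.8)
The plan is to unfold the definitions of $R$ and of the dual operator $\textbf{I}^{\delta}$, and to reduce everything to one clean fact about primary filters. First I would record the key observation: for any primary filter $u$ of $\textbf{D}$ (recall $\mathcal{F}_{p}(\textbf{D})=\mathcal{F}_{pr}(\textbf{D})$ by Proposition \ref{compar-ideal}, so every element of $\mathcal{F}_{p}(\textbf{D})$ is proper and primary) and any $x\in D$, one has $x\in u$ if and only if $\neg x\notin u$. Indeed, primariness gives $x\in u$ or $\neg x\in u$, while both cannot hold simultaneously: closure of $u$ under $\sqcap$ together with axiom $(9a)$, $x\sqcap\neg x=\bot$, would force $\bot\in u$, and then Proposition \ref{pro1.5}(1), $\bot\sqsubseteq z$, with upward closure of filters would make $u=D$, contradicting properness. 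Dually, for a primary ideal $v$ and any $x$, $x\in v$ if and only if $\lrcorner x\notin v$, using $(9b)$ and $x\sqsubseteq\top$.

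For part (1), recall that $uRu_{1}$ means $\textbf{I}^{\delta}(a)=\neg\textbf{I}(\neg a)\in u$ for all $a\in u_{1}$. For the direction $(\Leftarrow)$, assume $\textbf{I}a\in u$ implies $a\in u_{1}$ for all $a$, and fix $a\in u_{1}$. If $\textbf{I}(\neg a)\in u$ held, the hypothesis would give $\neg a\in u_{1}$, whence $a\sqcap\neg a=\bot\in u_{1}$ and $u_{1}=D$, a contradiction; hence $\textbf{I}(\neg a)\notin u$, and the observation yields $\neg\textbf{I}(\neg a)\in u$, i.e.\ $uRu_{1}$. For the direction $(\Rightarrow)$, assume $uRu_{1}$ and suppose $\textbf{I}c\in u$; if $c\notin u_{1}$ then $\neg c\in u_{1}$ by primariness, so $uRu_{1}$ applied to $a:=\neg c$ gives $\neg\textbf{I}(\neg\neg c)\in u$. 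Here I would use Proposition \ref{pro2}(3), $\neg\neg c=c\sqcap c$, and the operator axiom $3a$, $\textbf{I}(c\sqcap c)=\textbf{I}(c)$, to rewrite this as $\neg\textbf{I}c\in u$; together with $\textbf{I}c\in u$ this forces $\bot\in u$, a contradiction, so $c\in u_{1}$.

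Part (2) is entirely dual: replace filters by ideals, $\neg$ by $\lrcorner$, $\textbf{I}$ by $\textbf{C}$, $\textbf{I}^{\delta}$ by $\textbf{C}^{\delta}$, axiom $(9a)$ by $(9b)$, axiom $3a$ by $3b$, and use $\lrcorner\lrcorner c=c\sqcup c$ from Proposition \ref{pro2}(3). The routine manipulations, namely unfolding the definitions and the two contradiction arguments, are straightforward; the step I expect to be the main obstacle is the $(\Rightarrow)$ direction, where a general element $c\in D$ must be handled. The subtlety is that $\textbf{I}^{\delta}$ is defined via a double negation, so $\neg\neg c$ appears and is not literally $c$; collapsing it correctly requires combining $\neg\neg c=c\sqcap c$ with the idempotency-type axiom $\textbf{I}(c\sqcap c)=\textbf{I}(c)$. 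Getting this collapse right, rather than only proving the statement for $c\in D_{\sqcap}$, is the crux of the argument.
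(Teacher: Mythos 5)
Your proof is correct. Note that this paper only imports Lemma \ref{canonical relations} from \cite{howlader2021dbalogic} without reproducing its proof, so there is nothing here to compare against; your argument is the standard one, and you have correctly identified and handled the one delicate point, namely collapsing $\neg\textbf{I}(\neg\neg c)$ to $\neg\textbf{I}(c)$ via $\neg\neg c=c\sqcap c$ (Proposition \ref{pro2}(3)) and $\textbf{I}(c\sqcap c)=\textbf{I}(c)$ (axiom $3a$ of Definition \ref{DBA with operators}), with all auxiliary facts (primariness of members of $\mathcal{F}_{p}(\textbf{D})$ via Proposition \ref{compar-ideal}, $x\sqcap\neg x=\bot$, $\bot\sqsubseteq z$) cited accurately.
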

\begin{lemma}
	\label{canonical box and dimon} 
	{\rm \cite{howlader2021dbalogic} Let $\mathfrak{O}$ be a pdBao and $\mathbb{KC}(\mathfrak{O}):=((\mathcal{F}_{p}(\textbf{D}),R), (\mathcal{I}_{p}(\textbf{D}),S),\Delta)$. For all $a\in D$: 
		\begin{enumerate}
			\item  $\overline{F_{a}}^{R}=F_{\textbf{I}^{\delta}(a)}$ and $\underline{F_{a}}_{R}=F_{\textbf{I}(a)}$.
			\item  $\overline{I_{a}}^{S}=I_{\textbf{C}^{\delta}(a)}$ and $\underline{I_{a}}_{S}=I_{\textbf{C}(a)}$.
	\end{enumerate}}
\end{lemma}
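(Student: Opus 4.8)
The plan is to reduce the whole lemma to the single identity $\underline{F_{a}}_{R} = F_{\textbf{I}(a)}$ and obtain everything else formally. Part (2) is the exact $\sqcup/\lrcorner$/ideal-dual of part (1): replacing $D_{\sqcap}$ by the Boolean algebra $\textbf{D}_{\sqcup}$ (Proposition \ref{pro1}(2)), filters by ideals, $\textbf{I}$ by $\textbf{C}$, and invoking Lemma \ref{canonical relations}(2) and the ideal form of Lemma \ref{lema1} in place of their filter versions, the argument below yields $\underline{I_{a}}_{S} = I_{\textbf{C}(a)}$ and then $\overline{I_{a}}^{S} = I_{\textbf{C}^{\delta}(a)}$. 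Within part (1), once $\underline{F_{a}}_{R} = F_{\textbf{I}(a)}$ is known for all $a\in D$, the upper-approximation identity is cheap: by Proposition \ref{pra}(\textbf{I}) and Lemma \ref{complement of Fx}(1), $\overline{F_{a}}^{R} = (\underline{(F_{a})^{c}}_{R})^{c} = (\underline{F_{\neg a}}_{R})^{c} = (F_{\textbf{I}(\neg a)})^{c} = F_{\neg\textbf{I}(\neg a)} = F_{\textbf{I}^{\delta}(a)}$, using the first identity at $\neg a$ and the definition $\textbf{I}^{\delta}(a):=\neg\textbf{I}(\neg a)$.

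It remains to prove $\underline{F_{a}}_{R} = F_{\textbf{I}(a)}$. Unfolding the lower approximation, $u\in\underline{F_{a}}_{R}$ says that every $u_{1}\in\mathcal{F}_{p}(\textbf{D})$ with $uRu_{1}$ contains $a$. The inclusion $F_{\textbf{I}(a)}\subseteq\underline{F_{a}}_{R}$ is immediate from Lemma \ref{canonical relations}(1): if $\textbf{I}(a)\in u$ and $uRu_{1}$, then instantiating that criterion at $a$ forces $a\in u_{1}$, i.e. $u_{1}\in F_{a}$. The reverse inclusion is the heart of the matter, and I expect it to be the main obstacle, since it is an existence (``diamond'') lemma demanding the construction of a witnessing primary filter. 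I would argue contrapositively: assuming $\textbf{I}(a)\notin u$, I build $u_{1}\in\mathcal{F}_{p}(\textbf{D})$ with $uRu_{1}$ and $a\notin u_{1}$. Put $F_{0}:=\{b\in D:\textbf{I}(b)\in u\}$. Normality (axiom (1a) of Definition \ref{DBA with operators}) shows $F_{0}$ is closed under $\sqcap$, and monotonicity of $\textbf{I}$ together with upward closure of $u$ shows $F_{0}$ is upward closed, so $F_{0}$ is a filter; moreover $\textbf{I}(a)\notin u$ gives $a\notin F_{0}$, and since $a_{\sqcap}\sqsubseteq a$ (Proposition \ref{pro1.5}(5)) with upward closure, also $a_{\sqcap}\notin F_{0}$. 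Passing to $\textbf{D}_{\sqcap}$, the set $F_{0}\cap D_{\sqcap}$ is a filter of $\textbf{D}_{\sqcap}$ (Lemma \ref{lema1}(1)) avoiding $a_{\sqcap}$ (note $a_{\sqcap}\in D_{\sqcap}$, and $\textbf{I}(a)\in D_{\sqcap}$ by axioms (1a),(3a)). By the prime filter theorem for Boolean algebras there is a prime filter $P$ of $\textbf{D}_{\sqcap}$ with $F_{0}\cap D_{\sqcap}\subseteq P$ and $a_{\sqcap}\notin P$; lifting $P$ through Lemma \ref{lema1}(2) produces a primary filter $u_{1}\in\mathcal{F}_{p}(\textbf{D})$ with $u_{1}\cap D_{\sqcap}=P$.

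Finally I would verify the two properties of $u_{1}$. For $uRu_{1}$ I check the criterion of Lemma \ref{canonical relations}(1): if $\textbf{I}(b)\in u$ then $b\in F_{0}$, hence $b_{\sqcap}\in F_{0}\cap D_{\sqcap}\subseteq u_{1}$, and $b_{\sqcap}\sqsubseteq b$ with upward closure gives $b\in u_{1}$. For $a\notin u_{1}$ I use that, in any filter, membership of an element is equivalent to membership of its $\sqcap$-idempotent part (both directions via $a_{\sqcap}\sqsubseteq a$ and closure under $\sqcap$); since $a_{\sqcap}\notin P=u_{1}\cap D_{\sqcap}$ and $a_{\sqcap}\in D_{\sqcap}$, we get $a_{\sqcap}\notin u_{1}$, so $a\notin u_{1}$. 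This exhibits $u_{1}\in R(u)\setminus F_{a}$, whence $u\notin\underline{F_{a}}_{R}$, which is exactly the contrapositive needed. The only genuinely delicate points are confirming that $F_{0}$ is a filter and that the lift of $P$ absorbs all of $F_{0}$ (not merely $F_{0}\cap D_{\sqcap}$); both reduce membership questions to the $D_{\sqcap}$-part through $\sqcap$-idempotents, which is where I would concentrate the care.
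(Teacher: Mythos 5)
The paper does not actually prove this lemma here --- it is quoted from \cite{howlader2021dbalogic} --- so there is no in-text proof to compare against; but your argument is correct and is the standard Jónsson--Tarski-style one this setting calls for: the easy inclusion $F_{\textbf{I}(a)}\subseteq\underline{F_{a}}_{R}$ from Lemma \ref{canonical relations}(1), and an existence lemma for the converse, building the witness $u_{1}$ by extending the prime filter of $\textbf{D}_{\sqcap}$ that separates $F_{0}\cap D_{\sqcap}$ from $a_{\sqcap}$ and lifting via Lemma \ref{lema1}(2). The reductions (upper approximation from lower via $\overline{A}=(\underline{A^{c}})^{c}$ and $(F_{x})^{c}=F_{\neg x}$, and part (2) by ideal duality) are all sound, and the delicate points you flag --- that $F_{0}$ is a filter, and that membership in a filter passes through the $\sqcap$-idempotent part --- are exactly the right ones and are handled correctly.
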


The Kripke context $\mathbb{KC}(\mathfrak{O})$ of Lemma \ref{canonical box and dimon} is used to obtain the representation theorem.
\begin{theorem}[Representation theorem]
	\label{rtdBao}
	{\rm \cite{howlader2021dbalogic} A pdBao $\mathfrak{O}:=(D,\sqcup,\sqcap,\neg,\lrcorner,\top,\bot, \textbf{I}, \textbf{C})$ is embeddable into the  complex algebra $\underline{\mathfrak{H}}^{+}(\mathbb{KC}(\mathfrak{O})):=(\mathfrak{H}(\mathbb{K}(\mathfrak{O})),\sqcup,\sqcap,\neg,\lrcorner,\top,\bot,f_{R},f_{S})$ of the Kripke context $\mathbb{KC}(\mathfrak{O})$. The map  $h:D\rightarrow \mathfrak{H}(\mathbb{K}(\textbf{D}))$ defined by $h(x):=(F_{x},I_{x})$ for all $x \in D$,  is the required embedding.
	}
\end{theorem}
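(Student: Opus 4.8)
The plan is to leverage the representation theorem for the underlying pdBa rather than redo it. The codomain $\underline{\mathfrak{H}}^{+}(\mathbb{KC}(\mathfrak{O}))$ is a pdBao by Theorem \ref{complex algebra}, so an embedding here means an injective pdBao-homomorphism. By Theorem \ref{semiconceptembedding}, the very same map $h(x)=(F_x,I_x)$ is already an injective homomorphism of the reduct pdBa $(D,\sqcup,\sqcap,\neg,\lrcorner,\top,\bot)$ into $\underline{\mathfrak{H}}(\mathbb{K}(\textbf{D}))$. Hence injectivity and preservation of all the non-modal operations come for free, and the entire task reduces to verifying that $h$ intertwines the two extra operators, i.e.
$$h(\textbf{I}(x)) = f_R(h(x)) \quad \text{and} \quad h(\textbf{C}(x)) = f_S(h(x)) \quad \text{for all } x \in D.$$

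First I would record two idempotency facts forced by the pdBao axioms. Combining axiom $1a$ with $3a$ gives $\textbf{I}(x)\sqcap\textbf{I}(x)=\textbf{I}(x\sqcap x)=\textbf{I}(x)$, so $\textbf{I}(x)\in D_{\sqcap}$; dually, axioms $1b$ and $3b$ give $\textbf{C}(x)\in D_{\sqcup}$. Next I would establish the set-level collapses $F_y=F_{y_{\sqcap}}$ and $I_y=I_{y_{\sqcup}}$ for every $y\in D$: since $y_{\sqcap}\sqsubseteq y\sqsubseteq y_{\sqcup}$ (instances of Proposition \ref{pro1.5}(5)), and filters (ideals) are closed under $\sqcap$ ($\sqcup$) and upward (downward) under $\sqsubseteq$, membership of $y$ and of $y_{\sqcap}$ in any primary filter coincide, and likewise for $y$, $y_{\sqcup}$ and any primary ideal.

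The core computation is then a direct unwinding. Writing out $f_R(h(x))=(\underline{F_x}_{R},(\underline{F_x}_{R})^{\prime})$ and applying Lemma \ref{canonical box and dimon}(1), the first coordinate becomes $F_{\textbf{I}(x)}$, which already matches the first coordinate of $h(\textbf{I}(x))$. For the second coordinate, Lemma \ref{derivation} gives $(F_{\textbf{I}(x)})^{\prime}=I_{(\textbf{I}(x))_{\sqcap\sqcup}}$; since $\textbf{I}(x)\in D_{\sqcap}$ we have $(\textbf{I}(x))_{\sqcap\sqcup}=(\textbf{I}(x))_{\sqcup}$, and the collapse $I_y=I_{y_{\sqcup}}$ reduces this to $I_{\textbf{I}(x)}$. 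Thus $f_R(h(x))=(F_{\textbf{I}(x)},I_{\textbf{I}(x)})=h(\textbf{I}(x))$. The identity $h(\textbf{C}(x))=f_S(h(x))$ is proved symmetrically, using Lemma \ref{canonical box and dimon}(2), the relation $I_y^{\prime}=F_{y_{\sqcup\sqcap}}$ from Lemma \ref{derivation}, the fact $\textbf{C}(x)\in D_{\sqcup}$, and the collapse $F_y=F_{y_{\sqcap}}$.

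I expect the main obstacle to be precisely the matching of the second coordinate: the operator $f_R$ manufactures a left semiconcept $(\underline{A}_R,(\underline{A}_R)^{\prime})$ whose intent is computed by the context derivation, whereas $h(\textbf{I}(x))$ carries the intent $I_{\textbf{I}(x)}$ defined directly from primary ideals. Reconciling these two descriptions is exactly what the $\sqcap$-idempotency of $\textbf{I}(x)$ together with the filter/ideal collapses achieves, and keeping track of the $(\cdot)_{\sqcap\sqcup}$ shifts produced by Lemma \ref{derivation} is the delicate bookkeeping; everything else is routine once the reduct is handled by Theorem \ref{semiconceptembedding}.
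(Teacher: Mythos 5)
Your proof is correct and follows essentially the route the paper intends: this theorem is recalled from \cite{howlader2021dbalogic} without a reproduced proof, but the paper explicitly flags Lemmas \ref{derivation} and \ref{canonical box and dimon} (together with Theorem \ref{semiconceptembedding} for the pdBa reduct) as the ingredients, and your argument assembles exactly these, with the $\sqcap$-idempotency of $\textbf{I}(x)$ and the filter/ideal collapses $F_y=F_{y_{\sqcap}}$, $I_y=I_{y_{\sqcup}}$ correctly handling the $(\cdot)_{\sqcap\sqcup}$ shift in the second coordinate. No gaps.
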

For a tpdBa $\mathfrak{O}$, we now have
\begin{theorem}
	\label{rttdBa}
	{\rm \cite{howlader2021dbalogic} 
		$\mathbb{KC}(\mathfrak{O}):=((\mathcal{F}_{p}(\textbf{D}), R), (\mathcal{I}_{p}(\textbf{D}), S), \Delta)$  is a reflexive  and transitive Kripke context.}
\end{theorem}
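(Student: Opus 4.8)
The plan is to verify directly that both relations $R$ and $S$ of $\mathbb{KC}(\mathfrak{O})$ are reflexive and transitive; since a reflexive (transitive) Kripke context is by definition one that is reflexive (transitive) from both the left and the right, the claim then follows. Rather than arguing from the defining conditions phrased via the duals $\textbf{I}^{\delta},\textbf{C}^{\delta}$, I would work with the equivalent characterizations supplied by Lemma \ref{canonical relations}: for $u,u_{1}\in\mathcal{F}_{p}(\textbf{D})$, $uRu_{1}$ iff $\textbf{I}a\in u$ implies $a\in u_{1}$ for all $a\in D$, and dually $vSv_{1}$ iff $\textbf{C}a\in v$ implies $a\in v_{1}$ for all $a\in D$. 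These forms interact cleanly with the tpdBa axioms and with the closure properties of primary filters and ideals.

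For reflexivity of $R$, I would fix $u\in\mathcal{F}_{p}(\textbf{D})$ and $a\in D$ with $\textbf{I}a\in u$. Since $\mathfrak{O}$ is a tpdBa, condition (4a) gives $\textbf{I}(a)\sqsubseteq a$, and because $u$ is a filter, hence upward closed under $\sqsubseteq$, we get $a\in u$; thus $uRu$. Reflexivity of $S$ is the order dual: from $\textbf{C}a\in v$, condition (4b) gives $a\sqsubseteq\textbf{C}(a)$, and as $v$ is an ideal, hence downward closed under $\sqsubseteq$, we obtain $a\in v$, so $vSv$.

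For transitivity of $R$, suppose $uRu_{1}$ and $u_{1}Ru_{2}$, and let $\textbf{I}a\in u$. The key move is to invoke idempotency: condition (5a) gives $\textbf{I}(\textbf{I}a)=\textbf{I}a\in u$, so $uRu_{1}$ instantiated at $\textbf{I}a$ yields $\textbf{I}a\in u_{1}$; then $u_{1}Ru_{2}$ yields $a\in u_{2}$. Hence $uRu_{2}$. Transitivity of $S$ runs symmetrically, using $\textbf{C}(\textbf{C}a)=\textbf{C}a$ from condition (5b).

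I do not expect a serious obstacle: the result is essentially a translation of the tpdBa axioms (4a)/(4b) and (5a)/(5b) into frame conditions, and the only points requiring care are choosing the Lemma \ref{canonical relations} form of the relations so that filter and ideal closure can be applied in the correct direction, and instantiating the relational condition at $\textbf{I}a$ (resp.\ $\textbf{C}a$) rather than at $a$ when handling transitivity. As an alternative route, one could argue straight from the defining conditions using Lemma \ref{tdBadual}: reflexivity from $a\sqcap a\sqsubseteq\textbf{I}^{\delta}(a)$ together with $a\sqcap a\in u$, and transitivity from $\textbf{I}^{\delta}\textbf{I}^{\delta}(a)=\textbf{I}^{\delta}(a)$, but the Lemma \ref{canonical relations} route is the more transparent one.
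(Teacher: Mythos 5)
Your proof is correct. The paper itself states Theorem \ref{rttdBa} without proof, citing \cite{howlader2021dbalogic}, so there is no in-paper argument to compare against; but your route --- using the Lemma \ref{canonical relations} characterizations of $R$ and $S$, deriving reflexivity from axioms (4a)/(4b) together with upward closure of filters (downward closure of ideals), and transitivity from the idempotency axioms (5a)/(5b) by instantiating the relational condition at $\textbf{I}a$ (resp.\ $\textbf{C}a$) --- is exactly the standard argument one would expect, and every step checks out against the definitions in the paper.
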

We get the representation results for tpdBas in terms of reflexive and transitive Kripke contexts by combining Theorems \ref{rtdBao} and \ref{rttdBa}. 

\begin{theorem}
	\label{rtdBa}
	{\rm \cite{howlader2021dbalogic} A  tpdBa $\mathfrak{O}$ is embeddable into the  complex algebra $\underline{\mathfrak{H}}^{+}(\mathbb{KC}(\mathfrak{O}))$ of the reflexive and transitive Kripke context $\mathbb{KC}(\mathfrak{O})$. 
	}
\end{theorem}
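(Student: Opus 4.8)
The plan is to obtain this statement as a direct combination of the representation theorem for pdBaos (Theorem \ref{rtdBao}) and the structural fact recorded in Theorem \ref{rttdBa}, rather than by constructing a fresh embedding. The guiding observation is that a tpdBa and a pdBao share exactly the same signature $(D,\sqcup,\sqcap,\neg,\lrcorner,\top,\bot,\textbf{I},\textbf{C})$; a tpdBa is merely a pdBao that additionally satisfies the identities 4a, 4b, 5a, 5b. Consequently, any map that is an embedding of the underlying pdBaos is automatically an embedding of tpdBas, provided both its source and its target are known to be tpdBas.

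First I would regard the given tpdBa $\mathfrak{O}$ simply as a pdBao and invoke Theorem \ref{rtdBao}. This supplies the map $h\colon D\rightarrow \mathfrak{H}(\mathbb{K}(\textbf{D}))$, $h(x):=(F_{x},I_{x})$, and guarantees that $h$ is an injective homomorphism of pdBaos into $\underline{\mathfrak{H}}^{+}(\mathbb{KC}(\mathfrak{O}))$. In particular $h$ already preserves the two Boolean reducts together with $\neg$ and $\lrcorner$ (by Theorem \ref{semiconceptembedding}) and, crucially, sends $\textbf{I}$ to $f_{R}$ and $\textbf{C}$ to $f_{S}$; the latter is precisely the content encoded in Lemma \ref{canonical box and dimon}, where the identities $\underline{F_{x}}_{R}=F_{\textbf{I}(x)}$ and $\underline{I_{x}}_{S}=I_{\textbf{C}(x)}$ force $f_{R}(h(x))=h(\textbf{I}(x))$ and $f_{S}(h(x))=h(\textbf{C}(x))$.

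Next I would check that the codomain is not merely a pdBao but a tpdBa. By Theorem \ref{rttdBa}, the canonical Kripke context $\mathbb{KC}(\mathfrak{O})=((\mathcal{F}_{p}(\textbf{D}),R),(\mathcal{I}_{p}(\textbf{D}),S),\Delta)$ is reflexive and transitive whenever $\mathfrak{O}$ is a tpdBa. Theorem \ref{pure complex algebra]} then applies and yields that $\underline{\mathfrak{H}}^{+}(\mathbb{KC}(\mathfrak{O}))$ is a tpdBa. Since $\mathfrak{O}$ is a tpdBa by hypothesis and $h$ preserves every operation of the common signature, $h$ is the desired embedding of $\mathfrak{O}$ into the tpdBa $\underline{\mathfrak{H}}^{+}(\mathbb{KC}(\mathfrak{O}))$, which completes the argument.

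The routine parts are pure bookkeeping; the one conceptual point worth stressing is that the tpdBa axioms 4a, 4b, 5a, 5b never have to be re-verified for $h$ itself. They are identities, not operations, and an embedding is not required to ``carry'' identities: it suffices that both ends of $h$ satisfy them, which is exactly what the hypothesis secures for the domain and what Theorems \ref{rttdBa} and \ref{pure complex algebra]} secure for the codomain. Thus the main (and essentially only) obstacle is logically upstream, namely Theorem \ref{rttdBa} itself: one must show that reflexivity and transitivity of $R$ and $S$ genuinely follow from axioms 4a, 4b, 5a, 5b through the dual operators $\textbf{I}^{\delta}$ and $\textbf{C}^{\delta}$ (using Lemma \ref{tdBadual} and Lemma \ref{canonical relations}). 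Given that result, the present theorem is immediate.
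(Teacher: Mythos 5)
Your proposal is correct and follows exactly the route the paper indicates: it obtains Theorem \ref{rtdBa} by combining the pdBao representation theorem (Theorem \ref{rtdBao}) with the reflexivity/transitivity of the canonical Kripke context (Theorem \ref{rttdBa}), with Theorem \ref{pure complex algebra]} confirming the codomain is a tpdBa. Your remark that the tpdBa identities need only hold at both ends of $h$ and need not be re-verified for the map itself is a sound and correctly placed observation.
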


\section{The logic PDBL and its modal  extensions}
\label{logic}
We now formulate the logic \textbf{PDBL}  for pdBas, followed by the modal extensions
 \textbf{MPDBL}  for the class of pdBaos and  
  \textbf{MPDBL4} for tpdBas. 
\subsection{PDBL}
\label{ppdBl}
Recall that a pdBa is obtained from a dBa $\textbf{D}$, by adding the property: for all $a\in D$, $a=a\sqcap a~\mbox{or}~a=a\sqcup a$   in $\textbf{D}$ (Definition \ref{DBA}). 
By Proposition \ref{pro1.5}(5), for all $a\in D$, $a\sqcap a\sqsubseteq a$ and $a\sqsubseteq a\sqcup a$ are anyway valid in the class of pdBas. So   $((a=a\sqcap a)~\mbox{or}~(a=a\sqcup a))$ is valid in the class of pdBas if and only if $((a\sqsubseteq a\sqcap a) ~\mbox{or}~ (a\sqcup a)\sqsubseteq a))$ is valid in the class. 
Thus \textbf{PDBL} will need to be a system in which  the last disjunction is provable. To facilitate this,
one makes use of hypersequent calculus. 

Hypersequents were  introduced by Pottinger \cite{pottinger1983uniform} and independently studied by Avron \cite{avron1987constructive}. 
For a language \textbf{L}, a hypersequent \cite{avron1996method,PaoliF} is a  finite sequence of sequents of the form:  
\[\Gamma_{1}\vdash\Delta_{1}\mid\Gamma_{2}\vdash\Delta_{2}\mid\ldots\mid\Gamma_{n}\vdash\Delta_{n},\] where $\Gamma_{i}$ and $\Delta_{i}$ are finite sequences of formulae in \textbf{L}. $\Gamma_{i}\vdash\Delta_{i}$ are called the {\it components} of the hypersequent.  
A hypersequent is called {\it empty}  when $n=0$.  It is called  {\it single-conclusioned} or an {\it s-hypersequent}, if $\Delta_{i}$ consists of a single formula for all $i$. 
For \textbf{PDBL}, a special case of s-hypersequents is considered -- 
$\Gamma_{i}$ also consists of a single formula for all $i$. The intended interpretation of an s-hypersequent, that is, the definition of its satisfaction   by a valuation (given formally in Definition \ref{satis-hyper-sequent} below), is  in terms of the satisfaction  of {\it any of} its  components by the valuation. This fact enables us to deal with the disjunction of statements giving the pdBa axiom. 

The language $\mathfrak{L}$ of   \textbf{PDBL}  consists of a countably infinite set $\textbf{OV}:=\{ p,q,r,\ldots\}$ of object variables, a countably infinite set $\textbf{PV}:=\{ P,Q,R,\ldots\}$ of property variables, propositional constants  $\bot,\top$, and logical connectives $\sqcup,\sqcap,\neg,\lrcorner$. The set $\mathfrak{F}$ of formulae is given by the following  scheme:  
\[\top~|~\bot~|~p~|~P~|~\alpha\sqcup\beta~|~ \alpha\sqcap\beta~|~\neg\alpha~|~\lrcorner\beta\] where $p\in\textbf{OV}$ and $P\in \textbf{PV}$.
$\vee$ and $\wedge$ are definable connectives:
$\alpha\vee\beta:=\neg(\neg\alpha\sqcap\neg\beta)$ and $\alpha\wedge\beta:=\lrcorner(\lrcorner\alpha\sqcup\lrcorner\beta)$  for all $\alpha,\beta\in\mathfrak{F} $.
A {\it sequent} in {\rm \textbf{PDBL}} is a pair of formulae denoted by $\alpha\vdash\beta$ for $\alpha,\beta\in \mathfrak{F}$. 
If $\alpha\vdash\beta$ and  $\beta\vdash\alpha$, we use the abbreviation $\alpha\dashv\vdash\beta$.  
\begin{definition}
	{\rm An s-hypersequent in \textbf{PDBL} is a   finite sequence of sequents of the form:\[\alpha_{1}\vdash\beta_{1}\mid\alpha_{2}\vdash\beta_{2}\mid\ldots\mid\alpha_{n}\vdash\beta_{n},\] where for all $i\in\{1,2,\ldots, n\}$, $\alpha_{i},\beta_{i}\in \mathfrak{F}$.
	}
\end{definition}
We use $B, C, D,\ldots$ as  meta variables for s-hypersequents. Note that a sequent $\alpha\vdash\beta$ is a special case of an s-hypersequent $B\mid\alpha\vdash\beta\mid D$,  where  $B$ and $D$ are empty s-hypersequents.
\vskip 3pt

\noindent The {\it axioms}  of $\textbf{PDBL}$ are given by the following schema. $\alpha,\beta,\gamma\in \mathfrak{F}$, $p\in \textbf{OV}$, $P\in\textbf{PV}$.

1 $\alpha\vdash\alpha$.
\vskip 2pt

\noindent {\it Axioms for $\sqcap$ and $\sqcup$}: 

$\begin{array}{ll}

    2a~ \alpha\sqcap\beta\vdash \alpha &
	2b~ \alpha\vdash\alpha\sqcup\beta \\
	
	3a~  \alpha\sqcap\beta\vdash \beta &
	3b~ \beta\vdash\alpha\sqcup\beta \\
	
	4a~ \alpha\sqcap\beta\vdash (\alpha\sqcap\beta)\sqcap(\alpha\sqcap\beta) &
	4b~ (\alpha\sqcup\beta)\sqcup(\alpha\sqcup\beta)\vdash\alpha\sqcup \beta 
	\end{array}$
\vskip 2pt
	
\noindent {\it Axioms for  $\neg$ and $\lrcorner$}:

$\begin{array}{ll}

   5a~\neg(\alpha\sqcap \alpha)\vdash\neg \alpha &
	5b~\lrcorner \alpha\vdash\lrcorner(\alpha\sqcup \alpha)\\
		6a~\alpha\sqcap\neg \alpha\vdash \bot &
	6b~\top\vdash \alpha\sqcup \lrcorner \alpha\\
	7a~\neg\neg(\alpha\sqcap \beta)\dashv\vdash(\alpha\sqcap \beta)&
	7b~\lrcorner\lrcorner(\alpha\sqcup \beta)\dashv\vdash(\alpha\sqcup \beta)
\end{array}$
\vskip 2pt

\noindent {\it Generalization of the law of absorption}:

$\begin{array}{ll}

8a~\alpha\sqcap \alpha\vdash \alpha\sqcap(\alpha\sqcup \beta)&
	8b~\alpha\sqcup (\alpha\sqcap \beta)\vdash \alpha\sqcup \alpha\\
    9a~\alpha\sqcap \alpha\vdash \alpha\sqcap(\alpha\vee \beta)&
	9b~\alpha\sqcup(\alpha\wedge \beta)\vdash \alpha\sqcup \alpha\\
	\end{array}$
\vskip 2pt
	
\noindent  {\it Laws of distribution}:

	$\begin{array}{ll}
	
	10a~\alpha\sqcap(\beta\vee \gamma)\dashv\vdash(\alpha\sqcap \beta)\vee(\alpha\sqcap \gamma) &
	10b~\alpha\sqcup (\beta\wedge \gamma)\dashv\vdash(\alpha\sqcup \beta)\wedge(\alpha\sqcup \gamma)\\
	
\end{array}$
\vskip 2pt

\noindent  {\it Axioms for} $\bot,\top$:
 
$\begin{array}{ll}
	
	11a~  \bot\vdash \alpha &
	11b~ \alpha\vdash \top\\
	12a~\neg\top\vdash\bot &
	12b~\top\vdash\lrcorner\bot\\
	13a~\neg \bot\dashv\vdash \top\sqcap\top &
	13b~\lrcorner\top\dashv\vdash\bot\sqcup\bot\\

\end{array}$	
\vskip 2pt

\noindent {\it The compatibility 	 axiom}:

14  $(\alpha\sqcup \alpha)\sqcap (\alpha\sqcup \alpha)\dashv\vdash (\alpha\sqcap \alpha)\sqcup (\alpha\sqcap \alpha)$

\noindent  {\it Special axioms for} variables:
$\begin{array}{ll}













15a~ p\sqcap p\dashv\vdash p&
15b~ P\sqcup P\dashv\vdash P
\end{array}$

\vskip 3pt
\noindent 
 Axioms 1-14 are lookalikes   of the axioms defining \textbf{CDBL} \cite{howlader2021dbalogic}, the logic for cdBas -- this is expected because a pdBa is also a cdBa. 
 However, it may be pointed out that \textbf{PDBL} and \textbf{CDBL} are different syntactically, the fact also begin highlighted by the \textbf{PDBL} axioms 15a and 15b.
\vskip 3pt
\noindent {\it Rules of inference of {\bf PDBL}} are as follows. $B,C,D,E, F, G, H, X$ are possibly empty s-hypersequents.
\vskip 3pt
\noindent {\it  For $\sqcap$ and $\sqcup$}:
\begin{center}
	$	\begin{array}{ll}
	\infer[(R1)]{B\mid\alpha\sqcap \gamma\vdash \beta\sqcap \gamma\mid  C}{B\mid  \alpha\vdash \beta\mid  C} &
	\infer[(R1)^{\prime}]{B\mid  \gamma\sqcap\alpha \vdash \gamma\sqcap \beta\mid  C}{B\mid  \alpha\vdash \beta\mid  C}\\
	\infer[(R2)]{B\mid  \alpha\sqcup \gamma\vdash \beta\sqcup \gamma\mid  C}{B\mid  \alpha\vdash \beta\mid  C}&
	\infer[(R2)^{\prime}]{B\mid  \gamma\sqcup\alpha \vdash\gamma \sqcup \beta\mid  C}{B\mid  \alpha\vdash \beta\mid  C}
	\end{array}$
	\end{center}
	
	\noindent {\it  For $\neg, \lrcorner$}:
	\begin{center}
	   $\begin{array}{ll}
	\infer[(R3)]{B\mid  \neg\beta\vdash \neg\alpha\mid  C}{B\mid  \alpha\vdash \beta\mid  C}&
	\infer[(R3)^{\prime}]{B\mid  \lrcorner\beta\vdash \lrcorner\alpha\mid  C}{B\mid  \alpha\vdash \beta\mid  C}\end{array}$
	\end{center}
	
	\noindent	{\it Transitivity}:
	\begin{center}
	$\infer[(R4)]{B\mid  D\mid  \alpha\vdash\gamma\mid  C\mid  E}{B\mid  \alpha\vdash\beta\mid  C & D\mid  \beta\vdash\gamma\mid  E}$
	\end{center}
	\begin{center}
	$\infer[(Sp)]{\alpha\vdash\alpha\sqcap\alpha\mid  \alpha\sqcup\alpha\vdash\alpha}{}$
	\end{center}
	
	\begin{center}
	    {\small $\infer[\hspace*{-4pt}(R5)]{B\mid  D\mid  F\mid  H\mid  \alpha\vdash\beta\mid  C\mid  E\mid  G\mid  X}{B\mid  \alpha\sqcap\beta\vdash\alpha\sqcap\alpha\mid  C & D\mid  \alpha\sqcap\alpha\vdash\alpha\sqcap\beta\mid  E & F\mid  \alpha\sqcup\beta\vdash\beta\sqcup\beta\mid  G & H\mid  \beta\sqcup\beta\vdash\alpha\sqcup\beta\mid  X}$}
\end{center}

\vskip 2pt
\noindent We shall see in the sequel that $(Sp)$ corresponds to the defining axiom for pdBas, while $(R5)$ captures the order relation of the  pdBas.
\vskip 3pt
{\it External rules of inference}:
\begin{center}
	$\begin{array}{ll}
	\infer[\mbox{(External contraction-EC)}]{B\mid D\mid  C}{B\mid  D\mid  D\mid  C}&
	\infer[\mbox{(External exchange-EE)}]{B\mid  E\mid  D\mid  C}{B\mid  D\mid  E\mid  C}\\
	\infer[\mbox{(External weakening-EW)}]{B\mid C}{B}
	\end{array}$
\end{center}
\noindent  Derivability is defined in the standard manner: an s-hypersequent S is {\it derivable} (or
{\it provable}) in \textbf{PDBL}, if there exists a finite sequence of s-hypersequents $S_{1},. . ., S_{m}$
such that $S_{m}$ is the s-hypersequent S and for all $k \in\{1,. . .,m\}$ either $S_{k}$ is an axiom or $S_{k}$ is obtained by applying  rules of  \textbf{PDBL} to elements from $\{S_{1} ,. . . , S_{k-1}\}$. 
Let us give a few examples of derived rules and sequents.

\begin{proposition}
	\label{drive rule1}
	{\rm  The following rules are derivable in  \textbf{PDBL}.
		\begin{multicols}{2}
			\item $\infer[(R6)]{B\mid  D\mid  \alpha\sqcap\alpha\vdash\beta\sqcap\gamma\mid  C\mid  E}{B\mid  \alpha\vdash\beta\mid  C & D\mid  \alpha\vdash \gamma\mid  E}$ 
			\item $\infer[(R7)]{B\mid  D\mid  \beta\sqcup \gamma\vdash\alpha\sqcup\alpha\mid  C\mid  E}{B\mid  \beta\vdash\alpha\mid  C & D\mid  \gamma\vdash \alpha\mid  E}$
		\end{multicols}
	}
\end{proposition}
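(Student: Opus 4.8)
The plan is to derive both rules purely from the two congruence rules for each binary connective together with the transitivity rule $(R4)$; no new axioms are needed, so the argument is just a short composition of rules already available. I treat $(R6)$ first and obtain $(R7)$ by the evident left--right and $\sqcap$--$\sqcup$ duality.

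For $(R6)$ I start from the two premises $B\mid\alpha\vdash\beta\mid C$ and $D\mid\alpha\vdash\gamma\mid E$. Applying $(R1)$ to the first premise, with the fixed ``context'' formula taken to be $\alpha$, yields $B\mid\alpha\sqcap\alpha\vdash\beta\sqcap\alpha\mid C$, which already places $\alpha\sqcap\alpha$ on the left exactly as the conclusion demands. Applying $(R1)^{\prime}$ to the second premise, with the fixed formula taken to be $\beta$, yields $D\mid\beta\sqcap\alpha\vdash\beta\sqcap\gamma\mid E$. The right-hand side of the first resulting sequent, $\beta\sqcap\alpha$, now coincides with the left-hand side of the second, so a single application of transitivity $(R4)$ with cut formula $\beta\sqcap\alpha$ produces $B\mid D\mid\alpha\sqcap\alpha\vdash\beta\sqcap\gamma\mid C\mid E$, which is precisely the conclusion of $(R6)$.

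For $(R7)$ I proceed dually, replacing $\sqcap$ by $\sqcup$ and the rules $(R1),(R1)^{\prime}$ by $(R2),(R2)^{\prime}$. From $B\mid\beta\vdash\alpha\mid C$, rule $(R2)$ with fixed formula $\gamma$ gives $B\mid\beta\sqcup\gamma\vdash\alpha\sqcup\gamma\mid C$; from $D\mid\gamma\vdash\alpha\mid E$, rule $(R2)^{\prime}$ with fixed formula $\alpha$ gives $D\mid\alpha\sqcup\gamma\vdash\alpha\sqcup\alpha\mid E$; and transitivity $(R4)$ with cut formula $\alpha\sqcup\gamma$ then yields $B\mid D\mid\beta\sqcup\gamma\vdash\alpha\sqcup\alpha\mid C\mid E$, as required.

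There is no genuine obstacle here beyond bookkeeping. The only points requiring care are the choice of the fixed context formula in each congruence step, so that the two intermediate sequents share a common middle formula that can serve as the cut formula in $(R4)$, and the correct threading of the side-hypersequents $B,C,D,E$ through the transitivity rule so that the resulting component structure matches the stated conclusion. Everything else is a routine instantiation of the congruence and transitivity rules of \textbf{PDBL}.
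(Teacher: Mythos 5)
Your derivation is correct and follows exactly the route the paper indicates: $(R6)$ via $(R1)$, $(R1)^{\prime}$ and $(R4)$ with cut formula $\beta\sqcap\alpha$, and $(R7)$ dually via $(R2)$, $(R2)^{\prime}$ and $(R4)$. The paper's proof is just a one-line statement of which rules are used, so your write-up is the same argument made explicit.
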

\begin{proof}
	$(R6)$ is derived using $(R1), (R1)^{\prime}$ and $(R4)$, while for $(R7)$ one uses $(R2), (R2)^{\prime}$ and $(R4)$.  
\end{proof}

\begin{theorem} 
	\label{thempdbl}
	{\rm 
		\label{other axiom of DBA}
		For $\alpha,\beta,\gamma\in\mathfrak{F}$, the following are provable in  \textbf{PDBL}. 
		
		$	\begin{array}{ll}

		1a~ (\alpha\sqcap \beta)\dashv\vdash(\beta\sqcap \alpha).&
		1b~ \alpha\sqcup \beta\dashv\vdash \beta\sqcup \alpha.\\
		
		2a~ \alpha\sqcap(\beta\sqcap \gamma)\dashv\vdash(\alpha\sqcap \beta)\sqcap \gamma.&
		2b~ \alpha\sqcup(\beta\sqcup \gamma)\dashv\vdash(\alpha\sqcup \beta)\sqcup \gamma.\\
		
		3a~ (\alpha\sqcap \alpha)\sqcap \beta\dashv\vdash (\alpha\sqcap \beta).&
		3b~ (\alpha\sqcup \alpha)\sqcup \beta\dashv\vdash \alpha\sqcup \beta.\\
		
		4a~ \neg \alpha\vdash\neg (\alpha\sqcap \alpha).&
		4b~ \lrcorner(\alpha\sqcup \alpha)\vdash\lrcorner \alpha.\\
		
		5a~ \alpha\sqcap(\alpha\sqcup \beta)\vdash (\alpha\sqcap \alpha).&
		5b~ \alpha\sqcup \alpha\vdash \alpha\sqcup (\alpha\sqcap \beta).\\
		
		6a~ \alpha\sqcap(\alpha\vee \beta)\vdash \alpha\sqcap \alpha.&
		6b~ \alpha\sqcup \alpha\vdash \alpha\sqcup (\alpha\wedge \beta).\\
		
		7a~ \bot \vdash \alpha\sqcap\neg \alpha.&
		7b~ \alpha\sqcup \lrcorner \alpha\vdash \top .\\
		
		8a~ \bot\vdash \neg \top. &
		8b~ \lrcorner \bot \vdash \top.\\
		
		9a~ \neg\neg(\alpha\sqcap\alpha)\dashv\vdash \alpha\sqcap\alpha. &
		9b~ \lrcorner\lrcorner(\alpha\sqcup\alpha)\dashv\vdash \alpha\sqcup\alpha.\\
		10a~\neg\neg\alpha\dashv\vdash \alpha\sqcap\alpha. &
		10b~ \lrcorner\lrcorner\alpha\dashv\vdash \alpha\sqcup\alpha.\\
		11a~ \beta\vdash\alpha\vee\beta\mid\beta\sqcup\beta\vdash\beta. &
		11b~ \beta\vdash\beta\sqcap\beta\mid\alpha\wedge\beta\vdash\beta.
		\end{array}$
	}
\end{theorem}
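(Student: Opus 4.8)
The plan is to derive all twenty-two sequents mechanically, reading the turnstile $\vdash$ as the intended order $\sqsubseteq$ and $\dashv\vdash$ as equality, so that each item becomes either one direction of a dBa axiom not already supplied as a \textbf{PDBL} axiom, or a direct instance of one. The backbone I would set up first is a pair of derived \emph{meet-} and \emph{join-introduction} schemes: if $\delta\vdash\alpha$ and $\delta\vdash\beta$ are derivable and $\delta$ is a meet-term (so axiom 4a gives $\delta\vdash\delta\sqcap\delta$), then $(R6)$ yields $\delta\sqcap\delta\vdash\alpha\sqcap\beta$ and transitivity $(R4)$ gives $\delta\vdash\alpha\sqcap\beta$; dually for $\sqcup$ via axiom 4b and $(R7)$. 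Almost every item below is an instance of these two schemes combined with the projection/injection axioms 2a,2b,3a,3b.

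With these in hand, the lattice identities 1a--3b follow routinely: $\alpha\sqcap\beta\vdash\beta\sqcap\alpha$ comes from $\alpha\sqcap\beta\vdash\beta$ and $\alpha\sqcap\beta\vdash\alpha$ by meet-introduction, the converse by symmetry (giving 1a), and associativity 2a,2b and the idempotent-absorption identities 3a,3b are obtained the same way. The negation items are immediate: 4a is a single application of $(R3)$ to the instance $\alpha\sqcap\alpha\vdash\alpha$ of axiom 2a (4b dually via $(R3)'$), while 9a,9b are literally instances of axioms 7a,7b with $\beta:=\alpha$. The absorption sequents 5a,6a and their duals use meet-introduction, since $\alpha\sqcap(\alpha\sqcup\beta)$ and $\alpha\sqcap(\alpha\vee\beta)$ are meet-terms satisfying $\vdash\alpha$ by axiom 2a, hence $\vdash\alpha\sqcap\alpha$. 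The sequents 7a,8a (and the duals 7b,8b) are direct instances of the bottom axiom 11a. Finally 10a,10b reduce to 9a,9b once one shows $\neg\neg\alpha\dashv\vdash\neg\neg(\alpha\sqcap\alpha)$, which follows by applying $(R3)$ to both directions of $\neg\alpha\dashv\vdash\neg(\alpha\sqcap\alpha)$ (axiom 5a together with the already-derived 4a).

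The genuinely new step, and the one I expect to be the main obstacle, is the pair of two-component hypersequents 11a,11b, since these are exactly where purity, and hence the special rule $(Sp)$, must enter. For 11a I would start from the $(Sp)$-instance $\beta\vdash\beta\sqcap\beta\mid\beta\sqcup\beta\vdash\beta$ and rewrite only its first component. The required auxiliary sequent is $\beta\sqcap\beta\vdash\alpha\vee\beta$: unfolding $\alpha\vee\beta:=\neg(\neg\alpha\sqcap\neg\beta)$, apply $(R3)$ to the axiom-3a instance $\neg\alpha\sqcap\neg\beta\vdash\neg\beta$ to get $\neg\neg\beta\vdash\alpha\vee\beta$, then prepend $\beta\sqcap\beta\vdash\neg\neg\beta$ (item 10a) by transitivity. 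Cutting this auxiliary sequent against the first component of the $(Sp)$-instance via $(R4)$ transforms $\beta\vdash\beta\sqcap\beta$ into $\beta\vdash\alpha\vee\beta$ and carries the remaining component $\beta\sqcup\beta\vdash\beta$ along untouched, delivering exactly $\beta\vdash\alpha\vee\beta\mid\beta\sqcup\beta\vdash\beta$. Item 11b is the exact dual: one derives $\alpha\wedge\beta\vdash\beta\sqcup\beta$ from axiom 3b, $(R3)'$ and item 10b, and cuts it against the second component of the same $(Sp)$-instance (with the $(Sp)$-instance playing the role of the second premise of $(R4)$), producing $\beta\vdash\beta\sqcap\beta\mid\alpha\wedge\beta\vdash\beta$. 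The only delicate point is threading $(R4)$ through a fixed designated component of a multi-component hypersequent rather than any subtlety in the algebra; everything else is bookkeeping with the projection, injection and idempotency axioms.
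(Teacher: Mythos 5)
Your proposal is correct and follows essentially the same route as the paper: the meet/join-introduction scheme via axiom 4a/4b, $(R6)/(R7)$ and $(R4)$ is exactly the paper's engine for items 1--6, items 7--9 are read off from axioms 11a/11b and 7a/7b as you say, 10a/10b are reduced to 9a/9b through $\neg\alpha\dashv\vdash\neg(\alpha\sqcap\alpha)$, and 11a/11b are obtained precisely as in the paper by cutting an auxiliary sequent against one designated component of the $(Sp)$-instance via $(R4)$. The only (immaterial) divergences are that you obtain $\neg\alpha\sqcap\neg\beta\vdash\neg\beta$ directly as an instance of axiom 3a where the paper detours through $\neg\bot$, and that 7b/8b are of course instances of axiom 11b rather than 11a.
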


\begin{proof}
	The proofs of 1-9 are obtained in a similar way as the proofs given in  Theorem 25  in \cite{howlader2021dbalogic} in case of   \textbf{CDBL}. Nevertheless, we include the proofs in the appendix to make the paper self-contained. 
	\vskip 5pt
	\noindent  $10a$ is obtained using axiom 5a, Theorem \ref{thempdbl}(4a, 9a) and rules (R3), (R4). We get $10b$  by using the dual axiom and rules. The proof of $11a$ is given below. $11b$ is again obtained by using duals of axioms and rules that prove $11a$.
	\vskip 5pt
	\noindent Proof of  $11a$:

	 $\infer{\beta\vdash\alpha\vee\beta\mid\beta\sqcup\beta\vdash\beta~(R4)}{\infer{(Sp)~\beta\vdash\beta\sqcap\beta\mid\beta\sqcup\beta\vdash\beta~~~~\beta\sqcap\beta\vdash\alpha\vee\beta~(\mbox{using definition of} \vee \mbox{in previous step})}{\infer{\beta\sqcap\beta\vdash\neg(\neg\alpha\sqcap\neg\beta)~(R4)}{\infer{(\mbox{10a,~this~theorem)}~\beta\sqcap\beta\vdash\neg\neg\beta~~~~\neg\neg\beta\vdash\neg(\neg\alpha\sqcap\neg\beta)~~(R3~\mbox{on~previous~step})}{\infer{\neg\alpha\sqcap\neg\beta\vdash\neg\beta~(R4)}{\infer{(R1)~\neg\alpha\sqcap\neg\beta\vdash\neg\bot\sqcap\neg\beta~~~~\neg\bot\sqcap\neg\beta\vdash\neg\beta~(\mbox{axiom 3a})}{\infer{\neg\alpha\vdash\neg \bot~(R3)}{\infer{\bot\vdash\alpha~(\mbox{axiom 11a})}{}}}}}}}}$
	\end{proof}

\noindent

\begin{definition}
	\label{valution}
	{\rm 	 Let $\textbf{D}:=(D, \sqcap,\sqcup, \neg, \lrcorner, \top_{D}, \bot_{D})$ be a  pdBa. A {\it valuation} $v:\textbf{OV}\cup\textbf{PV}\cup \{\top, \bot\}\rightarrow D$ on $\textbf{D}$ is a map such that $v(p)\in D_{\sqcap}$, for all $p\in \textbf{OV}$,  $v(P)\in D_{\sqcup}$, for all $P\in \textbf{PV}$, $v(\top):=\top_{D}$ and  $v(\bot):=\bot_{D}$. $v$ is extended to the set $\mathfrak{F}$ of formulae by the following. }

	$\begin{array}{ll}
	
	1.~ v(\alpha\sqcup\beta):=v(\alpha)\sqcup v(\beta).&
	2.~ v(\alpha\sqcap\beta):=v(\alpha)\sqcap v(\beta).\\
	3.~ v(\neg \alpha):=\neg v(\alpha).&
	4.~ v(\lrcorner \alpha):=\lrcorner v(\alpha).
	\end{array}$

\end{definition}

\begin{definition}
	\label{satis-hyper-sequent}
	{\rm  A sequent $\alpha\vdash\beta$ is said to be  {\it satisfied} by a valuation $v$ on   a pdBa $\textbf{D}$ if and only if  $v(\alpha)\sqsubseteq v(\beta)$. An s-hypersequent $B$ is said to be  {\it satisfied} by a valuation $v$ on   a pdBa $\textbf{D}$ if and only if  $v$ satisfies one of the components of the s-hypersequent $B$.
		An s-hypersequent $B$ is  {\it true} in  $\textbf{D}$ if and only if  for  all valuations  $v$ on $\textbf{D}$, $v$ satisfies the s-hypersequent $B$. An s-hypersequent $B$  is {\it valid} in the class of all  pdBas if and only if it is true in  every  pdBa.
} \end{definition}

\begin{theorem}[Soundness]
	\label{sound1}
	{\rm If an s-hypersequent $G$ is provable in  \textbf{PDBL}  then it is  valid in the class of all pdBas.
}	 \end{theorem}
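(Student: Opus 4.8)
The plan is to proceed by induction on the length of a derivation of $G$ in \textbf{PDBL}. It suffices to show that (i) every axiom is valid in the class of all pdBas, and (ii) each rule of inference preserves truth in an arbitrary fixed pdBa $\textbf{D}$, i.e. whenever all its premises are true in $\textbf{D}$ so is its conclusion. Since validity means truth in every pdBa, (i) and (ii) together yield soundness. Throughout I would fix a pdBa $\textbf{D}$ and an arbitrary valuation $v$, and recall from Proposition \ref{order pure} that $\sqsubseteq$ is a partial order on $\textbf{D}$ (in particular reflexive, transitive and antisymmetric); this is exactly what the hypersequent rules exploit.

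For the base case, each single-sequent axiom $\alpha\vdash\beta$ reduces, under the homomorphic extension of $v$ (Definition \ref{valution}), to an instance of a pdBa (in)equality already recorded in the preliminaries. For example, axioms 2a--3b are the meet/join bounds of Proposition \ref{pro1.5}(5); axioms 6a, 9a and their duals are the complementation and absorption laws of Definition \ref{DBA}; axioms 11a, 11b are Proposition \ref{pro1.5}(1),(2); and the special axioms 15a, 15b hold because a valuation sends object variables into $D_{\sqcap}$ and property variables into $D_{\sqcup}$, so $v(p)\sqcap v(p)=v(p)$ and $v(P)\sqcup v(P)=v(P)$. The axioms written with $\dashv\vdash$ are handled by verifying both directions. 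This part is routine bookkeeping once each connective is read off as its corresponding algebraic operation.

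The inductive step splits into the ``local'' rules and the rules that manipulate the disjunctive hypersequent structure. The monotonicity rules $(R1),(R1)',(R2),(R2)'$ follow from Proposition \ref{pro1.5}(6) and the antitonicity rules $(R3),(R3)'$ from Proposition \ref{pro2}(2); in each case the side s-hypersequents $B,C$ play no role, since if $v$ already satisfies a component of $B$ or $C$ the conclusion is immediate, and otherwise $v$ satisfies the distinguished premise component and the algebraic law transports it to the distinguished conclusion component. The external rules (EC), (EE), (EW) are immediate from the ``satisfaction of some component'' reading in Definition \ref{satis-hyper-sequent}: exchange and contraction leave the available components unchanged (up to the collapse of a repeated block), while weakening only enlarges them. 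The rule $(Sp)$ is precisely the purity axiom: for $x=v(\alpha)$, either $x\sqcap x=x$ or $x\sqcup x=x$, so by Proposition \ref{pro1.5}(3) one of the two components $\alpha\vdash\alpha\sqcap\alpha$ or $\alpha\sqcup\alpha\vdash\alpha$ is satisfied by $v$.

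The steps requiring genuine care, which I expect to be the main obstacle, are the transitivity rule $(R4)$ and the order-capturing rule $(R5)$, since there the disjunctive reading interacts with the algebra. For $(R4)$ I would argue by cases on $v$: if $v$ satisfies any component of the side s-hypersequents $B,C,D,E$ the conclusion holds trivially; otherwise $v$ must satisfy both distinguished premise components, giving $v(\alpha)\sqsubseteq v(\beta)$ and $v(\beta)\sqsubseteq v(\gamma)$, whence $v(\alpha)\sqsubseteq v(\gamma)$ by transitivity of $\sqsubseteq$, i.e. the component $\alpha\vdash\gamma$ is satisfied. For $(R5)$ the same case split reduces, after discarding the side blocks, to the situation where $v$ satisfies all four distinguished premises; using antisymmetry of $\sqsubseteq$ (Proposition \ref{order pure}) these read $v(\alpha)\sqcap v(\beta)=v(\alpha)\sqcap v(\alpha)$ and $v(\alpha)\sqcup v(\beta)=v(\beta)\sqcup v(\beta)$, which are exactly the two clauses defining $v(\alpha)\sqsubseteq v(\beta)$ in Definition \ref{DBA}. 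Hence $v$ satisfies the component $\alpha\vdash\beta$, establishing the conclusion and confirming that $(R5)$ internalises the order relation of a pdBa. Assembling the base case with these rule-by-rule verifications completes the induction.
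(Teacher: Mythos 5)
Your proposal is correct and follows essentially the same route as the paper's own proof: verify each axiom algebraically via Proposition \ref{pro1.5} and Definition \ref{DBA} (with 15a, 15b from the sorting of the valuation), handle $(Sp)$ by purity, $(R1)$--$(R3)'$ by (anti)monotonicity, and $(R4)$, $(R5)$ by a case split on whether a side component is already satisfied, using transitivity and antisymmetry of $\sqsubseteq$ (Proposition \ref{order pure}) respectively. You simply spell out details the paper leaves implicit.
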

\begin{proof}
	The proof that  all the axioms of  \textbf{PDBL} are valid in the class of all pdBas is straightforward and can be obtained using Proposition \ref{pro1.5} and Definition \ref{DBA}. As examples, we give proofs for 15a and 15b. Let $p\in\textbf{OV}$, $P\in\textbf{PV}$, $\textbf{D}$ be a pdBa and $v$ be a valuation on $\textbf{D}$. As $v(p)\in D_{\sqcap}$, $v(p)\sqcap v(p)=v(p)$. So $v$ satisfies $p\sqcap p\dashv\vdash p$. Similarly as $v(P)\in D_{\sqcup}$, $v(P)\sqcup v(P)=v(P)$, implying  that $v$  satisfies $P\sqcup P\dashv\vdash P$.

	\noindent	The validity of the inference rules  then needs to be verified. The case for external rules is straightforward.
	Using Proposition  \ref{pro1.5}, one can show that $(R1), (R2),(R1)^{\prime}$ and $(R2)^{\prime}$ preserve validity. The cases for $(R3)$ and $(R3)^{\prime}$  follow from Proposition \ref{pro2}. For $(R5)$, one uses Proposition \ref{order pure} and Definition \ref{DBA}.\\
	$(Sp)$: Let $\textbf{D}$ be a pdBa and $v$  a valuation on $\textbf{D}$. Let $\alpha\in \mathfrak{F}$. Then either $v(\alpha)=v(\alpha)\sqcap v(\alpha)=v(\alpha\sqcap\alpha)$ or $v(\alpha)=v(\alpha)\sqcup v(\alpha)=v(\alpha\sqcup\alpha)$. In the former case $v$ satisfies $\alpha\vdash\alpha\sqcap\alpha$ and in the latter, $v$ satisfies $\alpha\sqcup\alpha\vdash\alpha$.  	
\end{proof}

The completeness theorem is proved using the Lindenbaum-Tarski algebra of \textbf{PDBL}, which is constructed in the usual fashion as follows. 
A relation $\equiv_{\vdash}$ is defined  on $\mathfrak{F}$ by:  $\alpha\equiv_{\vdash}\beta$ if and only if $\alpha\dashv\vdash\beta$, for $\alpha,\beta\in \mathfrak{F}$. $\equiv_{\vdash}$ is    a congruence relation on $\mathfrak{F}$ with respect to  $\sqcup$, $\sqcap$, $\neg$, $\lrcorner$. The quotient set $\mathfrak{F}/\equiv_{\vdash}$  
with operations induced by the logical connectives, give the Lindenbaum-Tarski algebra $\mathcal{L}(\mathfrak{F}):= (\mathfrak{F}/\equiv_{\vdash},\sqcup,\sqcap,\neg,\lrcorner,[\top],[\bot])$. The axioms in {\bf PDBL} and Theorem \ref{other axiom of DBA} ensure that $\mathcal{L}(\mathfrak{F})$  is a dBa such that $[p]_{\sqcap}=[p]\sqcap [p]=[p]$ and $[P]_{\sqcup}=[P]\sqcup [P]=[P]$ for all $p\in \textbf{OV}$ and $P\in \textbf{PV}$, respectively.
One then obtains

\begin{proposition}
	\label{ldbm-dba}
	{\rm For any formulae $\alpha$ and $\beta$, the following are equivalent.
		\begin{enumerate}
			\item  $[\alpha] \sqsubseteq [\beta]$ in $\mathcal{L}(\mathfrak{F})$.
			\item $\alpha\vdash \beta$ is provable in \textbf{PDBL}.
		\end{enumerate}
	}
\end{proposition}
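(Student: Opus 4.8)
The plan is to reduce statement (1) to a purely syntactic condition and then read off each implication from the inference rules. First I would unpack the meaning of $[\alpha]\sqsubseteq[\beta]$ in $\mathcal{L}(\mathfrak{F})$. By Definition \ref{DBA}, in any dBa $x\sqsubseteq y$ means $x\sqcap y=x\sqcap x$ and $x\sqcup y=y\sqcup y$. Since the operations on $\mathcal{L}(\mathfrak{F})$ are induced by the connectives, $[\alpha]\sqcap[\beta]=[\alpha\sqcap\beta]$ and $[\alpha]\sqcup[\beta]=[\alpha\sqcup\beta]$, while equality of classes is exactly interderivability $\dashv\vdash$. Hence (1) is equivalent to the conjunction of $\alpha\sqcap\beta\dashv\vdash\alpha\sqcap\alpha$ and $\alpha\sqcup\beta\dashv\vdash\beta\sqcup\beta$, i.e. to the provability of the four sequents $\alpha\sqcap\beta\vdash\alpha\sqcap\alpha$, $\alpha\sqcap\alpha\vdash\alpha\sqcap\beta$, $\alpha\sqcup\beta\vdash\beta\sqcup\beta$ and $\beta\sqcup\beta\vdash\alpha\sqcup\beta$. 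Everything then turns on relating these four sequents to $\alpha\vdash\beta$.

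For the implication (1)$\Rightarrow$(2) I would simply invoke the rule $(R5)$, whose four premises (taking all side s-hypersequents $B,C,D,E,F,G,H,X$ empty) are precisely $\alpha\sqcap\beta\vdash\alpha\sqcap\alpha$, $\alpha\sqcap\alpha\vdash\alpha\sqcap\beta$, $\alpha\sqcup\beta\vdash\beta\sqcup\beta$ and $\beta\sqcup\beta\vdash\alpha\sqcup\beta$, and whose conclusion is $\alpha\vdash\beta$. Thus once (1) has been translated into the four provable sequents above, a single application of $(R5)$ yields (2). This is the step for which $(R5)$ was designed, so it is immediate rather than an obstacle.

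For the converse (2)$\Rightarrow$(1) I would derive the four sequents from $\alpha\vdash\beta$. Two of them use the hypothesis directly: applying $(R1)^{\prime}$ with $\gamma:=\alpha$ to $\alpha\vdash\beta$ gives $\alpha\sqcap\alpha\vdash\alpha\sqcap\beta$, and applying $(R2)$ with $\gamma:=\beta$ gives $\alpha\sqcup\beta\vdash\beta\sqcup\beta$. The remaining two halves hold unconditionally. For $\alpha\sqcap\beta\vdash\alpha\sqcap\alpha$ I would feed axiom $2a$ ($\alpha\sqcap\beta\vdash\alpha$) twice into $(R6)$ (Proposition \ref{drive rule1}) to obtain $(\alpha\sqcap\beta)\sqcap(\alpha\sqcap\beta)\vdash\alpha\sqcap\alpha$, then compose with axiom $4a$ ($\alpha\sqcap\beta\vdash(\alpha\sqcap\beta)\sqcap(\alpha\sqcap\beta)$) via transitivity $(R4)$. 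Dually, for $\beta\sqcup\beta\vdash\alpha\sqcup\beta$ I would feed axiom $3b$ ($\beta\vdash\alpha\sqcup\beta$) twice into $(R7)$ to get $\beta\sqcup\beta\vdash(\alpha\sqcup\beta)\sqcup(\alpha\sqcup\beta)$, then compose with axiom $4b$ through $(R4)$. Together these four derivations establish both $\dashv\vdash$ statements, hence (1).

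I expect no serious obstacle here; the only conceptual point is recognizing that $(R5)$ exactly encodes the dBa quasi-order $\sqsubseteq$ in syntactic form, while the reverse direction is routine bookkeeping with the structural rules $(R1)^{\prime}$, $(R2)$, $(R4)$ and the derived rules $(R6)$, $(R7)$. It is worth remarking that $\mathcal{L}(\mathfrak{F})$ is a \emph{pure} dBa (by axioms $15a$, $15b$ and rule $(Sp)$), so Proposition \ref{order pure} guarantees that $\sqsubseteq$ is a genuine partial order on it; this is what makes the equivalence the natural bridge to the completeness theorem for \textbf{PDBL}.
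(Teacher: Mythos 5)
Your proof is correct and follows essentially the same route as the paper's: for (2)$\Rightarrow$(1) you derive the four sequents witnessing $[\alpha]\sqcap[\beta]=[\alpha]\sqcap[\alpha]$ and $[\alpha]\sqcup[\beta]=[\beta]\sqcup[\beta]$ via the structural rules, and for (1)$\Rightarrow$(2) you apply $(R5)$ with empty side hypersequents, exactly as the paper does. The only immaterial difference is that for the unconditional sequent $\alpha\sqcap\beta\vdash\alpha\sqcap\alpha$ you go through $(R6)$ and axiom $4a$, whereas the paper composes $(R1)^{\prime}$ with Theorem \ref{thempdbl}(2a, 3a).
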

\noindent The proof of   Proposition \ref{ldbm-dba} is obtained in a similar way as that of Proposition 27 in \cite{howlader2021dbalogic}. 
\vskip 3pt
 For $\mathcal{L}(\mathfrak{F})$, the corresponding standard context is  $\mathbb{K}(\mathcal{L}(\mathfrak{F})):=(\mathcal{F}_{p}(\mathcal{L}(\mathfrak{F})),\mathcal{I}_{p}(\mathcal{L}(\mathfrak{F})),\Delta)$. Let us note
\begin{lemma}
	\label{lemmavalu}
	\noindent {\rm\begin{enumerate}
			\item For  $p\in \textbf{OV}$, $F_{[p]}^{\prime}=I_{[p]}$ and $P\in\textbf{PV}$, $I_{[P]}^{\prime}=F_{[P]}$.
			\item For $\phi, \sigma\in \mathfrak{F}$, $F_{[\phi]\sqcap [\sigma]}^{\prime}=I_{[\phi]\sqcap [\sigma]}$, $I_{[\phi]\sqcup [\sigma]}^{\prime}=F_{[\phi]\sqcup [\sigma]}$, $F_{\neg[\phi]}^{\prime}=I_{\neg[\phi]}$ and $I_{\lrcorner[\phi]}^{\prime}=F_{\lrcorner[\phi]}$.
	\end{enumerate}  }
\end{lemma}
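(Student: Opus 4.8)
The plan is to push everything back to the derivation formula of Lemma~\ref{derivation}, which tells us that in the underlying dBa $\mathcal{L}(\mathfrak{F})$ (whose carrier is $\mathfrak{F}/\equiv_{\vdash}$) we have $F_{x}^{\prime}=I_{x_{\sqcap\sqcup}}$ and $I_{x}^{\prime}=F_{x_{\sqcup\sqcap}}$ for every $x$. The observation that makes the lemma essentially a one-line computation is that each element appearing in the statement is idempotent for one of the two operations, i.e.\ lies in $D_{\sqcap}$ or in $D_{\sqcup}$ (with $\textbf{D}=\mathcal{L}(\mathfrak{F})$), so that the double subscript in Lemma~\ref{derivation} collapses to a single one.

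First I would isolate the elementary reduction that $I_{x_{\sqcup}}=I_{x}$ and $F_{x_{\sqcap}}=F_{x}$ for \emph{every} $x$. For the ideal identity: by Proposition~\ref{pro1.5}(5) we have $x\sqsubseteq x\sqcup x$, so if $x\sqcup x\in I$ then $x\in I$ because every ideal is downward closed; conversely $x\in I$ gives $x\sqcup x\in I$ because ideals are closed under $\sqcup$. Hence a primary ideal contains $x$ iff it contains $x_{\sqcup}$, i.e.\ $I_{x_{\sqcup}}=I_{x}$. The identity $F_{x_{\sqcap}}=F_{x}$ is proved dually, using $x\sqcap x\sqsubseteq x$ and the facts that filters are upward closed and $\sqcap$-closed.

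For part (1) the variables are idempotent by the special axioms recorded just before Proposition~\ref{ldbm-dba}: $[p]\in D_{\sqcap}$ since $[p]\sqcap[p]=[p]$, and $[P]\in D_{\sqcup}$ since $[P]\sqcup[P]=[P]$. Thus $[p]_{\sqcap\sqcup}=([p]_{\sqcap})_{\sqcup}=[p]_{\sqcup}=[p]\sqcup[p]$, whence Lemma~\ref{derivation} and the reduction above give $F_{[p]}^{\prime}=I_{[p]\sqcup[p]}=I_{[p]}$; dually $[P]_{\sqcup\sqcap}=[P]\sqcap[P]$ yields $I_{[P]}^{\prime}=F_{[P]\sqcap[P]}=F_{[P]}$.

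Part (2) follows by exactly the same scheme once membership in $D_{\sqcap}$ or $D_{\sqcup}$ is checked: $[\phi]\sqcap[\sigma]$ and $\neg[\phi]$ lie in $D_{\sqcap}$ (the former combining axiom (8a) of Definition~\ref{DBA}, $\neg\neg(x\sqcap y)=x\sqcap y$, with $\neg\neg x=x\sqcap x$ from Proposition~\ref{pro2}(3); the latter directly from Proposition~\ref{pro2}(1)), while $[\phi]\sqcup[\sigma]$ and $\lrcorner[\phi]$ lie in $D_{\sqcup}$ by the dual facts. For each the double subscript collapses and the corresponding identity drops out as in part (1). I do not expect a genuine obstacle: the only points needing care are the bookkeeping of the subscripts $x_{\sqcap\sqcup},x_{\sqcup\sqcap}$ in Lemma~\ref{derivation} and confirming the $D_{\sqcap}/D_{\sqcup}$ membership, after which the lemma is immediate.
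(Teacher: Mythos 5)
Your proof is correct and follows essentially the same route as the paper: apply Lemma~\ref{derivation}, use idempotence (axioms 15a/15b for variables, and the facts that $x\sqcap y$, $\neg x\in D_{\sqcap}$ and $x\sqcup y$, $\lrcorner x\in D_{\sqcup}$) to collapse the double subscript, and then reduce $I_{x\sqcup x}$ to $I_{x}$ (the paper does this last step via Lemma~\ref{complement of Fx}(2), i.e.\ $I_{x\sqcup x}=I_{x}\cap I_{x}$, while you argue it directly from the closure properties of ideals and filters --- an immaterial difference).
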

\begin{proof}
	1.	Let $p\in \textbf{OV}$. By Lemma \ref{derivation}, $F_{[p]}^{\prime}=I_{[p]_{\sqcap\sqcup}}=I_{[p]_{\sqcup}}$, as $[p]_{\sqcap}=[p]$ in $\mathcal{L}(\mathfrak{F})$. By Lemma \ref{complement of Fx}, $I_{[p]\sqcup [p]}=I_{[p]}$, implying $F_{[p]}^{\prime}= I_{[p]}$. Similarly, we can show that for $P\in\textbf{PV}$, $I_{[P]}^{\prime}=F_{[P]}$.
	\vskip4pt
	\noindent 2. In the following equations, we use Lemmas \ref{derivation} and  \ref{complement of Fx}.
	
	\noindent  $F^{\prime}_{[\phi]\sqcap [\sigma]}= I_{([\phi]\sqcap[\sigma])\sqcup ([\phi]\sqcap[\sigma])}=I_{([\phi]\sqcap[\sigma])}\cap I_{([\phi]\sqcap[\sigma])}=I_{([\phi]\sqcap[\sigma])}$.
	\vskip4pt
	\noindent  $I^{\prime}_{[\phi]\sqcup [\sigma]}= F_{([\phi]\sqcup[\sigma])\sqcap ([\phi]\sqcup[\sigma])}=F_{([\phi]\sqcup[\sigma])}\cap F_{([\phi]\sqcup[\sigma])}=F_{([\phi]\sqcup[\sigma])}$.
	\vskip4pt
	
	\noindent   $F^{\prime}_{\neg [\phi]}= I_{\neg[\phi]\sqcap \neg[\phi]}=I_{\neg [\phi]}$, as $\neg[\phi]\sqcap \neg[\phi]=\neg [\phi]$ and $I^{\prime}_{\lrcorner [\phi]}= F_{\lrcorner[\phi]\sqcup \lrcorner[\phi]}=F_{\lrcorner [\phi]}$, as $\lrcorner[\phi]\sqcup \lrcorner[\phi]=\lrcorner [\phi]$
\end{proof}
Using  Lemma \ref{lemmavalu}, we obtain a well-defined map:
\begin{definition}
	\label{v0}
	{\rm The map $v_{0}:\textbf{OV}\cup\textbf{PV}\cup \{\top, \bot\}\rightarrow \mathfrak{H}(\mathbb{K}(\mathcal{L}(\mathfrak{F})))$ is defined by $v_{0}(x):=(F_{ [x]},I_{[x]})$, for all $x\in \textbf{OV}\cup\textbf{PV}\cup \{\top,\bot\}$. }
\end{definition}


\begin{lemma}
	\label{cnnvaluation}
	{\rm $v_{0}$ is a valuation on $\underline{\mathfrak{H}}(\mathbb{K}(\mathcal{L}(\mathfrak{F})))$. Moreover, for each formula $\alpha\in \mathfrak{F}$,  $v_{0}(\alpha)=(F_{[\alpha]}, I_{[\alpha]})$.}
\end{lemma}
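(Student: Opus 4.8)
The plan is to prove the two assertions separately. First I would check the three defining conditions of a valuation from Definition \ref{valution}, and then establish the displayed identity $v_0(\alpha)=(F_{[\alpha]},I_{[\alpha]})$ by induction on the complexity of $\alpha$, the base cases being precisely the points on which $v_0$ was already defined in Definition \ref{v0}.

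For the first assertion, take $p\in\textbf{OV}$. By Lemma \ref{lemmavalu}(1) we have $F_{[p]}^{\prime}=I_{[p]}$, so $v_0(p)=(F_{[p]},I_{[p]})=(F_{[p]},F_{[p]}^{\prime})$ is of the form $(A,A^{\prime})$; by the description of $\mathfrak{H}(\mathbb{K})_{\sqcap}$ recorded just before Theorem \ref{cs}, this means $v_0(p)\in\mathfrak{H}(\mathbb{K})_{\sqcap}=D_{\sqcap}$. Dually, for $P\in\textbf{PV}$ Lemma \ref{lemmavalu}(1) gives $I_{[P]}^{\prime}=F_{[P]}$, so $v_0(P)=(I_{[P]}^{\prime},I_{[P]})$ has the form $(B^{\prime},B)$ and hence lies in $\mathfrak{H}(\mathbb{K})_{\sqcup}=D_{\sqcup}$. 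It remains to verify the constants. Since $[\top]$ is the greatest element of $\mathcal{L}(\mathfrak{F})$, it belongs to every primary filter (a primary filter is nonempty and upward closed under $\sqsubseteq$, and $x\sqsubseteq[\top]$ for all $x$ by Proposition \ref{pro1.5}(2)) and to no proper ideal (else downward closure would force that ideal to be all of $\mathcal{L}(\mathfrak{F})$); hence $F_{[\top]}=\mathcal{F}_{p}(\mathcal{L}(\mathfrak{F}))$ and $I_{[\top]}=\emptyset$, giving $v_0(\top)=(\mathcal{F}_{p}(\mathcal{L}(\mathfrak{F})),\emptyset)=\top$. The symmetric argument using Proposition \ref{pro1.5}(1) yields $v_0(\bot)=(\emptyset,\mathcal{I}_{p}(\mathcal{L}(\mathfrak{F})))=\bot$. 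Thus $v_0$ is a valuation on $\underline{\mathfrak{H}}(\mathbb{K}(\mathcal{L}(\mathfrak{F})))$.

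For the ``moreover'' part, the base cases $\alpha\in\textbf{OV}\cup\textbf{PV}\cup\{\top,\bot\}$ hold by Definition \ref{v0}. For the inductive step I would compute the extension clauses of Definition \ref{valution} directly in the algebra of semiconcepts, using the operations $\sqcap,\sqcup,\neg,\lrcorner$ recalled in the Introduction together with Lemmas \ref{complement of Fx} and \ref{lemmavalu}, and the fact that the connectives commute with the quotient map $\alpha\mapsto[\alpha]$. For $\alpha=\beta\sqcap\gamma$, the induction hypothesis and the definition of $\sqcap$ give $v_0(\alpha)=(F_{[\beta]}\cap F_{[\gamma]},(F_{[\beta]}\cap F_{[\gamma]})^{\prime})$; by Lemma \ref{complement of Fx}(2) the first coordinate is $F_{[\beta]\sqcap[\gamma]}=F_{[\alpha]}$, and by Lemma \ref{lemmavalu}(2) its derivation is $I_{[\beta]\sqcap[\gamma]}=I_{[\alpha]}$. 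The case $\alpha=\beta\sqcup\gamma$ is dual, using $I_{[\beta]}\cap I_{[\gamma]}=I_{[\beta]\sqcup[\gamma]}$ and $(I_{[\beta]\sqcup[\gamma]})^{\prime}=F_{[\beta]\sqcup[\gamma]}$. For $\alpha=\neg\beta$, the definition of $\neg$ gives $v_0(\alpha)=((F_{[\beta]})^{c},((F_{[\beta]})^{c})^{\prime})$, and Lemma \ref{complement of Fx}(1) rewrites $(F_{[\beta]})^{c}=F_{\neg[\beta]}=F_{[\alpha]}$, whereupon Lemma \ref{lemmavalu}(2) yields the second coordinate $I_{\neg[\beta]}=I_{[\alpha]}$; the case $\alpha=\lrcorner\beta$ is handled symmetrically through $(I_{[\beta]})^{c}=I_{\lrcorner[\beta]}$. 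This closes the induction.

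I do not expect a genuine obstacle here: the content is essentially bookkeeping, and every ingredient is already in place. The one point that needs a moment's care is the treatment of the constants $\top$ and $\bot$, where one must argue about primary filters and ideals rather than merely quote a lemma; everything else reduces to matching the semiconcept operations $\sqcap,\sqcup,\neg,\lrcorner$ against the two equalities of Lemma \ref{complement of Fx} and the derivation identities of Lemma \ref{lemmavalu}.
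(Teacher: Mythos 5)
Your proof is correct and follows essentially the same route as the paper's: Lemma \ref{lemmavalu}(1) to place $v_0(p)$ in $\mathfrak{H}(\mathbb{K})_{\sqcap}$ and $v_0(P)$ in $\mathfrak{H}(\mathbb{K})_{\sqcup}$, a direct check of the constants, and induction on connectives via Lemmas \ref{complement of Fx} and \ref{lemmavalu}(2). The only (harmless) cosmetic difference is that you identify $v_0(p)$ as an element of $\mathfrak{H}(\mathbb{K})_{\sqcap}$ by its form $(A,A^{\prime})$ where the paper verifies the idempotence $v_0(p)\sqcap v_0(p)=v_0(p)$ directly, and you spell out the filter/ideal argument for $\top,\bot$ and the inductive cases that the paper leaves implicit.
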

\begin{proof} 
	Let $p\in \textbf{OV}$, $v_{0}(p)\sqcap v_{0}(p)=(F_{[p]}, I_{[p]})\sqcap (F_{[p]},I_{[p]})=(F_{[p]}\cap F_{[p]}, (F_{[p]}\cap F_{[p]})^{\prime})=(F_{[p]},F_{[p]}^{\prime})= (F_{[p]},I_{[p]})=v_{0}([p])$, by Lemma \ref{lemmavalu}(1). Similarly, for $P\in \textbf{PV}$,  $v_{0}(P)\sqcup v_{0}(P)= v_{0}(P)$. By Definition \ref{v0}, $v_{0}(\top)=(F_{[\top]}, I_{[\top]})$. So $v_{0}(\top)= (\mathcal{F}_{p}(\mathcal{L}(\mathfrak{F})),\emptyset)=\top_{\underline{\mathfrak{H}}(\mathbb{K}(\mathcal{L}(\mathfrak{F})))}$. Similarly,  we can show that $v_{0}(\bot)= \bot_{\underline{\mathfrak{H}}(\mathbb{K}(\mathcal{L}(\mathfrak{F})))}$. So $v_{0}$ is a valuation on $\underline{\mathfrak{H}}(\mathbb{K}(\mathcal{L}(\mathfrak{F})))$.

\noindent The second part of the lemma is proved by induction on the number of connectives in $\alpha$, and  using Definition \ref{valution} and Lemma \ref{lemmavalu}(2), one gets the result in each case.	
\end{proof}

\begin{proposition}
	\label{ldbm-dba1}
	{\rm The following are equivalent.
		\begin{enumerate}
			\item An s-hypersequent $G:\alpha_{1}\vdash\beta_{1}~|~\alpha_{2}\vdash\beta_{2}~|~\alpha_{3}\vdash\beta_{3}~|\ldots~|\alpha_{n}\vdash\beta_{n}$ is provable in  \textbf{PDBL}.
			\item $\alpha_{i}\vdash \beta_{i}$ is provable in \textbf{PDBL} for some $i\in\{1,2,\ldots,n\}$.
		\end{enumerate}
	}
\end{proposition}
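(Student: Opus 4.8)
The plan is to prove the two implications separately; $(2)\Rightarrow(1)$ is immediate, while $(1)\Rightarrow(2)$ is where the work lies and will be handled by the canonical algebra already set up for completeness. For $(2)\Rightarrow(1)$, suppose $\alpha_{i}\vdash\beta_{i}$ is provable for some $i$. Viewing this single sequent as a one-component s-hypersequent, repeated applications of external weakening (EW) insert the remaining components $\alpha_{j}\vdash\beta_{j}$, and external exchange (EE) reorders them into the required positions, yielding $G$. Hence $G$ is provable, and this direction needs no semantic input.

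For $(1)\Rightarrow(2)$, I would first record the key structural fact that $\mathcal{L}(\mathfrak{F})$ is not merely a dBa but a \emph{pure} one. This is proved by induction on the formula $\gamma$, showing that $[\gamma]\in\mathcal{L}(\mathfrak{F})_{\sqcap}$ or $[\gamma]\in\mathcal{L}(\mathfrak{F})_{\sqcup}$. The base cases use the special axioms $15a$ ($p\sqcap p\dashv\vdash p$) and $15b$ ($P\sqcup P\dashv\vdash P$) for variables, and axioms $11a,11b$ together with $2a,2b$ for the constants $\bot,\top$. For the inductive step, axiom $4a$ with $2a$ gives $\alpha\sqcap\beta\dashv\vdash(\alpha\sqcap\beta)\sqcap(\alpha\sqcap\beta)$, so $[\alpha\sqcap\beta]\in\mathcal{L}(\mathfrak{F})_{\sqcap}$ (dually $[\alpha\sqcup\beta]\in\mathcal{L}(\mathfrak{F})_{\sqcup}$); for $\neg\alpha$ one combines axiom $5a$, Theorem \ref{thempdbl}$(4a,10a)$ and rule $(R3)$ to derive $\neg\alpha\dashv\vdash\neg\alpha\sqcap\neg\alpha$, placing $[\neg\alpha]\in\mathcal{L}(\mathfrak{F})_{\sqcap}$ (dually $[\lrcorner\alpha]\in\mathcal{L}(\mathfrak{F})_{\sqcup}$). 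Thus every class is idempotent for $\sqcap$ or for $\sqcup$, so $\mathcal{L}(\mathfrak{F})$ is a pdBa.

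With purity in hand, Theorem \ref{semiconceptembedding} applies: the map $h:\mathcal{L}(\mathfrak{F})\to\underline{\mathfrak{H}}(\mathbb{K}(\mathcal{L}(\mathfrak{F})))$, $h([\gamma])=(F_{[\gamma]},I_{[\gamma]})$, is an injective homomorphism, and by Lemma \ref{cnnvaluation} it coincides with the canonical valuation, $v_{0}(\gamma)=h([\gamma])$ for every formula $\gamma$. Since $h$ is an injective homomorphism and $\sqsubseteq$ is defined equationally, $h$ both preserves and reflects $\sqsubseteq$; hence, for a single sequent, $v_{0}$ satisfies $\gamma\vdash\delta$ iff $h([\gamma])\sqsubseteq h([\delta])$ iff $[\gamma]\sqsubseteq[\delta]$, which by Proposition \ref{ldbm-dba} holds iff $\gamma\vdash\delta$ is provable. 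Now assume $G$ is provable. By Soundness (Theorem \ref{sound1}), $G$ is valid, and since $\underline{\mathfrak{H}}(\mathbb{K}(\mathcal{L}(\mathfrak{F})))$ is a pdBa (Theorem \ref{protconcept algebra}) carrying the valuation $v_{0}$, the hypersequent $G$ is satisfied by $v_{0}$. By Definition \ref{satis-hyper-sequent} this means $v_{0}$ satisfies some component $\alpha_{i}\vdash\beta_{i}$, whence, by the equivalence just established, $\alpha_{i}\vdash\beta_{i}$ is provable.

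The main obstacle is the purity step. Semantically the hypersequent $(Sp)$ forces a genuine \emph{disjunction} --- $(a=a\sqcap a)$ or $(a=a\sqcup a)$ --- that no single equation can express, so it is a priori unclear that the Lindenbaum--Tarski algebra is pure rather than merely contextual. What makes the argument go through is that, for each concrete formula, the syntax already decides its ``sort'' through the variable axioms and the shape of the outermost connective, so that exactly one component of the relevant $(Sp)$-instance is provable. Once this is secured, the representation embedding and soundness reduce the whole statement to the single-sequent correspondence of Proposition \ref{ldbm-dba}, and no further hypersequent-specific reasoning is required.
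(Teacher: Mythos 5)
Your proposal is correct, but it takes a genuinely different route from the paper. The paper proves $1\Longrightarrow 2$ by contradiction and by hand: assuming no component is provable, it uses Proposition \ref{ldbm-dba} to get $[\alpha_{i}]\not\sqsubseteq[\beta_{i}]$, splits this via Proposition \ref{pro1} into failure in one of the Boolean quotients $\mathcal{L}(\mathfrak{F})_{\sqcap}$ or $\mathcal{L}(\mathfrak{F})_{\sqcup}$, and then runs a prime filter/ideal separation argument (Lemma \ref{lema1}) to exhibit a primary filter or ideal witnessing $v_{0}(\alpha_{i})\not\sqsubseteq v_{0}(\beta_{i})$, contradicting soundness. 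Crucially, the paper does this while treating $\mathcal{L}(\mathfrak{F})$ only as a dBa — the fact that $\mathcal{L}(\mathfrak{F})$ is a pdBa appears in the paper only \emph{after} Proposition \ref{ldbm-dba1}, derived from it together with $(Sp)$. You instead front-load a direct syntactic proof of purity (every class $[\gamma]$ is sorted by the outermost symbol of $\gamma$, using axioms 15a, 15b for variables and the idempotency facts for compound formulae), which lets you invoke the representation Theorem \ref{semiconceptembedding} as a black box: an injective homomorphism preserves and reflects the equationally defined $\sqsubseteq$, so $v_{0}$ satisfies a component iff that component is provable, and soundness finishes the job. Both arguments are sound and neither is circular; yours buys a cleaner, more modular reduction (no explicit filter-chasing) plus an independent, earlier proof that the Lindenbaum--Tarski algebra is pure, while the paper's version is self-contained at the level of the dBa structure and postpones purity to a corollary. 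One small point worth making explicit if you write this up: the order-reflection step needs injectivity of $h$ (equality of images must pull back to equality of classes), which is exactly what Theorem \ref{semiconceptembedding} supplies, and your identification $v_{0}(\gamma)=h([\gamma])$ is Lemma \ref{cnnvaluation}, so the chain of citations is complete.
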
 
\begin{proof}
	$1 \Longrightarrow 2:$	Let $G:= \alpha_{1}\vdash\beta_{1}|\alpha_{2}\vdash\beta_{2}|\ldots|\alpha_{n}\vdash\beta_{n}$ be provable in $\textbf{PDBL}$. If possible, let us assume that  for all $i\in\{1,2,\ldots,n\}$, $\alpha_{i}\vdash \beta_{i}$ is not provable in $\textbf{PDBL}$.  By Proposition \ref{ldbm-dba},  for all $i\in\{1,2,\ldots,n\}$, $[\alpha_{i}]\not\sqsubseteq [\beta_{i}]$ in $\mathcal{L}(\mathfrak{F})$. By  Proposition \ref{pro1},  either $[\alpha_{i}]\sqcap[\alpha_{i}]\not\sqsubseteq_{\sqcap}[\beta_{i}]\sqcap[\beta_{i}]$  or $[\alpha_{i}]\sqcup[\alpha_{i}]\not\sqsubseteq_{\sqcap}[\beta_{i}]\sqcup[\beta_{i}]$  for all $i\in\{1,2,\ldots,n\}$.  Now, we consider the valuation $v_{0}$. By  Lemma \ref{cnnvaluation}, $v_{0}(\alpha_{i})=(F_{[\alpha_{i}]}, I_{[\alpha_{i}]})$ for all $i\in \{1,2,\ldots, n\}$.  Let $[\alpha_{i}]\sqcap[\alpha_{i}]\not\sqsubseteq_{\sqcap}[\beta_{i}]\sqcap[\beta_{i}]$ for some $i$. Then there exists a prime filter $F_{0i}$ in  $\mathcal{L}(\mathfrak{F})_{\sqcap}$ such that $[\alpha_{i}]\sqcap[\alpha_{i}]\in F_{0i}$ and $[\beta_{i}]\sqcap[\beta_{i}]\notin F_{0i}$. Therefore by Lemma \ref{lema1} there exists a  filter $F_{i}$ in $\mathcal{L}(\mathfrak{F})$ such that $F_{i}\cap\mathcal{L}(\mathfrak{F})_{\sqcap}=F_{0i}$. As $F_{0i}$ is a prime filter in  $\mathcal{L}(\mathfrak{F})_{\sqcap}$, $F_{i}\in\mathcal{F}_{p}(\mathcal{L}(\mathfrak{F}))$ such that $[\alpha_{i}]\sqcap[\alpha_{i}]\in F_{i}$ and $[\beta_{i}]\sqcap[\beta_{i}]\notin F_{i}$. So $[\alpha_{i}]\in F_{i}$, as $[\alpha_{i}]\sqcap[\alpha_{i}]\sqsubseteq[\alpha_{i}]$ and $[\beta_{i}]\notin F_{i}$ otherwise $[\beta_{i}]\sqcap[\beta_{i}]\in F_{i}$. Therefore  $F_{[\alpha_{i}]} \cancel{\subseteq} F_{[\beta_{i}]}$.

	If $[\alpha_{i}]\sqcup[\alpha_{i}]\not\sqsubseteq_{\sqcap}[\beta_{i}]\sqcup[\beta_{i}]$, then dually, we can show that there exists $I_{i}\in \mathcal{I}_{p}(\mathcal{L}(\mathfrak{F}))$ such that $[\alpha_{i}]\notin I_{i} $ and $[\beta_{i}]\in I_{i}$, which implies that $I_{[\beta_{i}]} \cancel{\subseteq} I_{[\alpha_{i}]}$.
	
	So in either case, $v_{0}(\alpha_{i})\cancel{\sqsubseteq} v_{0}(\beta_{i})$ for all $i\in \{1,2,3,\ldots, n\}$,
	which is not possible, as $G$ is valid in the class all pdBas by Theorem \ref{sound1}.
	So there exists $i\in \{1,2,3,\ldots, n\}$ such that $\alpha_{i}\vdash \beta_{i}$ is provable in $\textbf{PDBL}$.

	$2 \Longrightarrow 1:$ Let  $\alpha_{i}\vdash\beta_{i}$ be provable for some $i\in \{1, 2,\ldots, n\}$. 
	The s-hypersequent $G$ is then proved by repeatedly applying the external rules EW and EE. 
\end{proof}
\begin{lemma}
	{\rm $\mathcal{L}(\mathfrak{F})$ is a pdBa.}
\end{lemma}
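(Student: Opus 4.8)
The plan is to leverage the fact, noted just before the statement, that $\mathcal{L}(\mathfrak{F})$ is already a dBa; what remains is to establish \emph{purity}, namely that for every formula $\alpha$ either $[\alpha]\sqcap[\alpha]=[\alpha]$ or $[\alpha]\sqcup[\alpha]=[\alpha]$ holds in $\mathcal{L}(\mathfrak{F})$ (the defining condition for a pdBa in Definition \ref{DBA}). So the whole task reduces to verifying this one disjunction componentwise.

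First I would record that the quasi-order $\sqsubseteq$ on $\mathcal{L}(\mathfrak{F})$ is in fact a partial order. Indeed, by Proposition \ref{ldbm-dba}, $[\alpha]\sqsubseteq[\beta]$ holds precisely when $\alpha\vdash\beta$ is provable in \textbf{PDBL}; hence $[\alpha]\sqsubseteq[\beta]$ and $[\beta]\sqsubseteq[\alpha]$ together mean $\alpha\dashv\vdash\beta$, i.e. $\alpha\equiv_{\vdash}\beta$, which is exactly $[\alpha]=[\beta]$. Thus $\sqsubseteq$ is antisymmetric and $\mathcal{L}(\mathfrak{F})$ is contextual.

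The core of the argument is the rule $(Sp)$. Since $(Sp)$ has no premises, the s-hypersequent $\alpha\vdash\alpha\sqcap\alpha\mid\alpha\sqcup\alpha\vdash\alpha$ is provable in \textbf{PDBL} for every $\alpha\in\mathfrak{F}$. Applying Proposition \ref{ldbm-dba1} (direction $1\Rightarrow 2$), at least one of its two components is provable, so by Proposition \ref{ldbm-dba} I obtain, for each $\alpha$, that $[\alpha]\sqsubseteq[\alpha]\sqcap[\alpha]$ or $[\alpha]\sqcup[\alpha]\sqsubseteq[\alpha]$, using that the induced quotient operations give $[\alpha\sqcap\alpha]=[\alpha]\sqcap[\alpha]$ and $[\alpha\sqcup\alpha]=[\alpha]\sqcup[\alpha]$.

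Finally I would combine these with the inequalities valid in any dBa by Proposition \ref{pro1.5}(5), namely $[\alpha]\sqcap[\alpha]\sqsubseteq[\alpha]$ and $[\alpha]\sqsubseteq[\alpha]\sqcup[\alpha]$. In the first case antisymmetry yields $[\alpha]\sqcap[\alpha]=[\alpha]$; in the second, $[\alpha]\sqcup[\alpha]=[\alpha]$. Either way purity holds, so $\mathcal{L}(\mathfrak{F})$ is a pdBa. The only mildly delicate point is the antisymmetry step, but it is forced immediately by the definition of $\equiv_{\vdash}$ together with Proposition \ref{ldbm-dba}; everything else is a direct reading-off from $(Sp)$ and the two earlier propositions.
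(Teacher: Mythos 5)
Your proof is correct and follows essentially the same route as the paper's: derive the disjunction $[\alpha]\sqsubseteq[\alpha]\sqcap[\alpha]$ or $[\alpha]\sqcup[\alpha]\sqsubseteq[\alpha]$ from $(Sp)$ via Propositions \ref{ldbm-dba1} and \ref{ldbm-dba}, then combine with Proposition \ref{pro1.5}(5) to get purity. Your explicit justification of antisymmetry of $\sqsubseteq$ via Proposition \ref{ldbm-dba} is a point the paper leaves implicit, but it is the same argument.
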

\begin{proof}
	By Propositions \ref{ldbm-dba1}, \ref{ldbm-dba} and (Sp), it follows that $[\alpha]\sqsubseteq [\alpha\sqcap\alpha]$ or $[\alpha\sqcup\alpha]\sqsubseteq [\alpha]$ in $\mathcal{L}(\mathfrak{F})$. By Proposition \ref{pro1.5}(5), $[\alpha]\sqcap[\alpha]\sqsubseteq [\alpha]$ and $[\alpha]\sqsubseteq [\alpha]\sqcup[\alpha]$. So $[\alpha]=[\alpha]\sqcap [\alpha]=[\alpha\sqcup\alpha]$ or $[\alpha]=[\alpha]\sqcup [\alpha]=[\alpha\sqcup\alpha]$, which implies that $\mathcal{L}(\mathfrak{F})$ is a pdBa.
\end{proof}

\begin{theorem}[Completeness]
	\label{algcomdbl}
	{\rm If a hyper-sequent $G:\alpha_{1}\vdash\beta_{1}|\alpha_{2}\vdash\beta_{2}|\ldots|\alpha_{n}\vdash\beta_{n}$ is valid in the class of all pdBas then $G$ is provable in  \textbf{PDBL}.}
\end{theorem}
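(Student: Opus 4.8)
The plan is to prove the contrapositive: assuming $G$ is \emph{not} provable in \textbf{PDBL}, I would exhibit a single pdBa together with a valuation that fails to satisfy $G$, thereby witnessing that $G$ is not valid. Since all the algebraic scaffolding has already been assembled, this amounts to putting the pieces together rather than performing any fresh computation.

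First I would invoke Proposition \ref{ldbm-dba1}: if $G$ is not provable, then none of its component sequents $\alpha_{i}\vdash\beta_{i}$ is provable. Applying Proposition \ref{ldbm-dba} to each component, I obtain $[\alpha_{i}]\not\sqsubseteq[\beta_{i}]$ in the Lindenbaum--Tarski algebra $\mathcal{L}(\mathfrak{F})$ for every $i\in\{1,\ldots,n\}$.

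Next, recall that $\mathcal{L}(\mathfrak{F})$ has just been shown to be a pdBa. I would take the canonical (identity) valuation $v$ on $\mathcal{L}(\mathfrak{F})$ given by $v(x):=[x]$ for each variable $x\in\textbf{OV}\cup\textbf{PV}$; this is a legitimate valuation in the sense of Definition \ref{valution}, since $[p]_{\sqcap}=[p]$ and $[P]_{\sqcup}=[P]$ give $[p]\in\mathcal{L}(\mathfrak{F})_{\sqcap}$ and $[P]\in\mathcal{L}(\mathfrak{F})_{\sqcup}$. A routine induction on the number of connectives yields $v(\alpha)=[\alpha]$ for every formula $\alpha$. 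By the previous step, $v(\alpha_{i})=[\alpha_{i}]\not\sqsubseteq[\beta_{i}]=v(\beta_{i})$ for all $i$, so $v$ satisfies no component of $G$; by the hypersequent satisfaction clause of Definition \ref{satis-hyper-sequent}, $v$ therefore does not satisfy $G$. Hence $G$ is not true in the pdBa $\mathcal{L}(\mathfrak{F})$, and so not valid in the class of all pdBas, which is the contrapositive we wanted. Alternatively, one may route the same argument through the canonical valuation $v_{0}$ of Definition \ref{v0} on the algebra of semiconcepts $\underline{\mathfrak{H}}(\mathbb{K}(\mathcal{L}(\mathfrak{F})))$, using Lemma \ref{cnnvaluation} in place of the identity valuation; this is essentially the refutation already carried out inside the proof of Proposition \ref{ldbm-dba1}.

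The genuine difficulties have all been discharged earlier: the passage from the disjunctive reading of an s-hypersequent to a disjunction of component provabilities (Proposition \ref{ldbm-dba1}), and the verification that $(Sp)$ together with Proposition \ref{pro1.5}(5) forces $\mathcal{L}(\mathfrak{F})$ to be pure. Consequently the only point requiring care in the present argument is bookkeeping: confirming that the chosen valuation is well-defined and that the ``satisfaction of at least one component'' semantics of s-hypersequents is applied correctly, so that refuting \emph{every} component is exactly what is needed to refute $G$.
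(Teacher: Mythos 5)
Your proposal is correct, and its primary route differs from the paper's in a worthwhile way. The paper's proof of Theorem \ref{algcomdbl} defers to the $1\Longrightarrow 2$ argument of Proposition \ref{ldbm-dba1}, so its counter-model lives in the semiconcept algebra $\underline{\mathfrak{H}}(\mathbb{K}(\mathcal{L}(\mathfrak{F})))$ under the valuation $v_{0}$ of Definition \ref{v0}; getting $v_{0}(\alpha_{i})\not\sqsubseteq v_{0}(\beta_{i})$ there requires the prime-filter/prime-ideal separation via Proposition \ref{pro1} and Lemma \ref{lema1}, plus Lemma \ref{cnnvaluation}. You instead refute $G$ directly in the Lindenbaum--Tarski algebra $\mathcal{L}(\mathfrak{F})$ with the identity valuation $v(x):=[x]$, which is legitimate because the lemma immediately preceding the theorem establishes that $\mathcal{L}(\mathfrak{F})$ is a pdBa and axioms 15a, 15b guarantee $[p]\in\mathcal{L}(\mathfrak{F})_{\sqcap}$, $[P]\in\mathcal{L}(\mathfrak{F})_{\sqcup}$; from $[\alpha_{i}]\not\sqsubseteq[\beta_{i}]$ the failure of every component is then immediate. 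Your route is more elementary (no filter machinery at all) and is in fact the one the authors themselves adopt for the modal completeness proof via $v_{+}$ in Theorem \ref{algeb compe mdbl}. What the paper's heavier route buys is reuse: the same $v_{0}$-based refutation inside a concrete context $\mathbb{K}(\mathcal{L}(\mathfrak{F}))$ is exactly what is needed later for relational completeness (Theorem \ref{comdbl}), whereas your identity-valuation counter-model does not directly yield a context-based counter-model. Your closing remark that one may alternatively route the argument through $v_{0}$ shows you see this; either version is a complete proof of the stated theorem.
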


\begin{proof}
	Suppose if possible,  $G$ is not provable. Then $\alpha_{i}\vdash\beta_{i}$ are not provable for all $i\in\{1,2,\ldots, n\}$ by Proposition \ref{ldbm-dba1}. Rest of the proof is similar to the proof of $1 \Longrightarrow 2$ part of Proposition \ref{ldbm-dba1}.
\end{proof}

\subsection{MPDBL}
\label{logicmpdbl}

The language $\mathfrak{L}_{1}$ of \textbf{MPDBL} adds two  unary modal connectives $\square$ and $\blacksquare$ to the language $\mathfrak{L}$ of \textbf{PDBL}. The formulae are given by the following scheme. 
\[\top\mid  \bot\mid  p\mid P\mid \alpha\sqcup\beta\mid   \alpha\sqcap\beta\mid  \neg\alpha\mid  \lrcorner\alpha\mid   \square\alpha\mid  \blacksquare\alpha,\] 
where $p\in \textbf{OV}$ and $P\in \textbf{PV}$. The set of formulae is denoted by $\mathfrak{F}_{1}$.
The axiom schema for \textbf{MPDBL} consists of all the axioms of  \textbf{PDBL} and the following.
\vskip 3pt
$\begin{array}{ll}
16a~\square \alpha\sqcap\square\beta\dashv\vdash\square(\alpha\sqcap\beta)& 
16b~ \blacksquare\alpha\sqcup\blacksquare\beta\dashv\vdash\blacksquare(\alpha\sqcup\beta)\\
17a~ \square(\neg\bot)\dashv\vdash\neg\bot&
17b~\blacksquare(\lrcorner\top)\dashv\vdash\lrcorner\top\\
18a~\square(\alpha\sqcap\alpha)\dashv\vdash\square(\alpha)&
18b~ \blacksquare(\alpha\sqcup\alpha)\dashv\vdash\blacksquare(\alpha)
\end{array}$
\vskip 3pt
\noindent {\it Rules of inference}: All the rules of \textbf{PDBL} and the following.

$\infer[(R8)]
{B\mid  \square\alpha\vdash\square\beta\mid  C}{B\mid  \alpha\vdash\beta \mid  C}$~~~
$\infer[(R9)]
{B\mid  \blacksquare\alpha\vdash\blacksquare\beta\mid  C}{B\mid  \alpha\vdash\beta\mid  C}$

\noindent Definable modal operators are $\lozenge, \blacklozenge$, given by $\lozenge\alpha:=\neg\square\neg\alpha$  and $\blacklozenge\alpha:=\lrcorner\blacksquare\lrcorner\alpha$.
It is immediate that 
\begin{theorem}
	\label{provability}
	{\rm If an s-hypersequent $G:=\alpha_{1}\vdash\beta_{1}|\alpha_{2}\vdash\beta_{2}|\ldots|\alpha_{n}\vdash\beta_{n}$ is provable in \textbf{PDBL} then $G$ is also provable in \textbf{MPDBL}.}
\end{theorem}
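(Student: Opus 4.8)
The plan is to argue by induction on the length of a \textbf{PDBL}-derivation of $G$, exploiting the fact that \textbf{MPDBL} is by construction an extension of \textbf{PDBL}. Concretely, every axiom schema of \textbf{PDBL} (axioms 1--15) is retained verbatim among the axioms of \textbf{MPDBL}, and every rule of inference of \textbf{PDBL} -- namely $(R1)$--$(R5)$, $(R1)'$, $(R2)'$, $(R3)'$, $(Sp)$, together with the external rules EC, EE, EW -- is retained among the rules of \textbf{MPDBL}; the modal system only \emph{adds} the axioms 16--18 and the rules $(R8), (R9)$. Since the formula set $\mathfrak{F}$ of \textbf{PDBL} is a subset of the formula set $\mathfrak{F}_{1}$ of \textbf{MPDBL}, every s-hypersequent over $\mathfrak{F}$ is in particular an s-hypersequent over $\mathfrak{F}_{1}$, so $G$ is a well-formed object of the modal system and the statement makes sense.

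First I would take a \textbf{PDBL}-derivation $S_{1},\ldots,S_{m}$ of $G$, so that $S_{m}=G$ and, by the definition of derivability recorded in Section \ref{ppdBl}, each $S_{k}$ is either a \textbf{PDBL}-axiom or is obtained from members of $\{S_{1},\ldots,S_{k-1}\}$ by a \textbf{PDBL}-rule. The claim is that this very same sequence $S_{1},\ldots,S_{m}$ is already a \textbf{MPDBL}-derivation. This is verified by a routine induction on $k$: if $S_{k}$ is a \textbf{PDBL}-axiom, then it is also a \textbf{MPDBL}-axiom and hence a legitimate entry; if $S_{k}$ is obtained by applying a \textbf{PDBL}-rule to earlier entries, then, since that rule is also a \textbf{MPDBL}-rule and its premises still lie among $\{S_{1},\ldots,S_{k-1}\}$, the identical inference is a legal step in \textbf{MPDBL}. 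Hence $G=S_{m}$ is provable in \textbf{MPDBL}.

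There is essentially no obstacle here; the conclusion is immediate from the inclusion of the axioms and rules, which is why the authors flag it as ``immediate''. The only points that warrant a line of care are to confirm that the notion of derivability is uniform across the two systems (it is, both being finite sequences of s-hypersequents, each an axiom or obtained from predecessors by a rule), and to note that enlarging the language from $\mathfrak{L}$ to $\mathfrak{L}_{1}$ imposes no new constraint that could invalidate a pre-existing \textbf{PDBL}-proof -- it cannot, since every symbol occurring in $G$ or anywhere in its derivation already belongs to $\mathfrak{L}\subseteq\mathfrak{L}_{1}$.
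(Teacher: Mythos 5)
Your proposal is correct and follows exactly the paper's own argument: since every axiom and rule of \textbf{PDBL} is retained in \textbf{MPDBL}, any \textbf{PDBL}-derivation of $G$ is verbatim an \textbf{MPDBL}-derivation. The paper states this in one sentence; your induction on the derivation length merely makes the same observation explicit.
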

\begin{proof}
As all the axioms of \textbf{PDBL} are also axioms of \textbf{MPDBL} and the rules of inference of \textbf{PDBL} are also rules of inference of \textbf{MPDBL}, a proof for the s-hypersequent $G$ in \textbf{PDBL} is also a proof for  $G$ in \textbf{MPDBL}.
\end{proof}
As we observed in the case of \textbf{PDBL}, the modal axioms defining \textbf{MPDBL} are also lookalikes of those defining \textbf{MCDBL} \cite{howlader2021dbalogic}.
\begin{definition}
	\label{algvaludbao}
	{\rm A valuation $v$ on a pdBao $\mathfrak{O}:=(D,\sqcup,\sqcap,\neg,\lrcorner,\top_{D},\bot_{D},\textbf{I},\textbf{C})$, is a map  from $\textbf{OV}\cup\textbf{PV}\cup \{\top, \bot\}$ to $D$   that satisfies the conditions in Definition \ref{valution}  and the following  for the modal operators:
		
		%
		%
		
		$v(\square\alpha):=\textbf{I}(v(\alpha))$ and $v(\blacksquare\alpha):=\textbf{C}(v(\alpha))$.}
\end{definition}
\noindent The satisfaction, truth, and validity of  s-hypersequents are defined in the same way as before. 

\subsubsection{$MPDBL\Sigma$}
\label{mpdblsigma}

We give a scheme of logics that can be obtained using sequents of \textbf{MPDBL}.
\begin{definition}
	{\rm Let $\Sigma$ be  any set of   sequents  in \textbf{MPDBL}. $\textbf{MPDBL}\Sigma$ is the logic obtained from \textbf{MPDBL} by adding all the sequents in $\Sigma$ as axioms.}
\end{definition}
\noindent 
If $\Sigma=\emptyset$, $\textbf{MPDBL}\Sigma$ is the same as $\textbf{MPDBL}$.
At the end of this section, 
we give the  set $\Sigma$ 
defining $\textbf{MPDBL4}$.
Let us go over some of the properties of $\textbf{MPDBL}\Sigma$ for any $\Sigma$, which would be applicable to both $\textbf{MPDBL}$ and $\textbf{MPDBL4}$. 
\vskip 2pt
\noindent The class of pdBaos in which the sequents of $\Sigma$ are valid is denoted by $V_{\Sigma}$.
\begin{theorem}[Soundness]
	\label{soundmdl}
	{\rm If an s-hypersequent $G:\alpha_{1}\vdash\beta_{1}\mid\alpha_{2}\vdash\beta_{2}\mid\ldots\mid\alpha_{n}\vdash\beta_{n}$ is provable in $\textbf{MPDBL}\Sigma$ then it is valid in the class $V_{\Sigma}$.}
\end{theorem}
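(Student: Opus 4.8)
The plan is to proceed by induction on the length of a derivation of $G$ in $\textbf{MPDBL}\Sigma$. I would fix an arbitrary algebra $\mathfrak{O}:=(D,\sqcup,\sqcap,\neg,\lrcorner,\top_{D},\bot_{D},\textbf{I},\textbf{C})$ in $V_{\Sigma}$ together with an arbitrary valuation $v$ on it, and show that $v$ satisfies every s-hypersequent occurring in the derivation; since $\mathfrak{O}$ and $v$ are arbitrary, this yields validity of $G$ in $V_{\Sigma}$. Recall throughout that satisfaction of an s-hypersequent by $v$ means satisfaction of at least one of its components, i.e.\ $v(\alpha)\sqsubseteq v(\beta)$ for some component $\alpha\vdash\beta$.

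For the base case the axioms fall into three groups. The PDBL axioms 1--15 contain no modal connectives, so their satisfaction under any $v$ depends only on the underlying pdBa reduct $(D,\sqcup,\sqcap,\neg,\lrcorner,\top_{D},\bot_{D})$; their validity is therefore already supplied by Theorem \ref{sound1}. The sequents in $\Sigma$ are valid in every member of $V_{\Sigma}$ by the very definition of $V_{\Sigma}$, hence are satisfied by $v$. It remains to verify the modal axioms 16--18, which I would do by unwinding Definition \ref{algvaludbao} and invoking the defining equations of a pdBao in Definition \ref{DBA with operators}. For example, $v(\square(\alpha\sqcap\beta))=\textbf{I}(v(\alpha)\sqcap v(\beta))=\textbf{I}(v(\alpha))\sqcap\textbf{I}(v(\beta))=v(\square\alpha\sqcap\square\beta)$ by condition $1a$, giving 16a; conditions $2a$ and $3a$ give 17a and 18a, and the dual conditions $1b,2b,3b$ give the b-versions. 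As each computation yields an equality, both sequents of every $\dashv\vdash$ axiom are satisfied.

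For the inductive step, the rules of PDBL (R1--R5, $(Sp)$, and the external rules) preserve validity exactly as in the proof of Theorem \ref{sound1}: that argument uses only order-theoretic facts about the pdBa reduct (Propositions \ref{pro1.5}, \ref{pro2}, \ref{order pure}) applied to the values $v(\alpha)$, and these facts remain available verbatim when the formulae involved contain modal connectives. The genuinely new rules are R8 and R9. For R8, suppose $v$ satisfies the premise $B\mid\alpha\vdash\beta\mid C$. If $v$ satisfies some component lying in $B$ or $C$, that component reappears unchanged in the conclusion $B\mid\square\alpha\vdash\square\beta\mid C$ and we are done; otherwise $v$ satisfies $\alpha\vdash\beta$, i.e.\ $v(\alpha)\sqsubseteq v(\beta)$, and monotonicity of $\textbf{I}$ (part of the definition of a pdBao) gives $\textbf{I}(v(\alpha))\sqsubseteq\textbf{I}(v(\beta))$, that is $v(\square\alpha)\sqsubseteq v(\square\beta)$, so the new component is satisfied. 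The argument for R9 is identical with $\textbf{C}$ in place of $\textbf{I}$.

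I expect the only real care to be in the bookkeeping of the hypersequent structure---tracking which component of the premise is satisfied and confirming that it survives into the conclusion---rather than in any single algebraic computation. The monotonicity clause built into the definition of a pdBao is precisely what makes R8 and R9 sound, so no extra hypotheses on $\mathfrak{O}$ are needed; in particular the sequents of $\Sigma$ contribute only to the (trivial) base case through the membership $\mathfrak{O}\in V_{\Sigma}$. Once the base case and inductive step are established, every s-hypersequent in the derivation of $G$, and hence $G$ itself, is satisfied by $v$, completing the proof.
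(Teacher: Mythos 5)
Your proof is correct and follows essentially the same route as the paper's: validity of the propositional axioms is inherited from Theorem \ref{sound1}, the sequents of $\Sigma$ hold by definition of $V_{\Sigma}$, the modal axioms are verified by unwinding the valuation against the pdBao equations, and the modal rules are shown to preserve validity via the monotonicity of $\textbf{I}$ and $\textbf{C}$ together with the component-tracking argument for hypersequents. The only cosmetic difference is that you phrase the argument explicitly as an induction on derivation length, whereas the paper states it as ``axioms valid plus rules preserve validity,'' which amounts to the same thing.
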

\begin{proof}
	To complete the proof it is sufficient to show that all axioms of $\textbf{MPDBL}\Sigma$  are valid in $V_{\Sigma}$ and rules of inference preserve validity. The propositional cases follow from Theorem \ref{sound1}. Here, we check  the validity of axioms 16a, 17a, 18a, 16b, 17b and 18b in $V_{\Sigma}$, and show that rules of inference $(R9)$ and $(R10)$ preserve  validity.   
	Let  $\mathfrak{O}$ be a pdBao belonging to the class $V_{\Sigma}$  and $v$ be a valuation on $\mathfrak{O}$. Let $\alpha,\beta\in \mathfrak{F}$. Then $v(\square(\alpha\sqcap\beta))=\textbf{I}(v(\alpha\sqcap\beta))=\textbf{I}(v(\alpha)\sqcap v(\beta))= \textbf{I}(v(\alpha))\sqcap\textbf{I}(v(\beta))=v(\square\alpha)\sqcap v(\square\beta)=v(\square\alpha\sqcap\square\beta)$. Now $v(\square(\alpha\sqcap\alpha))=\textbf{I}(v(\alpha\sqcap\alpha))=\textbf{I}(v(\alpha)\sqcap v(\alpha))=\textbf{I}(v(\alpha))=v(\square\alpha)$. Therefore the axioms $16a$ and $18a$ are true in $\mathfrak{O}$, which implies that $16a$ and $18a$ are  valid in the class $V_{\Sigma}$. Dually, we can show that the axioms $16b$ and $18b$ are valid in the class $V_{\Sigma}$. Now $v(\square(\neg \bot))=\textbf{I}(v(\neg\bot))=\textbf{I}(\neg v(\bot))=\textbf{I}(\neg\bot_{D})=\neg\bot_{D}$ and $v(\blacksquare(\lrcorner\top) )= \textbf{C}(v(\lrcorner\top))=\textbf{C}(\lrcorner v(\top))= \textbf{C}(\lrcorner\top_{D})=\lrcorner\top_{D}$. Therefore axioms $17a$ and $17b$ are true in $\mathfrak{O}$, which implies that $17a$ and $17b$ are valid in the class $V_{\Sigma}$.
	
	To show that $(R9)$ preserves validity, let $B~|~\alpha\vdash\beta~|~C$ be valid in $V_{\Sigma}$. Let $\mathfrak{O}\in V_{\Sigma}$ and $v$ be a valuation on $\mathfrak{O}$. Now $B~|~\alpha\vdash\beta~|~C$ is true in $\mathfrak{O}$, as $B~|~\alpha\vdash\beta~|~C$ is valid in $V_{\Sigma}$. So $v$ satisfies  the s-hypersequent $B~|~\alpha\vdash\beta~|~C$. Now there are two possibilities. (1) $v$ satisfies a component from B or C. (2) $v$ satisfies the component $\alpha\vdash\beta$. Now if (1) holds then $v$ also satisfies the s-hypersequent $B~|~\square\alpha\vdash\square\beta~|~C$, and if (2) holds then $v(\alpha)\sqsubseteq v(\beta)$ in $\mathfrak{O}$, which implies that $\textbf{I}(v(\alpha))\sqsubseteq \textbf{I}(v(\beta))$, by the monotonicity of $\textbf{I}$. So $v(\square\alpha)\sqsubseteq v(\square\beta)$, which implies that $v$ satisfies the s-hypersequent $B~|~\square\alpha\vdash\square\beta~|~C$. So (R9) preserves validity in $V_{\Sigma}$.
	
	\noindent Showing (R10) preserves validity is similar to the above.
\end{proof}

As before, the Lindenbaum-Tarski algebra $\mathcal{L}_{\Sigma}(\mathfrak{F}_{1})$ is obtained
for $\textbf{MPDBL}\Sigma$; the modal operators in  $\mathfrak{L}_{1}$ have induced new unary operators. 
More precisely,  
$\mathcal{L}_{\Sigma}(\mathfrak{F}_{1}):=(\mathfrak{F}_{1}/\equiv_{\vdash}, \sqcup,\sqcap,\neg,\lrcorner,[\top],[\bot],f_{\square},f_{\blacksquare})$, where $f_{\square},f_{\blacksquare}$ are defined as: 
$f_{\square}([\alpha]):=[\square\alpha]$,
$f_{\blacksquare}([\alpha]):=[\blacksquare\alpha].$

\begin{note}
	\label{dual-ldbalg-ope}
	{\rm  $f_{\square}^{\delta}([\alpha])=\neg f_{\square}(\neg [\alpha])=\neg[\square\neg\alpha]=[\neg\square\neg\alpha]=[\lozenge\alpha]$ and, we denote $f_{\lozenge}([\alpha]):=f_{\square}^{\delta}([\alpha])$.
		
		\noindent Similar to the above $f_{\blacksquare}^{\delta}([\alpha])=[\blacklozenge\alpha]$ and, we denote $f_{\blacklozenge}([\alpha]):=f_{\blacksquare}^{\delta}([\alpha])$. }
\end{note}

\vskip 3pt 
\noindent Propositions \ref{ldbm-dba} and  \ref{ldbm-dba1} extend to this case also:
\begin{proposition}
	\label{ldbm-mdba}
	{\rm For any formulae $\alpha$ and $\beta$ the following are equivalent.
		\begin{enumerate}
			\item  $[\alpha] \sqsubseteq [\beta]$ in $\mathcal{L}_{\Sigma}(\mathfrak{F}_{1})$.
			\item $\alpha\vdash \beta$ is provable in $\textbf{MPDBL}\Sigma$.
		\end{enumerate}
	}
\end{proposition}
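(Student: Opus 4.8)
The plan is to mirror the proof of Proposition \ref{ldbm-dba}, the key observation being that the relation $\sqsubseteq$ in $\mathcal{L}_{\Sigma}(\mathfrak{F}_{1})$ is defined using only $\sqcap$ and $\sqcup$ (Definition \ref{DBA}), so the modal connectives play no essential role. First I would unfold the definition of $\sqsubseteq$: since $\mathcal{L}_{\Sigma}(\mathfrak{F}_{1})$ is a dBa whose operations are computed componentwise (so that $[\alpha]\sqcap[\beta]=[\alpha\sqcap\beta]$ and $[\alpha]\sqcup[\beta]=[\alpha\sqcup\beta]$), the condition $[\alpha]\sqsubseteq[\beta]$, i.e. $[\alpha]\sqcap[\beta]=[\alpha]\sqcap[\alpha]$ and $[\alpha]\sqcup[\beta]=[\beta]\sqcup[\beta]$, is equivalent to the joint provability of the four sequents $\alpha\sqcap\beta\vdash\alpha\sqcap\alpha$, $\alpha\sqcap\alpha\vdash\alpha\sqcap\beta$, $\alpha\sqcup\beta\vdash\beta\sqcup\beta$ and $\beta\sqcup\beta\vdash\alpha\sqcup\beta$ in $\textbf{MPDBL}\Sigma$.

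For the implication $(1)\Rightarrow(2)$ I would feed exactly these four sequents, with all side hypersequents empty, as the premises of rule $(R5)$; its conclusion is precisely $\alpha\vdash\beta$. Since $(R5)$ is a rule of \textbf{PDBL} and hence of $\textbf{MPDBL}\Sigma$, this direction is immediate.

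For the converse $(2)\Rightarrow(1)$, assume $\alpha\vdash\beta$ is provable and derive each of the four sequents in turn. The sequent $\alpha\sqcap\alpha\vdash\alpha\sqcap\beta$ follows from $\alpha\vdash\alpha$ (axiom 1) and $\alpha\vdash\beta$ by $(R6)$, while $\alpha\sqcup\beta\vdash\beta\sqcup\beta$ follows from $\alpha\vdash\beta$ and $\beta\vdash\beta$ (axiom 1) by $(R7)$. For $\alpha\sqcap\beta\vdash\alpha\sqcap\alpha$, apply $(R6)$ to two copies of the axiom $\alpha\sqcap\beta\vdash\alpha$ (2a) to obtain $(\alpha\sqcap\beta)\sqcap(\alpha\sqcap\beta)\vdash\alpha\sqcap\alpha$, then compose with axiom 4a through transitivity $(R4)$; dually, $\beta\sqcup\beta\vdash\alpha\sqcup\beta$ is obtained from two copies of axiom 3b via $(R7)$, axiom 4b and $(R4)$. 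These four sequents give $[\alpha]\sqcap[\beta]=[\alpha]\sqcap[\alpha]$ and $[\alpha]\sqcup[\beta]=[\beta]\sqcup[\beta]$, that is $[\alpha]\sqsubseteq[\beta]$.

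I do not anticipate a serious obstacle; the only point requiring care is to confirm that $\mathcal{L}_{\Sigma}(\mathfrak{F}_{1})$ is a well-defined dBa, i.e. that $\equiv_{\vdash}$ is a congruence also with respect to $\square$ and $\blacksquare$ (guaranteed by the modal rules $(R8)$ and $(R9)$), so that the componentwise description of the operations and the passage to $f_{\square},f_{\blacksquare}$ are legitimate. Once this is in place, the argument is verbatim the propositional one of Proposition \ref{ldbm-dba}, since the defining conditions for $\sqsubseteq$ never touch the modal structure.
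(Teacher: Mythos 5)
Your proposal is correct and follows essentially the same route as the paper: both directions reduce $[\alpha]\sqsubseteq[\beta]$ to the joint provability of the four sequents $\alpha\sqcap\beta\dashv\vdash\alpha\sqcap\alpha$ and $\alpha\sqcup\beta\dashv\vdash\beta\sqcup\beta$, invoking $(R5)$ for one implication and deriving those sequents from $\alpha\vdash\beta$ for the other, exactly as in the paper's proof of Proposition~\ref{ldbm-dba} (which the paper simply declares to extend to $\mathcal{L}_{\Sigma}(\mathfrak{F}_{1})$). Your derivations of the four sequents via $(R6)$, $(R7)$ and axioms 2a, 3b, 4a, 4b differ only cosmetically from the paper's use of $(R1)^{\prime}$ and Theorem~\ref{thempdbl}, and your remark that $(R8)$, $(R9)$ make $\equiv_{\vdash}$ a congruence for $\square,\blacksquare$ correctly addresses the only genuinely new point in the modal setting.
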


\begin{proposition}
	\label{ldbm-mdba1}
	{\rm The following are equivalent.
		\begin{enumerate}
			\item An s-hypersequent $G:\alpha_{1}\vdash\beta_{1}~|~\alpha_{2}\vdash\beta_{2}~|~\alpha_{3}\vdash\beta_{3}~|\ldots~|\alpha_{n}\vdash\beta_{n}$ is provable in  $\textbf{MPDBL}\Sigma$.
			\item $\alpha_{i}\vdash \beta_{i}$ is provable in $\textbf{MPDBL}\Sigma$ for some $i\in\{1,2,\ldots,n\}$.
		\end{enumerate}
	}
\end{proposition}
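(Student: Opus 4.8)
The plan is to follow the same two-step pattern used for Proposition \ref{ldbm-dba1}, replacing the non-modal Lindenbaum--Tarski algebra and its soundness theorem by their modal counterparts. The direction $2\Longrightarrow 1$ is the routine one: given a proof of a single component $\alpha_i\vdash\beta_i$, one builds the full s-hypersequent $G$ by repeated application of external weakening (EW) to introduce the missing components and external exchange (EE) to place them in the correct order. Nothing modal enters here, so this half transfers essentially verbatim from Proposition \ref{ldbm-dba1}.

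For $1\Longrightarrow 2$ I would argue contrapositively. Assume that no component $\alpha_i\vdash\beta_i$ is provable in $\textbf{MPDBL}\Sigma$. By Proposition \ref{ldbm-mdba} this means $[\alpha_i]\not\sqsubseteq[\beta_i]$ in the Lindenbaum--Tarski algebra $\mathcal{L}_{\Sigma}(\mathfrak{F}_{1})$ for every $i\in\{1,\dots,n\}$. The key observation is that $\mathcal{L}_{\Sigma}(\mathfrak{F}_{1})$ is itself a pdBao lying in the class $V_{\Sigma}$: it is a pdBao because axioms 16--18 together with rules $(R8),(R9)$ force the induced maps $f_{\square},f_{\blacksquare}$ to satisfy the operator conditions of Definition \ref{DBA with operators}, and it lies in $V_{\Sigma}$ because every sequent of $\Sigma$, being an axiom of $\textbf{MPDBL}\Sigma$, is provable and hence holds under $\sqsubseteq$ by Proposition \ref{ldbm-mdba}. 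On this algebra I would consider the canonical valuation $\bar v$ sending each variable $x$ to its class $[x]$; this is a legitimate valuation in the sense of Definition \ref{algvaludbao}, since $[p]_{\sqcap}=[p]$ and $[P]_{\sqcup}=[P]$ place object and property variables in the correct sorts.

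A short induction on the structure of formulae then shows $\bar v(\gamma)=[\gamma]$ for every $\gamma\in\mathfrak{F}_{1}$, the modal steps using exactly the defining equations $f_{\square}([\alpha])=[\square\alpha]$ and $f_{\blacksquare}([\alpha])=[\blacksquare\alpha]$. Consequently $\bar v(\alpha_i)=[\alpha_i]\not\sqsubseteq[\beta_i]=\bar v(\beta_i)$ for every $i$, so $\bar v$ satisfies none of the components and hence does not satisfy $G$. Thus $G$ is not true in $\mathcal{L}_{\Sigma}(\mathfrak{F}_{1})$, so $G$ is not valid in $V_{\Sigma}$, and by the soundness theorem (Theorem \ref{soundmdl}) $G$ is not provable in $\textbf{MPDBL}\Sigma$. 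This is the contrapositive of $1\Longrightarrow 2$.

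I expect the main obstacle to be the verification, carried out once and for all, that $\mathcal{L}_{\Sigma}(\mathfrak{F}_{1})$ genuinely belongs to $V_{\Sigma}$ and that $\bar v$ is a well-defined valuation compatible both with the sort discipline on variables and with the modal operators; everything else is bookkeeping parallel to the non-modal case. It is worth noting that, unlike in Proposition \ref{ldbm-dba1}, one should resist routing the argument through the complex algebra of the canonical Kripke context via the representation Theorem \ref{rtdBao}, since for an arbitrary $\Sigma$ that complex algebra need not lie in $V_{\Sigma}$; working directly inside the Lindenbaum--Tarski algebra sidesteps this difficulty and keeps the proof uniform across all choices of $\Sigma$.
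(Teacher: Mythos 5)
Your direction $2\Longrightarrow 1$ is correct and is exactly the paper's argument (EW and EE). The problem is in $1\Longrightarrow 2$: your ``key observation'' that $\mathcal{L}_{\Sigma}(\mathfrak{F}_{1})$ is a pdBao lying in $V_{\Sigma}$ is circular at this point in the development. Axioms 16--18 and the rules $(R8)$, $(R9)$ do take care of the operator conditions on $f_{\square}, f_{\blacksquare}$, but Definition \ref{DBA with operators} also requires the underlying algebra to be a \emph{pure} dBa, and purity of $\mathcal{L}_{\Sigma}(\mathfrak{F}_{1})$ is obtained only from the rule $(Sp)$, whose conclusion is the two-component s-hypersequent $\alpha\vdash\alpha\sqcap\alpha\mid\alpha\sqcup\alpha\vdash\alpha$. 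To pass from provability of that hypersequent to ``$[\alpha]=[\alpha]\sqcap[\alpha]$ or $[\alpha]=[\alpha]\sqcup[\alpha]$'' you must extract a provable \emph{component}, which is precisely the implication $1\Longrightarrow 2$ you are trying to establish. The paper's own ordering makes this dependency explicit: Theorem \ref{lbam} ($\mathcal{L}_{\Sigma}(\mathfrak{F}_{1})\in V_{\Sigma}$) is proved \emph{from} Propositions \ref{ldbm-mdba} and \ref{ldbm-mdba1}, not the other way around. Without purity your Lindenbaum--Tarski algebra is not known to be in $V_{\Sigma}$ (a class of pdBaos), so refuting $G$ there does not contradict Theorem \ref{soundmdl}.

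The paper avoids this by running the argument of Proposition \ref{ldbm-dba1} verbatim: from $[\alpha_{i}]\not\sqsubseteq[\beta_{i}]$ it separates $[\alpha_{i}]$ from $[\beta_{i}]$ by a primary filter or primary ideal --- a construction that needs only the dBa structure of the Lindenbaum--Tarski algebra, guaranteed by axioms 1--14 --- and then refutes every component under the valuation $v_{0}$ in the algebra of semiconcepts of the standard context, which is a pdBa unconditionally by Theorem \ref{protconcept algebra}. Your closing remark that the complex algebra of the canonical Kripke context need not lie in $V_{\Sigma}$ for arbitrary $\Sigma$ is a fair criticism of that route (it is a canonicity issue the paper glosses over), but the repair you propose trades that difficulty for the circularity above rather than removing it.
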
 
\noindent The operators $f_{\square}, f_{\blacksquare}$ are monotonic, according to  Proposition \ref{ldbm-mdba} and rules $(R9)$, $(R10)$: 

\begin{lemma}
	\label{monotonicity of operators}
	{\rm For $\alpha,\beta \in \mathfrak{F}_{1}$, $[\alpha]\sqsubseteq [\beta]$ in $\mathcal{L}_{\Sigma}( \mathfrak{F}_{1})$ implies that $f_{\square}([\alpha])\sqsubseteq f_{\square}([\beta])$ and $f_{\blacksquare}([\alpha])\sqsubseteq f_{\blacksquare}([\beta])$. }
\end{lemma}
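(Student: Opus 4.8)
The plan is to reduce the algebraic inequality to a purely syntactic provability statement via the Lindenbaum--Tarski correspondence, push it through the introduction rules for the two modal connectives, and then translate back. First I would invoke Proposition \ref{ldbm-mdba}: the hypothesis $[\alpha]\sqsubseteq[\beta]$ in $\mathcal{L}_{\Sigma}(\mathfrak{F}_{1})$ is equivalent to the sequent $\alpha\vdash\beta$ being provable in $\textbf{MPDBL}\Sigma$.

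Next I would apply the introduction rules for $\square$ and $\blacksquare$ (rules $(R8)$ and $(R9)$), taking the side s-hypersequents $B$ and $C$ to be empty — this is legitimate, since a sequent $\alpha\vdash\beta$ is precisely the special case $B\mid\alpha\vdash\beta\mid C$ with $B$ and $C$ empty. Rule $(R8)$ converts a derivation of $\alpha\vdash\beta$ into a derivation of $\square\alpha\vdash\square\beta$, and rule $(R9)$ likewise yields a derivation of $\blacksquare\alpha\vdash\blacksquare\beta$. Hence both $\square\alpha\vdash\square\beta$ and $\blacksquare\alpha\vdash\blacksquare\beta$ are provable in $\textbf{MPDBL}\Sigma$.

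Then I would translate back using Proposition \ref{ldbm-mdba} a second time: provability of $\square\alpha\vdash\square\beta$ gives $[\square\alpha]\sqsubseteq[\square\beta]$, and provability of $\blacksquare\alpha\vdash\blacksquare\beta$ gives $[\blacksquare\alpha]\sqsubseteq[\blacksquare\beta]$, both in $\mathcal{L}_{\Sigma}(\mathfrak{F}_{1})$. Finally, unwinding the definitions $f_{\square}([\alpha]):=[\square\alpha]$ and $f_{\blacksquare}([\alpha]):=[\blacksquare\alpha]$ (and the analogous identities for $\beta$) delivers exactly $f_{\square}([\alpha])\sqsubseteq f_{\square}([\beta])$ and $f_{\blacksquare}([\alpha])\sqsubseteq f_{\blacksquare}([\beta])$, as required.

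There is no genuine obstacle here: the argument is a short sandwich between two applications of Proposition \ref{ldbm-mdba}, with the modal rules doing the work in the middle. The only points deserving a moment's care are that the modal rules may be invoked with empty context s-hypersequents, and that $f_{\square}$ and $f_{\blacksquare}$ are well defined on $\equiv_{\vdash}$-classes so that the expressions $f_{\square}([\alpha])$, $f_{\blacksquare}([\alpha])$ are meaningful; the latter is exactly the congruence property of $\equiv_{\vdash}$ already recorded in the construction of $\mathcal{L}_{\Sigma}(\mathfrak{F}_{1})$.
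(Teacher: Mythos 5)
Your proposal is correct and follows essentially the same route as the paper's own proof: apply Proposition \ref{ldbm-mdba} to convert the order relation into provability of $\alpha\vdash\beta$, use the modal rules to obtain $\square\alpha\vdash\square\beta$ and $\blacksquare\alpha\vdash\blacksquare\beta$, then convert back and unwind the definitions of $f_{\square}$ and $f_{\blacksquare}$. The only (cosmetic) discrepancy is rule naming — the paper's proof cites the modal rules as $(R9)$ and $(R10)$ while they are labelled $(R8)$ and $(R9)$ where they are introduced; your labels match the latter.
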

\begin{proof}
	Let  $[\alpha]\sqsubseteq [\beta]$. $\alpha\vdash\beta$  by   Proposition  \ref{ldbm-mdba}. So $\square\alpha\vdash\square\beta$ and $\blacksquare\alpha\vdash\blacksquare\beta$  by $(R9)$ and $(R10)$. Again using  Proposition \ref{ldbm-mdba}, $[\square\alpha]\sqsubseteq [\square\beta]$ and $[\blacksquare\alpha]\sqsubseteq [\blacksquare\beta]$. Hence  $f_{\square}([\alpha])\sqsubseteq f_{\square}([\beta])$ and $f_{\blacksquare}([\alpha])\sqsubseteq f_{\blacksquare}([\beta])$. 
\end{proof}
\noindent $(\mathfrak{F}_{1}/\equiv_{\vdash}, \sqcup,\sqcap,\neg,\lrcorner,[\top],[\bot])$ is a pdBa;   Lemma \ref{monotonicity of operators} along with axioms  16a, 16b, 17a, 17b and Proposition \ref{ldbm-mdba1} gives
\begin{theorem}
	\label{lbam}
	{\rm   $\mathcal{L}_{\Sigma}(\mathfrak{F}_{1}) \in V_\Sigma$.} 
\end{theorem}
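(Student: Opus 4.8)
The statement asserts two things at once: that $\mathcal{L}_{\Sigma}(\mathfrak{F}_{1})$ is a pdBao, and that every sequent of $\Sigma$ is valid in it (this is what membership in $V_{\Sigma}$ means). The plan is to dispatch the algebraic structure directly from the modal axioms and then reduce the $\Sigma$-validity to the provability of substitution instances.

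First I would verify the conditions of Definition \ref{DBA with operators}. The reduct $(\mathfrak{F}_{1}/\equiv_{\vdash},\sqcup,\sqcap,\neg,\lrcorner,[\top],[\bot])$ is already a pdBa, and $f_{\square},f_{\blacksquare}$ are monotonic by Lemma \ref{monotonicity of operators}, so only the operator axioms 1a--3b remain. Each follows by unfolding the definitions $f_{\square}([\alpha])=[\square\alpha]$ and $f_{\blacksquare}([\alpha])=[\blacksquare\alpha]$ and applying the matching modal axiom via Proposition \ref{ldbm-mdba}. For instance $f_{\square}([\alpha]\sqcap[\beta])=f_{\square}([\alpha\sqcap\beta])=[\square(\alpha\sqcap\beta)]=[\square\alpha\sqcap\square\beta]=f_{\square}([\alpha])\sqcap f_{\square}([\beta])$, where the middle equality is axiom 16a; axioms 16b, 17a, 17b, 18a, 18b yield 1b, 2a, 2b, 3a, 3b in exactly the same way. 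This settles that $\mathcal{L}_{\Sigma}(\mathfrak{F}_{1})$ is a pdBao.

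For membership in $V_{\Sigma}$ I would fix a sequent $\phi\vdash\psi$ of $\Sigma$ and an arbitrary valuation $v$ on $\mathcal{L}_{\Sigma}(\mathfrak{F}_{1})$, and prove $v(\phi)\sqsubseteq v(\psi)$. The device is a substitution lemma: since $v$ maps each object variable $p$ to some $[\mu_{p}]\in D_{\sqcap}$ and each property variable $P$ to some $[\nu_{P}]\in D_{\sqcup}$, choosing representatives gives a type-respecting substitution $\theta$ satisfying $\mu_{p}\sqcap\mu_{p}\dashv\vdash\mu_{p}$ and $\nu_{P}\sqcup\nu_{P}\dashv\vdash\nu_{P}$, and an induction on the structure of a formula $\chi$ (the modal steps using $f_{\square},f_{\blacksquare}$ and Definition \ref{algvaludbao}) shows $v(\chi)=[\theta(\chi)]$. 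Reading the sequents of $\Sigma$ as schemas, $\theta(\phi)\vdash\theta(\psi)$ is a substitution instance of an axiom of $\textbf{MPDBL}\Sigma$, hence provable; Proposition \ref{ldbm-mdba} then gives $[\theta(\phi)]\sqsubseteq[\theta(\psi)]$, that is, $v(\phi)\sqsubseteq v(\psi)$. As $v$ and the chosen sequent are arbitrary, every sequent of $\Sigma$ is valid in $\mathcal{L}_{\Sigma}(\mathfrak{F}_{1})$, so $\mathcal{L}_{\Sigma}(\mathfrak{F}_{1})\in V_{\Sigma}$.

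The routine part is the verification of the operator axioms, which is mechanical once Proposition \ref{ldbm-mdba} is in hand. The main obstacle is the second part: one must ensure that the substitution $\theta$ extracted from $v$ respects the object/property typing (guaranteed by $v(p)\in D_{\sqcap}$, $v(P)\in D_{\sqcup}$ together with axioms 15a, 15b), and that the sequents of $\Sigma$ are genuinely treated as schemas closed under such substitutions. This is essential because validity in $\mathcal{L}_{\Sigma}(\mathfrak{F}_{1})$ quantifies over all valuations, not merely the canonical one sending each variable to its own class; without the schema reading one would only obtain that the $\Sigma$-sequents hold canonically, which is strictly weaker than $\mathcal{L}_{\Sigma}(\mathfrak{F}_{1})\in V_{\Sigma}$.
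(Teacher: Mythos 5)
Your proof is correct, and its first half --- verifying the pdBao operator axioms for $f_{\square},f_{\blacksquare}$ by unfolding $f_{\square}([\alpha])=[\square\alpha]$ and applying axioms 16a--18b through Proposition \ref{ldbm-mdba}, with monotonicity from Lemma \ref{monotonicity of operators} --- is exactly the computation the paper performs. Where you genuinely diverge is that the paper's proof stops there: it concludes that $\mathcal{L}_{\Sigma}(\mathfrak{F}_{1})$ is a pdBao and never explicitly checks the second half of membership in $V_{\Sigma}$, namely that every sequent of $\Sigma$ is valid (satisfied under \emph{every} valuation, not just the canonical $v_{+}$) in $\mathcal{L}_{\Sigma}(\mathfrak{F}_{1})$. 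Your substitution lemma supplies precisely this step, and your closing remark isolates the real content: the canonical valuation only witnesses that the $\Sigma$-sequents hold canonically, which is strictly weaker than validity. The paper implicitly treats the members of $\Sigma$ as schemas (as the displayed axioms 19a--21b suggest) and leaves the quantification over valuations tacit; your argument makes this precise, including the point that the extracted substitution must respect the object/property typing so that the images of axioms 15a and 15b remain provable. In short: same route for the algebraic part, and a fully spelled-out validity argument for the $V_{\Sigma}$ part that the paper's own proof elides --- a worthwhile addition rather than a redundancy.
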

\begin{proof}
	By Propositions \ref{ldbm-mdba} and \ref{ldbm-mdba1}, $(\mathfrak{F}_{1}/\equiv_{\vdash}, \sqcup,\sqcap,\neg,\lrcorner,[\top],[\bot])$ is a pdBa.   Lemma \ref{monotonicity of operators} implies that the operators $f_{\square}, f_{\blacksquare}$ are monotonic. 
	Now let $[\alpha],[\beta]\in\mathfrak{F}_{1}/\equiv_{\vdash} $. Then  $f_{\square}([\alpha]\sqcap[\beta])=f_{\square}([\alpha\sqcap\beta])=[\square(\alpha\sqcap\beta)]=[\square\alpha\sqcap\square\beta]=[\square\alpha]\sqcap [\square\beta]$ by axiom 16a, which implies that $f_{\square}([\alpha]\sqcap[\beta])=[\square\alpha]\sqcap[\square\beta]=f_{\square}([\alpha])\sqcap f_{\square}([\beta])$. Dually, we can show that $f_{\blacksquare}([\alpha]\sqcup [\beta])= f_{\blacksquare}([\alpha])\sqcup f_{\blacksquare}([\beta])$. The others axioms can be  proved directly using  axioms  17a, 17b, 18a, and 18b, which implies that $\mathcal{L}_{\Sigma}(\mathfrak{F}_{1})$ is a pdBao. 
\end{proof}
\noindent Now, we define $v_{+}:\textbf{OV}\cup \textbf{PV}\cup \{\top, \bot\}\rightarrow \mathfrak{F}_{1}/\equiv_{\vdash}$, by $v_{+}(x):=[x]$ for all $x\in \textbf{OV}\cup \textbf{PV}\cup \{\top, \bot\}$.
\begin{lemma}
	{\rm $v_{+}$ is a valuation on $\mathcal{L}_{\Sigma}(\mathfrak{F}_{1})$. Moreover, for all $\alpha\in \mathfrak{F}_{1}$, $v_{+}(\alpha)=[\alpha]$.}
\end{lemma}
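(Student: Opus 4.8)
The plan is to first confirm that $v_{+}$ satisfies the requirements of Definition \ref{algvaludbao} (which refers back to Definition \ref{valution}) to qualify as a valuation on the pdBao $\mathcal{L}_{\Sigma}(\mathfrak{F}_{1})$, and then to establish the extension formula $v_{+}(\alpha)=[\alpha]$ by induction on the number of connectives in $\alpha$. The whole argument mirrors the non-modal Lemma \ref{cnnvaluation}, with two extra clauses for the modal connectives.

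For the first part I would verify the three defining conditions on the generators. For $p\in\textbf{OV}$, axiom 15a gives $p\sqcap p\dashv\vdash p$, hence $[p]\sqcap[p]=[p\sqcap p]=[p]$, so $v_{+}(p)=[p]\in(\mathfrak{F}_{1}/\equiv_{\vdash})_{\sqcap}$; dually, axiom 15b yields $v_{+}(P)=[P]\in(\mathfrak{F}_{1}/\equiv_{\vdash})_{\sqcup}$ for $P\in\textbf{PV}$. Since the designated constants of $\mathcal{L}_{\Sigma}(\mathfrak{F}_{1})$ are by construction $[\top]$ and $[\bot]$, we have $v_{+}(\top)=[\top]=\top_{\mathcal{L}_{\Sigma}(\mathfrak{F}_{1})}$ and $v_{+}(\bot)=[\bot]=\bot_{\mathcal{L}_{\Sigma}(\mathfrak{F}_{1})}$ immediately. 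Because $\mathcal{L}_{\Sigma}(\mathfrak{F}_{1})\in V_{\Sigma}$ is a pdBao by Theorem \ref{lbam}, these checks suffice to make $v_{+}$ a legitimate valuation on it.

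For the \emph{moreover} part, the base cases $\alpha\in\textbf{OV}\cup\textbf{PV}\cup\{\top,\bot\}$ hold by the very definition of $v_{+}$. In the inductive step I would use that the operations on $\mathfrak{F}_{1}/\equiv_{\vdash}$ are precisely those induced by the connectives, so that $[\beta]\sqcap[\gamma]=[\beta\sqcap\gamma]$, $[\beta]\sqcup[\gamma]=[\beta\sqcup\gamma]$, $\neg[\beta]=[\neg\beta]$ and $\lrcorner[\beta]=[\lrcorner\beta]$, together with the induced operators $f_{\square}([\beta])=[\square\beta]$ and $f_{\blacksquare}([\beta])=[\blacksquare\beta]$. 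For example, if $\alpha=\square\beta$ then Definition \ref{algvaludbao} gives $v_{+}(\square\beta)=\textbf{I}(v_{+}(\beta))$, where $\textbf{I}=f_{\square}$ is the interior operator of $\mathcal{L}_{\Sigma}(\mathfrak{F}_{1})$; the induction hypothesis $v_{+}(\beta)=[\beta]$ then yields $v_{+}(\square\beta)=f_{\square}([\beta])=[\square\beta]$, and the $\blacksquare$ case is analogous with $f_{\blacksquare}$. The remaining binary and unary propositional cases follow at once from the induction hypothesis and the displayed identities.

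There is no serious obstacle: the claim is the standard fact that the canonical assignment of each generator to its equivalence class extends to the identity map on the Lindenbaum--Tarski algebra. The only point requiring care is to match the clauses $v_{+}(\square\beta):=\textbf{I}(v_{+}(\beta))$ and $v_{+}(\blacksquare\beta):=\textbf{C}(v_{+}(\beta))$ of Definition \ref{algvaludbao} with the induced operators $f_{\square},f_{\blacksquare}$ — that is, to recall (via Theorem \ref{lbam} and Note \ref{dual-ldbalg-ope}) that these are exactly the operators making $\mathcal{L}_{\Sigma}(\mathfrak{F}_{1})$ a pdBao — so that the two modal steps of the induction close cleanly.
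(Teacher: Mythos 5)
Your proposal is correct and follows essentially the same route as the paper: verify the valuation conditions on generators via axioms 15a, 15b and the definition of $[\top],[\bot]$, then induct on the number of connectives using the fact that the operations of $\mathcal{L}_{\Sigma}(\mathfrak{F}_{1})$, including $f_{\square}$ and $f_{\blacksquare}$, are exactly those induced by the connectives. The paper leaves the inductive step as a one-line remark; you have merely spelled out the same argument.
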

\begin{proof}
	For any $p\in \textbf{OV}$, $v_{+}(p):=[p]\in \mathcal{L}_{\Sigma}(\mathfrak{F}_{1})_{\sqcap}$, using axiom 15a. Similarly, using axiom 15b, for any $P\in \textbf{PV}$, $v_{+}(P):=[P]\in \mathcal{L}_{\Sigma}(\mathfrak{F}_{1})_{\sqcup}$. By definition of $v_{+}$, $v_{+}(\top)=[\top]=\top_{\mathcal{L}_{\Sigma}(\mathfrak{F}_{1})}$. Similarly, $v_{+}(\bot)=\bot_{\mathcal{L}_{\Sigma}(\mathfrak{F}_{1})}$. So $v_{+}$ is a valuation on $\mathcal{L}_{\Sigma}(\mathfrak{F}_{1})$.

	The second part of the lemma is proved by
	 mathematical induction on the number of connectives in $\alpha$; using the definition of $v_{+}$ and the induction hypothesis, the cases are obtained.
\end{proof}
\begin{theorem}[Completeness]
	\label{algeb compe mdbl}
	{\rm If an s-hypersequent $G:\alpha_{1}\vdash\beta_{1}\mid\alpha_{2}\vdash\beta_{2}\mid\ldots\mid\alpha_{n}\vdash\beta_{n}$ is valid in the class $V_{\Sigma}$ then it is provable in $\textbf{MPDBL}\Sigma$.}
\end{theorem}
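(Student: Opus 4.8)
The plan is to prove completeness by contraposition, exactly as in the proof of Theorem \ref{algcomdbl} for \textbf{PDBL}, but now using the Lindenbaum--Tarski algebra $\mathcal{L}_{\Sigma}(\mathfrak{F}_{1})$ as a canonical counter-model. The one feature that makes the whole scheme work uniformly for an arbitrary set $\Sigma$ of sequents is that the quotient algebra itself lies in the class $V_{\Sigma}$; this is already secured by Theorem \ref{lbam}, so in the present argument it can simply be cited.

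First I would assume that $G$ is \emph{not} provable in $\textbf{MPDBL}\Sigma$. By Proposition \ref{ldbm-mdba1}, this forces every component $\alpha_{i}\vdash\beta_{i}$ to be unprovable, and Proposition \ref{ldbm-mdba} then converts each such failure into the algebraic inequality $[\alpha_{i}]\not\sqsubseteq[\beta_{i}]$ in $\mathcal{L}_{\Sigma}(\mathfrak{F}_{1})$, for every $i\in\{1,\ldots,n\}$. Next I would invoke the valuation $v_{+}$ on $\mathcal{L}_{\Sigma}(\mathfrak{F}_{1})$, which by the lemma immediately preceding this theorem satisfies $v_{+}(\alpha)=[\alpha]$ for every formula $\alpha$. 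Hence $v_{+}(\alpha_{i})=[\alpha_{i}]\not\sqsubseteq[\beta_{i}]=v_{+}(\beta_{i})$ for each $i$, so $v_{+}$ fails to satisfy every component of $G$.

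By the definition of satisfaction of an s-hypersequent (Definition \ref{satis-hyper-sequent}), which requires that at least one component be satisfied, $v_{+}$ does not satisfy $G$; thus $G$ is not true in $\mathcal{L}_{\Sigma}(\mathfrak{F}_{1})$. Since $\mathcal{L}_{\Sigma}(\mathfrak{F}_{1})\in V_{\Sigma}$ by Theorem \ref{lbam}, this exhibits a member of $V_{\Sigma}$ in which $G$ fails, so $G$ is not valid in $V_{\Sigma}$. Contraposing yields the statement: validity of $G$ in $V_{\Sigma}$ implies provability of $G$ in $\textbf{MPDBL}\Sigma$, as required. The argument is essentially routine once the canonical algebra is available; the only genuinely load-bearing step is the membership $\mathcal{L}_{\Sigma}(\mathfrak{F}_{1})\in V_{\Sigma}$ — i.e. that the quotient is a pdBao whose induced operators $f_{\square},f_{\blacksquare}$ are monotonic and satisfy axioms 16--18 together with each added sequent of $\Sigma$ — but this is precisely the content of Theorem \ref{lbam}, so it need not be reproved here.
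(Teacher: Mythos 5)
Your proposal is correct and follows essentially the same route as the paper: assume unprovability, use Propositions \ref{ldbm-mdba1} and \ref{ldbm-mdba} to get $[\alpha_i]\not\sqsubseteq[\beta_i]$ for every component, and then use the canonical valuation $v_+$ on $\mathcal{L}_\Sigma(\mathfrak{F}_1)$ to refute $G$ in a member of $V_\Sigma$. The only difference is cosmetic — you phrase it as a contraposition and explicitly flag the reliance on Theorem \ref{lbam} ($\mathcal{L}_\Sigma(\mathfrak{F}_1)\in V_\Sigma$), which the paper's proof uses implicitly in the phrase ``which is not possible, as $G$ is valid in $V_\Sigma$.''
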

\begin{proof}
	Let $G:\alpha_{1}\vdash\beta_{1}\mid\alpha_{2}\vdash\beta_{2}\mid\ldots\mid\alpha_{n}\vdash\beta_{n}$ be not provable in  $\textbf{MPDBL}\Sigma$. By   Proposition \ref{ldbm-mdba1}, for all $i\in \{1,2,\ldots, n\}$, $\alpha_{i}\vdash\beta_{i}$ is not provable in  $\textbf{MPDBL}\Sigma$.  By Proposition \ref{ldbm-mdba},  for all $i\in \{1,2,\ldots, n\}$, $[\alpha_{i}]\cancel{\sqsubseteq}[\beta_{i}]$, which implies that for all $i\in \{1,2,\ldots, n\}$, $v_{+}(\alpha_{i})\cancel{\sqsubseteq} v_{+}(\beta_{i})$. So $G$ is not true in $\mathcal{L}_{\Sigma}(\mathfrak{F}_{1})$, which is not possible, as $G$ is valid in $V_{\Sigma}$. So $G$ is provable in $\textbf{MPDBL}\Sigma$.
\end{proof}

\noindent   \textbf{MPDBL4} is defined as the logic $\textbf{MPDBL}\Sigma$ where  $\Sigma$ contains the following:

\begin{center}
	$\begin{array}{l l}
	19a~ \square\alpha\vdash\alpha &
	19b~ \alpha\vdash\blacksquare\alpha\\
	20a~ \square\square\alpha\dashv\vdash\square\alpha & 
	20b~ \blacksquare\blacksquare\alpha\dashv\vdash\blacksquare\alpha
	\end{array}$
\end{center}
We have thus obtained  
\begin{theorem}[Soundness and Completeness]
	\label{sundandcompdbl}
	{\rm \noindent 
		\begin{enumerate}
			\item An s-hypersequent $G:\alpha_{1}\vdash\beta_{1}\mid\alpha_{2}\vdash\beta_{2}\mid\ldots\mid\alpha_{n}\vdash\beta_{n}$ is provable in \textbf{MPDBL} if and only if $G:\alpha_{1}\vdash\beta_{1}\mid\alpha_{2}\vdash\beta_{2}\mid\ldots\mid\alpha_{n}\vdash\beta_{n}$ is valid in the class of all  pdBaos.
			\item An s-hypersequent $G:\alpha_{1}\vdash\beta_{1}\mid\alpha_{2}\vdash\beta_{2}\mid\ldots\mid\alpha_{n}\vdash\beta_{n}$ is provable in \textbf{MPDBL4} if and only if $G:\alpha_{1}\vdash\beta_{1}\mid\alpha_{2}\vdash\beta_{2}\mid\ldots\mid\alpha_{n}\vdash\beta_{n}$ is valid in the class of all tpdBas.
	\end{enumerate}}
\end{theorem}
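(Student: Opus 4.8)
The plan is to obtain both statements as special cases of the soundness and completeness results already established for the parametrized calculus $\textbf{MPDBL}\Sigma$, namely Theorems \ref{soundmdl} and \ref{algeb compe mdbl}. Together these give, for every set $\Sigma$ of sequents, that an s-hypersequent $G$ is provable in $\textbf{MPDBL}\Sigma$ if and only if $G$ is valid in $V_{\Sigma}$, the class of pdBaos in which all sequents of $\Sigma$ are valid. Thus the only work left is to identify $V_{\Sigma}$ concretely for the two choices of $\Sigma$ relevant here.

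For statement (1), I would take $\Sigma=\emptyset$, so that $\textbf{MPDBL}\emptyset$ is literally $\textbf{MPDBL}$. Since the empty set of sequents is vacuously valid in every pdBao, $V_{\emptyset}$ is exactly the class of all pdBaos. Hence the biconditional supplied by Theorems \ref{soundmdl} and \ref{algeb compe mdbl} instantiated at $\Sigma=\emptyset$ is precisely statement (1), and no further argument is needed.

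For statement (2), take $\Sigma=\{19a,19b,20a,20b\}$, so that $\textbf{MPDBL}\Sigma$ is $\textbf{MPDBL4}$. It suffices to prove the semantic lemma that $V_{\Sigma}$ equals the class of all tpdBas, after which (2) again follows from Theorems \ref{soundmdl} and \ref{algeb compe mdbl}. I would prove this lemma by two inclusions measured against the defining conditions 4a, 4b, 5a, 5b of a tpdBa (Definition \ref{DBA with operators}). For the forward inclusion, given a tpdBa and an arbitrary valuation $v$, I would translate each sequent using $v(\square\alpha)=\textbf{I}(v(\alpha))$ and $v(\blacksquare\alpha)=\textbf{C}(v(\alpha))$: sequent 19a becomes $\textbf{I}(v(\alpha))\sqsubseteq v(\alpha)$ (condition 4a), 19b becomes $v(\alpha)\sqsubseteq \textbf{C}(v(\alpha))$ (condition 4b), and 20a, 20b become the idempotency equalities $\textbf{I}\textbf{I}=\textbf{I}$ and $\textbf{C}\textbf{C}=\textbf{C}$ (conditions 5a, 5b); each holds by hypothesis, so every tpdBa lies in $V_{\Sigma}$.

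The converse inclusion is where the real content lies, and is the step I expect to be the main obstacle. Here I must show that if all four sequents are valid in a pdBao $\mathfrak{O}$ over all valuations, then conditions 4a--5b hold for every $x\in D$, even though valuations are constrained to send object variables into $D_{\sqcap}$ and property variables into $D_{\sqcup}$. The key device is purity: by the defining property of a pdBa, $D=D_{\sqcap}\cup D_{\sqcup}$. Given $x\in D_{\sqcap}$, the valuation $v$ with $v(p)=x$ for an object variable $p$ realizes $x$ as $v(p)$, and validity of the instance $\square p\vdash p$ yields $\textbf{I}(x)\sqsubseteq x$; given $x\in D_{\sqcup}$, the valuation with $v(P)=x$ for a property variable $P$ together with $\square P\vdash P$ yields the same. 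Covering both cases gives condition 4a for all $x$, and the symmetric argument with 19b gives 4b. For the $\dashv\vdash$-sequents 20a, 20b I would run the same realization argument to obtain the two inequalities $\textbf{I}\textbf{I}(x)\sqsubseteq\textbf{I}(x)$ and $\textbf{I}(x)\sqsubseteq\textbf{I}\textbf{I}(x)$ (and dually for $\textbf{C}$), and then invoke antisymmetry of $\sqsubseteq$, which holds because $\sqsubseteq$ is a partial order on any pdBa (Proposition \ref{order pure}), to conclude the idempotency equalities 5a, 5b. This shows $\mathfrak{O}$ is a tpdBa, completing the lemma and hence statement (2).
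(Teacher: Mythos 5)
Your proposal is correct and follows essentially the same route as the paper, which presents Theorem \ref{sundandcompdbl} as an immediate instance of the general soundness and completeness results for $\textbf{MPDBL}\Sigma$ (Theorems \ref{soundmdl} and \ref{algeb compe mdbl}) with $\Sigma=\emptyset$ and $\Sigma=\{19a,19b,20a,20b\}$ respectively. The only difference is that you explicitly verify the identification of $V_{\{19a,19b,20a,20b\}}$ with the class of all tpdBas (using purity, $D=D_{\sqcap}\cup D_{\sqcup}$, to realize every element as a value of a variable), a step the paper leaves implicit; your argument for it is sound.
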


\section{Relational semantics  for the logics}
\label{semisemantics}
In this section, a two-sorted approach to give a relational semantics for the logics is proposed. Instead of taking the Kripke frame as a relational structure, we work with a context $\mathbb{K}:=(G, M, R)$ or a Kripke context $\mathbb{KC}$ based on $\mathbb{K}$. We consider the pdBa $\mathfrak{H}(\mathbb{K})$ and as defined earlier, a valuation on $\underline{\mathfrak{H}}(\mathbb{K})$ is a map $v: \textbf{OV}\cup\textbf{PV}\cup \{\top, \bot\}\rightarrow \mathfrak{H}(\mathbb{K})$ such that $v(p)\in \mathfrak{H}(\mathbb{K})_{\sqcap}$, $v(P)\in \mathfrak{H}(\mathbb{K})_{\sqcup}$, $v(\top)=(G, \emptyset)$ and $v(\bot)=(\emptyset, M)$.

\subsection{  Relational semantics for PDBL}
\label{gsempdbl}

First, we define a model for  \textbf{PDBL}.
Interpretation is defined in the same way as in  \cite{Gehrke, CWFSPAPMTAWN}. Instead of saying ``a part of'' relation ($\succ$), we refer to it as ``co-satisfaction" as in \cite{CWFSPAPMTAWN}.
\begin{definition}
	
	\label{pdBamodel}
	{\rm A model for \textbf{PDBL} is a pair $\mathbb{M}:=(\mathbb{K},v)$, where $\mathbb{K}=(G,M,R)$ is a context and   $v$ is a valuation on $\underline{\mathfrak{H}}(\mathbb{K})$.
	}
\end{definition}

\noindent Given a model $\mathbb{M}:=(\mathbb{K},v)$, the relations $\models$ of satisfaction  and $\succ$ of co-satisfaction of formulae with respect to $\mathbb{M}$ are defined recursively as follows. For any $g\in G$ and formula $\alpha$, $\mathbb{M}, g\models \alpha$ denotes $\alpha$ is satisfied at $g$ in $\mathbb{M}$, while for any $m\in M$, $\mathbb{M}, m\succ \alpha$ denotes $\alpha$ is co-satisfied at $m$ in $\mathbb{M}$.


\begin{definition}[Satisfaction and co-satisfaction]
	\label{satisfiction and co-satisfiction}
	{\rm For each $g\in G$ and for each $m\in M$,
		\begin{enumerate}
			\item $\mathbb{M},g\models p ~\mbox{if and only if}~ g\in ext(v(p))$,  where $p\in\textbf{OV}.$
			\item $\mathbb{M},g\models P ~\mbox{if and only if}~ g\in ext(v(P))$,  where $P\in \textbf{PV}.$
			\item $\mathbb{M},g\models \top$ for all $g\in G$ and $\mathbb{M},m\not\succ \top$ for all $m\in M$.
			\item $\mathbb{M},g\not\models\bot$ for all $g\in G$ and $\mathbb{M},m \succ \bot$ for all $m\in M$.
			\item $\mathbb{M},g\models \alpha\sqcap \beta$ if and only if $\mathbb{M},g\models \alpha$ and $\mathbb{M},g\models\beta$
			
			\item $\mathbb{M},g\models \neg \alpha$ if and only if $\mathbb{M},g\not\models\alpha $.
			\item $\mathbb{M},m\succ p~\mbox{if and only if}~ m\in int(v(p))$, where $p\in\textbf{OV}.$
			\item $\mathbb{M},m\succ P~\mbox{if and only if}~ m\in int(v(P))$, where $P \in \textbf{PV}.$
			\item $\mathbb{M},m\succ\alpha\sqcup \beta$ if and only if $\mathbb{M},m\succ\alpha$ and $\mathbb{M},m\succ\beta$.
			\item $\mathbb{M},m\succ\lrcorner\alpha$ if and only if $\mathbb{M},m\not\succ\alpha$.
			
			\item $ \mathbb{M},g\models \alpha \sqcup\beta $ if and only if for all $m\in M(\mathbb{M},m\succ \alpha\sqcup\beta\Longrightarrow gRm)$.
			
			\item $\mathbb{M},g\models \lrcorner \alpha$ if and only if  for all $m\in M(\mathbb{M},m\not\succ \alpha\Longrightarrow gRm)$.
			\item $\mathbb{M},m\succ\neg\alpha$ if and only if for all $g\in G, (\mathbb{M},g\not\models\alpha\Longrightarrow gRm )$.
			\item $\mathbb{M},m\succ \alpha\sqcap\beta$ if and only if for all $g\in G,  (\mathbb{M},g\models\alpha\sqcap\beta\Longrightarrow gRm) $.
	\end{enumerate}}
	
\end{definition}

\begin{definition} 
	\label{set-of-world-co}
	{\rm For any model $\mathbb{M}:=(\mathbb{K}, v)$ and  $\alpha\in \mathfrak{F}$,  the maps $v_{1}: \mathfrak{F}\rightarrow \mathcal{P}(G)$ and $v_{2}: \mathfrak{F}\rightarrow \mathcal{P}(M)$ are defined as   $v_{1}(\alpha):=\{g\in G~|~ \mathbb{M},g\models \alpha\},$ and  $v_{2}(\alpha):=\{m\in M~ |~ \mathbb{M},m\succ \alpha\}$.  }
\end{definition}

\begin{proposition}
	\label{satisfiction-set}
	{\rm Let $\mathbb{M}:=(\mathbb{K}, v)$ be a model and $\alpha, \beta\in \mathfrak{F}$. 
		\begin{enumerate}
			\item $v_{1}(\alpha\sqcap\beta)=v_{1}(\alpha)\cap v_{1}(\beta)$ and $v_{2}(\alpha\sqcap\beta)=(v_{1}(\alpha)\cap v_{1}(\beta))^{\prime}$.
			\item $v_{1}(\alpha\sqcup\beta)=(v_{2}(\alpha)\cap v_{2}(\beta))^{\prime}$ and $v_{2}(\alpha\sqcup\beta)=v_{2}(\alpha)\cap v_{2}(\beta)$.
			\item $v_{1}(\neg\alpha)= v_{1}(\alpha)^{c}$ and  $v_{2}(\neg\alpha)= (v_{1}(\alpha)^{c})^{\prime}$.
			\item $v_{1}(\lrcorner \alpha)= (v_{2}(\alpha)^{c})^{\prime}$ and $v_{2}(\lrcorner \alpha)= (v_{2}(\alpha)^{c}$.
			\item $v_{1}(\top)=G$ and $v_{2}(\top)=\emptyset$.
			\item $v_{1}(\bot)=\emptyset$ and $ v_{2}(\bot)=M$.
	\end{enumerate}}
\end{proposition}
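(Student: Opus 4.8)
The plan is to establish all six items by directly unwinding the recursive clauses of Definition \ref{satisfiction and co-satisfiction}, reading each claimed equality between subsets of $G$ (respectively $M$) as an equivalence between membership conditions. Since by Definition \ref{set-of-world-co} the sets $v_{1}(\gamma)$ and $v_{2}(\gamma)$ are exactly the collections of objects satisfying and of properties co-satisfying $\gamma$, each assertion reduces to rewriting one defining clause of the $\models$/$\succ$ relations, with no appeal to any structural property beyond the definitions themselves.

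First I would dispose of the clauses that do not involve the relation $R$. For the first equality of item 1, clause (5) gives $g\models\alpha\sqcap\beta$ iff $g\models\alpha$ and $g\models\beta$, i.e. $g\in v_{1}(\alpha)\cap v_{1}(\beta)$; the first equality of item 3 follows identically from clause (6), reading negation as non-satisfaction; the $\sqcup$- and $\lrcorner$-analogues on the $M$-side, namely the second equalities of items 2 and 4, come the same way from clauses (9) and (10). Items 5 and 6 are read off directly from clauses (3) and (4).

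The slightly more delicate half comprises the equalities whose right-hand side carries a derivation operator $(\,)'$, namely the second equalities of items 1 and 3 and the first equalities of items 2 and 4. Here the key observation is that the polarity clauses (11)--(14) are phrased precisely as a universally quantified implication ``$\cdots\Longrightarrow gRm$'', which matches the definition of $(\,)'$ from the Introduction, $A'=\{m\in M:\forall g\,(g\in A\Rightarrow gRm)\}$ and dually for $B'$. Thus for item 1, clause (14) gives $m\succ\alpha\sqcap\beta$ iff for all $g$ one has $g\models\alpha\sqcap\beta\Rightarrow gRm$; substituting the already-proved identity $v_{1}(\alpha\sqcap\beta)=v_{1}(\alpha)\cap v_{1}(\beta)$ converts this into $m\in(v_{1}(\alpha)\cap v_{1}(\beta))'$. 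The same pattern, using clause (13) together with $v_{1}(\neg\alpha)=v_{1}(\alpha)^{c}$, yields the second equality of item 3, while clauses (11) and (12), now applying the derivation operator to subsets of $M$, yield the first equalities of items 2 and 4.

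I would order the six items so that within each the ``plain'' equality is proved before the one containing $(\,)'$, since the latter substitutes the former. The main (and admittedly modest) obstacle is purely bookkeeping: one must keep straight which of the two defining clauses governs a connective at an object versus at a property --- for instance that $\sqcap$ is handled by clause (5) at objects but by clause (14) at properties --- and must invoke the correct side of the Galois connection $(\,)'$ when passing between $\mathcal{P}(G)$ and $\mathcal{P}(M)$. No genuinely hard step arises, as the proposition serves only as a bridge lemma translating the relational clauses into set-theoretic identities for use in the soundness and completeness arguments that follow.
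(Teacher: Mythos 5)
Your proposal is correct and follows essentially the same route as the paper: both proofs simply unwind the clauses of Definition \ref{satisfiction and co-satisfiction}, prove the ``plain'' intersection/complement identities first, and then obtain the primed identities by matching the universally quantified implication in clauses (11)--(14) with the derivation operator $(\,)'$ and substituting the already-established identity. The paper writes out only items 1 and 3 and appeals to duality for 2 and 4, but the content is the same as yours.
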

\begin{proof}
We given the proofs of 1 and 3. 2 and 4 follow dually. 5 and 6 follow directly from Definition \ref{satisfiction and co-satisfiction}.
\begin{eqnarray*}
		1.~v_{1}(\alpha\sqcap\beta)
		&=&\{g\in G~|~\mathbb{M},g\models \alpha\sqcap\beta\}
		=\{g\in G~|~\mathbb{M},g\models \alpha~\mbox{and}~\mathbb{M},g\models \beta\}\\
		&=&\{g\in G~|~ g\in v_{1}(\alpha)~\mbox{and}~g\in v_{1}(\beta)\}
		= v_{1}(\alpha)\cap v_{1}(\beta).
	\end{eqnarray*}
	\begin{eqnarray*}	~~~v_{2}(\alpha\sqcap\beta)&=&\{m\in M~|~\mathbb{M},m\succ \alpha\sqcap\beta\}\\
		&=&\{m\in M~|~\mbox{for all}~ g\in G( \mathbb{M},g\models \alpha\sqcap\beta \Longrightarrow gRm)\}\\
		&=&\{m\in M~|~\mbox{for all}~ g\in G( \mathbb{M},g\models \alpha~\mbox{and}~\mathbb{M},g\models\beta \Longrightarrow gRm)\}\\
		&=&\{m\in M~|~\mbox{for all}~ g\in G( g\in v_{1}(\alpha)~\mbox{and}~g\in v_{1}(\beta) \Longrightarrow gRm)\}\\
		&=&\{m\in M~|~\mbox{for all}~ g\in G( g\in v_{1}(\alpha)\cap v_{1}(\beta) \Longrightarrow gRm)\}\\
		&=&  (v_{1}(\alpha)\cap v_{1}(\beta))^{\prime}.
	\end{eqnarray*}
	\begin{eqnarray*}
		3.~v_{1}(\neg\alpha)
		&=&\{g\in G~|~\mathbb{M},g\models \neg\alpha\}
		=\{g\in G~|~\mathbb{M},g\not\models \alpha\}\\
		&=&\{g\in G~|~ g\notin v_{1}(\alpha)\}
		=(v_{1}(\alpha))^{c}.
	\end{eqnarray*}
	\begin{eqnarray*}
		~~~v_{2}(\neg\alpha)
		&=&\{m\in M~|~\mathbb{M},m\succ\neg \alpha\}\\
		&=&\{m\in M~|~\mbox{for all}~ g\in G( \mathbb{M},g\not\models \alpha \Longrightarrow gRm)\}\\
		&=&\{m\in M~|~\mbox{for all}~ g\in G( g\notin v_{1}(\alpha) \Longrightarrow gRm)\}\\
		&=&\{m\in M~|~\mbox{for all}~ g\in G( g\in (v_{1}(\alpha))^{c} \Longrightarrow gRm)\}\\
		&=&(v_{1}(\alpha))^{c\prime}.
	\end{eqnarray*}
\end{proof}

\begin{corollary}
	\label{evproto} {\rm For any formula $\alpha\in \mathfrak{F}$, $(v_{1}(\alpha),v_{2}(\alpha))\in \mathfrak{H}(\mathbb{K})$.}
\end{corollary}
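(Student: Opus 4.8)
The plan is to prove the statement by induction on the structure of the formula $\alpha$, showing in each case that one coordinate of the pair $(v_{1}(\alpha), v_{2}(\alpha))$ is the image of the other under the derivation operator $()'$. By the definition of a semiconcept, namely that a pair $(A,B)$ lies in $\mathfrak{H}(\mathbb{K})$ precisely when $A' = B$ or $B' = A$, this is exactly what must be established. The whole argument rests on Proposition \ref{satisfiction-set}, which already records, clause by clause, the relationship between $v_{1}$ and $v_{2}$.

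For the base cases I would treat the four atomic formulas. For an object variable $p$, Definition \ref{satisfiction and co-satisfiction}(1),(7) give $v_{1}(p) = ext(v(p))$ and $v_{2}(p) = int(v(p))$; since a valuation sends $p$ to $v(p) \in \mathfrak{H}(\mathbb{K})_{\sqcap}$, which is of the form $(A, A')$, we obtain $v_{1}(p)' = A' = v_{2}(p)$, so $(v_{1}(p), v_{2}(p))$ is a (left) semiconcept. The case of a property variable $P$ is dual, using that $v(P) \in \mathfrak{H}(\mathbb{K})_{\sqcup}$ has the form $(B', B)$, whence $v_{2}(P)' = v_{1}(P)$. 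For the constants, Proposition \ref{satisfiction-set}(5),(6) give $(v_{1}(\top), v_{2}(\top)) = (G, \emptyset)$ and $(v_{1}(\bot), v_{2}(\bot)) = (\emptyset, M)$; since $\emptyset' = G$ when $\emptyset$ is read as a subset of $M$, and $\emptyset' = M$ when $\emptyset$ is read as a subset of $G$ (both vacuously), both pairs are semiconcepts.

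The inductive step requires essentially no work beyond quoting Proposition \ref{satisfiction-set}, and this is the key observation: each clause there already expresses one coordinate as the derivation of the other. Concretely, for $\alpha = \beta \sqcap \gamma$ the proposition gives $v_{2}(\alpha) = (v_{1}(\beta) \cap v_{1}(\gamma))' = v_{1}(\alpha)'$, so the pair is a semiconcept; the case $\alpha = \neg\beta$ is identical, since $v_{2}(\neg\beta) = (v_{1}(\beta)^{c})' = v_{1}(\neg\beta)'$. Dually, for $\alpha = \beta \sqcup \gamma$ and $\alpha = \lrcorner\beta$ one reads off $v_{1}(\alpha) = v_{2}(\alpha)'$ directly from parts (2) and (4). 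Thus in every compound case one of the two semiconcept conditions holds on the nose.

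I expect no genuine obstacle here. Because Proposition \ref{satisfiction-set} has already carried out all the set-theoretic computation, the induction hypothesis is not even invoked in the compound cases; they follow purely from the syntactic shape of the right-hand sides. The only points that need care are the atomic cases, where one must unwind the definitions of $ext$ and $int$ together with the constraint, built into the notion of a valuation on $\underline{\mathfrak{H}}(\mathbb{K})$, that object variables are sent into $\mathfrak{H}(\mathbb{K})_{\sqcap}$ and property variables into $\mathfrak{H}(\mathbb{K})_{\sqcup}$, plus the two vacuous identities $\emptyset' = G$ and $\emptyset' = M$ for the constants.
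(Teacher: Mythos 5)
Your proof is correct, and it follows the same basic skeleton as the paper's (induction on the structure of $\alpha$, with Proposition \ref{satisfiction-set} doing the computational work), but the way you close each inductive case is genuinely leaner. The paper's proof invokes the induction hypothesis to place $(v_{1}(\beta),v_{2}(\beta))$ and $(v_{1}(\delta),v_{2}(\delta))$ in $\mathfrak{H}(\mathbb{K})$, then uses closure of $\mathfrak{H}(\mathbb{K})$ under $\sqcap,\sqcup,\neg,\lrcorner$ together with the identities of Proposition \ref{satisfiction-set} to recognise $(v_{1}(\alpha),v_{2}(\alpha))$ as the corresponding algebraic combination. You instead observe that each clause of Proposition \ref{satisfiction-set} already exhibits one coordinate as the derivation of the other --- e.g.\ $v_{2}(\beta\sqcap\gamma)=(v_{1}(\beta)\cap v_{1}(\gamma))'=v_{1}(\beta\sqcap\gamma)'$ --- so the defining condition $A'=B$ or $B'=A$ of a semiconcept holds outright, and the induction hypothesis is never used in the compound cases. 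This is a correct and slightly more economical argument; what it gives up is the extra information the paper's route establishes along the way (and records separately as Corollary \ref{PDBL-valution}), namely that $(v_{1}(\cdot),v_{2}(\cdot))$ commutes with the operations of $\underline{\mathfrak{H}}(\mathbb{K})$, which the paper needs later to conclude $v(\alpha)=(v_{1}(\alpha),v_{2}(\alpha))$. Your handling of the base cases, including the vacuous identities $\emptyset'=G$ and $\emptyset'=M$ for the constants and the use of the constraints $v(p)\in\mathfrak{H}(\mathbb{K})_{\sqcap}$, $v(P)\in\mathfrak{H}(\mathbb{K})_{\sqcup}$, matches the paper's.
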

\begin{proof}
	The  result is  obtained by using mathematical induction on the number of connectives that occur  in
	$\alpha$.  Let $\mathbb{M}:=(\mathbb{K},v)$ be a model, where $v:\textbf{OV}\cup\textbf{PV}\cup \{\top, \bot\}\rightarrow\mathfrak{H}(\mathbb{K})$ is the valuation.  
	
	\noindent For the base case let the number of connectives in $\alpha$, $n=0$. Then either  $\alpha$ belongs to $\textbf{OV}\cup\textbf{PV}$ or $\alpha$  is a propositional constant.
	
	\noindent Let $\alpha=p$. Then $v_{1}(p)=\{g\in G~|~ \mathbb{M},g\models p\}=ext(v(p))$ and $ v_{2}(p)=\{m\in M~ |~ \mathbb{M},m\succ p\}= int(v(p))$ by the definition of satisfaction and co-satisfaction. So $(v_{1}(p),v_{2}(p))=(ext(v(p)), int(v(p)))=v(p)\in \mathfrak{H}(\mathbb{K})$.
	
	\noindent If $\alpha=P$ then similar to the  above, $(v_{1}(\alpha),v_{2}(\alpha))=(ext(v(P)), int(v(P)))=v(P)\in \mathfrak{H}(\mathbb{K})$. 
	
	
	\noindent For $\alpha=\top$, $v_{1}(\top)=G$ and $v_{2}(\top)=\emptyset$. So $(v_{1}(\top),v_{2}(\top))=(G,\emptyset)\in \mathfrak{H}(\mathbb{K})$, and if $\alpha=\bot$ then $v_{1}(\bot)=\emptyset$ and $v_{2}(\bot)=M$. So $(v_{1}(\bot),v_{2}(\bot))=(\emptyset, M)\in \mathfrak{H}(\mathbb{K})$. 
	
	\noindent  Now, we assume that the result is true for all formulae  with number of connectives less or equal to $n$. Let $\alpha$ be a formula with $n+1$ connectives. Then  $\alpha$ is of one of the forms $\beta\sqcap\delta$, $\beta\sqcup\delta$, $\neg\beta$, or $\lrcorner\beta$, where $\beta, \delta$ are formulae with number of connectives less or equal to $n$. By induction hypothesis $(v_{1}(\beta),v_{2}(\beta))$ and $(v_{1}(\delta),v_{2}(\delta))$ belong to $\mathfrak{H}(\mathbb{K})$.
	Therefore $(v_{1}(\beta),v_{2}(\beta))\sqcap (v_{1}(\delta),v_{2}(\delta))$, 
	$(v_{1}(\beta),v_{2}(\beta))\sqcup (v_{1}(\delta),v_{2}(\delta))$, 
	$\neg(v_{1}(\beta),v_{2}(\beta))$ and $\lrcorner (v_{1}(\beta),v_{2}(\beta))$
	all belong to $\mathfrak{H}(\mathbb{K})$, as $\mathfrak{H}(\mathbb{K})$ is closed under $\sqcap,\sqcup, \neg, \lrcorner$.
	
	\noindent  By Proposition \ref{satisfiction-set} it follows that
	
	\noindent  $(v_{1}(\beta\sqcap\delta),v_{2}(\beta\sqcap\delta))
	=(v_{1}(\beta)\cap v_{1}(\delta),(v_{1}(\beta)\cap v_{1}(\delta))^{\prime})=(v_{1}(\beta),v_{2}(\beta))\sqcap (v_{1}(\delta),v_{2}(\delta))$, 
	
	\noindent $(v_{1}(\beta\sqcup\delta),v_{2}(\beta\sqcup\delta))=((v_{2}(\beta)\cap v_{2}(\delta))^{\prime},v_{2}(\beta)\cap v_{2}(\delta))=(v_{1}(\beta),v_{2}(\beta))\sqcup (v_{1}(\delta),v_{2}(\delta))$,
	
	\noindent $(v_{1}(\neg\beta),v_{2}(\neg\beta))=(v_{1}(\beta)^{c},(v_{1}(\beta))^{c\prime})=\neg (v_{1}(\beta),v_{2}(\beta))$,
	
	\noindent  $(v_{1}(\lrcorner \beta),v_{2}(\lrcorner \beta))=((v_{2}(\beta))^{c\prime},v_{2}(\beta)^{c})=\lrcorner (v_{1}(\beta),v_{2}(\beta))$. 
	
\noindent So	$(v_{1}(\alpha), v_{2}(\alpha))\in \mathfrak{H}(\mathbb{K})$.
\end{proof}

\begin{corollary}
	\label{PDBL-valution}
	{\rm 
		For any formulae $\alpha, \beta\in \mathfrak{F}$,
		
		\noindent	\begin{enumerate}
			\item $(v_{1}(\alpha\sqcap\beta), v_{2}(\alpha\sqcap\beta))=(v_{1}(\alpha), v_{2}(\alpha))\sqcap (v_{1}(\beta),v_{2}(\beta))$.
			\item $(v_{1}(\alpha\sqcup\beta), v_{2}(\alpha\sqcup\beta))=(v_{1}(\alpha), v_{2}(\alpha))\sqcup (v_{1}(\beta),v_{2}(\beta))$.
			\item $\neg (v_{1}(\alpha), v_{2}(\alpha))=(v_{1}(\neg\alpha), v_{2}(\neg\alpha))$ and $\lrcorner (v_{1}(\alpha), v_{2}(\alpha))=(v_{1}(\lrcorner\alpha), v_{2}(\lrcorner\alpha))$.
	\end{enumerate}}
\end{corollary}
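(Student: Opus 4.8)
The plan is to read off all three identities directly from Proposition \ref{satisfiction-set} together with the explicit formulas for $\sqcap,\sqcup,\neg,\lrcorner$ on $\mathfrak{H}(\mathbb{K})$ recorded in the Introduction; indeed, these very computations already surface inside the inductive step of the proof of Corollary \ref{evproto}, so here I would merely isolate and name them. First I would observe that by Corollary \ref{evproto} each pair $(v_{1}(\gamma),v_{2}(\gamma))$ is a genuine semiconcept of $\mathbb{K}$, so the semiconcept operations of $\underline{\mathfrak{H}}(\mathbb{K})$ are applicable to the right-hand sides and every pair produced is a legitimate element of $\mathfrak{H}(\mathbb{K})$.

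For item 1, I would expand the right-hand side using the definition of $\sqcap$, obtaining $(v_{1}(\alpha),v_{2}(\alpha))\sqcap(v_{1}(\beta),v_{2}(\beta))=(v_{1}(\alpha)\cap v_{1}(\beta),(v_{1}(\alpha)\cap v_{1}(\beta))^{\prime})$. Proposition \ref{satisfiction-set}(1) says exactly that $v_{1}(\alpha\sqcap\beta)=v_{1}(\alpha)\cap v_{1}(\beta)$ and $v_{2}(\alpha\sqcap\beta)=(v_{1}(\alpha)\cap v_{1}(\beta))^{\prime}$, so the two pairs coincide componentwise. Item 2 is handled dually: the definition of $\sqcup$ gives $((v_{2}(\alpha)\cap v_{2}(\beta))^{\prime},v_{2}(\alpha)\cap v_{2}(\beta))$, which matches $v_{1}(\alpha\sqcup\beta)$ and $v_{2}(\alpha\sqcup\beta)$ by Proposition \ref{satisfiction-set}(2).

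For item 3, I would use that the complement in $G$ (respectively in $M$) is written $v_{1}(\alpha)^{c}$ (respectively $v_{2}(\alpha)^{c}$). Then the definition of $\neg$ yields $\neg(v_{1}(\alpha),v_{2}(\alpha))=(G\setminus v_{1}(\alpha),(G\setminus v_{1}(\alpha))^{\prime})=(v_{1}(\alpha)^{c},(v_{1}(\alpha)^{c})^{\prime})$, which equals $(v_{1}(\neg\alpha),v_{2}(\neg\alpha))$ by Proposition \ref{satisfiction-set}(3); similarly $\lrcorner(v_{1}(\alpha),v_{2}(\alpha))=((M\setminus v_{2}(\alpha))^{\prime},M\setminus v_{2}(\alpha))=((v_{2}(\alpha)^{c})^{\prime},v_{2}(\alpha)^{c})$ equals $(v_{1}(\lrcorner\alpha),v_{2}(\lrcorner\alpha))$ by Proposition \ref{satisfiction-set}(4).

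There is essentially no genuine obstacle here; the only points demanding care are bookkeeping ones, namely reconciling the set-difference notation $G\setminus A$ and $M\setminus B$ used in the definitions of $\neg$ and $\lrcorner$ with the complement notation $A^{c}$ appearing in Proposition \ref{satisfiction-set}, and invoking Corollary \ref{evproto} to guarantee that the operands are semiconcepts so that the pairs written on the right are the honest operations of $\underline{\mathfrak{H}}(\mathbb{K})$. Since Proposition \ref{satisfiction-set} already carries the real content, the corollary follows by a line-by-line comparison of the two componentwise descriptions.
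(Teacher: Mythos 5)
Your proposal is correct and follows exactly the paper's route: the paper's proof of this corollary is a one-line appeal to Proposition \ref{satisfiction-set}, Corollary \ref{evproto}, and the definitions of $\sqcap,\sqcup,\neg,\lrcorner$ on $\mathfrak{H}(\mathbb{K})$, and you have simply written out those componentwise comparisons explicitly (as also appear inside the inductive step of Corollary \ref{evproto}).
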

\begin{proof}
	The proof follows from Proposition \ref{satisfiction-set}, Corollary \ref{evproto} and the definitions of the operations $\sqcap, \sqcup, \neg, \lrcorner$ on $\mathfrak{H}(\mathbb{K})$ as given  in Section \ref{intro}.
	\end{proof}

\begin{corollary}
	\label{PDBL-valution1}
	{\rm   For any model $\mathbb{M}:=(\mathbb{K}, v)$ and  $\alpha\in \mathfrak{F}$, $v(\alpha)=(v_{1}(\alpha), v_{2}(\alpha))$. }
\end{corollary}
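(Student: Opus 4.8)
The plan is to prove the identity $v(\alpha)=(v_{1}(\alpha),v_{2}(\alpha))$ by induction on the number $n$ of connectives occurring in $\alpha$, exactly as in the proof of Corollary \ref{evproto}. Note first that by Corollary \ref{evproto} the pair $(v_{1}(\alpha),v_{2}(\alpha))$ is a genuine semiconcept, so the right-hand side is a well-defined element of $\mathfrak{H}(\mathbb{K})$, and comparing it with $v(\alpha)\in\mathfrak{H}(\mathbb{K})$ makes sense.

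For the base case $n=0$, the formula $\alpha$ is either a variable or a propositional constant, and I would simply read off the clauses of Definition \ref{satisfiction and co-satisfiction}. If $\alpha=p\in\textbf{OV}$, then clause (1) gives $v_{1}(p)=\{g\in G:\mathbb{M},g\models p\}=ext(v(p))$ and clause (7) gives $v_{2}(p)=\{m\in M:\mathbb{M},m\succ p\}=int(v(p))$, whence $(v_{1}(p),v_{2}(p))=(ext(v(p)),int(v(p)))=v(p)$; the case $\alpha=P\in\textbf{PV}$ is identical using clauses (2) and (8). For $\alpha=\top$ we have $v_{1}(\top)=G$ and $v_{2}(\top)=\emptyset$, so $(v_{1}(\top),v_{2}(\top))=(G,\emptyset)=\top_{\underline{\mathfrak{H}}(\mathbb{K})}=v(\top)$, and dually $(v_{1}(\bot),v_{2}(\bot))=(\emptyset,M)=\bot_{\underline{\mathfrak{H}}(\mathbb{K})}=v(\bot)$.

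For the inductive step, assume the identity holds for all formulae with at most $n$ connectives, and let $\alpha$ have $n+1$ connectives, so that $\alpha$ is of one of the forms $\beta\sqcap\delta$, $\beta\sqcup\delta$, $\neg\beta$, or $\lrcorner\beta$ with $\beta,\delta$ having at most $n$ connectives. In the case $\alpha=\beta\sqcap\delta$, Definition \ref{valution} gives $v(\alpha)=v(\beta)\sqcap v(\delta)$; applying the induction hypothesis to $\beta$ and $\delta$ yields $v(\alpha)=(v_{1}(\beta),v_{2}(\beta))\sqcap(v_{1}(\delta),v_{2}(\delta))$, and Corollary \ref{PDBL-valution}(1) rewrites the right-hand side as $(v_{1}(\beta\sqcap\delta),v_{2}(\beta\sqcap\delta))=(v_{1}(\alpha),v_{2}(\alpha))$. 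The three remaining cases are handled the same way: $v(\beta\sqcup\delta)=v(\beta)\sqcup v(\delta)$ together with Corollary \ref{PDBL-valution}(2), and $v(\neg\beta)=\neg v(\beta)$, $v(\lrcorner\beta)=\lrcorner v(\beta)$ together with Corollary \ref{PDBL-valution}(3).

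I do not expect any genuine obstacle here, since the real content, namely that the pointwise-defined pair $(v_{1},v_{2})$ commutes with each connective, has already been extracted in Corollary \ref{PDBL-valution}; the present statement only packages that fact with the base-case computations into a single inductive assertion. The only point requiring mild care is that the induction must be on the total number of connectives (rather than on formula structure directly) so that both immediate subformulae $\beta$ and $\delta$ in the binary cases fall under the induction hypothesis simultaneously, which is precisely the formulation used above.
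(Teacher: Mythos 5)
Your proof is correct and follows essentially the same route as the paper's: induction on the number of connectives, with the base cases read off from Definition \ref{satisfiction and co-satisfiction} (equivalently Proposition \ref{satisfiction-set}) and the inductive step discharged by Corollary \ref{PDBL-valution} together with Definition \ref{valution}. You merely spell out the details that the paper leaves implicit, so there is nothing to add.
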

\begin{proof}
	Follows by induction on the number of connectives that occur in a formula, using  definition of extension of a valuation and Corollary \ref{PDBL-valution}. Note that $v(p)=(ext(v(p)), int(v(p)))=(v_{1}(p), v_{2}(p))$ and similarly, $v(P)=(v_{1}(P), v_{2}(P))$. $v(\top)=(G, \emptyset)=(v_{1}(\top),  v_{2}(\top))$ by Proposition \ref{satisfiction-set}. Similarly, $v(\bot)=(v_{1}(\bot), v_{2}(\bot))$.
\end{proof}


\begin{definition}
	\label{dfsatis}
	{\rm Let $\mathbb{M}$ be a model. For $\alpha,\beta\in \mathfrak{F}$  a sequent $\alpha\vdash\beta$ is said to be {\it satisfied} in the model $\mathbb{M}$ if and only if the following hold.
		\begin{enumerate}
			\item For all $g\in G$ $\mathbb{M},g\models\alpha$ implies that $\mathbb{M},g\models\beta$.
			\item For all $m\in M$ $\mathbb{M},m\succ \beta$ implies that $\mathbb{M},m\succ\alpha$.
	\end{enumerate}}
\end{definition}
\begin{proposition}
	\label{satisdbl}
	{\rm Let $\mathbb{M}:=(\mathbb{K}, v)$ be a model. For $\alpha,\beta\in \mathfrak{F}$  a sequent $\alpha\vdash\beta$ is  satisfied in the model $\mathbb{M}$  if and only if $v(\alpha)\sqsubseteq v(\beta)$ in $\mathfrak{H}(\mathbb{K})$, that is, $\alpha\vdash \beta$ is satisfied by $v$ on the pdBa  $\underline{\mathfrak{H}}(\mathbb{K})$.
	}
\end{proposition}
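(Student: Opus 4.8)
The plan is to reduce the model-theoretic notion of satisfaction (Definition \ref{dfsatis}) to a purely order-theoretic statement about two semiconcepts, using the identification $v(\alpha)=(v_{1}(\alpha),v_{2}(\alpha))$ already established in Corollary \ref{PDBL-valution1}. The key observation is that the order $\sqsubseteq$ of the pdBa $\underline{\mathfrak{H}}(\mathbb{K})$ admits a transparent description on semiconcepts in terms of set-inclusion of extents and intents, after which the equivalence becomes bookkeeping.

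First I would make explicit the order on $\mathfrak{H}(\mathbb{K})$. Recall from Definition \ref{DBA} that $x\sqsubseteq y$ iff $x\sqcap y = x\sqcap x$ and $x\sqcup y = y\sqcup y$. Writing $(A_{1},B_{1}):=v(\alpha)$ and $(A_{2},B_{2}):=v(\beta)$ and using the operations $\sqcap,\sqcup$ on semiconcepts recalled in the Introduction, I would compute $(A_{1},B_{1})\sqcap(A_{2},B_{2})=(A_{1}\cap A_{2},(A_{1}\cap A_{2})^{\prime})$ against $(A_{1},B_{1})\sqcap(A_{1},B_{1})=(A_{1},A_{1}^{\prime})$, so that the first defining equality of $\sqsubseteq$ forces $A_{1}\cap A_{2}=A_{1}$, i.e. $A_{1}\subseteq A_{2}$ (the intent coordinate then agreeing automatically). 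Dually, from $(A_{1},B_{1})\sqcup(A_{2},B_{2})=((B_{1}\cap B_{2})^{\prime},B_{1}\cap B_{2})$ and $(A_{2},B_{2})\sqcup(A_{2},B_{2})=(B_{2}^{\prime},B_{2})$ the second equality gives $B_{1}\cap B_{2}=B_{2}$, i.e. $B_{2}\subseteq B_{1}$. Conversely these two inclusions clearly restore both equalities, so $v(\alpha)\sqsubseteq v(\beta)$ holds in $\underline{\mathfrak{H}}(\mathbb{K})$ if and only if $v_{1}(\alpha)\subseteq v_{1}(\beta)$ and $v_{2}(\beta)\subseteq v_{2}(\alpha)$.

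Second, I would unfold Definition \ref{dfsatis}. By Definition \ref{set-of-world-co}, the clause ``for all $g\in G$, $\mathbb{M},g\models\alpha$ implies $\mathbb{M},g\models\beta$'' says exactly $v_{1}(\alpha)\subseteq v_{1}(\beta)$, and the clause ``for all $m\in M$, $\mathbb{M},m\succ\beta$ implies $\mathbb{M},m\succ\alpha$'' says exactly $v_{2}(\beta)\subseteq v_{2}(\alpha)$. Combining with the previous paragraph, $\alpha\vdash\beta$ is satisfied in $\mathbb{M}$ iff these two inclusions hold iff $v(\alpha)\sqsubseteq v(\beta)$, which is precisely the statement that $v$ satisfies $\alpha\vdash\beta$ on $\underline{\mathfrak{H}}(\mathbb{K})$ in the sense of Definition \ref{satis-hyper-sequent}.

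The argument is essentially routine; the only step requiring a small computation is the concrete description of $\sqsubseteq$ on semiconcepts, and the mild subtlety there is to check that each defining equality of $\sqsubseteq$ reduces to an inclusion in a single coordinate, the other coordinate being determined by the derivation operator $(\cdot)^{\prime}$. I do not anticipate any genuine obstacle, since Corollary \ref{PDBL-valution1} already packages the inductive content, leaving only the translation between model satisfaction and the order of $\underline{\mathfrak{H}}(\mathbb{K})$.
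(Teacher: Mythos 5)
Your proposal is correct and follows essentially the same route as the paper's proof: both reduce satisfaction of $\alpha\vdash\beta$ in $\mathbb{M}$ to the two inclusions $v_{1}(\alpha)\subseteq v_{1}(\beta)$ and $v_{2}(\beta)\subseteq v_{2}(\alpha)$ via Definition \ref{set-of-world-co}, and identify these with $v(\alpha)\sqsubseteq v(\beta)$. The only difference is that you spell out the coordinate-wise description of $\sqsubseteq$ on semiconcepts from the defining equations $x\sqcap y=x\sqcap x$, $x\sqcup y=y\sqcup y$, a step the paper leaves implicit.
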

\begin{proof}
	Let $\alpha\vdash\beta$ be satisfied in the model $\mathbb{M}$. Let $g\in v_{1}(\alpha)$. Then $\mathbb{M}, g\models \alpha$, which implies that $\mathbb{M}, g\models \beta$. So $g\in v_{1}(\beta)$, whence $v_{1}(\alpha)\subseteq v_{1}(\beta)$. Let $m\in v_{2}(\beta)$. Then $\mathbb{M}, m\succ\beta$, which implies that $\mathbb{M}, m\succ\alpha$. So $m\in v_{2}(\alpha)$, which implies that $v_{2}(\beta)\subseteq v_{2}(\alpha)$. Therefore $v(\alpha)\sqsubseteq v(\beta)$.
	
	\noindent Conversely, let $v(\alpha)\sqsubseteq v(\beta)$. Then $v_{1}(\alpha)\subseteq v_{1}(\beta)$ and $v_{2}(\beta)\subseteq v_{2}(\alpha)$. Let $\mathbb{M},g\models \alpha$. Then $g\in v_{1}(\alpha)$, which implies that $g\in v_{1}(\beta)$. So $\mathbb{M}, g\models\beta$. Similar to the above, we can show that $\mathbb{M}, m\succ\beta$ implies that $\mathbb{M}, m\succ\alpha$. Therefore $\alpha\vdash\beta$ is satisfied in the model $\mathbb{M}$.
\end{proof}

Next we define satisfaction of an s-hypersequent in a model and validity of an s-hypersequent in the class $\mathcal{K}$ of all contexts. Then we show that \textbf{PDBL} is  sound and complete with respect to the class $\mathcal{K}$.

\begin{definition}
	\label{s-hyper satisf}
	{\rm A hyper-sequent $G:\alpha_{1}\vdash\beta_{1}|\alpha_{2}\vdash\beta_{2}|. . .|\alpha_{n}\vdash\beta_{n}$ is said to be {\it satisfied} in a model $\mathbb{M}$ if and only if at least one of the components of $G$ is satisfied in $\mathbb{M}$.
		
		\noindent  $G$ is said to be {\it true} in $\mathbb{K}$ if and only if $G$ is satisfied in every model $\mathbb{M}$ based on $\mathbb{K}$.
		
		\noindent $G$ is said to be {\it valid} in $\mathcal{K}$ if and only if $G$ is true in every $\mathbb{K}$ for all $\mathbb{K}\in \mathcal{K}$.} 
\end{definition} 
\begin{proposition}
	\label{connectingpropo}
	{\rm Let $\mathbb{M}:=(\mathbb{K}, v)$ be a model. A hyper-sequent $G:\alpha_{1}\vdash\beta_{1}|\alpha_{2}\vdash\beta_{2}|\alpha_{3}\vdash\beta_{3}|\ldots|\alpha_{n}\vdash\beta_{n}$ is satisfied in  $\mathbb{M}$ if and only if $G$ is satisfied by the valuation $v$ on the pdBa $\underline{\mathfrak{H}}(\mathbb{K})$.}
\end{proposition}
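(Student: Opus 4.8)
The plan is to reduce the claim about the whole hypersequent to a claim about its individual components, and then invoke the componentwise equivalence that has already been established in Proposition~\ref{satisdbl}. Both sides of the desired biconditional are, by definition, existential statements over the components of $G$, so once the single-component case is known, the proof is a matter of matching the two definitions of ``satisfaction'' through that bridge.

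Concretely, I would first unfold the two notions of satisfaction. By Definition~\ref{s-hyper satisf}, $G$ is satisfied in the model $\mathbb{M}$ if and only if at least one component $\alpha_{i}\vdash\beta_{i}$ (for some $i\in\{1,\ldots,n\}$) is satisfied in $\mathbb{M}$ in the sense of Definition~\ref{dfsatis}. On the algebraic side, by Definition~\ref{satis-hyper-sequent}, the valuation $v$ satisfies the s-hypersequent $G$ on the pdBa $\underline{\mathfrak{H}}(\mathbb{K})$ if and only if $v$ satisfies one of the components $\alpha_{i}\vdash\beta_{i}$, i.e. $v(\alpha_{i})\sqsubseteq v(\beta_{i})$ in $\mathfrak{H}(\mathbb{K})$ for some $i$.

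Next I would apply Proposition~\ref{satisdbl} componentwise: for each fixed $i$, the sequent $\alpha_{i}\vdash\beta_{i}$ is satisfied in the model $\mathbb{M}$ if and only if $v(\alpha_{i})\sqsubseteq v(\beta_{i})$, that is, if and only if $\alpha_{i}\vdash\beta_{i}$ is satisfied by $v$ on $\underline{\mathfrak{H}}(\mathbb{K})$. Chaining these equivalences, the statement ``there exists $i$ such that $\alpha_{i}\vdash\beta_{i}$ is satisfied in $\mathbb{M}$'' is equivalent to ``there exists $i$ such that $\alpha_{i}\vdash\beta_{i}$ is satisfied by $v$ on $\underline{\mathfrak{H}}(\mathbb{K})$.'' By the two definitions recalled above, the former is exactly ``$G$ is satisfied in $\mathbb{M}$'' and the latter is exactly ``$G$ is satisfied by $v$ on $\underline{\mathfrak{H}}(\mathbb{K})$,'' which is the required equivalence.

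There is no genuine obstacle here: no induction on formula complexity is needed, since all the inductive and calculational work has already been absorbed into Proposition~\ref{satisdbl} (which in turn rests on Corollary~\ref{PDBL-valution1} identifying $v(\alpha)$ with $(v_{1}(\alpha),v_{2}(\alpha))$). The only point requiring care is to keep the two distinct notions of ``satisfaction''---the model-theoretic one of Definition~\ref{s-hyper satisf} and the algebraic one of Definition~\ref{satis-hyper-sequent}---clearly aligned, so that the existential quantifier over components is transported correctly across the biconditional.
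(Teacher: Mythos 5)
Your proposal is correct and follows exactly the paper's own route: the paper's proof is the one-line observation that the result follows from Definition \ref{satis-hyper-sequent} together with Proposition \ref{satisdbl}, which is precisely the componentwise reduction you spell out. You have simply made explicit the unfolding of the two definitions and the transport of the existential quantifier that the paper leaves implicit.
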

\begin{proof}
	Proof follows from Definition \ref{satis-hyper-sequent} and  Proposition \ref{satisdbl}.
\end{proof}
\begin{theorem}[Soundness]
	\label{pdbl case sound}
	{\rm If a hyper-sequent $G:\alpha_{1}\vdash\beta_{1}|\alpha_{2}\vdash\beta_{2}|\alpha_{3}\vdash\beta_{3}|\ldots|\alpha_{n}\vdash\beta_{n}$ is provable in  \textbf{PDBL}  then it is valid in $\mathcal{K}$.
	}
\end{theorem}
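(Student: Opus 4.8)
The plan is to derive this relational soundness result directly from the algebraic soundness already established in Theorem~\ref{sound1}, using the translation between relational models and algebraic valuations recorded in Proposition~\ref{connectingpropo}. The central observation is that for any context $\mathbb{K}$, the structure $\underline{\mathfrak{H}}(\mathbb{K})$ of semiconcepts is itself a pdBa (Theorem~\ref{protconcept algebra}), and every relational model based on $\mathbb{K}$ carries, by definition, a valuation on exactly this pdBa. Thus validity over the class $\mathcal{K}$ of all contexts is subsumed by validity over the class of all pdBas, and no independent verification of the axioms and rules against the $\models,\succ$ semantics is needed.

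Concretely, I would argue as a direct chain. Assume $G$ is provable in \textbf{PDBL}. Fix an arbitrary context $\mathbb{K}\in\mathcal{K}$ and an arbitrary model $\mathbb{M}:=(\mathbb{K},v)$ based on it; here $v$ is a valuation on the pdBa $\underline{\mathfrak{H}}(\mathbb{K})$. By Theorem~\ref{sound1}, provability of $G$ in \textbf{PDBL} yields that $G$ is valid in the class of all pdBas, hence in particular true in $\underline{\mathfrak{H}}(\mathbb{K})$; so the valuation $v$ satisfies $G$ on $\underline{\mathfrak{H}}(\mathbb{K})$. Proposition~\ref{connectingpropo} then transfers this to the relational side, giving that $G$ is satisfied in the model $\mathbb{M}$. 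Since $\mathbb{M}$ ranges over all models based on $\mathbb{K}$, it follows that $G$ is true in $\mathbb{K}$ in the sense of Definition~\ref{s-hyper satisf}, and since $\mathbb{K}$ was an arbitrary element of $\mathcal{K}$, $G$ is valid in $\mathcal{K}$, as required.

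I expect no genuine obstacle here, as all the substantive work has already been carried out in setting up the semantics. The two ingredients that drive the argument---that $\underline{\mathfrak{H}}(\mathbb{K})$ is a pdBa, and that satisfaction of an s-hypersequent in a model coincides with its satisfaction by the corresponding valuation---are precisely Theorem~\ref{protconcept algebra} and Proposition~\ref{connectingpropo}, the latter itself resting on Propositions~\ref{satisfiction-set} and~\ref{satisdbl} and on Corollary~\ref{PDBL-valution1}. Consequently the only care needed is correct bookkeeping with the nested quantifiers ``for every context $\mathbb{K}\in\mathcal{K}$'' and ``for every model based on $\mathbb{K}$'' in the definition of validity, so that the universal closure is handled properly throughout.
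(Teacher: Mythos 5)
Your argument is correct and is exactly the paper's proof: fix an arbitrary model $\mathbb{M}:=(\mathbb{K},v)$, note that $v$ is a valuation on the pdBa $\underline{\mathfrak{H}}(\mathbb{K})$, apply the algebraic soundness of Theorem~\ref{sound1}, and transfer back to the relational side via Proposition~\ref{connectingpropo}. No differences worth noting.
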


\begin{proof}
	Let $\mathbb{M}:=(\mathbb{K}, v)$ be a model based on $\mathbb{K}$.  Theorem \ref{sound1}  and Proposition \ref{connectingpropo} ensure that  $G$ is satisfied in $\mathbb{M}$, as $v$ is also a valuation on the pdBa $\underline{\mathfrak{H}}(\mathbb{K})$.
\end{proof}

\noindent Now, we consider the pair $(\mathbb{K}(\mathcal{L}(\mathfrak{F})), v_{0})$, where $v_{0}$ is as defined in Definition \ref{v0}. By  Lemma \ref{cnnvaluation}, $(\mathbb{K}(\mathcal{L}(\mathfrak{F})), v_{0})$ is a model for $\textbf{PDBL}$. The model is denoted by $\mathbb{M}(\mathfrak{F}):=(\mathbb{K}(\mathcal{L}(\mathfrak{F})), v_{0})$. 

\begin{theorem}[Completeness]
	\label{comdbl}
	{\rm If an s-hyper-sequent $G:\alpha_{1}\vdash\beta_{1}|\alpha_{2}\vdash\beta_{2}|\ldots|\alpha_{n}\vdash\beta_{n}$ is valid in $\mathcal{K}$ then $G$ is provable in  \textbf{PDBL}.}
\end{theorem}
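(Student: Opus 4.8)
The plan is to prove the contrapositive, exploiting the canonical model $\mathbb{M}(\mathfrak{F}) := (\mathbb{K}(\mathcal{L}(\mathfrak{F})), v_{0})$ just constructed. Assume that $G$ is not provable in \textbf{PDBL}. First I would invoke Proposition \ref{ldbm-dba1} to conclude that no single component $\alpha_{i}\vdash\beta_{i}$ is provable, and then Proposition \ref{ldbm-dba} to turn each of these into the algebraic non-order $[\alpha_{i}]\not\sqsubseteq[\beta_{i}]$ in the Lindenbaum--Tarski algebra $\mathcal{L}(\mathfrak{F})$, for every $i\in\{1,\ldots,n\}$.

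The heart of the argument is to transport these failures of $\sqsubseteq$ into the semiconcept algebra $\underline{\mathfrak{H}}(\mathbb{K}(\mathcal{L}(\mathfrak{F})))$ along the evaluation $v_{0}$. By Lemma \ref{cnnvaluation} we have $v_{0}(\alpha)=(F_{[\alpha]},I_{[\alpha]})$ for every formula $\alpha$, so $v_{0}$ is precisely the composite of the quotient map with the representation embedding $h$ of Theorem \ref{semiconceptembedding}. Since $h$ is an injective dBa-homomorphism and $\sqsubseteq$ is definable from $\sqcap$ and $\sqcup$, $h$ reflects the quasi-order: if $h(x)\sqsubseteq h(y)$ then $h(x\sqcap y)=h(x\sqcap x)$ and $h(x\sqcup y)=h(y\sqcup y)$, and injectivity forces $x\sqcap y=x\sqcap x$, $x\sqcup y=y\sqcup y$, i.e. $x\sqsubseteq y$. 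Hence from $[\alpha_{i}]\not\sqsubseteq[\beta_{i}]$ we obtain $v_{0}(\alpha_{i})\not\sqsubseteq v_{0}(\beta_{i})$ for every $i$. Equivalently, and without explicitly invoking order-reflection, one may simply reproduce the primary-filter/primary-ideal separation performed in the proof of Proposition \ref{ldbm-dba1}, yielding in each case either $F_{[\alpha_{i}]}\not\subseteq F_{[\beta_{i}]}$ or $I_{[\beta_{i}]}\not\subseteq I_{[\alpha_{i}]}$, and thus $v_{0}(\alpha_{i})\not\sqsubseteq v_{0}(\beta_{i})$.

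Having established $v_{0}(\alpha_{i})\not\sqsubseteq v_{0}(\beta_{i})$ for all $i$, Proposition \ref{satisdbl} shows that no component $\alpha_{i}\vdash\beta_{i}$ is satisfied in $\mathbb{M}(\mathfrak{F})$, whence by Definition \ref{s-hyper satisf} the whole s-hypersequent $G$ is not satisfied in $\mathbb{M}(\mathfrak{F})$. Since $\mathbb{K}(\mathcal{L}(\mathfrak{F}))\in\mathcal{K}$, this exhibits a context on which $G$ is not true, so $G$ is not valid in $\mathcal{K}$, contradicting the hypothesis and closing the contrapositive. I would also note the shorter purely algebraic route: given the already-proved algebraic completeness (Theorem \ref{algcomdbl}), it suffices to show that validity in $\mathcal{K}$ entails validity in the class of all pdBas; for an arbitrary pdBa $\textbf{D}$ with valuation $w$, the embedding $h$ makes $h\circ w$ a valuation on $\underline{\mathfrak{H}}(\mathbb{K}(\textbf{D}))$ (as $h$ preserves $\sqcap$- and $\sqcup$-idempotents, $\top$ and $\bot$), and Proposition \ref{connectingpropo} together with order-reflection of $h$ pushes satisfaction of $G$ in $(\mathbb{K}(\textbf{D}),h\circ w)$ back to $w$ on $\textbf{D}$.

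I expect the only genuinely delicate point to be the transfer of $[\alpha_{i}]\not\sqsubseteq[\beta_{i}]$ across $v_{0}$; everything else is bookkeeping built on the connecting propositions. That delicacy is already dissolved by the representation theorem and by the separation argument reused from Proposition \ref{ldbm-dba1}, so I anticipate no substantive obstacle beyond assembling the cited lemmas in the correct order.
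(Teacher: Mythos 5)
Your proposal is correct and follows essentially the same route as the paper: assume $G$ unprovable, use Propositions \ref{ldbm-dba1} and \ref{ldbm-dba} to get $[\alpha_{i}]\not\sqsubseteq[\beta_{i}]$ for all $i$, and then refute satisfaction of every component in the canonical model $\mathbb{M}(\mathfrak{F})$, the paper likewise delegating the transfer step to ``the same argument as part $1\Longrightarrow 2$ of Proposition \ref{ldbm-dba1}''. Your observation that the injective homomorphism $h$ of Theorem \ref{semiconceptembedding} reflects $\sqsubseteq$ (so that $v_{0}=h\circ[\cdot]$ automatically carries $[\alpha_{i}]\not\sqsubseteq[\beta_{i}]$ to $v_{0}(\alpha_{i})\not\sqsubseteq v_{0}(\beta_{i})$) is a clean, valid shortcut for that step, but it does not change the overall argument.
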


\begin{proof}
	Let $G:= \alpha_{1}\vdash\beta_{1}|\alpha_{2}\vdash\beta_{2}|\ldots|\alpha_{n}\vdash\beta_{n}$ be valid in $\mathcal{K}$. If possible, let us assume that $G$ is not provable in $\textbf{PDBL}$. By Proposition \ref{ldbm-dba1}, for all $i\in\{1,2,\ldots,n\}$, $\alpha_{i}\vdash \beta_{i}$ is not provable  in $\textbf{PDBL}$. By Proposition \ref{ldbm-dba} $[\alpha_{i}]\not\sqsubseteq [\beta_{i}]$ in $\mathcal{L}(\mathfrak{F})$, for all $i\in\{1,2,\ldots,n\}$. Now, we consider the model $\mathbb{M}(\mathfrak{F}):=(\mathbb{K}(\mathcal{L}(\mathfrak{F})),v_{0})$. Using the same argument as the proof of part $1\implies 2$ of Proposition \ref{ldbm-dba1}, we can show that 
 $v_{0}(\alpha_{i})\cancel{\sqsubseteq} v_{0}(\beta_{i})$ for all $i\in \{1,2,3\cdots n\}$,
	which is not possible, as $G$ is valid in the class $\mathcal{K}$.
	So $G$ is provable in $\textbf{PDBL}$. 
\end{proof}

We end the section with an example of characterisation of a class of contexts.
\begin{theorem}
	{\rm  Let $\mathcal{K}$ be a class of contexts. Then the sequent $\top\sqcap\top\dashv\vdash\bot\sqcup\bot$ is valid in  the class $\mathcal{K}$ if and only if  $\mathcal{K}$ contains only contexts of the kind $\mathbb{K}:=(G,M,R)$, where $R=G\times M$.
	}
\end{theorem}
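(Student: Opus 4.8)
The plan is to reduce the validity of this variable-free sequent to a purely set-theoretic condition on the relation $R$ of each context in $\mathcal{K}$, working through the algebra of semiconcepts.

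First I would observe that the sequent $\top\sqcap\top\dashv\vdash\bot\sqcup\bot$ contains no propositional variables, so for any model $\mathbb{M}:=(\mathbb{K},v)$ the interpretations $v(\top\sqcap\top)$ and $v(\bot\sqcup\bot)$ are independent of $v$. Using the operations on $\mathfrak{H}(\mathbb{K})$ recalled in the Introduction, together with $v(\top)=(G,\emptyset)$ and $v(\bot)=(\emptyset,M)$, I would compute
\[ v(\top\sqcap\top)=(G\cap G,(G\cap G)^{\prime})=(G,G^{\prime}), \qquad v(\bot\sqcup\bot)=((M\cap M)^{\prime},M\cap M)=(M^{\prime},M), \]
where $G^{\prime}=\{m\in M: gRm \text{ for all } g\in G\}$ and $M^{\prime}=\{g\in G: gRm \text{ for all } m\in M\}$.

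Next, by Proposition \ref{satisdbl} together with the definition of validity (Definition \ref{s-hyper satisf}), the sequent is valid in $\mathcal{K}$ if and only if for every $\mathbb{K}\in\mathcal{K}$ both $(G,G^{\prime})\sqsubseteq(M^{\prime},M)$ and $(M^{\prime},M)\sqsubseteq(G,G^{\prime})$ hold in $\underline{\mathfrak{H}}(\mathbb{K})$. Here lies the one point that needs care: the two-sided $\sqsubseteq$ must be turned into a genuine equality. Since $\underline{\mathfrak{H}}(\mathbb{K})$ is a pdBa (Theorem \ref{protconcept algebra}), hence a cdBa (Corollary \ref{jh}), the relation $\sqsubseteq$ is a partial order (Proposition \ref{order pure}); therefore the condition is equivalent to $(G,G^{\prime})=(M^{\prime},M)$, i.e.\ to the pair of set equalities $G=M^{\prime}$ and $G^{\prime}=M$.

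Finally I would show that $G^{\prime}=M$ holds if and only if $R=G\times M$ (the identity $G=M^{\prime}$ being handled symmetrically). If $R=G\times M$, then every $m\in M$ is $R$-related to every $g\in G$, so directly $G^{\prime}=M$ and $M^{\prime}=G$, giving the equality. Conversely, $G^{\prime}=M$ says that every property $m\in M$ lies in $G^{\prime}$, i.e.\ $gRm$ for all $g\in G$ and all $m\in M$, which is exactly $R=G\times M$. Quantifying the two directions over all $\mathbb{K}\in\mathcal{K}$ yields the stated equivalence. The whole argument is routine once the semiconcept computation is in place; the only genuinely delicate step is the invocation of contextuality to pass from the two $\sqsubseteq$-inequalities to the set equalities $G=M^{\prime}$ and $G^{\prime}=M$.
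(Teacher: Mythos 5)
Your proposal is correct and follows essentially the same route as the paper: compute $v(\top\sqcap\top)=(G,G^{\prime})$ and $v(\bot\sqcup\bot)=(M^{\prime},M)$, reduce satisfaction of the two-sided sequent to equality of these semiconcepts, and observe that $G^{\prime}=M$ and $G=M^{\prime}$ hold exactly when $R=G\times M$. The paper's proof is terser and leaves implicit the step you spell out (passing from the two $\sqsubseteq$-inequalities to equality via antisymmetry of $\sqsubseteq$ in a pdBa), so your version is just a more detailed rendering of the same argument.
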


\begin{proof}
	
	For any model $\mathbb{M}:=(\mathbb{K}, v)$, $\top\sqcap\top\dashv\vdash\bot\sqcup\bot$ is satisfied in $\mathbb{M}$ if and only if $v(\top\sqcap\top)=v(\bot\sqcup\bot)$, i.e if and only if $(G, G^{\prime})=(M^{\prime},M)$, i.e. if and only if $G^{\prime}=M$ and $G=M^{\prime}$, which is equivalent to $R=G\times M$.
\end{proof}

\subsection{Relational semantics for MPDBL}
\label{modalsemantic}

\begin{definition}
	\label{modalmodel}
	{\rm A model for \textbf{MPDBL} is a pair $\mathbb{M}:=(\mathbb{KC}, v)$, where $\mathbb{KC}:=((G,R), (M,S), I)$ is a Kripke context and $v$ is a valuation on $\underline{\mathfrak{H}}(\mathbb{K})$.
	}
\end{definition}
\noindent Satisfaction and co-satisfaction for propositional formulae are given in a similar manner as in  Definition \ref{satisfiction and co-satisfiction}, adding the following for modal formulae.
\begin{definition}
	\label{modal formula satisf}
	{\rm Let $\mathbb{M}:=(\mathbb{KC}, v)$ be a model based on the Kripke context $\mathbb{KC}:=((G,R), (M,S), I)$. In the following $g\in G$ and $m\in M$.
		\begin{enumerate}
			\item[15.] $\mathbb{M},g\models \square\alpha$ if and only if for all $g_{1}\in G$ ($gRg_{1}\Longrightarrow\mathbb{M},g_{1}\models \alpha$).
			\item[16.] $\mathbb{M},m\succ\square\alpha$ if and only if for all $g, g_{1}\in G(( gRg_{1} \Longrightarrow \mathbb{M},g_{1}\models\alpha)\Longrightarrow gIm).$
			\item[17.] $\mathbb{M},m\succ\blacksquare\alpha$ if and only if for all $m_{1}\in M$ ($mSm_{1}\Longrightarrow\mathbb{M},m_{1}\succ\alpha$).
			\item[18.] $\mathbb{M},g\models \blacksquare\alpha$ if and only if for all $m,m_{1}\in M(( mSm_{1}\Longrightarrow\mathbb{M},m_{1}\succ\alpha)\Longrightarrow gIm)$
	\end{enumerate}}
\end{definition}
\begin{lemma}
	\label{satdualmodope}
	{\rm For all $g\in G$ and for  all $m\in M$, the following hold.
		\begin{enumerate}
			\item $\mathbb{M},g\models\lozenge\alpha$ if and only if there exists $g_{1}\in G$ such that $gRg_{1}$ and $\mathbb{M},g_{1}\models\alpha$.
			\item $\mathbb{M},m\succ\lozenge\alpha$ if and only if for all $g\in G$  ((there exists $g_{1}\in G$ with $\mathbb{M},g_{1}\models \alpha$ and $gRg_{1}$) $ \Longrightarrow gIm$).
			\item $\mathbb{M},m\succ\blacklozenge\alpha$ if and only if there exists $m_{1}\in M$ such that $mSm_{1}$ and $\mathbb{M},m_{1}\succ\alpha$.
			\item $\mathbb{M},g\models\blacklozenge\alpha$ if and only if for all $m\in M$ ((there exists $m_{1}\in M$ with $mSm_{1}$ and $\mathbb{M},m_{1}\succ\alpha$)  $\Longrightarrow gIm$).
	\end{enumerate}}
	
\end{lemma}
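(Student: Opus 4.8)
The plan is to prove all four clauses purely by unfolding definitions, with no extra machinery. I would use the definitions of the definable operators, $\lozenge\alpha := \neg\square\neg\alpha$ and $\blacklozenge\alpha := \lrcorner\blacksquare\lrcorner\alpha$, the satisfaction/co-satisfaction clauses for $\neg$ and $\lrcorner$ from Definition \ref{satisfiction and co-satisfiction} (read in the present setting with the base relation $I$ of the Kripke context), and the modal clauses 15--18 of Definition \ref{modal formula satisf}. The four statements fall into two dual pairs: clauses 1 and 2 concern $\lozenge$ and are derived from the $\square$-clauses together with $\neg$; clauses 3 and 4 concern $\blacklozenge$ and follow by the left/right duality that interchanges $G \leftrightarrow M$, $R \leftrightarrow S$, $\models \leftrightarrow \succ$, $\neg \leftrightarrow \lrcorner$ and $\square \leftrightarrow \blacksquare$. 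The one logical step recurring throughout is the De Morgan equivalence that the negation of a universally quantified implication is an existentially quantified conjunction.

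For clause 1 I would start from $\mathbb{M}, g \models \lozenge\alpha$, i.e. $\mathbb{M}, g \models \neg\square\neg\alpha$. By the $\models$-clause for $\neg$ (clause 6) this is $\mathbb{M}, g \not\models \square\neg\alpha$; by clause 15 this is the negation of ``for all $g_1$, $gRg_1 \Longrightarrow \mathbb{M}, g_1 \models \neg\alpha$'', which is equivalent to the existence of $g_1$ with $gRg_1$ and $\mathbb{M}, g_1 \not\models \neg\alpha$, i.e. $\mathbb{M}, g_1 \models \alpha$. This is exactly the right-hand side of clause 1. Clause 2 begins instead with the $\succ$-clause for $\neg$ (clause 13): $\mathbb{M}, m \succ \neg\square\neg\alpha$ unfolds to ``for all $g$, $\mathbb{M}, g \not\models \square\neg\alpha \Longrightarrow gIm$'', and substituting the equivalence just derived for $\mathbb{M}, g \not\models \square\neg\alpha$ yields precisely ``for all $g$, (there exists $g_1$ with $gRg_1$ and $\mathbb{M}, g_1 \models \alpha$) $\Longrightarrow gIm$''.

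Clauses 3 and 4 are the $\blacklozenge$-analogues and transcribe verbatim under the duality above: for clause 3 one uses the $\succ$-clause for $\lrcorner$ (clause 10) together with clause 17 for $\blacksquare$, and for clause 4 the $\models$-clause for $\lrcorner$ (clause 12) together with clause 17. Since the relevant definitions are literally symmetric under interchanging the object side $(G, R, \models, \neg)$ with the property side $(M, S, \succ, \lrcorner)$, each argument is obtained from its $\lozenge$-counterpart by mechanical substitution.

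There is no genuine obstacle here; the proof is a routine chain of biconditionals. The only points demanding care are bookkeeping: one must keep track of which base relation appears — the propositional $\neg, \lrcorner$ clauses are stated in Definition \ref{satisfiction and co-satisfiction} with the context relation, which in the Kripke-context setting is $I$, whereas the modal steps introduce $R$ (resp. $S$) — and one must correctly negate the nested quantified implication inside the $\square$- and $\blacksquare$-clauses. Provided these are tracked cleanly, each of the four equivalences closes in a few lines.
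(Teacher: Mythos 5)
Your proposal is correct and follows essentially the same route as the paper's own proof: unfold $\lozenge\alpha=\neg\square\neg\alpha$ (resp. $\blacklozenge\alpha=\lrcorner\blacksquare\lrcorner\alpha$), apply the satisfaction/co-satisfaction clauses for $\neg$ (resp. $\lrcorner$) and the modal clauses for $\square$ (resp. $\blacksquare$), and negate the universally quantified implication to obtain the existential form, with clauses 3 and 4 obtained by the object/property duality. No gaps.
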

\begin{proof}
	1. Let $g\in G$. Then $\mathbb{M},g\models\lozenge\alpha$ if and only if $\mathbb{M},g\models\neg\square\neg\alpha$, which is equivalent to $\mathbb{M},g\not\models\square\neg\alpha$. Now $\mathbb{M},g\not\models\square\neg\alpha$ if and only if there exists $g_{1}\in G$ such that $gRg_{1}$ and $\mathbb{M},g_{1}\not\models\neg\alpha$, which is equivalent to say that there exists $g_{1}\in G$ such that $gRg_{1}$ and $\mathbb{M},g_{1}\models\alpha$.

	\noindent 2. Let $m\in M$. Then $\mathbb{M},m\succ\lozenge\alpha$ if and only if $\mathbb{M},m\succ\neg\square\neg\alpha$, which is equivalent to  say that for all $g\in G(\mathbb{M},g\not\models\square\neg\alpha~\mbox{implies}~gIm)$. Now $\mathbb{M},g\not\models\square\neg\alpha$ if and only if   there exists $g_{1}\in G$ such that $gR g_{1}$ and $\mathbb{M},g_{1}\not\models\neg\alpha$. So $\mathbb{M},m\succ\lozenge\alpha$ if and only if  for all $g\in G$ ((there exists $g_{1}\in G$ such that $gRg_{1}$ and $\mathbb{M},g_{1}\models\alpha)   \Longrightarrow gIm$).

	\noindent 3. Let $m\in M$. Then $\mathbb{M},m\succ\blacklozenge\alpha$ if and only if $\mathbb{M},m\succ\lrcorner\blacksquare\lrcorner\alpha$, which is equivalent to $\mathbb{M},m\not\succ\blacksquare\lrcorner\alpha$ if and only if there exists a $m_{1}\in M$ such that $mSm_{1}$ and $\mathbb{M},m_{1}\succ\alpha$.

	\noindent 4. Let $g\in G$. Then $\mathbb{M},g\models\blacklozenge\alpha$ if and only if $\mathbb{M},g\models\lrcorner\blacksquare\lrcorner\alpha$, which is equivalent to say that for all $m\in M$ ($\mathbb{M},m\not\succ\blacksquare\lrcorner\alpha$ implies that $gIm$).
	Now $\mathbb{M},m\cancel{\succ}\blacksquare\lrcorner\alpha$ if and only if  there exists $m_{1}\in M$ such that $m S m_{1}$ and $\mathbb{M},m_{1}\succ\alpha$. So $\mathbb{M},g\models\blacklozenge\alpha$ if and only if for all $m\in M$((there exists $m_{1}\in M$ such that $mSm_{1}$ and $\mathbb{M},m_{1}\succ\alpha) \Longrightarrow gIm$).
\end{proof}

\noindent Given a formula $\alpha$, recall the definitions of $v_{1}(\alpha)$ and $v_{2}(\alpha)$. Let $\mathbb{M}:=(\mathbb{K}, v)$ be a model. Then we have the following.
\begin{proposition}
	\label{modal valuation}
	
	\noindent 	{\rm 
		\begin{enumerate}
			\item If $\alpha=\square\phi$ then $v_{1}(\alpha)=\underline{v_{1}(\phi)}_{R}$ and $v_{2}(\alpha)=(\underline{v_{1}(\phi)}_{R})^{\prime}$.
			\item  If $\alpha=\blacksquare\phi$ then $v_{1}(\alpha)=(\underline{v_{2}(\phi)}_{S})^{\prime}$ and $v_{2}(\alpha)=\underline{v_{2}(\phi)}_{S}$.
			\item  If $\alpha=\lozenge \phi$ then $v_{1}(\alpha)=\overline{v_{1}(\phi)}^{R}$ and $v_{2}(\alpha)=(\overline{v_{1}(\phi)}^{R})^{\prime}$.
			\item If $\alpha=\blacklozenge\phi$ then  $v_{1}(\alpha)=(\overline{v_{2}(\phi)}^{S})^{\prime}$ and $v_{2}(\alpha)=\overline{v_{2}(\phi)}^{S}$.
	\end{enumerate}}
\end{proposition}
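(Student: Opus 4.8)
The plan is to prove each of the four identities by directly unfolding the relevant satisfaction/co-satisfaction clause and matching it against the set-theoretic definitions of the approximation operators recalled in the Introduction, namely $\underline{A}_{E}=\{x:E(x)\subseteq A\}$ and $\overline{A}^{E}=\{x:E(x)\cap A\neq\emptyset\}$. No induction is needed: each identity relates the sets for $\alpha$ directly to the already-available sets $v_{1}(\phi)$ and $v_{2}(\phi)$. Throughout I would write $R(g):=\{g_{1}\in G:gRg_{1}\}$ and $S(m):=\{m_{1}\in M:mSm_{1}\}$, and use the derivation operator $()^{\prime}$ of the underlying context $\mathbb{K}=(G,M,I)$, recalling that $A^{\prime}=\{m\in M:\text{for all }g\in G,\ g\in A\Rightarrow gIm\}$ for $A\subseteq G$, and dually for $B\subseteq M$.

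First I would treat the box cases. For $\alpha=\square\phi$, clause~15 of Definition~\ref{modal formula satisf} gives $g\in v_{1}(\square\phi)$ iff every $g_{1}$ with $gRg_{1}$ satisfies $\phi$, i.e. $R(g)\subseteq v_{1}(\phi)$, which is exactly $g\in\underline{v_{1}(\phi)}_{R}$; this is the first half of part~1. For $v_{2}(\square\phi)$ I would unfold clause~16 and observe that its inner subformula ``for all $g_{1}$, $gRg_{1}\Rightarrow\mathbb{M},g_{1}\models\phi$'' is precisely the condition ``$g\in\underline{v_{1}(\phi)}_{R}$'' just computed; hence clause~16 reads ``for all $g$, $g\in\underline{v_{1}(\phi)}_{R}\Rightarrow gIm$'', which is the definition of $m\in(\underline{v_{1}(\phi)}_{R})^{\prime}$. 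Part~2 ($\alpha=\blacksquare\phi$) is handled symmetrically on the property side: clause~17 gives $v_{2}(\blacksquare\phi)=\underline{v_{2}(\phi)}_{S}$ directly, and clause~18, whose inner subformula reproduces this very set, yields $v_{1}(\blacksquare\phi)=(\underline{v_{2}(\phi)}_{S})^{\prime}$.

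The diamond cases, parts~3 and~4, proceed in the same way but start from Lemma~\ref{satdualmodope} rather than Definition~\ref{modal formula satisf}. For $\alpha=\lozenge\phi$, Lemma~\ref{satdualmodope}(1) gives $g\in v_{1}(\lozenge\phi)$ iff some $g_{1}\in R(g)$ lies in $v_{1}(\phi)$, i.e. $R(g)\cap v_{1}(\phi)\neq\emptyset$, which is $g\in\overline{v_{1}(\phi)}^{R}$; and Lemma~\ref{satdualmodope}(2), whose antecedent is exactly this existential condition, gives $v_{2}(\lozenge\phi)=(\overline{v_{1}(\phi)}^{R})^{\prime}$. Part~4 is the dual, using Lemma~\ref{satdualmodope}(3),(4) with $S$ in place of $R$ and the $v_{2}$-side approximations.

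The computations are routine once the matching is set up, so I do not expect a genuine obstacle; the one point requiring care is the nested quantifier structure of clauses~16 and~18 (and of Lemma~\ref{satdualmodope}(2),(4)). The step that makes everything fall into place is recognising that the inner, bracketed subformula in each of these co-satisfaction clauses coincides verbatim with the box/diamond satisfaction condition on the opposite sort, so that $v_{2}$ (respectively $v_{1}$) of an object-modality (respectively property-modality) is forced to be the $I$-derivation of the approximation set already obtained for the other coordinate. Keeping the two derivation conventions straight --- $A^{\prime}\subseteq M$ for $A\subseteq G$ and $B^{\prime}\subseteq G$ for $B\subseteq M$ --- is the only place an error could realistically creep in.
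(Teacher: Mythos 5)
Your proposal is correct and follows essentially the same route as the paper: parts 1 and 2 are obtained by unfolding clauses 15--18 of Definition \ref{modal formula satisf} and recognising that the inner subformula of the co-satisfaction clauses is exactly the box condition on the other sort, so that $v_{2}(\square\phi)$ and $v_{1}(\blacksquare\phi)$ are the $I$-derivations of the approximation sets; parts 3 and 4 are obtained from Lemma \ref{satdualmodope}. The only (immaterial) difference is that in part 3 the paper first records $v_{1}(\lozenge\phi)=\overline{v_{1}(\neg\neg\phi)}^{R}$ and then simplifies via $v_{1}(\neg\neg\phi)=v_{1}(\phi)^{cc}=v_{1}(\phi)$, whereas you read Lemma \ref{satdualmodope}(1) directly as $R(g)\cap v_{1}(\phi)\neq\emptyset$.
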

\begin{proof}
	1.  Let $\alpha=\square\phi$. Then 
	$v_{1}(\alpha)=\{g\in G:\mathbb{M},g\models\square\phi\}=\{g\in G:~\mbox{for all}~g_{1}\in G, (Rgg_{1} \Longrightarrow \mathbb{M},g_{1}\models \phi)\}=\{g\in G:g\in \underline{v_{1}(\phi)}_{R}\}=\underline{v_{1}(\phi)}_{R}$.

	\noindent	$v_{2}(\alpha)=\{m\in M: \mathbb{M},m\succ\square\phi\}=\{m\in M:~\mbox{for all}~g,g_{1}\in G, ((Rgg_{1} \Longrightarrow\mathbb{M},g_{1}\models\phi) \Longrightarrow gIm)\}=(\underline{v_{1}(\phi)}_{R})^{\prime}$. 
	
	
	\noindent 2. Let $\alpha=\blacksquare\phi$. Then 
	$v_{2}(\alpha)=\{m\in M:~\mathbb{M},m\succ\blacksquare\phi\}=\{m\in M: ~
	\mbox{for all}~ m_{1}\in M( Smm_{1} \Longrightarrow \mathbb{M},m_{1}\succ\phi)\}=\underline{v_{2}(\phi)}_{S}$.
	
	\noindent $v_{1}(\alpha)=\{g\in G: \mathbb{M},g\models \blacksquare\phi\}=\{g\in G: ~\mbox{for all}~m,m_{1}\in M(( Smm_{1} \Longrightarrow\mathbb{M},m_{1}\succ\phi) \Longrightarrow gIm)\}=(\underline{v_{2}(\phi)}_{S})^{\prime}$. 
	
	
	\noindent 3. Let $\alpha= \lozenge\phi$. Then by Lemma \ref{satdualmodope}(1), \ref{satdualmodope}(2), $v_{1}(\alpha)=\overline{v_{1}(\neg\neg\phi)}^{R}$  and $v_{2}(\alpha)=(\overline{v_{1}(\neg\neg\phi)}^{R})^{\prime}$, which implies that   $v_{1}(\alpha)=\overline{v_{1}(\neg\neg\phi)}^{R}=\overline{v_{1}(\phi)^{cc}}^{R}=\overline{v_{1}(\phi)}^{R}$  and $v_{2}(\alpha)=(\overline{v_{1}(\neg\neg\phi)}^{R})^{\prime}=(\overline{v_{1}(\phi)}^{R})^{\prime}$.
	
	\noindent 4. The proof is similar to that of  3.
\end{proof}

\begin{corollary}
	\label{evsemi}
	{\rm For any formula $\alpha$, $(v_{1}(\alpha), v_{2}(\alpha))\in \mathfrak{H}(\mathbb{K})$.}
\end{corollary}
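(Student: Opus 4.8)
The plan is to extend the induction used in the proof of Corollary \ref{evproto} to the richer formula set $\mathfrak{F}_{1}$. The base cases (variables and the constants $\top,\bot$) and the cases for the propositional connectives $\sqcap,\sqcup,\neg,\lrcorner$ are verbatim those of Corollary \ref{evproto}, so I would simply recall that skeleton. Since $\lozenge$ and $\blacklozenge$ are mere abbreviations ($\lozenge\alpha:=\neg\square\neg\alpha$ and $\blacklozenge\alpha:=\lrcorner\blacksquare\lrcorner\alpha$), the only genuinely new inductive cases are $\alpha=\square\phi$ and $\alpha=\blacksquare\phi$; running the induction on the number of connectives, these are exactly where the new work lies.

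For $\alpha=\square\phi$, Proposition \ref{modal valuation}(1) gives $(v_{1}(\alpha),v_{2}(\alpha))=(\underline{v_{1}(\phi)}_{R},(\underline{v_{1}(\phi)}_{R})^{\prime})$. This pair has the shape $(A,A^{\prime})$ with $A:=\underline{v_{1}(\phi)}_{R}\subseteq G$, and any pair of this shape is a semiconcept, since the defining condition $A^{\prime}=B$ is met with $B=A^{\prime}$; indeed it is precisely an element of $\mathfrak{H}(\mathbb{K})_{\sqcap}$. Dually, for $\alpha=\blacksquare\phi$, Proposition \ref{modal valuation}(2) gives $(v_{1}(\alpha),v_{2}(\alpha))=((\underline{v_{2}(\phi)}_{S})^{\prime},\underline{v_{2}(\phi)}_{S})$, a pair of the shape $(B^{\prime},B)$, which is a semiconcept (an element of $\mathfrak{H}(\mathbb{K})_{\sqcup}$). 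Hence $(v_{1}(\alpha),v_{2}(\alpha))\in\mathfrak{H}(\mathbb{K})$ in both cases. Notably, this dispatch does not even invoke the induction hypothesis on $\phi$: it suffices that $v_{1}(\phi)\subseteq G$ and $v_{2}(\phi)\subseteq M$, which is immediate from Definition \ref{set-of-world-co}, together with the closure remark preceding Definition \ref{full-complexalg}, where the forms $(\underline{A}_{R},(\underline{A}_{R})^{\prime})$ and $((\underline{B}_{S})^{\prime},\underline{B}_{S})$ were already observed to be semiconcepts of $\mathbb{K}$. For completeness one may record the definable cases directly via Proposition \ref{modal valuation}(3),(4): $(v_{1}(\lozenge\phi),v_{2}(\lozenge\phi))=(\overline{v_{1}(\phi)}^{R},(\overline{v_{1}(\phi)}^{R})^{\prime})$ is again of shape $(A,A^{\prime})$, and $(v_{1}(\blacklozenge\phi),v_{2}(\blacklozenge\phi))=((\overline{v_{2}(\phi)}^{S})^{\prime},\overline{v_{2}(\phi)}^{S})$ of shape $(B^{\prime},B)$, so both are semiconcepts.

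There is essentially no obstacle here; the only point requiring care is the bookkeeping around the definable modalities. If one prefers to treat $\lozenge,\blacklozenge$ as primitive and induct on formula complexity, then all four modal clauses must be checked, but they collapse to the two observations above, because the upper-approximation operators $\overline{(\cdot)}^{R},\overline{(\cdot)}^{S}$ produce the same canonical shapes $(A,A^{\prime})$ and $(B^{\prime},B)$ as the lower-approximation operators $\underline{(\cdot)}_{R},\underline{(\cdot)}_{S}$. The real content was already absorbed into Proposition \ref{modal valuation}, whose computation of $v_{1},v_{2}$ on modal formulae as approximation operators is exactly what makes the semiconcept shape manifest; the present corollary is then a one-line reading-off of that shape.
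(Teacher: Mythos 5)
Your proof is correct and follows essentially the same route as the paper: induction on the number of connectives, with the propositional cases delegated to Corollary \ref{evproto} and the modal cases read off from the shapes $(A,A^{\prime})$ and $(B^{\prime},B)$ supplied by Proposition \ref{modal valuation}. Your observation that the induction hypothesis is not actually needed in the modal cases (since any pair of either shape is automatically a semiconcept) is a small but valid sharpening of the paper's terser argument.
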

\begin{proof}
	The  corollary is proved using mathematical induction on the number of connectives in $\alpha$, and  the cases are obtained using Corollary  \ref{evproto}, Proposition \ref{modal valuation} and the induction hypothesis. 
\end{proof}

\noindent The result analogous to  Corollary \ref{PDBL-valution}  is also true here. Moreover, the following holds.

\begin{corollary}
	\label{mpdbl-valuation}
	{\rm 
		
		\noindent	\begin{enumerate}
			\item $(v_{1}(\square\alpha), v_{2}(\square\alpha))=f_{R}(v_{1}(\alpha), v_{2}(\alpha))$.
			\item $(v_{1}(\blacksquare\alpha), v_{2}(\blacksquare\alpha))=f_{S}(v_{1}(\alpha),v_{2}(\alpha))$.
	\end{enumerate}}
\end{corollary}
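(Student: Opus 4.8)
The plan is to read off both identities directly from Proposition \ref{modal valuation} together with the definitions of the operators $f_{R}$ and $f_{S}$ on $\mathfrak{H}(\mathbb{K})$, so that essentially no new computation is required. The one ingredient that must be invoked at the outset is Corollary \ref{evsemi}: for the right-hand sides $f_{R}(v_{1}(\alpha),v_{2}(\alpha))$ and $f_{S}(v_{1}(\alpha),v_{2}(\alpha))$ to be meaningful at all, the pair $(v_{1}(\alpha),v_{2}(\alpha))$ must be a semiconcept of $\mathbb{K}$, which is precisely what that corollary guarantees, since $f_{R}$ and $f_{S}$ are defined only on $\mathfrak{H}(\mathbb{K})$.

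First I would treat part~1. Applying Proposition \ref{modal valuation}(1) with $\phi=\alpha$ gives $v_{1}(\square\alpha)=\underline{v_{1}(\alpha)}_{R}$ and $v_{2}(\square\alpha)=(\underline{v_{1}(\alpha)}_{R})^{\prime}$, so that $(v_{1}(\square\alpha),v_{2}(\square\alpha))=(\underline{v_{1}(\alpha)}_{R},(\underline{v_{1}(\alpha)}_{R})^{\prime})$. On the other hand, recalling the definition $f_{R}((A,B)):=(\underline{A}_{R},(\underline{A}_{R})^{\prime})$ for $(A,B)\in\mathfrak{H}(\mathbb{K})$ and instantiating $(A,B)=(v_{1}(\alpha),v_{2}(\alpha))$, which is legitimate by Corollary \ref{evsemi}, I obtain $f_{R}(v_{1}(\alpha),v_{2}(\alpha))=(\underline{v_{1}(\alpha)}_{R},(\underline{v_{1}(\alpha)}_{R})^{\prime})$. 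The two expressions coincide, establishing the claim.

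The second part is entirely dual: Proposition \ref{modal valuation}(2) gives $v_{1}(\blacksquare\alpha)=(\underline{v_{2}(\alpha)}_{S})^{\prime}$ and $v_{2}(\blacksquare\alpha)=\underline{v_{2}(\alpha)}_{S}$, while the definition $f_{S}((A,B)):=((\underline{B}_{S})^{\prime},\underline{B}_{S})$ instantiated at $(v_{1}(\alpha),v_{2}(\alpha))$ yields the same pair $((\underline{v_{2}(\alpha)}_{S})^{\prime},\underline{v_{2}(\alpha)}_{S})$. Hence both identities hold.

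There is no real obstacle in the corollary itself: it is a bookkeeping step that pairs up the already-established set-level computations of Proposition \ref{modal valuation} with the componentwise definitions of $f_{R}$ and $f_{S}$. The substantive work has been done upstream, namely in Proposition \ref{modal valuation} (that $\square$ and $\blacksquare$ translate into lower approximations, whose verification uses the satisfaction and co-satisfaction clauses of Definition \ref{modal formula satisf}) and in Corollary \ref{evsemi} (that every interpreted formula is a semiconcept). The only point worth stating explicitly, to keep the argument honest, is the appeal to Corollary \ref{evsemi}, since it alone secures that the applications of $f_{R}$ and $f_{S}$ are well-defined.
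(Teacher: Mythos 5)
Your proposal is correct and follows essentially the same route as the paper, which likewise derives the corollary from Proposition \ref{modal valuation}, the definitions of $f_{R}$ and $f_{S}$, and Corollary \ref{evsemi} for well-definedness. Your explicit remark that Corollary \ref{evsemi} is what legitimizes applying $f_{R}$ and $f_{S}$ to $(v_{1}(\alpha),v_{2}(\alpha))$ is a welcome clarification of a point the paper leaves implicit.
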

\begin{proof}
	The proof follows from  the definition of extension of a valuation, Proposition \ref{modal valuation} and Corollary \ref{evsemi}.
\end{proof}

\begin{corollary}
	\label{MPDBL-valution1}
	{\rm   For any model $\mathbb{M}:=(\mathbb{KC}, v)$ and formula $\alpha$, $v(\alpha)=(v_{1}(\alpha), v_{2}(\alpha))$. }
\end{corollary}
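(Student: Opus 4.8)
The plan is to argue by induction on the number of connectives occurring in $\alpha\in\mathfrak{F}_{1}$, mirroring the proof of Corollary \ref{PDBL-valution1} but adding two extra inductive cases for the modal connectives $\square$ and $\blacksquare$. Throughout I use that, for a model $\mathbb{M}:=(\mathbb{KC},v)$ based on a Kripke context, the map $v$ may be regarded as a valuation into the complex algebra $\underline{\mathfrak{H}}^{+}(\mathbb{KC})$ (a pdBao by Theorem \ref{complex algebra}), whose underlying pdBa is $\underline{\mathfrak{H}}(\mathbb{K})$; consequently the extension of $v$ to modal formulae is governed by the operators $f_{R},f_{S}$, namely $v(\square\alpha)=f_{R}(v(\alpha))$ and $v(\blacksquare\alpha)=f_{S}(v(\alpha))$.

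For the base case, when $\alpha$ has no connectives it is a variable or a constant, and the identities $v(p)=(ext(v(p)),int(v(p)))=(v_{1}(p),v_{2}(p))$, $v(P)=(v_{1}(P),v_{2}(P))$, $v(\top)=(G,\emptyset)=(v_{1}(\top),v_{2}(\top))$, and $v(\bot)=(\emptyset,M)=(v_{1}(\bot),v_{2}(\bot))$ hold verbatim as in Corollary \ref{PDBL-valution1}, using Definition \ref{satisfiction and co-satisfiction} and Proposition \ref{satisfiction-set}(5),(6). For the propositional part of the inductive step, suppose the claim holds for all formulae with at most $n$ connectives and let $\alpha$ have $n+1$ connectives. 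If $\alpha$ is $\beta\sqcap\delta$, $\beta\sqcup\delta$, $\neg\beta$, or $\lrcorner\beta$, then by the induction hypothesis $v(\beta)=(v_{1}(\beta),v_{2}(\beta))$ and $v(\delta)=(v_{1}(\delta),v_{2}(\delta))$, and applying the analogue of Corollary \ref{PDBL-valution} (valid here, as noted after Corollary \ref{evsemi}) together with the definition of the extension of a valuation yields the claim. For instance $v(\beta\sqcap\delta)=v(\beta)\sqcap v(\delta)=(v_{1}(\beta),v_{2}(\beta))\sqcap(v_{1}(\delta),v_{2}(\delta))=(v_{1}(\beta\sqcap\delta),v_{2}(\beta\sqcap\delta))$, and the remaining three are entirely analogous.

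The only genuinely new cases are $\alpha=\square\beta$ and $\alpha=\blacksquare\beta$, and here I would invoke Corollary \ref{mpdbl-valuation}: by the induction hypothesis $v(\beta)=(v_{1}(\beta),v_{2}(\beta))$, hence $v(\square\beta)=f_{R}(v(\beta))=f_{R}(v_{1}(\beta),v_{2}(\beta))=(v_{1}(\square\beta),v_{2}(\square\beta))$ by Corollary \ref{mpdbl-valuation}(1), and likewise $v(\blacksquare\beta)=f_{S}(v(\beta))=(v_{1}(\blacksquare\beta),v_{2}(\blacksquare\beta))$ by Corollary \ref{mpdbl-valuation}(2). Since $\lozenge$ and $\blacklozenge$ are mere abbreviations for strings built from $\neg,\square$ and $\lrcorner,\blacksquare$ respectively, the grammar of $\mathfrak{F}_{1}$ produces no further cases, completing the induction.

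I expect no substantial obstacle here: the proof is essentially routine, and all the computational content has already been discharged upstream in Corollary \ref{mpdbl-valuation} (whose proof in turn rests on Proposition \ref{modal valuation}, relating $\square,\blacksquare$ to the lower approximation operators $\underline{(\,\cdot\,)}_{R},\underline{(\,\cdot\,)}_{S}$, and on Corollary \ref{evsemi} guaranteeing that $(v_{1}(\alpha),v_{2}(\alpha))\in\mathfrak{H}(\mathbb{K})$). The one point meriting care is simply to confirm that the algebraic extension of $v$ to $\mathfrak{F}_{1}$ via $f_{R},f_{S}$ is the one compatible with the satisfaction clauses of Definition \ref{modal formula satisf}, so that matching the inductive decomposition of formulae in $\mathfrak{F}_{1}$ to the operations of $\underline{\mathfrak{H}}^{+}(\mathbb{KC})$ is the entire task.
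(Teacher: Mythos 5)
Your proposal is correct and follows essentially the same route as the paper: the paper's proof is a one-line appeal to the definition of the extension of a valuation together with Corollaries \ref{evsemi} and \ref{mpdbl-valuation}, which is precisely the induction you spell out (base case and propositional cases as in Corollary \ref{PDBL-valution1}, modal cases via $v(\square\beta)=f_{R}(v(\beta))$ and $v(\blacksquare\beta)=f_{S}(v(\beta))$ combined with Corollary \ref{mpdbl-valuation}). Nothing in your argument diverges from or adds to the paper's intended proof beyond making the induction explicit.
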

\begin{proof}
	The proof follows from the definition of extension of a valuation,  Corollaries \ref{evsemi} and  \ref{mpdbl-valuation}.
\end{proof}
\noindent  Satisfaction in a model $\mathbb{M}:=(\mathbb{KC}, v)$ of a sequent $\alpha\vdash\beta$ is  given in a similar manner as  in Definition \ref{dfsatis}. 
Let $\mathcal{KC}$ denote the class of all Kripke contexts. For an s-hypersequent, the definition of satisfaction  in a model and validity  in the class $\mathcal{KC}$ is given as: 

\begin{definition}
	\label{Ms-hyper satisf}
	{\rm  $G:\alpha_{1}\vdash\beta_{1}|\alpha_{2}\vdash\beta_{2}|\ldots|\alpha_{n}\vdash\beta_{n}$ is said to be {\it satisfied} in a model $\mathbb{M}$ if and only if at least one of the components of $G$ is satisfied in $\mathbb{M}$.
		
		\noindent  $G$ is said to be {\it true} in $\mathbb{KC}$ if and only if $G$ is satisfied in every model $\mathbb{M}$ based on $\mathbb{KC}$.
		
		\noindent $G$ is said to be {\it valid} in $\mathcal{KC}$ if and only if $G$ is true in every $\mathbb{KC}$ for all $\mathbb{KC}\in \mathcal{KC}$.} 
\end{definition} 
\noindent Results analogous to  Propositions  \ref{satisdbl} and \ref{connectingpropo} are also true here: 
\begin{proposition}
	\label{satismdbl1}
	{\rm Let $\mathbb{M}:=(\mathbb{KC}, v)$ be a model. For $\alpha,\beta\in \mathfrak{F}_{1}$,  a sequent $\alpha\vdash\beta$ is satisfied in  $\mathbb{M}$  if and only if $v(\alpha)\sqsubseteq v(\beta)$ in $\mathfrak{H}(\mathbb{K})$.
	}
\end{proposition}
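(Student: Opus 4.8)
The plan is to reduce the claim to the set-theoretic description of the order $\sqsubseteq$ on semiconcepts, exactly as was done in the propositional case (Proposition \ref{satisdbl}). First I would record the fact that for any two semiconcepts $(A_{1},B_{1}),(A_{2},B_{2})\in\mathfrak{H}(\mathbb{K})$ one has $(A_{1},B_{1})\sqsubseteq(A_{2},B_{2})$ if and only if $A_{1}\subseteq A_{2}$ and $B_{2}\subseteq B_{1}$. This follows by unfolding the definition $x\sqsubseteq y$ iff $x\sqcap y=x\sqcap x$ and $x\sqcup y=y\sqcup y$ together with the explicit formulas for $\sqcap,\sqcup$ on $\mathfrak{H}(\mathbb{K})$: the condition $x\sqcap y=x\sqcap x$ forces $A_{1}\cap A_{2}=A_{1}$, i.e.\ $A_{1}\subseteq A_{2}$, and $x\sqcup y=y\sqcup y$ forces $B_{1}\cap B_{2}=B_{2}$, i.e.\ $B_{2}\subseteq B_{1}$.

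The second ingredient is Corollary \ref{MPDBL-valution1}, which already guarantees $v(\alpha)=(v_{1}(\alpha),v_{2}(\alpha))$ and $v(\beta)=(v_{1}(\beta),v_{2}(\beta))$ even in the presence of the modal connectives. Consequently, by the order characterization above, $v(\alpha)\sqsubseteq v(\beta)$ holds if and only if $v_{1}(\alpha)\subseteq v_{1}(\beta)$ and $v_{2}(\beta)\subseteq v_{2}(\alpha)$. This reduces the entire statement to matching these two inclusions against the two defining clauses of sequent satisfaction (the modal analogue of Definition \ref{dfsatis}).

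For the forward direction, assuming $\alpha\vdash\beta$ is satisfied in $\mathbb{M}$, clause (1) of the definition says precisely that every $g$ with $g\in v_{1}(\alpha)$ also lies in $v_{1}(\beta)$, i.e.\ $v_{1}(\alpha)\subseteq v_{1}(\beta)$, while clause (2) says every $m\in v_{2}(\beta)$ lies in $v_{2}(\alpha)$, i.e.\ $v_{2}(\beta)\subseteq v_{2}(\alpha)$; hence $v(\alpha)\sqsubseteq v(\beta)$. Conversely, from $v_{1}(\alpha)\subseteq v_{1}(\beta)$ and $v_{2}(\beta)\subseteq v_{2}(\alpha)$ one reads off both clauses directly: $\mathbb{M},g\models\alpha$ gives $g\in v_{1}(\alpha)\subseteq v_{1}(\beta)$, so $\mathbb{M},g\models\beta$, and $\mathbb{M},m\succ\beta$ gives $m\in v_{2}(\beta)\subseteq v_{2}(\alpha)$, so $\mathbb{M},m\succ\alpha$, which is exactly satisfaction of $\alpha\vdash\beta$ in $\mathbb{M}$.

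There is no genuine obstacle here: the argument is verbatim the proof of Proposition \ref{satisdbl}, the only change being that the identity $v(\gamma)=(v_{1}(\gamma),v_{2}(\gamma))$ --- which for modal formulae rests on Proposition \ref{modal valuation} and Corollary \ref{evsemi} --- is invoked through Corollary \ref{MPDBL-valution1} rather than its purely propositional counterpart. The point worth stressing is that all the work specific to the modal operators $\square,\blacksquare$ (and their duals) has already been absorbed into Corollary \ref{MPDBL-valution1}; once that identity is in hand, the proof is completely insensitive to whether $\alpha$ and $\beta$ contain modalities, and so carries over to $\mathfrak{F}_{1}$ without modification.
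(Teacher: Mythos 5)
Your proposal is correct and follows exactly the route the paper intends: the paper states Proposition \ref{satismdbl1} as "analogous to Proposition \ref{satisdbl}" without a separate proof, and your argument is precisely that proof transported to $\mathfrak{F}_{1}$, with the componentwise characterization of $\sqsubseteq$ on $\mathfrak{H}(\mathbb{K})$ (used implicitly in the paper) made explicit and the identity $v(\gamma)=(v_{1}(\gamma),v_{2}(\gamma))$ supplied by Corollary \ref{MPDBL-valution1}. No gaps.
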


\begin{proposition}
	\label{connectingpropo1}
	{\rm An s-hypersequent $G:\alpha_{1}\vdash\beta_{1}|\alpha_{2}\vdash\beta_{2}|\alpha_{3}\vdash\beta_{3}|\ldots|\alpha_{n}\vdash\beta_{n}$ is satisfied in a model $\mathbb{M}:=(\mathbb{KC}, v)$ if and only if $G$ is satisfied by the valuation $v$ on the pdBao $\underline{\mathfrak{H}}^{+}(\mathbb{KC})$.}
\end{proposition}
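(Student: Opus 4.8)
The plan is to mirror the proof of the propositional case (Proposition \ref{connectingpropo}), chaining together the relevant definitions with the modal analogue of Proposition \ref{satisdbl}. The only genuinely new ingredient is to confirm that the valuation $v$, which is given on $\underline{\mathfrak{H}}(\mathbb{K})$ and extended recursively to all of $\mathfrak{F}_{1}$ via the satisfaction and co-satisfaction clauses, coincides with an algebraic valuation on the \emph{pdBao} $\underline{\mathfrak{H}}^{+}(\mathbb{KC})$ in the sense of Definition \ref{algvaludbao}. Once this identification is made, the statement reduces to routine bookkeeping with the satisfaction definitions.

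First I would record this identification. By Corollary \ref{MPDBL-valution1}, $v(\alpha)=(v_{1}(\alpha),v_{2}(\alpha))$ for every $\alpha\in\mathfrak{F}_{1}$, and by Corollary \ref{mpdbl-valuation} the modal clauses give $v(\square\alpha)=f_{R}(v(\alpha))$ and $v(\blacksquare\alpha)=f_{S}(v(\alpha))$. Since $f_{R},f_{S}$ are precisely the operators $\textbf{I},\textbf{C}$ of the pdBao $\underline{\mathfrak{H}}^{+}(\mathbb{KC})$ (Theorem \ref{complex algebra}), this shows that $v$ satisfies all the requirements of Definition \ref{algvaludbao}, and hence that $v$ is a valuation on $\underline{\mathfrak{H}}^{+}(\mathbb{KC})$ that agrees, on every formula, with the interpretation used to define satisfaction in the model $\mathbb{M}$.

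With this in hand, the equivalence is a direct unwinding. By Definition \ref{Ms-hyper satisf}, $G$ is satisfied in $\mathbb{M}$ if and only if at least one component $\alpha_{i}\vdash\beta_{i}$ is satisfied in $\mathbb{M}$. By Proposition \ref{satismdbl1}, the component $\alpha_{i}\vdash\beta_{i}$ is satisfied in $\mathbb{M}$ if and only if $v(\alpha_{i})\sqsubseteq v(\beta_{i})$ in $\mathfrak{H}(\mathbb{K})$; and by Definition \ref{satis-hyper-sequent}, $v(\alpha_{i})\sqsubseteq v(\beta_{i})$ is exactly the statement that the valuation $v$ on the pdBao $\underline{\mathfrak{H}}^{+}(\mathbb{KC})$ satisfies that component. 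Combining these, some component is satisfied in $\mathbb{M}$ if and only if some component is satisfied by $v$ on $\underline{\mathfrak{H}}^{+}(\mathbb{KC})$, which by Definition \ref{satis-hyper-sequent} is precisely satisfaction of $G$ by $v$ on the pdBao. This gives the claimed biconditional.

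I do not anticipate a serious obstacle: the substantive content has already been packaged into the earlier results, so the proposition is essentially definitional. The one point that requires care is the identification in the first step---verifying that the recursively defined $v$ respects the operators $f_{R},f_{S}$ and thus qualifies as an algebraic valuation on $\underline{\mathfrak{H}}^{+}(\mathbb{KC})$---but this is delivered exactly by Corollaries \ref{mpdbl-valuation} and \ref{MPDBL-valution1}, so the argument reduces to citing them and then chaining Definitions \ref{Ms-hyper satisf} and \ref{satis-hyper-sequent} through Proposition \ref{satismdbl1}.
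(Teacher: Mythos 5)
Your proof is correct and follows essentially the same route the paper intends: the paper states Proposition \ref{connectingpropo1} as the direct analogue of Proposition \ref{connectingpropo}, whose proof is just the chain of Definition \ref{satis-hyper-sequent} (resp.\ Definition \ref{Ms-hyper satisf}) with Proposition \ref{satismdbl1}. Your preliminary step verifying via Corollaries \ref{mpdbl-valuation} and \ref{MPDBL-valution1} that the model valuation really is an algebraic valuation on the pdBao $\underline{\mathfrak{H}}^{+}(\mathbb{KC})$ is a point the paper leaves implicit, and making it explicit is a welcome bit of extra care rather than a deviation.
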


\begin{theorem}[Soundness]
	\label{soundnessmPDBL}
	{\rm If an s-hypersequent  $G:=\alpha_{1}\vdash \beta_{1}~|~\ldots|\alpha_{n}\vdash\beta_{n}$ is provable in $\textbf{MPDBL}$ then it is valid in the class $\mathcal{KC}$.}
\end{theorem}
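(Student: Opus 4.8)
The plan is to deduce relational soundness from the algebraic soundness already established in Theorem~\ref{soundmdl}, using the complex algebra of a Kripke context as the bridge. Recall that by Theorem~\ref{complex algebra}, for any Kripke context $\mathbb{KC}:=((G,R),(M,S),I)$ the complex algebra $\underline{\mathfrak{H}}^{+}(\mathbb{KC})$ is a pdBao. Since $\textbf{MPDBL}$ is exactly $\textbf{MPDBL}\Sigma$ with $\Sigma=\emptyset$, the class $V_{\emptyset}$ of pdBaos in which the sequents of $\Sigma$ are valid is the class of \emph{all} pdBaos (vacuously). Hence whatever is provable in $\textbf{MPDBL}$ is, by Theorem~\ref{soundmdl}, valid in the class of all pdBaos, and in particular true in each complex algebra $\underline{\mathfrak{H}}^{+}(\mathbb{KC})$.

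First I would fix an arbitrary Kripke context $\mathbb{KC}\in\mathcal{KC}$ and an arbitrary model $\mathbb{M}:=(\mathbb{KC},v)$ based on it. The valuation $v$ on $\underline{\mathfrak{H}}(\mathbb{K})$ also serves as a valuation on the pdBao $\underline{\mathfrak{H}}^{+}(\mathbb{KC})$, because Corollaries~\ref{mpdbl-valuation} and~\ref{MPDBL-valution1} guarantee that the relational interpretation of the modal connectives coincides with the algebraic one, namely $v(\square\alpha)=f_{R}(v(\alpha))$ and $v(\blacksquare\alpha)=f_{S}(v(\alpha))$, where $f_{R},f_{S}$ are the operators of the complex algebra. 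Assuming $G$ is provable in $\textbf{MPDBL}$, the observation of the previous paragraph shows $G$ is true in $\underline{\mathfrak{H}}^{+}(\mathbb{KC})$, so $v$ satisfies $G$ on this pdBao. By Proposition~\ref{connectingpropo1}, $G$ is then satisfied in the model $\mathbb{M}$. As $\mathbb{M}$ was an arbitrary model over $\mathbb{KC}$, the s-hypersequent $G$ is true in $\mathbb{KC}$; and as $\mathbb{KC}\in\mathcal{KC}$ was arbitrary, $G$ is valid in $\mathcal{KC}$, as required.

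The theorem therefore requires no fresh inductive argument. The genuine content has already been discharged upstream: Proposition~\ref{modal valuation} shows that the clauses of Definition~\ref{modal formula satisf} defining satisfaction and co-satisfaction of $\square\alpha,\blacksquare\alpha$ (and their duals $\lozenge,\blacklozenge$) via the relations $R,S$ compute precisely the lower and upper approximation operators underlying $f_{R},f_{S}$, and Corollaries~\ref{evsemi}--\ref{MPDBL-valution1} lift this to the identity $v(\alpha)=(v_{1}(\alpha),v_{2}(\alpha))$ for every $\alpha\in\mathfrak{F}_{1}$. The only point that warrants care is the bookkeeping identification $\textbf{MPDBL}=\textbf{MPDBL}\emptyset$ with $V_{\emptyset}$ equal to the class of all pdBaos, which licenses the appeal to Theorem~\ref{soundmdl} (equivalently, one may invoke Theorem~\ref{sundandcompdbl}(1) directly); once this is noted, the result is immediate, and no step presents a substantive obstacle.
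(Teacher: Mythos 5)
Your proof is correct and follows essentially the same route as the paper: the paper also fixes an arbitrary model $\mathbb{M}:=(\mathbb{KC},v)$, invokes the algebraic soundness of \textbf{MPDBL} (Theorem \ref{sundandcompdbl}(1), which is your Theorem \ref{soundmdl} with $\Sigma=\emptyset$) to conclude that $v$ satisfies $G$ on the pdBao $\underline{\mathfrak{H}}^{+}(\mathbb{KC})$, and then transfers this to satisfaction in $\mathbb{M}$ via Proposition \ref{connectingpropo1}. Your additional remarks about why $v$ doubles as a valuation on the complex algebra are accurate elaborations of the same argument.
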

\begin{proof}
	Let $\mathbb{M}:=(\mathbb{KC}, v)$ be a model based on $\mathbb{KC}$. By Theorem \ref{sundandcompdbl}(1),   $G$ is satisfied by the valuation $v$ on the pdBao $\underline{\mathfrak{H}}^{+}(\mathbb{KC})$. So by Proposition \ref{connectingpropo1}, $G$ is satisfied in $\mathbb{M}$.
\end{proof}

To prove completeness, we recall the Lindenbaum-Tarski algebra  $\mathcal{L}(\mathfrak{F}_{1})$ and the Kripke context $\mathbb{KC}(\mathcal{L}(\mathfrak{F}_{1})):=((\mathcal{F}_{p}(\mathcal{L}(\mathfrak{F}_{1})), R), (\mathcal{I}_{p}(\mathcal{L}(\mathfrak{F}_{1})), S), \Delta)$ based on the context $\mathbb{K}(\mathcal{L}(\mathfrak{F}_{1}))$   defined in Section \ref{mpdblsigma} and Section \ref{dbawo} respectively. Recall the definitions of $R$ and $S$. Using Note \ref{dual-ldbalg-ope}, we have the following.

\begin{itemize}
	\item[-] For all $F,F_{1}\in \mathcal{F}_{p}(\mathcal{L}(\mathfrak{F}_{1}))$, $FRF_{1}$ if and only if $f_{\lozenge}(a)\in F$ for all $a\in F_{1}$.
	
	\item[-] For all $I,I_{1}\in \mathcal{I}_{p}(\mathcal{L}(\mathfrak{F}_{1}))$, $ISI_{1}$ if and only if $f_{\blacklozenge}(a)\in F$ for all $a\in I_{1}$.
\end{itemize}

\noindent Recall the map $v_{0}: \textbf{OV}\cup\textbf{PV}\cup \{\top, \bot\}\rightarrow \mathfrak{H}(\mathcal{L}(\mathfrak{F}_{1}))$  given  in Definition \ref{v0}. $(\mathbb{KC}(\mathcal{L}(\mathfrak{F}_{1})), v_{0})$ is  a model for \textbf{MPDBL} and denoted as  $\mathbb{M}(\mathfrak{F}_{1})$. 
\begin{lemma}
	{\rm For any formula $\alpha$, $v_{0}(\alpha)=(F_{[\alpha]}, I_{[\alpha]})$.}
\end{lemma}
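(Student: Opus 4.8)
The plan is to prove the identity $v_0(\alpha)=(F_{[\alpha]},I_{[\alpha]})$ by induction on the number of connectives occurring in $\alpha$, where $v_0$ is understood as the interpretation in the canonical model $\mathbb{M}(\mathfrak{F}_1)$, so that $v_0(\alpha)=(v_1(\alpha),v_2(\alpha))$ by Corollary \ref{MPDBL-valution1}. This is the ``truth lemma'' that will feed the completeness argument for the modal systems, and it extends the non-modal computation of Lemma \ref{cnnvaluation} by the two new inductive clauses for $\square$ and $\blacksquare$.

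For the base case and the propositional inductive steps I would simply reprise the reasoning of Lemma \ref{cnnvaluation}. For a variable or constant the claim is the definition of $v_0$ (Definition \ref{v0}) together with the fact that $(F_{[x]},I_{[x]})$ is a genuine semiconcept; the required identities $F_{[p]}'=I_{[p]}$ and $I_{[P]}'=F_{[P]}$ are the analog of Lemma \ref{lemmavalu}(1) for $\mathcal{L}(\mathfrak{F}_1)$, whose proof carries over verbatim since it only invokes Lemmas \ref{derivation} and \ref{complement of Fx} (valid in any dBa) and the idempotencies $[p]_{\sqcap}=[p]$, $[P]_{\sqcup}=[P]$ furnished by axioms 15a, 15b. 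The steps for $\sqcap,\sqcup,\neg,\lrcorner$ then repeat the computation of Lemma \ref{cnnvaluation}, using the (identically proved) analog of Lemma \ref{lemmavalu}(2) over $\mathfrak{F}_1$ and the definitions of the operations on $\mathfrak{H}(\mathbb{K}(\mathcal{L}(\mathfrak{F}_1)))$.

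The heart of the argument is the modal step. Take $\alpha=\square\phi$. Since $v_0$ is a valuation on the complex algebra $\underline{\mathfrak{H}}^{+}(\mathbb{KC}(\mathcal{L}(\mathfrak{F}_1)))$, Corollaries \ref{mpdbl-valuation}(1) and \ref{MPDBL-valution1} give $v_0(\square\phi)=f_R(v_0(\phi))$, and by the induction hypothesis $v_0(\phi)=(F_{[\phi]},I_{[\phi]})$, so $v_0(\square\phi)=f_R((F_{[\phi]},I_{[\phi]}))=(\underline{F_{[\phi]}}_R,(\underline{F_{[\phi]}}_R)')$. Now Lemma \ref{canonical box and dimon}(1), applied to the pdBao $\mathcal{L}(\mathfrak{F}_1)$ whose operator $\textbf{I}$ is precisely $f_{\square}$, yields $\underline{F_{[\phi]}}_R=F_{f_{\square}([\phi])}=F_{[\square\phi]}$. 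To identify the second coordinate I would observe that $[\square\phi]$ is $\sqcap$-idempotent: by axioms 16a and 18a, $\square\phi\sqcap\square\phi\dashv\vdash\square(\phi\sqcap\phi)\dashv\vdash\square\phi$, so $[\square\phi]\in D_{\sqcap}$ and $[\square\phi]_{\sqcap}=[\square\phi]$. Then Lemma \ref{derivation} gives $(F_{[\square\phi]})'=I_{([\square\phi])_{\sqcap\sqcup}}=I_{([\square\phi])_{\sqcup}}$, and Lemma \ref{complement of Fx}(2) collapses $I_{[\square\phi]\sqcup[\square\phi]}$ to $I_{[\square\phi]}$, so that $v_0(\square\phi)=(F_{[\square\phi]},I_{[\square\phi]})$. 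The case $\alpha=\blacksquare\phi$ is dual: Corollary \ref{mpdbl-valuation}(2), Lemma \ref{canonical box and dimon}(2) with $\textbf{C}=f_{\blacksquare}$, the $\sqcup$-idempotency of $[\blacksquare\phi]$ from axioms 16b, 18b, and Lemmas \ref{derivation}, \ref{complement of Fx} together give $v_0(\blacksquare\phi)=(F_{[\blacksquare\phi]},I_{[\blacksquare\phi]})$.

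The main obstacle will be the bookkeeping in this modal step rather than any genuine difficulty: one must be careful to recognize that the operators of the canonical pdBao $\mathcal{L}(\mathfrak{F}_1)$ are exactly $f_{\square}$ and $f_{\blacksquare}$ (so that Lemma \ref{canonical box and dimon} applies and produces $F_{[\square\phi]}$, $I_{[\blacksquare\phi]}$), and to verify that $[\square\phi]\in D_{\sqcap}$ and $[\blacksquare\phi]\in D_{\sqcup}$ so that the remaining coordinate simplifies correctly. Everything else is a routine transcription of the propositional truth lemma into the expanded language $\mathfrak{F}_1$.
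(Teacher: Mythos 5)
Your proposal is correct and follows essentially the same route as the paper's proof: induction on the number of connectives, with the propositional cases handled as in Lemma \ref{cnnvaluation} and the modal cases settled by Proposition \ref{modal valuation} (equivalently, Corollaries \ref{mpdbl-valuation} and \ref{MPDBL-valution1}) together with Lemma \ref{canonical box and dimon} for the first coordinate and Lemmas \ref{derivation} and \ref{complement of Fx} plus the $\sqcap$-idempotency of $[\square\phi]$ (resp. $\sqcup$-idempotency of $[\blacksquare\phi]$) for the second. Your explicit derivation of that idempotency from axioms 16a and 18a is in fact slightly more careful than the paper's citation at that step.
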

\begin{proof}
	We will use mathematical induction on the number of connectives in $\alpha$ to prove this lemma. The base case follows from the definition of $v_{0}$.  Let the claim be true for all formulae $\alpha$ with number of connectives less or equal to $n$.
	
	\noindent Let $\alpha$ be a formula with $n+1$ connectives . Then $\alpha\in \{\beta\sqcup\gamma, \beta\sqcap\gamma,\neg\beta,\lrcorner\gamma\}$ or $\alpha\in \{\square\beta,\blacksquare\beta\}$, where $\beta$ and $\gamma$ are formulae with number of connectives less or equal to $n$.  Proof for the propositional cases is similar to the proof of Lemma \ref{cnnvaluation}. To complete the proof it is sufficient to show that $v_{0}(\alpha)=(F_{[\alpha]}, I_{[\alpha]})$, for $\alpha\in \{\square\beta,\blacksquare\beta\}$.
	\vskip 4pt
	\noindent  Case I: Let $\alpha=\square\beta$. 
	By Proposition \ref{modal valuation}, $v_{1}(\alpha)=\underline{v_{1}(\beta)}_{R}$. By induction hypotheses $v_{1}(\beta)=F_{[\beta]}$. Therefore $v_{1}(\alpha)=\underline{F_{[\beta]}}_{R}$. By Lemma \ref{canonical box and dimon}(1),  $v_{1}(\alpha)= \underline{F_{[\beta]}}_{R}=F_{f_{\square}([\beta])}=F_{[\square\beta]}=F_{[\alpha]}$.
	
	\noindent  By Proposition \ref{modal valuation}, $v_{2}(\alpha)=(\underline{v_{1}(\beta)}_{R})^{\prime}=(F_{[\alpha]})^{\prime}$. By Lemmas \ref{derivation}, \ref{complement of Fx}, and axiom 17a, $v_{2}(\alpha)=(F_{[\alpha]})^{\prime}=I_{[\alpha]_{\sqcap \sqcup}}=I_{[\alpha]_{\sqcap}}=I_{[\alpha]}$. Therefore $v_{0}(\alpha)=(F_{[\alpha]}, I_{[\alpha]})$.
	\vskip 4pt
	\noindent Case II: Let $\alpha=\blacksquare\phi$. Then similar to the above, we can show that $v_{2}(\alpha)=I_{[\alpha]}$ and $v_{1}(\alpha)=(v_{2}(\alpha))^{\prime}=F_{[\alpha]}$. In this case we use Lemmas \ref{derivation}, \ref{complement of Fx}, and axiom 17b. Therefore $v_{0}(\alpha)=(F_{[\alpha]},I_{[\alpha]})$.
\end{proof} 
\begin{theorem}[Completeness]
	\label{compl-mPDBL}
	{\rm If an s-hypersequent $G:\alpha_{1}\vdash\beta_{1}~|~\ldots~|~\alpha_{n}\vdash\beta_{n}$  is valid in $\mathcal{KC}$ then $G$ is provable in \textbf{MPDBL}.}
\end{theorem}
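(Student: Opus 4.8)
The plan is to argue by contraposition using the canonical model $\mathbb{M}(\mathfrak{F}_{1}) := (\mathbb{KC}(\mathcal{L}(\mathfrak{F}_{1})), v_{0})$, exactly paralleling the completeness proof for \textbf{PDBL} (Theorem \ref{comdbl}), with the lemma established just above supplying the only genuinely new ingredient. Suppose $G : \alpha_{1}\vdash\beta_{1}\mid\ldots\mid\alpha_{n}\vdash\beta_{n}$ is not provable in \textbf{MPDBL}. By Proposition \ref{ldbm-mdba1} (taken with $\Sigma=\emptyset$), no component $\alpha_{i}\vdash\beta_{i}$ is provable, so by Proposition \ref{ldbm-mdba} we have $[\alpha_{i}]\not\sqsubseteq[\beta_{i}]$ in $\mathcal{L}(\mathfrak{F}_{1})$ for every $i$. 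Since $\mathcal{L}(\mathfrak{F}_{1})$ is a pdBao (Theorem \ref{lbam} with $\Sigma=\emptyset$), the Kripke context $\mathbb{KC}(\mathcal{L}(\mathfrak{F}_{1}))$ belongs to $\mathcal{KC}$, and $\mathbb{M}(\mathfrak{F}_{1})$ is a model based on it.

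The core step is to transfer each failure $[\alpha_{i}]\not\sqsubseteq[\beta_{i}]$ into a failure $v_{0}(\alpha_{i})\not\sqsubseteq v_{0}(\beta_{i})$ in the semiconcept algebra $\mathfrak{H}(\mathbb{K}(\mathcal{L}(\mathfrak{F}_{1})))$. By the preceding lemma, $v_{0}(\alpha_{i})=(F_{[\alpha_{i}]},I_{[\alpha_{i}]})$ and $v_{0}(\beta_{i})=(F_{[\beta_{i}]},I_{[\beta_{i}]})$, even though the formulae may contain the modal connectives $\square,\blacksquare$. I would then reuse the filter/ideal separation argument from the $1\Longrightarrow 2$ direction of Proposition \ref{ldbm-dba1}: by Proposition \ref{pro1}, $[\alpha_{i}]\not\sqsubseteq[\beta_{i}]$ forces either $[\alpha_{i}]_{\sqcap}\not\sqsubseteq_{\sqcap}[\beta_{i}]_{\sqcap}$ or $[\alpha_{i}]_{\sqcup}\not\sqsubseteq_{\sqcup}[\beta_{i}]_{\sqcup}$. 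In the first case Lemma \ref{lema1} produces a primary filter $F_{i}$ with $[\alpha_{i}]\in F_{i}$ and $[\beta_{i}]\notin F_{i}$, whence $F_{[\alpha_{i}]}\not\subseteq F_{[\beta_{i}]}$; dually, in the second case one obtains a primary ideal witnessing $I_{[\beta_{i}]}\not\subseteq I_{[\alpha_{i}]}$. Either way $(F_{[\alpha_{i}]},I_{[\alpha_{i}]})\not\sqsubseteq(F_{[\beta_{i}]},I_{[\beta_{i}]})$, i.e. $v_{0}(\alpha_{i})\not\sqsubseteq v_{0}(\beta_{i})$.

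Finally, Proposition \ref{satismdbl1} converts each such algebraic non-inequality into the statement that $\alpha_{i}\vdash\beta_{i}$ is not satisfied in $\mathbb{M}(\mathfrak{F}_{1})$. Since this holds for all $i$, no component of $G$ is satisfied in $\mathbb{M}(\mathfrak{F}_{1})$, so $G$ itself is not satisfied there; as $\mathbb{M}(\mathfrak{F}_{1})$ is a model over a Kripke context, $G$ fails to be valid in $\mathcal{KC}$, contradicting the hypothesis. Hence $G$ must be provable in \textbf{MPDBL}.

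I expect the only subtle point to be the modal part of the preceding lemma, which is what makes $v_{0}$ a genuine canonical valuation: it rests on Lemma \ref{canonical box and dimon}, which identifies the approximation operators $\underline{(\cdot)}_{R},\underline{(\cdot)}_{S}$ on $\mathbb{KC}(\mathcal{L}(\mathfrak{F}_{1}))$ with the algebraic modalities (via $F_{\textbf{I}(a)}$ and $I_{\textbf{C}(a)}$) together with axioms 17a and 17b. Once that lemma is in hand, the completeness argument is a purely order-theoretic separation and matches the non-modal case step for step; the main work has in effect already been carried out in building the canonical model and the representation machinery of Section \ref{dbawo}.
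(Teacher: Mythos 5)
Your proposal is correct and follows essentially the same route as the paper: contraposition via the canonical model $\mathbb{M}(\mathfrak{F}_{1})$, using Propositions \ref{ldbm-mdba} and \ref{ldbm-mdba1} to get $[\alpha_{i}]\not\sqsubseteq[\beta_{i}]$ for all $i$, the preceding lemma (resting on Lemma \ref{canonical box and dimon} and axioms 17a, 17b) to identify $v_{0}(\alpha_{i})$ with $(F_{[\alpha_{i}]},I_{[\alpha_{i}]})$, and the filter/ideal separation argument from the $1\Longrightarrow 2$ direction of Proposition \ref{ldbm-dba1} to conclude $v_{0}(\alpha_{i})\not\sqsubseteq v_{0}(\beta_{i})$. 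The paper compresses these steps by referring back to Theorem \ref{comdbl}; you have simply made them explicit.
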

\begin{proof}
	Let the s-hypersequent $G:\alpha_{1}\vdash\beta_{1}~|~\ldots~|~\alpha_{n}\vdash\beta_{n}$  be valid in $\mathcal{KC}$. If possible,  assume that $G$ is not provable in \textbf{MPDBL}. By   Propositions \ref{ldbm-mdba} and  \ref{ldbm-mdba1}, $[\alpha_{i}]\not\sqsubseteq [\beta_{i}]$ for all $i\in \{1,2,3,\ldots, n\}$. The rest of the proof is similar to the proof of Theorem \ref{comdbl}. 
\end{proof}

Let $\mathcal{KC}_{RT}$ be the class of all reflexive and transitive Kripke contexts. In the following theorems, we show that \textbf{MPDBL4} is sound and complete with respect to the class $\mathcal{KC}_{RT}$.


\begin{theorem}[Soundness]
	{\rm If an s-hypersequent $G:\alpha_{1}\vdash\beta_{1}\ldots \alpha_{n}\vdash\beta_{n}$ is provable in \textbf{MPDBL4} then $G$ is valid in $\mathcal{KC}_{RT}$.}
\end{theorem}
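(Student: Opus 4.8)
The plan is to reduce this soundness statement to the algebraic soundness of \textbf{MPDBL4} over the class of tpdBas, which is already available as the forward direction of Theorem \ref{sundandcompdbl}(2), and then transfer it to the relational setting via the complex-algebra construction. The key observation making this work is that the semantic class $\mathcal{KC}_{RT}$ of reflexive and transitive Kripke contexts is matched, at the algebraic level, exactly by the tpdBas, through Theorem \ref{pure complex algebra]}.

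First I would fix an arbitrary model $\mathbb{M}:=(\mathbb{KC}, v)$ based on a reflexive and transitive Kripke context $\mathbb{KC}:=((G,R),(M,S),I)$, and pass to its complex algebra $\underline{\mathfrak{H}}^{+}(\mathbb{KC})$. Since $\mathbb{KC}$ is reflexive and transitive, Theorem \ref{pure complex algebra]} guarantees that $\underline{\mathfrak{H}}^{+}(\mathbb{KC})$ is a tpdBa. The valuation $v$ on $\underline{\mathfrak{H}}(\mathbb{K})$, together with the interpretation of the modal connectives by $f_{R}$ and $f_{S}$ (in the roles of $\textbf{I}$ and $\textbf{C}$), is precisely a valuation on this pdBao in the sense of Definition \ref{algvaludbao}. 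Now, assuming $G$ is provable in \textbf{MPDBL4}, the soundness half of Theorem \ref{sundandcompdbl}(2) yields that $G$ is valid in the class of all tpdBas; in particular $G$ is true in $\underline{\mathfrak{H}}^{+}(\mathbb{KC})$, so $v$ satisfies $G$ on this algebra. Applying Proposition \ref{connectingpropo1}, it follows that $G$ is satisfied in the model $\mathbb{M}$. As $\mathbb{M}$ was an arbitrary model based on an arbitrary member of $\mathcal{KC}_{RT}$, we conclude that $G$ is valid in $\mathcal{KC}_{RT}$, as required.

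The main obstacle is not located in this final transfer, which is routine once the components are assembled, but is entirely shouldered by Theorem \ref{pure complex algebra]}: the identities $4a$–$5b$ defining a tpdBa, namely $\textbf{I}(x)\sqsubseteq x$, $x\sqsubseteq \textbf{C}(x)$, and the idempotence of $\textbf{I}$ and $\textbf{C}$, must hold for $f_{R}$ and $f_{S}$ exactly because $R$ and $S$ are reflexive and transitive, i.e. because the lower and upper approximation operators then enjoy the properties in Proposition \ref{pra}(\textbf{II}). This correspondence between the frame conditions on $(G,R)$ and $(M,S)$ and the tpdBa axioms is the conceptual crux, and it is what ensures that the relational class $\mathcal{KC}_{RT}$ faithfully mirrors the axioms $19a$–$20b$ adjoined to form \textbf{MPDBL4}. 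A direct alternative would be to verify the added axioms $19a$–$20b$ semantically in reflexive and transitive Kripke contexts using Proposition \ref{modal valuation} and Proposition \ref{pra}(\textbf{II}), but the algebraic route above is shorter and avoids re-proving those frame correspondences by hand.
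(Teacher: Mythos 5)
Your proposal is correct and follows essentially the same route as the paper: the paper's own proof likewise invokes Theorem \ref{sundandcompdbl}(2) together with Proposition \ref{connectingpropo1}, noting that the valuation $v$ of any model based on a reflexive and transitive Kripke context is a valuation on the tpdBa $\underline{\mathfrak{H}}^{+}(\mathbb{KC})$ (which is where Theorem \ref{pure complex algebra]} enters, just as you identify). Your write-up simply makes explicit the steps the paper compresses into one sentence.
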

\begin{proof}
	The proof is a consequence of Theorem \ref{sundandcompdbl}(2) and Proposition \ref{connectingpropo1}, as for a model	$\mathbb{M}:=(\mathbb{KC}, v)$, $v$ is also  a valuation on the tpdBa $\underline{\mathfrak{H}}^{+}(\mathbb{KC})$. 
\end{proof}

\begin{theorem}[Completeness]
	\label{compl-mPDBL4}
	{\rm If an s-hypersequent $G:=\alpha_{1}\vdash\beta_{1}\ldots\alpha_{n}\vdash\beta_{n}$ is valid in $\mathcal{KC}_{RT}$ then $G$ is provable in \textbf{MPDBL4}.}
\end{theorem}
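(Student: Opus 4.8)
The plan is to prove the contrapositive, following the template of the completeness proof for \textbf{MPDBL} (Theorem \ref{compl-mPDBL}), while keeping track of the extra fact that the relevant canonical Kripke context now lands in $\mathcal{KC}_{RT}$. So suppose $G$ is not provable in \textbf{MPDBL4}. Recall that \textbf{MPDBL4} is $\textbf{MPDBL}\Sigma$ for $\Sigma$ the set $\{19a,19b,20a,20b\}$. By Proposition \ref{ldbm-mdba1} (applied with this $\Sigma$), none of the components $\alpha_{i}\vdash\beta_{i}$ of $G$ is provable, and hence by Proposition \ref{ldbm-mdba} we have $[\alpha_{i}]\not\sqsubseteq[\beta_{i}]$ in the Lindenbaum--Tarski algebra $\mathcal{L}_{\Sigma}(\mathfrak{F}_{1})$ for every $i\in\{1,\dots,n\}$.

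The key structural observation is that for this particular $\Sigma$ the class $V_{\Sigma}$ is exactly the class of tpdBas: axioms $19a,19b$ encode $\textbf{I}(x)\sqsubseteq x$ and $x\sqsubseteq\textbf{C}(x)$, while $20a,20b$ encode $\textbf{I}\textbf{I}=\textbf{I}$ and $\textbf{C}\textbf{C}=\textbf{C}$, i.e.\ conditions $4a$--$5b$ of Definition \ref{DBA with operators}. Consequently Theorem \ref{lbam} yields that $\mathcal{L}_{\Sigma}(\mathfrak{F}_{1})$ is a tpdBa, and Theorem \ref{rttdBa} then guarantees that its canonical Kripke context $\mathbb{KC}(\mathcal{L}_{\Sigma}(\mathfrak{F}_{1}))$ is reflexive and transitive, hence belongs to $\mathcal{KC}_{RT}$. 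This is precisely the additional ingredient over the \textbf{MPDBL} case, where the analogous algebra was only a pdBao and its canonical context only a member of $\mathcal{KC}$.

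Next I would run the canonical model argument on $\mathbb{M}(\mathfrak{F}_{1}):=(\mathbb{KC}(\mathcal{L}_{\Sigma}(\mathfrak{F}_{1})),v_{0})$. The lemma preceding Theorem \ref{compl-mPDBL}, established by induction on the number of connectives, applies verbatim here since $\mathcal{L}_{\Sigma}(\mathfrak{F}_{1})$ is in particular a pdBao so that Lemma \ref{canonical box and dimon} together with axioms $17a,17b$ can be invoked; it gives $v_{0}(\gamma)=(F_{[\gamma]},I_{[\gamma]})$ for every formula $\gamma$. Since $h(x)=(F_{x},I_{x})$ is an injective dBa-homomorphism (Theorems \ref{rtdBao} and \ref{rtdBa}) and $\sqsubseteq$ is definable from $\sqcap,\sqcup$, the map $h$ is in fact an order embedding; thus $[\alpha_{i}]\not\sqsubseteq[\beta_{i}]$ forces $v_{0}(\alpha_{i})=(F_{[\alpha_{i}]},I_{[\alpha_{i}]})\not\sqsubseteq(F_{[\beta_{i}]},I_{[\beta_{i}]})=v_{0}(\beta_{i})$ in $\underline{\mathfrak{H}}(\mathbb{K}(\mathcal{L}_{\Sigma}(\mathfrak{F}_{1})))$ for each $i$ (alternatively, one may replicate the direct prime-filter/ideal construction of the $1\Longrightarrow 2$ part of Proposition \ref{ldbm-dba1}). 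By Proposition \ref{satismdbl1} no component $\alpha_{i}\vdash\beta_{i}$ is satisfied in $\mathbb{M}(\mathfrak{F}_{1})$, so by Definition \ref{Ms-hyper satisf} the s-hypersequent $G$ is not satisfied in $\mathbb{M}(\mathfrak{F}_{1})$. As $\mathbb{M}(\mathfrak{F}_{1})$ is based on a reflexive and transitive Kripke context, $G$ is not valid in $\mathcal{KC}_{RT}$, contradicting the hypothesis; therefore $G$ is provable in \textbf{MPDBL4}.

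The routine parts (the $v_{0}$-lemma and the order-embedding verification) are inherited essentially unchanged from the \textbf{MPDBL} development, so the only genuinely new point -- and the only place demanding care -- is the identification $V_{\Sigma}=\{\text{tpdBas}\}$ combined with the appeal to Theorem \ref{rttdBa}, which is exactly what places the constructed countermodel inside $\mathcal{KC}_{RT}$ rather than merely in $\mathcal{KC}$. Everything else is bookkeeping.
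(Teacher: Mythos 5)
Your proposal is correct and follows essentially the same route as the paper: observe that for $\Sigma=\{19a,19b,20a,20b\}$ the Lindenbaum--Tarski algebra $\mathcal{L}_{\Sigma}(\mathfrak{F}_{1})$ is a tpdBa, invoke Theorem \ref{rttdBa} to place the canonical Kripke context in $\mathcal{KC}_{RT}$, and then repeat the countermodel argument of Theorem \ref{compl-mPDBL}. Your added detail on why $V_{\Sigma}$ is the class of tpdBas and why the canonical valuation reflects the order only makes explicit what the paper leaves implicit.
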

\begin{proof}
	In case of \textbf{MPDBL4}, $\mathcal{L}_{\Sigma}(\mathfrak{F}_{1})$ forms a tpdBa. By Theorem \ref{rttdBa} it follows that  $\mathbb{KC}(\mathcal{L}(\mathfrak{F}_{1}))$ is reflexive and transitive. Rest of the proof is similar to the proof of Theorem \ref{compl-mPDBL}.
\end{proof}

Let $\mathcal{KC}_{RST}$ be the class of all reflexive, symmetric and transitive Kripke contexts. Next, we propose a logic \textbf{MPDBL5} for the class $\mathcal{KC}_{RST}$. The logic \textbf{MPDBL5} is obtained from  \textbf{MPDBL4} by adding the following sequents as axioms. 
\[21a~\lozenge\alpha\vdash\square\lozenge\alpha \hspace*{20pt}  21b~\blacksquare\blacklozenge\alpha\vdash\blacklozenge\alpha.\]

\begin{theorem}[Soundness]
	{\rm If an s-hypersequent $G:\alpha_{1}\vdash\beta_{1}\ldots \alpha_{n}\vdash\beta_{n}$ is provable in \textbf{MPDBL5} then $G$ is valid in $\mathcal{KC}_{RST}$.}
\end{theorem}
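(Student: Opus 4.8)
The plan is to run the standard soundness argument: verify that every axiom of \textbf{MPDBL5} is valid in $\mathcal{KC}_{RST}$ and that every rule of inference preserves validity. Since \textbf{MPDBL5} is \textbf{MPDBL4} together with only the two new schemes $21a$ and $21b$, and since $\mathcal{KC}_{RST}\subseteq\mathcal{KC}_{RT}$, most of this is inherited: by the soundness of \textbf{MPDBL4} each \textbf{MPDBL4} axiom is valid in $\mathcal{KC}_{RT}$ and hence in the smaller class $\mathcal{KC}_{RST}$, while the inference rules --- shared with \textbf{MPDBL4} --- preserve validity for the same reason as before. So the only new work is to establish that $21a$ and $21b$ are valid in every reflexive, symmetric and transitive Kripke context. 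Throughout I would use Proposition \ref{satismdbl1} to turn ``$\alpha\vdash\beta$ is satisfied in $\mathbb{M}$'' into the algebraic inequality $v(\alpha)\sqsubseteq v(\beta)$ in $\mathfrak{H}(\mathbb{K})$, and Proposition \ref{modal valuation} to evaluate modal formulae as approximations.

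For $21a$ I would fix $\mathbb{KC}:=((G,R),(M,S),I)\in\mathcal{KC}_{RST}$, a model $\mathbb{M}:=(\mathbb{KC},v)$, a formula $\alpha$, and set $A:=v_{1}(\alpha)$. Proposition \ref{modal valuation}(3) gives $v(\lozenge\alpha)=(\overline{A}^{R},(\overline{A}^{R})^{\prime})$, and applying Proposition \ref{modal valuation}(1) to $\phi:=\lozenge\alpha$ gives $v(\square\lozenge\alpha)=(\underline{\overline{A}^{R}}_{R},(\underline{\overline{A}^{R}}_{R})^{\prime})$; both are left semiconcepts. Since $\sqsubseteq$ between two left semiconcepts $(X,X^{\prime})$ and $(Y,Y^{\prime})$ reduces (by a direct computation from the definition of $\sqsubseteq$, or via Proposition \ref{pro1}(3)) to $X\subseteq Y$, satisfaction of $21a$ becomes the single inclusion
\[\overline{A}^{R}\subseteq\underline{\overline{A}^{R}}_{R}.\]
This is exactly where symmetry and transitivity enter: if $g\in\overline{A}^{R}$, pick $h\in A$ with $gRh$; for any $g^{\prime}$ with $gRg^{\prime}$, symmetry gives $g^{\prime}Rg$ and transitivity (from $g^{\prime}Rg$ and $gRh$) gives $g^{\prime}Rh$, so $g^{\prime}\in\overline{A}^{R}$, whence $R(g)\subseteq\overline{A}^{R}$, i.e.\ $g\in\underline{\overline{A}^{R}}_{R}$.

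The scheme $21b$ I would treat dually on the property side: with $B:=v_{2}(\alpha)$, Proposition \ref{modal valuation}(4) and (2) show that $v(\blacklozenge\alpha)=((\overline{B}^{S})^{\prime},\overline{B}^{S})$ and $v(\blacksquare\blacklozenge\alpha)=((\underline{\overline{B}^{S}}_{S})^{\prime},\underline{\overline{B}^{S}}_{S})$ are right semiconcepts; for right semiconcepts $\sqsubseteq$ reduces to \emph{reverse} inclusion of intents, so $21b$ collapses to $\overline{B}^{S}\subseteq\underline{\overline{B}^{S}}_{S}$, proved by the identical argument using the symmetry and transitivity of $S$.

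I expect the main obstacle to be not the Euclidean-closure step --- which is routine --- but the bookkeeping around it: confirming via Proposition \ref{modal valuation} that $v(\lozenge\alpha),v(\square\lozenge\alpha)$ land in $\mathfrak{H}(\mathbb{K})_{\sqcap}$ while $v(\blacklozenge\alpha),v(\blacksquare\blacklozenge\alpha)$ land in $\mathfrak{H}(\mathbb{K})_{\sqcup}$, and then translating $\sqsubseteq$ on each homogeneous family into the correct (direct, respectively reverse) set inclusion so that each sequent reduces to exactly one approximation inclusion. I note that reflexivity is not needed for $21a, 21b$ --- only symmetry and transitivity are used --- reflexivity being precisely what the inherited \textbf{MPDBL4} axioms $19a, 19b$ consume. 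Alternatively, the whole argument could be funnelled through Proposition \ref{connectingpropo1}, reducing validity to the claim that the complex algebra $\underline{\mathfrak{H}}^{+}(\mathbb{KC})$ validates $21a$ and $21b$ for $\mathbb{KC}\in\mathcal{KC}_{RST}$, with the same computation.
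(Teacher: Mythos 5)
Your proposal is correct and follows essentially the same route as the paper: reduce the task to verifying axioms $21a$ and $21b$ (everything else being inherited from the soundness of \textbf{MPDBL4}), compute $v(\lozenge\alpha)$ and $v(\square\lozenge\alpha)$ via Proposition \ref{modal valuation}, and establish $\overline{v_{1}(\alpha)}^{R}\subseteq\underline{(\overline{v_{1}(\alpha)}^{R})}_{R}$ by exactly the symmetry-plus-transitivity argument the paper uses, then dualize for $21b$. Your extra observations --- that $\sqsubseteq$ between left semiconcepts collapses to inclusion of extents, and that reflexivity is not consumed by $21a$, $21b$ --- are accurate but only streamline the bookkeeping the paper carries out explicitly.
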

\begin{proof}
	To complete the proof it is sufficient to show that 21a and 21b are valid in $\mathcal{KC}_{RST}$. To show 21a is valid in $\mathcal{KC}_{RST}$, let $\mathbb{KC}\in\mathcal{KC}_{RST} $ and $\mathbb{M}:=(\mathbb{KC}, v)$ be a model based on $\mathbb{KC}$. 
	
	\noindent By Corollary  \ref{mpdbl-valuation} and \ref{MPDBL-valution1}, $v(\square\lozenge\alpha)=f_{R}f^{\delta}_{R}((v_{1}(\alpha), v_{2}(\alpha)))=f_{R}((\overline{v_{1}(\alpha)}^{R}, (\overline{v_{1}(\alpha)}^{R})^{\prime}))=(\underline{(\overline{v_{1}(\alpha)}^{R})}_{R}, (\underline{(\overline{v_{1}(\alpha)}^{R})}_{R})^{\prime} )$.  Let $x\in \overline{v_{1}(\alpha)}^{R}$. Then $R(x)\cap v_{1}(\alpha)\neq\emptyset$. Let $z_{0}\in R(x)\cap v_{1}(\alpha)$ for some $z_{0}\in G$. Then  $xRz_{0}$.  Let $y\in R(x)$. Then $xRy$, which implies that $yRx$, as $R$ is symmetric. So $yRz_{0}$, as $R$ is transitive, which implies that $R(y)\cap v_{1}(\alpha)\neq\emptyset$. So $y\in \overline{v_{1}(\alpha)}^{R}$, whence $R(x)\subseteq \overline{v_{1}(\alpha)}^{R}$. So $x\in \underline{(\overline{v_{1}(\alpha)}^{R})}_{R} $, which implies that $\overline{v_{1}(\alpha)}^{R}\subseteq \underline{(\overline{v_{1}(\alpha)}^{R})}_{R}$. So $(\underline{(\overline{v_{1}(\alpha)}^{R})}_{R})^{\prime}\subseteq (\overline{v_{1}(\alpha)}^{R})^{\prime}$, whence $(\overline{v_{1}(\alpha)}^{R}, (\overline{v_{1}(\alpha)}^{R})^{\prime})\sqsubseteq (\underline{(\overline{v_{1}(\alpha)}^{R})}_{R}, (\underline{(\overline{v_{1}(\alpha)}^{R})}_{R})^{\prime})$. So $v(\lozenge\alpha)\sqsubseteq v(\square\lozenge\alpha)$. 
	
	\noindent By Proposition \ref{satismdbl1}, $\lozenge\alpha\vdash\square\lozenge\alpha$ is satisfied in  $\mathbb{M}$, which implies that $\lozenge\alpha\vdash\square\lozenge\alpha$ is true in $\mathbb{KC}$. So $\lozenge\alpha\vdash\square\lozenge\alpha$ is valid in $\mathcal{KC}_{RST}$.
	
	\noindent Similar to the above proof, we can show that $\blacksquare\blacklozenge\alpha\vdash\blacklozenge\alpha$ is valid in $\mathcal{KC}_{RST}$.
\end{proof}
\begin{theorem}[Completeness]
	{\rm If an s-hypersequent $G:=\alpha_{1}\vdash\beta_{1}\ldots\alpha_{n}\vdash\beta_{n}$ is valid in $\mathcal{KC}_{RST}$ then $G$ is provable in \textbf{MPDBL5}.}
\end{theorem}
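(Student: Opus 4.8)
The plan is to establish completeness exactly as for \textbf{MPDBL4} (Theorem \ref{compl-mPDBL4}), through the canonical model, the only genuinely new ingredient being that the \emph{canonical Kripke context is symmetric}. Recall that \textbf{MPDBL5} is the system $\textbf{MPDBL}\Sigma$ with $\Sigma=\{19a,19b,20a,20b,21a,21b\}$. First I would note that, since $\Sigma$ extends the defining sequents of \textbf{MPDBL4}, the Lindenbaum--Tarski algebra $\mathcal{L}_{\Sigma}(\mathfrak{F}_{1})$ is a tpdBa (as in the proof of Theorem \ref{compl-mPDBL4}) which, in addition, validates the algebraic counterparts of $21a$ and $21b$. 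Using Propositions \ref{ldbm-mdba} and \ref{ldbm-mdba1} together with $f_{\lozenge}=f_{\square}^{\delta}=\textbf{I}^{\delta}$ and $f_{\blacklozenge}=f_{\blacksquare}^{\delta}=\textbf{C}^{\delta}$ in $\mathcal{L}_{\Sigma}(\mathfrak{F}_{1})$, these counterparts read $\textbf{I}^{\delta}(a)\sqsubseteq\textbf{I}\textbf{I}^{\delta}(a)$ and $\textbf{C}\textbf{C}^{\delta}(a)\sqsubseteq\textbf{C}^{\delta}(a)$ for all $a$.

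Next I would form the canonical Kripke context $\mathbb{KC}(\mathcal{L}_{\Sigma}(\mathfrak{F}_{1}))$ with its relations $R,S$. By Theorem \ref{rttdBa} it is already reflexive and transitive, since $\mathcal{L}_{\Sigma}(\mathfrak{F}_{1})$ is a tpdBa, so it remains to prove that $R$ and $S$ are symmetric; this is the heart of the argument. For $R$, I would fix $F R F_{1}$ and aim at $F_{1} R F$, using the defining form $F_{1}RF \iff \textbf{I}^{\delta}(b)\in F_{1}$ for all $b\in F$. Taking $b\in F$, closure of a filter under $\sqcap$ gives $b_{\sqcap}=b\sqcap b\in F$; from Lemma \ref{tdBadual}(2) we have $b_{\sqcap}\sqsubseteq\textbf{I}^{\delta}(b)$, and $21a$ gives $\textbf{I}^{\delta}(b)\sqsubseteq\textbf{I}\textbf{I}^{\delta}(b)$, so by upward closure of $F$ we get $\textbf{I}(\textbf{I}^{\delta}(b))\in F$. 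Applying now the equivalent $\textbf{I}$-form of $F R F_{1}$ from Lemma \ref{canonical relations}(1) with $a=\textbf{I}^{\delta}(b)$ yields $\textbf{I}^{\delta}(b)\in F_{1}$, as required; hence $R$ is symmetric. The symmetry of $S$ is obtained dually from $21b$, Lemma \ref{tdBadual}(2) (the clause $\textbf{C}^{\delta}(a)\sqsubseteq a\sqcup a$), closure of an ideal under $\sqcup$, downward closure, and Lemma \ref{canonical relations}(2). Thus $\mathbb{KC}(\mathcal{L}_{\Sigma}(\mathfrak{F}_{1}))\in\mathcal{KC}_{RST}$.

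With this in hand the completeness argument is routine. Assuming $G$ is valid in $\mathcal{KC}_{RST}$ but not provable, Propositions \ref{ldbm-mdba} and \ref{ldbm-mdba1} give $[\alpha_{i}]\not\sqsubseteq[\beta_{i}]$ in $\mathcal{L}_{\Sigma}(\mathfrak{F}_{1})$ for every $i$. Considering the canonical model $\mathbb{M}(\mathfrak{F}_{1})=(\mathbb{KC}(\mathcal{L}_{\Sigma}(\mathfrak{F}_{1})),v_{0})$, whose underlying Kripke context now lies in $\mathcal{KC}_{RST}$, and using $v_{0}(\alpha)=(F_{[\alpha]},I_{[\alpha]})$, I would conclude $v_{0}(\alpha_{i})\not\sqsubseteq v_{0}(\beta_{i})$ for all $i$, so no component of $G$ is satisfied in $\mathbb{M}(\mathfrak{F}_{1})$, contradicting validity in $\mathcal{KC}_{RST}$ via Proposition \ref{connectingpropo1}. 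The main obstacle is precisely the symmetry step: correctly translating the ``$5$''-type sequents $21a,21b$ into the algebra and combining Lemma \ref{tdBadual} with the two equivalent descriptions of the canonical relations (the $\textbf{I}^{\delta}$-form and the $\textbf{I}$-form) to interchange the roles of $F$ and $F_{1}$; everything else mirrors Theorems \ref{compl-mPDBL} and \ref{compl-mPDBL4}.
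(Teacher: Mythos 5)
Your proposal is correct and follows essentially the same route as the paper: the paper's proof likewise reduces everything to showing that the canonical relations $R$ and $S$ of $\mathbb{KC}(\mathcal{L}_{\Sigma}(\mathfrak{F}_{1}))$ are symmetric, using exactly the combination of axiom $21a$ (resp.\ $21b$), Lemma \ref{tdBadual}(2), filter/ideal closure, and the two equivalent descriptions of the canonical relations from Lemma \ref{canonical relations}, before concluding as in Theorems \ref{compl-mPDBL} and \ref{compl-mPDBL4}. Your identification of the algebraic counterparts of $21a$, $21b$ and your observation that $\mathcal{L}_{\Sigma}(\mathfrak{F}_{1})$ is a tpdBa (so that Lemma \ref{tdBadual} applies) match the paper's argument step for step.
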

\begin{proof}
	We show that $\mathbb{KC}(\mathcal{L}_{\Sigma}(\mathfrak{F}_{1}))$ is a symmetric Kripke context and rest of the proof is similar to the proof of Theorems \ref{compl-mPDBL} and \ref{compl-mPDBL4}. 
	
	
	\noindent To show $R$ is symmetric, let $F, F_{1}\in \mathcal{F}_{p}(\mathcal{L}_{\Sigma}(\mathfrak{F}_{1}))$ and $FRF_{1}$. Then for all $[\alpha]\in F_{1}$, $f_{\lozenge}([\alpha])=[\lozenge\alpha]\in F$. By axiom 21a and  Proposition \ref{ldbm-mdba}, $f_{\lozenge}([\alpha])=[\lozenge\alpha]\sqsubseteq [\square\lozenge\alpha]=f_{\square}f_{\lozenge}([\alpha])$. By Lemma \ref{tdBadual}(2), $[\alpha]\sqcap [\alpha]\sqsubseteq f_{\lozenge}([\alpha]) $, as $\mathcal{L}_{\Sigma}(\mathfrak{F}_{1})$ is a pdBao. So $[\alpha]\sqcap [\alpha]\sqsubseteq f_{\square}f_{\lozenge}([\alpha])$.  Let $[\alpha]\in F$. Then $[\alpha]\sqcap [\alpha]\in F$, as $F$ is a filter. So  $f_{\square}f_{\lozenge}([\alpha])\in F$, as $F$ is a filter.  By Lemma \ref{canonical relations}(1), $f_{\lozenge}([\alpha])\in F_{1}$,  which implies that $RF_{1}F$. So  $R$ is symmetric.
	
	\noindent Similar to the above, we can show that $S$ is also symmetric. 
\end{proof}
\section{  PDBL and conceptual knowledge}
\label{sematics to meaning1}
In this part, we look at $\textbf{PDBL}$ through the lens of  conceptual  knowledge \cite{lpwille}. 
 As discussed in \cite{lpwille}, the main assumption for conceptual knowledge is that it must be expressible by the  three basic notions of objects, attributes and concepts,  and these three are linked by the four basic relations  ``an object has an attribute'', ``an object belongs to a concept'', ``an attribute abstracts from a concept'' and ``a concept is a subconcept of another concept''. 
These three basic notions and relations are represented mathematically using the concept lattice of a context. Let us recall this briefly.
For each object $g\in G$, there  is $(\{g\}^{\prime\prime}, \{g\}^{\prime})\in \mathcal{B}(\mathbb{K})$,  the smallest concept   containing $g$ in its extent and for each attribute $m\in  M$, the largest concept $(\{m\}^{\prime}, \{m\}^{\prime\prime})\in \mathcal{B}(\mathbb{K})$ 
containing $m$ in its intent. Additionally, $gIm \Leftrightarrow (\{g\}^{\prime\prime}, \{g\}^{\prime}) \sqsubseteq (\{m\}^{\prime}, \{m\}^{\prime\prime})$ in $\mathcal{B}(\mathbb{K})$. Now the concept lattice $\mathcal{B}(\mathbb{K})$ depicts the objects and  attributes of the context $\mathbb{K}$, if each object $g$ is identified with $(\{g\}^{\prime\prime}, \{g\}^{\prime})$ and each attribute $m$  with $(\{m\}^{\prime}, \{m\}^{\prime\prime})$. On the other hand, the four relations are describable in $\mathcal{B}(\mathbb{K})$ as follows. The object $g$ has the attribute $m$ if and only if $(\{g\}^{\prime\prime}, \{g\}^{\prime}) \sqsubseteq (\{m\}^{\prime}, \{m\}^{\prime\prime})$, the
object $g$ belongs to the concept $(A, B)$ if and only if  $(\{g\}^{\prime\prime}, \{g\}^{\prime}) \sqsubseteq (A,B)$, the attribute $m$
abstracts from the concept $(A, B)$ if and only if $(A, B) \sqsubseteq (\{m\}^{\prime}, \{m\}^{\prime\prime})$, and the concept $(A_{1}, B_{1} )$
is a subconcept of the concept $(A_{2} , B_{2} )$ if and only if $(A_{1}, B_{1} )\sqsubseteq (A_{2} , B_{2} )$. 



Luksch et al. \cite{lpwille}  assume that all conceptual knowledge for a given field of interest is derived from a comprehensive formal context $U:=(G_{U}, M_{U}, I_{U})$, which is referred to as a {\it conceptual universe} for the field of interest.   

Theorem \ref{cs} implies that a concept can be expressed using semiconcepts. As  $\textbf{PDBL}$ is a logic for  semiconcepts of a context, 
 it is natural to expect that all the three basic notions and the four basic  relations of conceptual knowledge can also be represented in the system. In the rest of the section, we  establish this.  Recall that a model for $\textbf{PDBL}$ is $\mathbb{M}:=(\mathbb{K}, v)$, where $v:\textbf{OV}\cup\textbf{PV}\cup \{\top, \bot\}\rightarrow\mathfrak{H}(\mathbb{K})$ is a valuation.  From Corollary \ref{evproto}, it follows that each formula $\alpha$ represents a semiconcept $v(\alpha)=(v_{1}(\alpha), v_{2}(\alpha))$, where $v_{1}(\alpha):=\{g\in G~:~\mathbb{M}, g\models\alpha\}$ and $v_{2}(\alpha):=\{m\in M~:~\mathbb{M}, m\succ\alpha\}$. Let  $\alpha$ be a formula and $g\in G$. Then $\mathbb{M},g\models \alpha$ 
represents that $\alpha$ is satisfied at $g$ in a model $\mathbb{M}$. In other words, the satisfaction relation $\models$ may be considered as a relation between $G$ and $\mathfrak{F}$, i.e., $\models\subseteq G\times \mathfrak{F}$. $\mathbb{M},g\models \alpha$ if and only if $g\in v_{1}(\alpha)$, which is equivalent to ``the object $g$ belongs to the semiconcept $v(\alpha)$''. 
 Similar to the above, the co-satisfaction relation $\succ\subseteq M\times \mathfrak{F}$ represents the relation  ``the property $m$ abstracts from the semiconcept $v(\alpha)$''. 
 
The next proposition tells us when a formula represents a concept.  
\begin{proposition}
	\label{rep of concept}
	{\rm Let $\alpha\in\mathfrak{F}$  and $\mathbb{M}:=(\mathbb{K}, v)$ be a model. Then the sequents $\alpha\sqcap\alpha\dashv\vdash\alpha\sqcup\alpha, \alpha\sqcap\alpha\dashv\vdash\alpha$ and $\alpha\dashv\vdash\alpha\sqcup\alpha$ are satisfied in $\mathbb{M}$ if and only $v(\alpha)$ is a concept of $\mathbb{K}$.  }
\end{proposition}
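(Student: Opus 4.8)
The plan is to translate the satisfaction of each of the three $\dashv\vdash$-sequents into a single algebraic identity in $\underline{\mathfrak{H}}(\mathbb{K})$ about the element $x:=v(\alpha)$, and then to recognise the resulting conditions as precisely the membership condition for $x$ to be a concept, via the characterisation in Theorem \ref{cs}(2).

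First I would fix notation and reduce $\dashv\vdash$ to equality. By Corollary \ref{PDBL-valution1} we have $v(\alpha)=(v_{1}(\alpha),v_{2}(\alpha))$, and by Corollary \ref{evproto} this is a semiconcept; write $x:=v(\alpha)\in\mathfrak{H}(\mathbb{K})$. Since $\underline{\mathfrak{H}}(\mathbb{K})$ is a pdBa (Theorem \ref{protconcept algebra}), its quasi-order $\sqsubseteq$ is in fact a partial order by Proposition \ref{order pure}. Hence, using Proposition \ref{satisdbl}, a sequent $\gamma\dashv\vdash\delta$ (that is, $\gamma\vdash\delta$ together with $\delta\vdash\gamma$) is satisfied in $\mathbb{M}$ if and only if $v(\gamma)\sqsubseteq v(\delta)$ and $v(\delta)\sqsubseteq v(\gamma)$, which by antisymmetry is equivalent to $v(\gamma)=v(\delta)$. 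Combining this with $v(\alpha\sqcap\alpha)=x\sqcap x$ and $v(\alpha\sqcup\alpha)=x\sqcup x$ coming from the valuation (Definition \ref{valution}), I would record the three translations: $\alpha\sqcap\alpha\dashv\vdash\alpha\sqcup\alpha$ is satisfied iff $x\sqcap x=x\sqcup x$; $\alpha\sqcap\alpha\dashv\vdash\alpha$ is satisfied iff $x\sqcap x=x$; and $\alpha\dashv\vdash\alpha\sqcup\alpha$ is satisfied iff $x\sqcup x=x$.

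With these three equivalences in hand the proposition is immediate. For the forward direction, assuming all three sequents are satisfied gives in particular $x\sqcap x=x$ and $x\sqcup x=x$, so $x\in\mathfrak{H}(\mathbb{K})_{\sqcap}$ and $x\in\mathfrak{H}(\mathbb{K})_{\sqcup}$ by the definitions of $D_{\sqcap},D_{\sqcup}$; Theorem \ref{cs}(2) then yields $x\in\mathcal{B}(\mathbb{K})$, that is, $v(\alpha)$ is a concept of $\mathbb{K}$. Conversely, if $x=v(\alpha)$ is a concept then $x\in\mathfrak{H}(\mathbb{K})_{\sqcap}\cap\mathfrak{H}(\mathbb{K})_{\sqcup}$ by Theorem \ref{cs}(2), so $x\sqcap x=x$ and $x\sqcup x=x$; this makes the second and third translated identities hold, and it also gives $x\sqcap x=x=x\sqcup x$, so the first holds as well.

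The argument carries no real obstacle: it is essentially bookkeeping resting on two facts already available, namely that $\sqsubseteq$ is a genuine partial order on a pdBa (so that $\dashv\vdash$ becomes equality) and the characterisation $\mathcal{B}(\mathbb{K})=\mathfrak{H}(\mathbb{K})_{\sqcap}\cap\mathfrak{H}(\mathbb{K})_{\sqcup}$. The one point worth flagging is that the first sequent $\alpha\sqcap\alpha\dashv\vdash\alpha\sqcup\alpha$ is in fact redundant, being implied by the other two (which already force $x\sqcap x=x=x\sqcup x$); I would mention this so the reader sees why listing all three is harmless, and I would take care that the ``if'' direction verifies the redundant first sequent explicitly rather than silently discarding it.
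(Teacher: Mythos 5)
Your proposal is correct and follows essentially the same route as the paper: both arguments use Proposition \ref{satisdbl} together with the fact that $\sqsubseteq$ is a partial order on the pdBa $\underline{\mathfrak{H}}(\mathbb{K})$ to turn each $\dashv\vdash$ into an equality of values, and then invoke the characterisation of concepts as exactly those semiconcepts $x$ with $x\sqcap x=x=x\sqcup x$ (your appeal to Theorem \ref{cs}(2) is the same observation the paper states directly). Your write-up is merely more explicit than the paper's terse version, and your remark that the first sequent is redundant given the other two is accurate.
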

\begin{proof}
	Let $\mathbb{K}:=(G, M, I)$ and $A\subseteq G$, $B\subseteq M$. Now observe that the pair $(A,B)$ is a concept of $\mathbb{K}$ if and only if $(A,B)\sqcup (A,B)=(A,B)\sqcap (A,B)=(A, B)$. As $v$ is a valuation, $v$ preserve $\sqcap$, $\sqcup$ 
	and in $\mathfrak{H}(\mathbb{K})$ $\sqsubseteq$ is partial order. The proof is a consequence of the  observations.
\end{proof}

\begin{definition}
	{\rm For a model $\mathbb{M}:=(\mathbb{K}, v)$, $\mathfrak{F}_{\mathbb{M}}:=\{\alpha\in \mathfrak{F}~:~ \alpha\sqcap\alpha\dashv\vdash\alpha\sqcup\alpha, \alpha\sqcap\alpha\dashv\vdash\alpha~\mbox{ and}~ \alpha\dashv\vdash\alpha\sqcup\alpha~\mbox{ are satisfied in}~ \mathbb{M}\}$.}
\end{definition} 
In other words, $\mathfrak{F}_{\mathbb{M}}=\{\alpha\in \mathfrak{F} : v(\alpha)\in \mathcal{B}(\mathbb{K})\}$. In the next proposition, we characterize the set $\mathfrak{F}_{\mathbb{M}}$ for a class of models  that is based on the class $\mathcal{K}_{*}$ of contexts defined as $\mathcal{K}_{*}:=\{ \mathbb{K}:=(G, M, I) : I=G\times M\}$.
\begin{proposition}
	\label{characoncept}
	{\rm  Let $\mathbb{M}:=(\mathbb{K}, v)$ be a model based on the context $\mathbb{K}\in \mathcal{K}_{*}$. The following hold.
		\begin{enumerate}
			\item For $\mathbb{K}\in \mathcal{K}_{*}$, $\mathcal{B}(\mathbb{K})=\{(G, M)\}$,
			\item $\top\sqcap\top\in \mathfrak{F}_{\mathbb{M}}$.
			\item For any $\alpha\in \mathfrak{F}$, $\alpha\in \mathfrak{F}_{\mathbb{M}}$ if and only if for any  $\beta\in \mathfrak{F}_{\mathbb{M}}$, $\beta \dashv\vdash\alpha$ is satisfied in the model $\mathbb{M}$.
		\end{enumerate}
	}
\end{proposition}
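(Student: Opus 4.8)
The plan is to exploit the fact that the hypothesis $I = G \times M$ forces both derivation operators to collapse, and then to reduce every sequent-satisfaction condition to the partial order $\sqsubseteq$ on $\underline{\mathfrak{H}}(\mathbb{K})$.

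For part 1, I would argue directly from the definition of the derivations. When $I = G\times M$, the conclusion $gIm$ of the implication ``$g\in A \Longrightarrow gIm$'' holds for every pair $(g,m)$, so this implication is true for every $m\in M$; hence $A^{\prime}=M$ for \emph{every} $A\subseteq G$, and dually $B^{\prime}=G$ for every $B\subseteq M$. A concept $(A,B)$ satisfies $A^{\prime}=B$ and $B^{\prime}=A$, which forces $B=M$ and $A=G$. Thus $\mathcal{B}(\mathbb{K})=\{(G,M)\}$. For part 2, I would simply compute: since $v(\top)=(G,\emptyset)$ and $\sqcap$ is given by $(A_{1},B_{1})\sqcap(A_{2},B_{2})=(A_{1}\cap A_{2},(A_{1}\cap A_{2})^{\prime})$, we obtain $v(\top\sqcap\top)=v(\top)\sqcap v(\top)=(G,G^{\prime})=(G,M)$, using the collapse established in part 1. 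By part 1 this is precisely the unique concept of $\mathbb{K}$, so $v(\top\sqcap\top)\in\mathcal{B}(\mathbb{K})$, i.e.\ $\top\sqcap\top\in\mathfrak{F}_{\mathbb{M}}$.

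For part 3, the crucial reduction is that, by part 1, $\mathfrak{F}_{\mathbb{M}}=\{\alpha\in\mathfrak{F} : v(\alpha)=(G,M)\}$, combined with Proposition \ref{satisdbl}, which says that $\beta\dashv\vdash\alpha$ is satisfied in $\mathbb{M}$ if and only if $v(\alpha)\sqsubseteq v(\beta)$ and $v(\beta)\sqsubseteq v(\alpha)$. Since $\sqsubseteq$ is a partial order on the pdBa $\underline{\mathfrak{H}}(\mathbb{K})$ by Proposition \ref{order pure}, antisymmetry shows that this is equivalent to $v(\alpha)=v(\beta)$. The forward direction is then immediate: if $\alpha\in\mathfrak{F}_{\mathbb{M}}$ then $v(\alpha)=(G,M)=v(\beta)$ for every $\beta\in\mathfrak{F}_{\mathbb{M}}$, so $\beta\dashv\vdash\alpha$ is satisfied. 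For the backward direction, I would instantiate the hypothesis at $\beta=\top\sqcap\top$, which lies in $\mathfrak{F}_{\mathbb{M}}$ by part 2; this yields $v(\alpha)=v(\top\sqcap\top)=(G,M)\in\mathcal{B}(\mathbb{K})$, whence $\alpha\in\mathfrak{F}_{\mathbb{M}}$.

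The only delicate point — and precisely the reason part 2 is isolated as a separate statement — is that the backward direction of part 3 needs a concrete witness in $\mathfrak{F}_{\mathbb{M}}$: without knowing $\mathfrak{F}_{\mathbb{M}}\neq\emptyset$, the universally quantified hypothesis ``for all $\beta\in\mathfrak{F}_{\mathbb{M}}$, $\beta\dashv\vdash\alpha$ is satisfied'' would be vacuously true and could not be used to conclude anything about $\alpha$. Part 2 supplies exactly this witness. Beyond identifying this dependency, I do not expect any genuine obstacle; the whole argument is a routine unwinding of the definitions once the derivation operators are seen to trivialise under $I=G\times M$.
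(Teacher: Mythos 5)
Your proof is correct and takes essentially the same route as the paper's: the collapse $A^{\prime}=M$, $B^{\prime}=G$ under $I=G\times M$ for part 1, the direct computation $v(\top\sqcap\top)=(G,G^{\prime})=(G,M)$ for part 2, and the reduction of satisfaction of $\beta\dashv\vdash\alpha$ to $v(\alpha)=v(\beta)$ with $\top\sqcap\top$ serving as the witness in the backward direction of part 3. Your explicit observation that part 2 is what guarantees $\mathfrak{F}_{\mathbb{M}}\neq\emptyset$, without which the backward implication would be vacuous, is a dependency the paper uses but leaves implicit.
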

\begin{proof}
	\noindent  1. Let  $(A, B)$ be a concept of $\mathbb{K}$, which implies that $B=A^{\prime}=M$ and $A=B^{\prime}=G$. So $(A, B)=(G, M)$, which implies that $\mathcal{B}(\mathbb{K})=\{(G, M)\}$.

	\noindent 2. $v(\top\sqcap\top)=v(\top)\sqcap v(\top)=(G, \emptyset)\sqcap (G,\emptyset)=(G, G^{\prime})=(G, M)\in \mathcal{B}(\mathbb{K})$. 	
	So $\top\sqcap \top\in \mathfrak{F}_{\mathbb{M}}$.
	
	\noindent  3. Let $\alpha, \beta \in  \mathfrak{F}_{\mathbb{M}} $. Then $v(\alpha)$ and $v(\beta)$ are concepts of $\mathbb{K}$, which implies that $v(\alpha)=(G, M)=v(\beta)$. So  $\beta\dashv\vdash\alpha$ is satisfied in the model $\mathbb{M}$.
	
	\noindent Conversely, let $\alpha\in \mathfrak{F}$, and take $\beta\in \mathfrak{F}_{\mathbb{M}}$ such that $\beta \dashv\vdash\alpha$ is satisfied in the model $\mathbb{M}$. Then $v(\alpha)=v(\beta)=(G, M)\in \mathcal{B}(\mathbb{K})$. So $\alpha\in \mathfrak{F}_{\mathbb{M}}$.
\end{proof}
\noindent As a consequence of Proposition \ref{characoncept}(2,3), for any $\alpha\in\mathfrak{F}_{\mathbb{M}}$, $\top\sqcap\top\dashv\vdash\alpha$ is satisfied in each model $\mathbb{M}$ based on $\mathbb{K}\in \mathcal{K}_{*}$.

\vskip 3pt 
From Proposition \ref{rep of concept}, it follows that the restriction $\models_{1}$ of the satisfaction relation $\models$ to the set $G\times \mathfrak{F}_{\mathbb{M}}$, represents the relation  ``the object $g\in G$ belongs to the concept $v(\alpha), \alpha\in \mathfrak{F}_{\mathbb{M}}$'', while  the restriction $\succ_{1}$ of the co-satisfaction relation $\succ$ to the set $M\times \mathfrak{F}_{\mathbb{M}}$ represents the relation  ``the property $m\in M$ abstracts from  the concept $v(\alpha), \alpha\in \mathfrak{F}_{\mathbb{M}}$''. 

\noindent Let $\mathbb{M}:=(\mathbb{K}, v)$ be a model based on $\mathbb{K}$ and $\alpha\vdash \beta$ be a valid sequent in $\textbf{PDBL}$ such that $\alpha, \beta\in \mathfrak{F}_{\mathbb{M}}$. This implies that $v(\alpha)\sqsubseteq v(\beta)$  and $v(\alpha)$, $v(\beta)$ are concepts of $\mathbb{K}$. So the restriction of the relation $\vdash \subseteq \mathfrak{F}\times\mathfrak{F}$ to the set $\mathfrak{F}_{\mathbb{M}}\times\mathfrak{F}_{\mathbb{M}}$ represents the subconcept-superconcept relation.


\vskip 3pt 

Thus we have shown that using \textbf{PDBL} and its models, one can express the notion of concept, the relations ``object belongs to a concept'', ``property abstracts from a concept'' and ``a concept is a subconcept of another concept''. What about the other two basic notions of objects and attributes, and   the relation ``an object has an attribute''?  These 
can also be represented in the system \textbf{PDBL}, by modifying the definition of a model.

For any context $\mathbb{K}:=(G, M, I)$ and for $g\in G$, $(\{g\}, \{g\}^{\prime})$ is called the {\it semiconcept of $\mathbb{K}$ generated by the object $g$} and for $m\in M$, $(\{m\}^{\prime}, \{m\} )$ is called the {\it semiconcept of $\mathbb{K}$ generated by the attribute $m$}.
We note the following maps.
\begin{itemize}
	\item[-] $\zeta: G\rightarrow \{(\{g\}, \{g\}^{\prime})~:~g\in G\}(\subseteq \mathfrak{H}(\mathbb{K}))$, where $g\mapsto (\{g\}, \{g\}^{\prime})$ for all $g\in G$. 
	\item[-] $\eta: M\rightarrow \{(\{m\}^{\prime}, \{m\})~:~m\in M\}(\subseteq \mathfrak{H}(\mathbb{K}))$, where $m \mapsto (\{m\}^{\prime}, \{m\})$ for all $m\in M$. 
\end{itemize}
\noindent $\zeta$ and $\eta$ are  bijections. So the sets $\mathcal{OS}:=\{(\{g\}, \{g\}^{\prime})~:~g\in G_\}$ and  $\mathcal{PS}:=\{(\{m\}^{\prime}, \{m\})~:~m\in M\}$ of semiconcepts  can be used to describe respectively, objects and  properties  in  $\mathbb{K}$. Taking a cue from  hybrid modal logic \cite{blackburn2002moda} and the above observation, we have the following.


\begin{definition}
        {\rm  A \textbf{PDBL} model $\mathbb{M}:=(\mathbb{K}, v)$ is called a
{\it named model} if for any $p\in \textbf{OV}$, $v(p)=(\{g\},
\{g\}^{\prime})$ for some $g\in G$, and for any $P\in \textbf{PV}$,
$v(P)=(\{m\}^{\prime}, \{m\})$ for some $m\in M$. Moreover,  for each
semiconcept $(\{g\}, \{g\}^{\prime})$ generated by the object $g $, there
is $p\in \textbf{OV}$ such that $v(p)=(\{g\}, \{g\}^{\prime})$,  and for
each semiconcept $(\{m\}^{\prime}, \{m\})$ generated by the attribute $m
$, there is $P\in \textbf{PV}$ such that $v(P)=(\{m\}^{\prime}, \{m\})$.
}
\end{definition}

\noindent Under such an interpretation of $\textbf{PDBL}$, the object variables and property variables represent the objects and attributes of  conceptual knowledge. \\
Now let $\mathbb{M}:=(\mathbb{K}, v)$ be a named model. For $g\in G$ and $m\in M$, $gIm$ (``object $g$ has the attribute $m$'') if and only if $(\{g\}, \{g\}^{\prime})\sqsubseteq  (\{m\}^{\prime}, \{m\})$, 
which is equivalent to saying that the sequent $p\vdash P$ is satisfied in  the model $\mathbb{M}$, where $p$ represents the object $g$ and $P$ represents the property $m$ in  $\mathbb{M}$.

Note that a named model is also a model  according to Definition \ref{pdBamodel}. So if an s-hypersequent $G$ is valid in the class $\mathcal{K}$ then it is also satisfied in  all named models based on all contexts $\mathbb{K}\in \mathcal{K}$. From the above, we can conclude that if we give  Definitions \ref{dfsatis} and  \ref{s-hyper satisf} in terms of named models then \textbf{PDBL} remains sound with respect to $\mathcal{K}$. 

\vskip 3pt 
 Next, we give an example of a named model based on the context given in Table \ref{CU}.
	\begin{table}[h]
		\centering
		\caption { 
			$\mathbb{K}_{z}$}	\label{CU}

		\scalebox{.70}{\begin{tabular}{|c|c|c|c|c|c|c|c|c|c|c|c|c|c|c|c|c|c|c|c|c|c|c|c|c|c|c|}
				\hline
				& \rotatebox[origin=l]{90}{Elfenbein Korpus}& \rotatebox[origin=l]{90}{Leder Uberzug}&\rotatebox[origin=l]{90}{Pergament Uberzug}&\rotatebox[origin=l]{90}{Messingschallst{\"u}ck}& \rotatebox[origin=l]{90}{aufgesetztes Mundst{\"u}ck}&\rotatebox[origin=l]{90}{eingedrehtes Mundst{\"u}ck}&\rotatebox[origin=l]{90}{gerade Form}&\rotatebox[origin=l]{90}{geboge Form}&\rotatebox[origin=l]{90}{$es^{1}$(Stimmgr{\"o}{\ss }e)}&\rotatebox[origin=l]{90}{$d^{1}$(Stimmgr{\"o}{\ss }e)}&\rotatebox[origin=l]{90}{a(Stimmgr{\"o}{\ss }e)}&\rotatebox[origin=l]{90}{g(Stimmgr{\"o}{\ss }e)}&\rotatebox[origin=l]{90}{b(Stimmgr{\"o}{\ss }e)}\\ 
				\hline
				\mbox{1558} &&$ $&$ $&$*$&$*$&$ $&$*$&$ $&$ $&$ $&$ $&$*$&$ $  \\
				\hline
				\mbox{1559}&&&&&&$*$&$*$&$ $&$ $&$ $&$*$&&$ $\\
				\hline
				\mbox{1560} &&&&& &$*$&$*$&$ $&$ $&$ $&$*$&$ $&$ $  \\
				\hline
				\mbox{1561}& &&&&&$*$&$*$&$ $&$ $&$ $&$ $&$*$&$ $ \\
				\hline
				\mbox{1562} &&&&&&$*$&$*$&$ $&$ $&$ $&$ $&$*$&$ $\\
				\hline
				\mbox{1563}& &$*$&&$ $&$*$&$ $&$ $&$*$&$*$&$ $&$ $&$ $&$ $\\
				\hline
				\mbox{1564} &&$*$&&$ $&$*$&$ $&$ $&$*$&$*$&$ $&$ $&$ $&$ $\\
				\hline
				\mbox{4030} &&$*$&&$ $&$*$&$ $&$ $&$*$&$*$&$ $&$ $&$ $&$ $\\
				\hline
				\mbox{1565} &&$*$&&$ $&$*$&$ $&$ $&$*$&&$*$&$ $&$ $&$ $\\
				\hline
				\mbox{1566} &&$*$&&$ $&$*$&$ $&$ $&$*$&$ $&$ $&$ $&$ $&$*$ \\
				\hline
				\mbox{1567} &&$*$&&$ $&$*$&$ $&$ $&$*$&$ $&$ $&$ $&$ $&$*$ \\
				\hline
				\mbox{1569} &&$*$&&$ $&$*$&$ $&$ $&$*$&$ $&$ $&$ $&$ $&$*$ \\
				\hline
				\mbox{1571} &$*$&&$ $&$ $&$*$&$ $&$ $&$*$&$ $&$ $&$ $&$ $&$*$ \\
				\hline
				\mbox{4031}&&$*$&$ $&$ $&$*$&$ $&$ $&$*$&$ $&$ $&$ $&$ $&$*$ \\
				\hline
			\end{tabular}
			
		}

	\end{table}

\begin{example}
	\label{namedmodelexm}
	{\rm  
	The context $\mathbb{K}_{z}:=(G_{z}, M_{z}, I_{z})$ in Table \ref{CU} is part of the conceptual universe for the field of interest, a family of musical instruments described in \cite{lpwille}. The set $G_{z}$ of objects  contains some concrete zink and is represented by the numbers in the first column of  Table \ref{CU}. The set $M_{z}$ of properties of the zink are written in the first row of  Table \ref{CU}. Each cell (i, j) with * encodes the information that the zink $g$ in the $i^{th}$ row has the property $m$ in the $j^{th}$ column, that is, $gI_{z} m$. Moreover, for this example, we assume that each empty cell (i, j) encodes the information that the zink $g$ in the $i^{th}$ row lacks the property $m$ in the $j^{th}$ column, that is, $g\cancel{I_{z}} m$.
 
 Let $\{p_{1}, p_{2}, p_{3},\ldots\}$  and $\{P_{1}, P_{2}, P_{3},\ldots\}$ be  enumerations of the sets $\textbf{OV}$ and $\textbf{PV}$ respectively. 
		The context contains 14 objects and so the set of semiconcepts generated by the objects is also finite. Let $\{x_{1}, x_{2}, \ldots x_{14}\}$ be an enumeration of the set of semiconcepts generated by the objects.  Similarly, let $\{y_{1}, y_{2}, \ldots y_{13}\}$ be the enumeration of the set of semiconcepts generated by the properties. Now, consider the model  $\mathbb{M}_{z}:=(\mathbb{K}_{z}, v_{r})$, where  $v_{r}: \textbf{OV}\cup\textbf{PV}\cup \{\top, \bot\}\rightarrow \mathfrak{H}(\mathbb{K}_{z})$ is defined as follows:

		\begin{equation*}
		\begin{aligned}
		v_{r}(p_{i}) &= x_{i},     && i\leq 14 \\
		&= x_{1}, && i\geq 15
		\end{aligned}
		\quad 
		\begin{aligned}
		v_{r}(P_{i}) &= y_{i},     && i\leq 13 \\
		&= y_{1}, && i\geq 14
		\end{aligned}
		\end{equation*}
		\hspace{0.8cm} $v_{r}(\top) = \top_{\underline{\mathfrak{H}}(\mathbb{K}_{z})}$,~     
		$v_{r}(\bot)= \bot_{\underline{\mathfrak{H}}(\mathbb{K}_{z})}$. 
		\vspace{.3cm}
		
		\noindent From the definition, it follows that $\mathbb{M}_{z}$ is a named model.
	}
\end{example}

\noindent Gerader Zink and Stiller Zink are two typical instances of concepts from the zink family.   A zink is a Gerader Zink if and only if it has the property gerade Form, while a zink is a Stiller Zink  if and only if it has the properties  gerade Form and eingedrehtes Mundst{\"u}ck. Let us assume that \\
$y_{1}:=( \{\mbox{gerade Form}\}^{\prime},  \{\mbox{gerade Form}\})=(\{ 1558, 1559, 1560, 1561, 1562\}, \{\mbox{gerade Form}\})$ and \\
$y_{2}:=(\{\mbox{eingedrehtes}~ \mbox{Mundst{\"u}ck}\}^{\prime}, \{\mbox{eingedrehtes}~ \mbox{Mundst{\"u}ck}\})= (\{ 1559, 1560, 1561, 1562\}, \{\mbox{eingedrehtes}~ \mbox{Mundst{\"u}ck}\})$.\\
Then $P_{1}$ represents the concept Gerader Zink in the model $\mathbb{M}_{z}:=(\mathbb{K}_{z}, v_{r})$, while  $P_{2}$ represents the semiconcept generated by eingedrehtes Mundst{\"u}ck. 

 \noindent Observe that the  concept  $(\{  1559, 1560, 1561, 1562\}, \{\mbox{gerade~ Form},\mbox{eingedrehtes}~ \mbox{Mundst{\"u}ck}\})$  of $\mathbb{K}_{z}$ represents the  concept  Stiller Zink. Now \\
 $(\{  1559, 1560, 1561, 1562\}, \{\mbox{gerade~ Form},\mbox{eingedrehtes}~ \mbox{Mundst{\"u}ck}\})\\
 =(\{ 1558, 1559, 1560,$ $  1561, 1562\}, \{\mbox{gerade~ Form}\})\sqcap (\{  1559, 1560, 1561, 1562\}, \{\mbox{eingedrehtes}~ \mbox{Mundst{\"u}ck}\})\\=v_{r}(P_{1})\sqcap v_{r}(P_{2})=v_{r}(P_{1}\sqcap P_{2})$. \\So $P_{1}\sqcap P_{2}$ represents the concept  Stiller Zink in the model $\mathbb{M}_{z}$.  In fact, we have the following.
\begin{proposition}
	{\rm Let $\mathbb{M}:=(\mathbb{K}, v)$ be a named model based on  a finite context $\mathbb{K}$ and $(A, B)$ be a concept of $\mathbb{K}$. Then there is a formula $\alpha$ in \textbf{PDBL} such that $v(\alpha)=(A, B)$.
	 }
\end{proposition}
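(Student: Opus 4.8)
The plan is to exploit the fact that a concept $(A,B)$ is completely determined by its extent $A$, since $B=A^{\prime}$, together with the finiteness of $\mathbb{K}$, which makes $A=\{g_{1},\dots,g_{k}\}$ a finite set. The named-model hypothesis supplies, for each object $g_{i}$, an object variable $p_{i}\in\textbf{OV}$ with $v(p_{i})=(\{g_{i}\},\{g_{i}\}^{\prime})$. Since each object-generated semiconcept $(\{g_{i}\},\{g_{i}\}^{\prime})$ is of the form $(X,X^{\prime})$, it is a left semiconcept, i.e. a member of $\mathfrak{H}(\mathbb{K})_{\sqcap}$ (and, being a concept, $(A,B)=(A,A^{\prime})$ is of this form too, cf. Theorem \ref{cs}(2)). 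The idea is then to write $(A,B)=(A,A^{\prime})$ as a disjunction of these singleton semiconcepts via the derived connective $\vee$, and to transfer this identity back to the level of formulae.

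The key step is the computation that $\vee$ on $\mathfrak{H}(\mathbb{K})_{\sqcap}$ corresponds to union of extents. By Proposition \ref{pro1}, $\textbf{D}_{\sqcap}=(\mathfrak{H}(\mathbb{K})_{\sqcap},\sqcap,\vee,\neg,\bot,\neg\bot)$ is a Boolean algebra, and by Proposition \ref{pro2}(4) the element $x\vee y$ stays in $D_{\sqcap}$. Using the definitions $\neg(X,X^{\prime})=(G\setminus X,(G\setminus X)^{\prime})$, $(X,X^{\prime})\sqcap(Y,Y^{\prime})=(X\cap Y,(X\cap Y)^{\prime})$ and $x\vee y=\neg(\neg x\sqcap\neg y)$, a direct calculation gives
\[(X,X^{\prime})\vee(Y,Y^{\prime})=\bigl(X\cup Y,(X\cup Y)^{\prime}\bigr),\]
so that, by a routine induction on $k$, one obtains $\bigvee_{i=1}^{k}(\{g_{i}\},\{g_{i}\}^{\prime})=(\{g_{1},\dots,g_{k}\},\{g_{1},\dots,g_{k}\}^{\prime})=(A,A^{\prime})=(A,B)$.

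To assemble the argument, I would set $\alpha:=p_{1}\vee\cdots\vee p_{k}$. Since $\vee$ is defined from $\neg$ and $\sqcap$, and $v$ preserves $\neg$ and $\sqcap$ by Definition \ref{valution} (with $v(\alpha)=(v_{1}(\alpha),v_{2}(\alpha))$ by Corollary \ref{PDBL-valution1}), $v$ also preserves $\vee$; hence $v(\alpha)=\bigvee_{i=1}^{k}v(p_{i})=(A,A^{\prime})=(A,B)$, as required. The degenerate case $A=\emptyset$ is treated separately: then $B=\emptyset^{\prime}=M$, so $(A,B)=\bot=v(\bot)$ and one takes $\alpha:=\bot$. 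I expect the only genuine obstacle to be the extent-union computation for $\vee$; the rest is bookkeeping. Note that the finiteness of $\mathbb{K}$ is essential, since it guarantees that the disjunction defining $\alpha$ is a genuine (finite) formula of \textbf{PDBL}. One could equally run the dual construction, representing $(A,B)=(B^{\prime},B)$ as $P_{1}\wedge\cdots\wedge P_{l}$ over property variables, using that $\wedge$ on $\mathfrak{H}(\mathbb{K})_{\sqcup}$ corresponds to union of intents.
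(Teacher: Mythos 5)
Your proof is correct, but it takes the dual route to the one the paper actually uses. The paper decomposes the \emph{intent}: writing $(A,B)=(B^{\prime},B^{\prime\prime})$, it represents the concept as the meet $\sqcap_{i=1}^{k}P_{i}$ of property variables with $v(P_{i})=(\{m_{r_{i}}\}^{\prime},\{m_{r_{i}}\})$ for $m_{r_{i}}\in B$; the needed identity $\sqcap_{i=1}^{k}(\{m_{r_{i}}\}^{\prime},\{m_{r_{i}}\})=\bigl(\bigcap_{i}\{m_{r_{i}}\}^{\prime},(\bigcap_{i}\{m_{r_{i}}\}^{\prime})^{\prime}\bigr)=(B^{\prime},B^{\prime\prime})=(A,B)$ follows at once from the definition of $\sqcap$ on $\mathfrak{H}(\mathbb{K})$ together with $(\bigcup_{i}\{m_{r_{i}}\})^{\prime}=\bigcap_{i}\{m_{r_{i}}\}^{\prime}$, so only the primitive connective $\sqcap$ is involved. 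You instead decompose the \emph{extent}, using the derived join $\vee$ on the Boolean part $\mathfrak{H}(\mathbb{K})_{\sqcap}$ and the computation $(X,X^{\prime})\vee(Y,Y^{\prime})=(X\cup Y,(X\cup Y)^{\prime})$; this is slightly more work, since it routes through $\neg$ and the Boolean-algebra structure of $D_{\sqcap}$, but it is equally valid, and you correctly observe that $v$ preserves $\vee$ because $\vee$ is defined from $\neg$ and $\sqcap$. Your treatment of the degenerate case $A=\emptyset$ is a point in your favour: the paper's proof silently assumes the index set is nonempty. Both arguments exploit the named-model hypothesis and the finiteness of $\mathbb{K}$ in exactly the same way.
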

\begin{proof}
Let 
$\mathbb{K}:=(\{g_{1},\ldots g_{n}\}, \{m_{1}\ldots m_{n}\}, I)$,  and $(A, B):=(\{g_{r_{1}}\ldots, g_{r_{k}}\}, \{m_{r_{1}},\ldots, m_{r_{k}}\} )$ be a concept of $\mathbb{K}$. Then  $(A, B)=(\{g_{r_{1}}\ldots, g_{r_{k}}\}, \{m_{r_{1}},\ldots, m_{r_{k}}\} )=(B^{\prime}, B^{\prime\prime})= \sqcap_{i=1}^{k}(\{m_{r_{i}}\}^{\prime},  \{m_{r_{i}}\})$.  So there are some $P_{i}, i=1,2,\ldots k$, such that $v(P_{i})=(\{m_{r_{i}}\}^{\prime},  \{m_{r_{i}}\})$ in the model $\mathbb{M}$. The required formula is $\alpha:=\sqcap_{i=1}^{k} P_{i}$.
\end{proof}

\noindent Let us return to the example. 
   $\eta(\mbox{g(Stimmgr{\"o}{\ss }e)})\sqcap \neg \eta(\mbox{a(Stimmgr{\"o}{\ss }e)})=(\{1558, 1561, 1562\}, \{\mbox{g(Stimmgr{\"o}{\ss }e)}\})=\eta(\mbox{g(Stimmgr{\"o}{\ss }e)})$ in $\underline{\mathfrak{H}}(\mathbb{K}_{z})$
 encodes the fact that  in $\mathbb{K}_{z}$, if an object has  the property $\mbox{g(Stimmgr{\"o}{\ss }e)}$ then it does not possess the property  $\mbox{a(Stimmgr{\"o}{\ss }e)}$.  
From this fact, we also derive that there is no object in $\mathbb{K}_{z}$ that has the properties a(Stimmgr{\"o}{\ss }e) and g(Stimmgr{\"o}{\ss }e) together, that is, $\eta(\mbox{a(Stimmgr{\"o}{\ss }e)})\sqcap  \eta(\mbox{g(Stimmgr{\"o}{\ss }e)})=\bot=(\emptyset , M)$.
In fact, using \textbf{PDBL}, we can show that  $\eta(\mbox{a(Stimmgr{\"o}{\ss }e)})\sqcap  \eta(\mbox{g(Stimmgr{\"o}{\ss }e)})=\bot$ is  deducible  from $\eta(\mbox{g(Stimmgr{\"o}{\ss }e)})\sqcap \neg \eta(\mbox{a(Stimmgr{\"o}{\ss }e)})=\eta(\mbox{g(Stimmgr{\"o}{\ss }e)})$. 
 For that, we prove the following theorem.

\begin{theorem}
	\label{infprof}
	{\rm The following rules are derivable in \textbf{PDBL}.
		
		$\begin{array}{ll}
		\infer[(R10)]{  \bot\vdash\alpha\sqcap\beta}{ \alpha\sqcap \neg \beta\vdash\alpha } &~~~~~~
		\infer[(R11)]{\alpha\sqcap\beta\vdash\bot}{\alpha\vdash\alpha\sqcap\neg\beta}
		\end{array}$}
\end{theorem}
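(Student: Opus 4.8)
The plan is to observe that the two rules play asymmetric roles: $(R10)$ is essentially trivial, while $(R11)$ carries all the content. For $(R10)$ the conclusion $\bot\vdash\alpha\sqcap\beta$ is nothing but an instance of axiom 11a, so it is provable outright, and the premise $\alpha\sqcap\neg\beta\vdash\alpha$ (itself an instance of axiom 2a) is not really needed; stating it as a rule merely records the ``easy half'' of the intended equivalence $\alpha\sqcap\neg\beta\dashv\vdash\alpha \;\Rightarrow\; \alpha\sqcap\beta\dashv\vdash\bot$. Hence I would dispose of $(R10)$ in one line and concentrate on $(R11)$.

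For $(R11)$ the semantic reason it holds is transparent: the premise forces $v(\alpha)\sqsubseteq\neg v(\beta)$, and then $v(\alpha)\sqcap v(\beta)\sqsubseteq\neg v(\beta)\sqcap v(\beta)=\bot$ by axiom 9a. I would turn this into a purely syntactic derivation as follows. First, from axiom 3a we have $\alpha\sqcap\neg\beta\vdash\neg\beta$; chaining this with the premise $\alpha\vdash\alpha\sqcap\neg\beta$ through transitivity $(R4)$ yields $\alpha\vdash\neg\beta$. Next, applying the congruence rule $(R1)$ with context formula $\beta$ gives $\alpha\sqcap\beta\vdash\neg\beta\sqcap\beta$. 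Finally, I would collapse the right-hand side to $\bot$: commutativity of $\sqcap$ (provable, Theorem~\ref{thempdbl}(1a)) gives $\neg\beta\sqcap\beta\vdash\beta\sqcap\neg\beta$, and axiom 6a gives $\beta\sqcap\neg\beta\vdash\bot$, so two further applications of $(R4)$ deliver $\alpha\sqcap\beta\vdash\bot$.

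There is no genuine obstacle here: every step is a single-sequent move, so no hypersequent branching or external rules are needed, and the whole derivation stays within the propositional fragment. The only point requiring mild care is bookkeeping, namely applying $(R1)$ (rather than $(R1)'$) so that $\beta$ ends up on the correct side, and then inserting the commutativity step so that the shape $\beta\sqcap\neg\beta$ matches axiom 6a exactly. Taken together, $(R10)$ and $(R11)$ supply the two halves of $\dashv\vdash$ needed for the worked example, letting one derive $\eta(\mathrm{a})\sqcap\eta(\mathrm{g})\dashv\vdash\bot$ from $\eta(\mathrm{g})\sqcap\neg\eta(\mathrm{a})\dashv\vdash\eta(\mathrm{g})$; the soundness of $(R11)$ can optionally be cross-checked against Lemma~\ref{botom neg and ordered}(1).
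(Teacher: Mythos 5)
Your proposal is correct, and both rules are indeed derivable by your argument, but your route differs from the paper's in two respects. First, for $(R10)$ the paper does \emph{not} observe that the conclusion $\bot\vdash\alpha\sqcap\beta$ is already an instance of axiom 11a; instead it gives a full derivation tree that genuinely uses the premise, chaining $\bot\vdash\alpha\sqcap\bot$, $\bot\vdash\neg\beta\sqcap\beta$ (Theorem~\ref{thempdbl}(7a)), $(R1)^{\prime}$, associativity (Theorem~\ref{thempdbl}(2a)) and $(R1)$ applied to the premise. Your one-line disposal is shorter and, in particular, avoids the paper's auxiliary fact $\alpha\sqcap\bot\dashv\vdash\bot$, which the paper obtains via Proposition~\ref{pro1.5} and the completeness theorem (Theorem~\ref{algcomdbl}) rather than by a direct syntactic derivation. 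Second, for $(R11)$ the paper only says the proof is ``similar'' to that of $(R10)$; the implied derivation would multiply the premise by $\beta$ via $(R1)$ and then reassociate to expose $\neg\beta\sqcap\beta$, whereas you first collapse the premise to $\alpha\vdash\neg\beta$ using axiom 3a and $(R4)$ and only then apply $(R1)$, which eliminates the associativity step and the $\alpha\sqcap\bot\vdash\bot$ lemma altogether. Every step you list checks out against the stated axioms and rules (axiom 3a instantiated at $\neg\beta$, $(R4)$, $(R1)$, commutativity from Theorem~\ref{thempdbl}(1a), axiom 6a). The only trivial slip is in your optional closing remark: the semantic cross-check for $(R11)$ is Lemma~\ref{botom neg and ordered}(2), not (1), but this plays no role in the syntactic derivation.
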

\begin{proof}
	The proof of (R11) is similar to that  of (R10) and we only prove (R10).\\
Note that from Proposition \ref{pro1.5} and Theorem \ref{algcomdbl}, one obtains in  \textbf{PDBL},\\	
\hspace*{5.5cm} $\alpha\sqcap\bot\vdash \bot$ and $\bot\vdash\alpha\sqcap\bot$ \hfill{(*)}

\noindent Then we have:	
	\begin{center}
		$\infer{\bot\vdash\alpha\sqcap\beta}{\infer{\mbox{(*)}~~\bot\vdash\alpha\sqcap\bot~~\alpha\sqcap\bot\vdash\alpha\sqcap\beta~(R4)}{	\infer{(R1)^{\prime}~~\alpha\sqcap\bot\vdash \alpha\sqcap (\neg\beta\sqcap\beta)~~\alpha\sqcap(\neg\beta\sqcap\beta)\vdash\alpha\sqcap\beta}{\infer{\mbox{Theorem \ref{thempdbl}(7a)}~\bot\vdash\neg\beta\sqcap\beta~~\alpha\sqcap(\neg\beta\sqcap\beta)\vdash\alpha\sqcap\beta~~(R4)}{\mbox{Theorem \ref{thempdbl}(2a)}~~\alpha\sqcap (\neg\beta\sqcap\beta)\vdash(\alpha\sqcap\neg\beta)\sqcap\beta~~\infer{(\alpha\sqcap\neg\beta)\sqcap\beta\vdash\alpha\sqcap\beta~~(R1)}{\alpha\sqcap\neg\beta\vdash\alpha}}}}}$
	\end{center}
\end{proof}
\noindent Let $y_{3}:=(\{\mbox{a(Stimmgr{\"o}{\ss }e)}\}^{\prime}, \{\mbox{a(Stimmgr{\"o}{\ss }e)}\})$ and $y_{4}:=(\{\mbox{g(Stimmgr{\"o}{\ss }e)}\}^{\prime}, \{\mbox{g(Stimmgr{\"o}{\ss }e)}\})$ in Example \ref{namedmodelexm}.  As  $\eta(\mbox{g(Stimmgr{\"o}{\ss }e)})\sqcap \neg \eta(\mbox{a(Stimmgr{\"o}{\ss }e)})=\eta(\mbox{g(Stimmgr{\"o}{\ss }e)})$, the sequents $P_{4}\sqcap \neg P_{3}\vdash P_{4}$ and $P_{4}\vdash P_{4}\sqcap\neg P_{3}$ are satisfied in the model $\mathbb{M}_{z}$. By soundness and Theorem \ref{infprof}, $\bot\vdash P_{4}\sqcap P_{3}$ and $P_{4}\sqcap P_{3}\vdash \bot$ are also satisfied in  $\mathbb{M}_{z}$, which implies that $v_{r}(\bot)\sqsubseteq v_{r}(P_{4}\sqcap P_{3})$ and $v_{r}(P_{4}\sqcap P_{3})\sqsubseteq v_{r}(\bot)$. So $v_{r}(\bot)=v_{r}(P_{4}\sqcap P_{3})=v_{r}(P_{4})\sqcap v_{r}(P_{3})$, whence $\eta(\mbox{a(Stimmgr{\"o}{\ss }e)})\sqcap  \eta(\mbox{g(Stimmgr{\"o}{\ss }e)})=\bot$.

\vskip 3pt
We end this part by demonstrating  that under the interpretation given in terms of named models, we can characterize the clarified context \cite{ganter2012formal}.  Let us recall the definition of  a clarified context.

\begin{definition}
	{\rm \cite{ganter2012formal} A context $\mathbb{K}:= (G, M, I)$ is called a {\it clarified context} if $I$ satisfies the following.
		\begin{enumerate}
			\item For all $g_{1}, g_{2}\in G$, $\{g_{1}\}^{\prime}=\{g_{2}\}^{\prime} \Longrightarrow g_{1}=g_{2}$.
			\item For all $m_{1}, m_{2}\in M$, $\{m_{1}\}^{\prime}=\{m_{2}\}^{\prime} \Longrightarrow m_{1}=m_{2}$.
	\end{enumerate}}
\end{definition}

\begin{theorem}
	{\rm The  following inference rules are valid in the class $\mathcal{K}$ for any $p, q\in \textbf{OV}$ and $P, Q\in \textbf{PV}$ if and only if  $\mathcal{K}$ is the class of all clarified contexts.
	\vspace{.5cm}	
		
		$\begin{array}{ll}
		\infer[\ldots (1)]{p\vdash q}{ p\sqcup p\vdash q\sqcup q & q\sqcup q\vdash p\sqcup p}&
		\infer[\ldots (2)]{q\vdash p}{ p\sqcup p\vdash q\sqcup q & q\sqcup q\vdash p\sqcup p}\\
		\infer[\ldots (3)]{P\vdash Q}{ P\sqcap P\vdash Q\sqcap Q & Q\sqcap Q\vdash P\sqcap P } &
		\infer[\ldots (4)]{Q\vdash P}{ P\sqcap P\vdash Q\sqcap Q & Q\sqcap Q\vdash P\sqcap P }
		\end{array}$
		
	}
\end{theorem}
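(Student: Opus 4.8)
The plan is to work throughout within \emph{named} models, as stipulated in the discussion immediately preceding the statement, and to translate each sequent into the semiconcept order $\sqsubseteq$ on $\underline{\mathfrak{H}}(\mathbb{K})$ via Proposition~\ref{satisdbl}. The whole argument then collapses to two short computations of the $\sqcup$- and $\sqcap$-idempotents of object- and attribute-generated semiconcepts. Recall that in a named model $\mathbb{M}:=(\mathbb{K},v)$ we have $v(p)=(\{g\},\{g\}^{\prime})$ for object variables and $v(P)=(\{m\}^{\prime},\{m\})$ for property variables, and that every such generated semiconcept is realised by some variable. Also, for two semiconcepts of the same sort the order $(X_{1},Y_{1})\sqsubseteq(X_{2},Y_{2})$ amounts to $X_{1}\subseteq X_{2}$ together with $Y_{2}\subseteq Y_{1}$ (and the two inclusions are equivalent there, since $^{\prime}$ is antitone and applying it three times equals applying it once).

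First I would fix object variables $p,q$, so that $v(p)=(\{g_{1}\},\{g_{1}\}^{\prime})$ and $v(q)=(\{g_{2}\},\{g_{2}\}^{\prime})$ for some $g_{1},g_{2}\in G$. Using the $\sqcup$-operation recalled in Section~\ref{intro} one computes $v(p\sqcup p)=(\{g_{1}\}^{\prime\prime},\{g_{1}\}^{\prime})$ and likewise $v(q\sqcup q)=(\{g_{2}\}^{\prime\prime},\{g_{2}\}^{\prime})$; these are right semiconcepts. By Proposition~\ref{satisdbl}, the premise $p\sqcup p\vdash q\sqcup q$ is satisfied iff $\{g_{2}\}^{\prime}\subseteq\{g_{1}\}^{\prime}$, and $q\sqcup q\vdash p\sqcup p$ iff $\{g_{1}\}^{\prime}\subseteq\{g_{2}\}^{\prime}$; hence the two premises of rules $(1)$ and $(2)$ hold simultaneously exactly when $\{g_{1}\}^{\prime}=\{g_{2}\}^{\prime}$. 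On the other hand $p\vdash q$ is satisfied iff $(\{g_{1}\},\{g_{1}\}^{\prime})\sqsubseteq(\{g_{2}\},\{g_{2}\}^{\prime})$, which, since the extents are singletons, forces $\{g_{1}\}\subseteq\{g_{2}\}$, i.e.\ $g_{1}=g_{2}$ (and symmetrically for $q\vdash p$). Thus, over named models, rules $(1)$ and $(2)$ preserve truth iff the implication $\{g_{1}\}^{\prime}=\{g_{2}\}^{\prime}\Rightarrow g_{1}=g_{2}$ holds for all $g_{1},g_{2}\in G$, which is precisely the first clarification condition. A dual computation with property variables, now using $v(P\sqcap P)=(\{m_{1}\}^{\prime},\{m_{1}\}^{\prime\prime})$ (a left semiconcept) and the $\sqcap$-operation, shows that rules $(3)$ and $(4)$ preserve truth iff $\{m_{1}\}^{\prime}=\{m_{2}\}^{\prime}\Rightarrow m_{1}=m_{2}$ for all $m_{1},m_{2}\in M$, the second clarification condition.

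With these two equivalences the theorem follows in both directions. For the ``if'' direction, assume every context in $\mathcal{K}$ is clarified; then in any named model over such a context the premises of $(1)/(2)$ and of $(3)/(4)$ force $\{g_{1}\}^{\prime}=\{g_{2}\}^{\prime}$ (resp.\ $\{m_{1}\}^{\prime}=\{m_{2}\}^{\prime}$), whence $g_{1}=g_{2}$ (resp.\ $m_{1}=m_{2}$) by clarification, so all four rules preserve truth and are valid in $\mathcal{K}$. For the ``only if'' direction I would argue contrapositively: if some $\mathbb{K}\in\mathcal{K}$ fails to be clarified, choose witnesses $g_{1}\neq g_{2}$ with $\{g_{1}\}^{\prime}=\{g_{2}\}^{\prime}$ (or the attribute analogue), take a named model assigning $v(p)=(\{g_{1}\},\{g_{1}\}^{\prime})$ and $v(q)=(\{g_{2}\},\{g_{2}\}^{\prime})$, and note that both premises of $(1)$ are satisfied while the conclusion $p\vdash q$ fails, contradicting validity.

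The step I expect to need the most care is this passage between a sequent and the extent/intent inclusions: one must verify that each comparison arising here is between semiconcepts of the \emph{same} sort, so that $\sqsubseteq$ reduces to a single inclusion, and that it is exactly the singleton extents/intents of a named model that convert the conclusions into the equalities $g_{1}=g_{2}$ and $m_{1}=m_{2}$. A minor subsidiary point is the existence, in the ``only if'' direction, of a named model realising the chosen values; this is unproblematic for the finite (or countable) contexts relevant here, since $\textbf{OV}$ and $\textbf{PV}$ are countably infinite and the surjectivity clause of a named model can then be met.
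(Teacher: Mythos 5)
Your proposal is correct and follows essentially the same route as the paper: work in named models, compute $v(p\sqcup p)=(\{g\}^{\prime\prime},\{g\}^{\prime})$ and $v(P\sqcap P)=(\{m\}^{\prime},\{m\}^{\prime\prime})$, observe that the premises amount to $\{g_1\}^{\prime}=\{g_2\}^{\prime}$ (resp.\ $\{m_1\}^{\prime}=\{m_2\}^{\prime}$) while the conclusions amount to $g_1=g_2$ (resp.\ $m_1=m_2$), so that the rules are exactly the clarification conditions. Only your parenthetical remark that the two inclusions defining $\sqsubseteq$ are always equivalent for semiconcepts of the same sort is overstated (it fails for left semiconcepts with non-closed extents — indeed that failure is the whole point of the theorem), but you never actually rely on it where it would be false, since the intents $\{g\}^{\prime}$ and extents $\{m\}^{\prime}$ involved are derivation-closed.
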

\begin{proof}
	Let $\mathcal{K}$ be the class of all clarified contexts and
	$ p\sqcup p\vdash q\sqcup q$,  $ q\sqcup q\vdash p\sqcup p$ be valid in  the class $\mathcal{K}$. To show $p\vdash q$  and $q\vdash p$ are valid in the class $\mathcal{K}$,
	let $\mathbb{M}:=(\mathbb{K}, v)$ be a named model based on  $\mathbb{K}:=(G,M, R)\in\mathcal{K}$. Then $v(p)=(\{g\}, \{g\}^{\prime})$ and $v(q)=(\{g_{1}\}, \{g_{1}\}^{\prime} )$ for some $g,g_{1}\in G$, and $v(P)=(\{m\}^{\prime}, \{m\})$,  $v(Q)=(\{m_{1}\}^{\prime}, \{m_{1}\})$ for some $m, m_{1}\in M$. By  Corollary \ref{PDBL-valution}, $v(p\sqcup p)=v(p)\sqcup v(p)=(\{g\}, \{g\}^{\prime})\sqcup (\{g\}, \{g\}^{\prime} )=( \{g\}^{\prime\prime},  \{g\}^{\prime})$. Similarly $v(q\sqcup q)=( \{g_{1}\}^{\prime\prime},  \{g_{1}\}^{\prime})$. Then $( \{g\}^{\prime\prime},  \{g\}^{\prime})=v(p\sqcup p)\sqsubseteq v(q\sqcup q)=( \{g_{1}\}^{\prime\prime},  \{g_{1}\}^{\prime})$  and $( \{g_{1}\}^{\prime\prime},  \{g_{1}\}^{\prime})=v(q\sqcup q)\sqsubseteq v(p\sqcup p)=( \{g\}^{\prime\prime},  \{g\}^{\prime})$, which implies that $( \{g\}^{\prime\prime},  \{g\}^{\prime})=( \{g_{1}\}^{\prime\prime},  \{g_{1}\}^{\prime})$. So $\{g\}^{\prime}=\{g_{1}\}^{\prime}$, which implies that $g=g_{1}$, as $\mathbb{K}$ is a clarified context. So  $v(p)=v(q)$, which implies that $v(p)\sqsubseteq v(q)$ and $v(q)\sqsubseteq v(p)$. So $p\vdash q$ and $q\vdash p$ are satisfied in $\mathbb{M}$, which implies that  $p\vdash q$  and $q\vdash p$ are true in $\mathbb{K}$. So  $p\vdash q$  and $q\vdash p$ are valid in the class $\mathcal{K}$, which implies that (1) and (2) are valid in $\mathcal{K}$. \\
	Similar to the above proof we can show that (3) and (4) are valid in the class $\mathcal{K}$.\\
	Conversely, let (1)-(4) be valid in the class $\mathcal{K}$. To show $\mathcal{K}$ is the class of all clarified contexts, let $\mathbb{K}:=(G,M,R)\in \mathcal{K}$, and $g, g_{1}\in G$ , $m,m_{1}\in M$ such that $\{g\}^{\prime}=\{g_{1}\}^{\prime}$ and $\{m\}^{\prime}=\{m_{1}\}^{\prime}$. Let $\mathbb{M}:=(\mathbb{K}, v)$ be  a named model such that $v(p):=(\{g\}, \{g\}^{\prime}), v(q):=(\{g_{1}\}, \{g_{1}\}^{\prime})$ and $v(P):=(\{m\}^{\prime}, \{m\}), v(Q):=(\{m_{1}\}^{\prime}, \{m_{1}\}) $. Then $ p\sqcup p\dashv\vdash q\sqcup q$ and $P\sqcap P\dashv\vdash Q\sqcap Q$  are satisfied in $\mathbb{M}$, as $v(p\sqcup p)=v(q\sqcup q)$ and $v(P\sqcap P)=v(Q\sqcap Q)$. So $p\dashv\vdash q$ and $P\dashv\vdash Q$ are satisfied in $\mathbb{M}$, which implies that  $v(p)=v(q)$ and $v(P)=v(Q)$. So $g=g_{1}$ and $m=m_{1}$, which implies that $\mathbb{K}$ is a clarified context. Hence  $\mathcal{K}$ is the class of all clarified contexts.
\end{proof}

\section{  MPDBL and rough sets}
\label{rough set approx}
In this section,  we explain our approach to FCA from the perspective of rough set theory. For that, recall the approximation spaces $(G, E_{1})$ and $(M, E_{2})$  for a  context $\mathbb{K}:=(G,M,I)$ given in Section \ref{Appropefca}. The relations $E_1,E_2$ are defined as:  $g_{1}E_{1} g_{2}$ if and only if $I(g_{1})=I(g_{2})$ for $g_{1},g_{2}\in G$, and
 $m_{1}E_{2}m_{2}$ if and only if  $I^{-1}(m_{1})= I^{-1}(m_{2})$ for $m_{1}, m_{2}\in M$. We consider the  Kripke context   $\mathbb{KC}_{SD}:=((G, E_{1}), (M, E_{2}), I)$  based on $\mathbb{K}$. $\mathbb{KC}_{SD}$ is an example of a reflexive, symmetric and transitive Kripke context, and
   may be understood to represent a  basic classification skill of an  agent about objects in $G$ and properties 
 in $M$, with respect to information given in the context $\mathbb{K}$. 
 We now propose the notion of definability of a semiconcept $(A,B)$, $A \subseteq G, B \subseteq M$.
\begin{definition}
	\label{pmapro}
	{\rm  A {\it definable semiconcept} of $\mathbb{KC}_{SD}:=((G, E_{1}), (M, E_{2}), I)$ is a semiconcept $(A,B)$ **of $\mathbb{K}$ where $A$ and $ B$ are categories in the Pawlakian approximation spaces $(G, E_{1})$ and $(M, E_{2})$ respectively. 
	}
\end{definition}

\noindent As noted earlier after Theorem \ref{cs}, a semiconcept  is either a left semiconcept of the form $(A, A^{\prime} )$ or a right semiconcept of the form $(B^{\prime},B)$. In Corollary \ref{propertyofapprox} below, we shall see that the former kind  is definable if and only if  the first component is a category in $(G, E_{1})$, and the  latter one is definable if and only if the second component is a category in $(M, E_{2})$. 
\begin{proposition}
	\label{definblconcept}
	{\rm  If a semiconcept $(A, B)$ is a concept of $\mathbb{K}$ then it is a definable semiconcept of $\mathbb{KC}_{SD}$.}
\end{proposition}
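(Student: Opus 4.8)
The plan is to verify the two membership conditions of Definition \ref{pmapro} separately: that $A$ is a category in the approximation space $(G, E_{1})$, and that $B$ is a category in $(M, E_{2})$. Since $E_{1}$ and $E_{2}$ are equivalence relations, a subset is a category (a union of basic categories) precisely when it is saturated, i.e. closed under the equivalence relation. So I would first reduce the whole claim to two saturation statements: (i) if $g\in A$ and $g E_{1} g_{1}$ then $g_{1}\in A$; and (ii) if $m\in B$ and $m E_{2} m_{1}$ then $m_{1}\in B$. I would make this reduction explicit, since it is the bridge between the ``union of equivalence classes'' formulation of definability and the saturation argument that does the real work.

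For (i), the key observation is that because $(A,B)$ is a concept we have $A = B^{\prime}$, so membership of an object $g$ in $A$ is equivalent to $B \subseteq I(g)$, where $I(g) = \{g\}^{\prime}$ is the set of attributes of $g$. First I would rewrite $g\in A$ as $B\subseteq I(g)$; then, using that $g E_{1} g_{1}$ means exactly $I(g)=I(g_{1})$, conclude $B \subseteq I(g_{1})$, which gives $g_{1}\in B^{\prime}=A$. Step (ii) is dual: from $B = A^{\prime}$, membership $m\in B$ is equivalent to $A\subseteq I^{-1}(m)$, and $m E_{2} m_{1}$ means $I^{-1}(m)=I^{-1}(m_{1})$, so $A\subseteq I^{-1}(m_{1})$ and hence $m_{1}\in A^{\prime}=B$. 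Combining (i) and (ii) shows that $A$ and $B$ are categories in their respective spaces, so $(A,B)$ is a definable semiconcept.

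I do not expect any serious obstacle here; the proof is essentially a one-line consequence of the concept equations $A=B^{\prime}$ and $B=A^{\prime}$ together with the definitions of $E_{1}$ and $E_{2}$. The only point requiring a little care is the conceptual one noted above, namely making explicit that for an equivalence relation a set is a category iff it is saturated, so that proving saturation suffices. This also clarifies why the hypothesis that $(A,B)$ is a concept is exactly what is needed: it is what forces $A$ and $B$ to be determined respectively by the rows $I(g)$ and the columns $I^{-1}(m)$, which are precisely the data on which $E_{1}$ and $E_{2}$ depend.
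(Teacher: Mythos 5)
Your proposal is correct and is essentially the paper's own argument: both proofs show that $A$ is closed under $E_{1}$ (and dually $B$ under $E_{2}$) by combining the concept equations with the fact that $E_{1}$, $E_{2}$ are defined via equality of $I(g)$, resp.\ $I^{-1}(m)$. The only cosmetic difference is that you pass from saturation to ``category'' directly, whereas the paper phrases the same closure as $E_{1}(g)\subseteq A$, hence $\underline{A}=A$, and then invokes Proposition \ref{pra}(vii).
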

\begin{proof}
	Let $(A, B)$ be a concept of $\mathbb{K}$. Then $A^{\prime\prime}=A$ and $B^{\prime\prime}=B$.  Let $g\in A$ and $g_{1}\in E_{1}(g)$. By definition of $E_1$, this  means $I(g)=I(g_{1})$.  As $g \in A$, $g I m$ for any $m \in A^\prime$. So $g_{1}Im$  for all such $m$, and thereby, $g_1 \in A^{\prime\prime}=A$.
	Thus $E_{1}(g)\subseteq A$, which gives $g\in \underline{A}$. So $\underline{A}=A$, by Proposition \ref{pra}(v), and by Proposition \ref{pra}(vii), $A$ is a category in $(G, E_{1})$. Similarly, we can show that $B$ is a category in $(M, E_{2})$. Therefore, $(A, B)$ is a definable semiconcept.
\end{proof}

It is not always the case that a semiconcept is a  definable semiconcept. 
Let us use an example to demonstrate this fact. 
\begin{example}
	\label{semiconceptapprox} 
{\rm $(G,M,I)$ is a modified subcontext of a context provided by Wille \cite{ganter2012formal}, where 
$G:=\{Leech, Bream, Frog,$ $ Dog, Cat\}$ and $M:=\{a,b,c,g\}$, and  $a$:=requires water to survive, $b$:=lives in water, $c$:=lives on land, and $g$:=can move. Table  \ref{context-1} provides $I$, where * as an entry corresponding to object $x$ and property $y$ indicates that $xIy$ holds. 



\begin{table}[ht]
	\centering
	
	\caption {Context $\mathbb{K}$} \label{context-1}
	\scalebox{.70}{\begin{tabular}{ | l | l | l | l | l |}
			\hline	
			& $a$ & $b$& $c$ & $g$\\ \hline
			Leech & * &*  &  & * \\ \hline
			Bream & * & * &  & * \\ \hline
			Frog & * & * & * & * \\ \hline
			Dog & * &  & * & * \\ \hline
			Cat & * &  &  * & *  \\ \hline
	\end{tabular}}

\end{table}
\noindent The induced Kripke context is $\mathbb{K}^{L}_{SD}:=((G, \{\{Leech, Bream\},\{Frog\}, \{Dog, Cat\}\}), (M, \{a,g\}, \{b\},\{c\}\}), I)$. $\mathbb{K}^{L}_{SD}$ represents that an agent cannot distinguish the properties a and g, while Leech and Bream as well as Dog and Cat are indistinguishable for an agent based on the information given in  $\mathbb{K}$. 
$(\{Leech, $ $Bream , Dog\}, \{a, g\})$ is a non-definable semiconcept of $\mathbb{KC}^{L}_{SD}$.
}
\end{example}
The question then  is:  can we approximate a semiconcept of $\mathbb{K}$ by  definable semiconcepts of $\mathbb{KC}_{SD}$? For that purpose, the   {\it lower and upper approximations of a semiconcept} are  defined. 
We split the set $\mathfrak{H}(\mathbb{K})$ as 
$\mathfrak{H}(\mathbb{K}):=(\mathfrak{H}(\mathbb{K})_{\sqcap}\setminus \mathcal{B}(\mathbb{K}))\cup (\mathfrak{H}(\mathbb{K})_{\sqcup}\setminus \mathcal{B}(\mathbb{K}))\cup \mathcal{B}(\mathbb{K})$.

\begin{definition}
	\label{approx meet semi}
	{\rm
	
	\noindent 	\bla
	\item Let $(A, B)\in \mathcal{B}(\mathbb{K})$. The {\it lower approximation} and  {\it upper approximation} of $(A, B)$ are $(A,B)$ itself.
		\item  Let $(A, B)\in \mathfrak{H}(\mathbb{K})_{\sqcap}\setminus \mathcal{B}(\mathbb{K})$.  The {\it lower approximation} and  {\it upper approximation} of $(A, B)$ are defined as $\underline{(A, B)}:=(\underline{A}_{E_{1}}, (\underline{A}_{E_{1}})^{\prime})$ and $\overline{(A, B)}:=(\overline{A}^{E_{1}}, (\overline{A}^{E_{1}})^{\prime})$ respectively.

	\item Let $(A, B)\in \mathfrak{H}(\mathbb{K})_{\sqcup}\setminus \mathcal{B}(\mathbb{K})$. The {\it lower approximation} and {\it upper approximation} of $(A, B)$ are defined as  $\underline{(A, B)}:=((\overline{B}^{E_{2}})^{\prime}, \overline{B}^{E_{2}})$ and  $\overline{(A, B)}:=((\underline{B}_{E_{2}})^{\prime}, \underline{B}_{E_{2}})$ respectively.
	 
\ela}
\end{definition}
\begin{observation}
	{\rm  Recall the operators $f_{R}$ and $f_{S}$ defined in Section \ref{dbawo}.
	\vskip 3pt 
		\begin{enumerate}
			\item For $(A, B)\in \mathfrak{H}(\mathbb{K})_{\sqcap}\setminus \mathcal{B}(\mathbb{K}) $, $\underline{(A,B)}=f_{E_{1}}((A, B))$, $\overline{(A, B)}=f^{\delta}_{E_{1}}((A, B))$.
		\item For $(A, B)\in \mathfrak{H}(\mathbb{K})_{\sqcup}\setminus \mathcal{B}(\mathbb{K})$, $\underline{(A, B)}=f_{E_{2}}((A, B))$, 
		$\overline{(A, B)}=f^{\delta}_{E_{2}}((A, B))$. 
	\end{enumerate}}
\end{observation}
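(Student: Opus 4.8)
The statement is pure definition-chasing: in each case one writes down the semiconcept produced by the relevant complex-algebra operator and the semiconcept produced by Definition \ref{approx meet semi}, and checks that the two ordered pairs coincide. So my plan is simply to record the operator values on a semiconcept and match them termwise against the approximation formulas. Recalling from Section \ref{dbawo} that in the Kripke context $\mathbb{KC}_{SD}$ we have $R = E_1$ and $S = E_2$, the two object-side operators evaluate to
\[
f_{E_1}((A,B)) = (\underline{A}_{E_1}, (\underline{A}_{E_1})^{\prime}), \qquad f^{\delta}_{E_1}((A,B)) = (\overline{A}^{E_1}, (\overline{A}^{E_1})^{\prime}),
\]
while $f_{E_2}$ and $f^{\delta}_{E_2}$ are obtained analogously from $f_S$ and $f^{\delta}_S$ and act on the intent component $B$ through the lower and upper approximation operators of the property-side space $(M, E_2)$.

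For the first item $(A,B)\in\mathfrak{H}(\mathbb{K})_{\sqcap}\setminus\mathcal{B}(\mathbb{K})$, a left semiconcept is determined by its extent $A$, which is approximated in the object-side space $(G, E_1)$. Comparing the displayed formulas with Definition \ref{approx meet semi} gives the identities termwise with no auxiliary lemma: the pair defining $\underline{(A,B)}$ is literally $(\underline{A}_{E_1}, (\underline{A}_{E_1})^{\prime}) = f_{E_1}((A,B))$, and the pair defining $\overline{(A,B)}$ is literally $(\overline{A}^{E_1}, (\overline{A}^{E_1})^{\prime}) = f^{\delta}_{E_1}((A,B))$.

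For the second item $(A,B)\in\mathfrak{H}(\mathbb{K})_{\sqcup}\setminus\mathcal{B}(\mathbb{K})$, the argument is the symmetric one on the intent side: a right semiconcept is determined by its intent $B$, which is approximated in $(M, E_2)$. Substituting the definitions of $f_{E_2}$ and $f^{\delta}_{E_2}$ together with the definitions of the lower and upper approximation of a right semiconcept from Definition \ref{approx meet semi}, and comparing the resulting ordered pairs component-for-component, yields the asserted identities $\underline{(A,B)} = f_{E_2}((A,B))$ and $\overline{(A,B)} = f^{\delta}_{E_2}((A,B))$. The step I would treat most carefully — and essentially the only content of the proof — is the bookkeeping of which of $f_{E_2}, f^{\delta}_{E_2}$ is matched with the lower and which with the upper approximation: because the derivation maps $(\cdot)^{\prime}$ are antitone, the lower/upper labels do not transfer naively from the object-side correspondence of the first item, and they must be read off carefully through $(\cdot)^{\prime}$ on the intent component before the pairs can be identified. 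Once this pairing is pinned down the equalities are immediate, and the remaining case $(A,B)\in\mathcal{B}(\mathbb{K})$ needs no separate work, since there both approximations are the semiconcept itself; this is coherent with Proposition \ref{definblconcept}, which already shows a concept to be a definable semiconcept whose approximations collapse.
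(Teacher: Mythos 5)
Your treatment of item 1 is correct and is exactly the intended argument: with $R=E_{1}$ one has $f_{E_{1}}((A,B))=(\underline{A}_{E_{1}},(\underline{A}_{E_{1}})^{\prime})$ and $f^{\delta}_{E_{1}}((A,B))=(\overline{A}^{E_{1}},(\overline{A}^{E_{1}})^{\prime})$, which are literally the pairs in Definition \ref{approx meet semi}(b); the Observation carries no separate proof in the paper, so definition-unfolding is all there is.

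The gap is in item 2, and it sits precisely at the step you yourself identified as ``essentially the only content of the proof'' and then did not carry out. Unfolding the definitions gives $f_{E_{2}}((A,B))=((\underline{B}_{E_{2}})^{\prime},\underline{B}_{E_{2}})$, whereas Definition \ref{approx meet semi}(c) sets $\underline{(A,B)}=((\overline{B}^{E_{2}})^{\prime},\overline{B}^{E_{2}})$ and $\overline{(A,B)}=((\underline{B}_{E_{2}})^{\prime},\underline{B}_{E_{2}})$. So the component-for-component comparison you invoke does not ``yield the asserted identities'': it yields the crossed ones, $f_{E_{2}}((A,B))=\overline{(A,B)}$ and $f^{\delta}_{E_{2}}((A,B))=((\overline{B}^{E_{2}})^{\prime},\overline{B}^{E_{2}})=\underline{(A,B)}$. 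This is forced by the antitonicity you flagged: since $\underline{B}_{E_{2}}\subseteq B$, shrinking the intent moves a right semiconcept \emph{up} in $\sqsubseteq$, so $f_{E_{2}}$ must be the upper, not the lower, approximation. The crossed pairing is also the one the rest of the paper actually uses: by Proposition \ref{modal valuation}, $\blacksquare$ corresponds to $f_{E_{2}}$ and $\blacklozenge$ to $f^{\delta}_{E_{2}}$, and Proposition \ref{property of approx} together with Observation \ref{upperlowrapprox} make $\blacklozenge\alpha$ the lower and $\blacksquare\alpha$ the upper approximation of a right semiconcept. The printed Observation thus appears to have the two operators transposed in item 2, and your write-up, by asserting that the substitution confirms the printed pairing, certifies an equality that the substitution in fact refutes; the honest conclusion of your own method is the transposed statement, and a careful proof should either state it or note the typo.
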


\begin{example}
	\label{exampleapprox}
{\rm  In Example \ref{semiconceptapprox} considered   above, the lower and upper approximations of the semiconcept  $(\{Leech,$ $ Bream , Dog\}, \{a, g\})$ are $(\{ Leech, Bream\}, \{a,b,g\})$  and  $(\{Leech, Bream , Cat, Dog\}, \{a, g\})$ respectively. }
 \end{example}

 
 \begin{proposition}
 	\label{defsemi}
 	{\rm  Let $A\subseteq G$ and $B\subseteq M$. 
 		\begin{enumerate}
 			\item $\underline{(\underline{A}_{E_{1}})^{\prime}}_{E_{2}}=(\underline{A}_{E_{1}})^{\prime}$ and $\overline{(\overline{A}^{E_{1}})^{\prime}}^{E_{2}}=(\overline{A}^{E_{1}})^{\prime}$.
 			\item $\underline{(\underline{B}_{E_{2}})^{\prime}}_{E_{1}}=(\underline{B}_{E_{2}})^{\prime}$ and $\overline{(\overline{B}^{E_{2}})^{\prime}}^{E_{1}}=(\overline{B}^{E_{2}})^{\prime}$.
 	\end{enumerate}} 
 \end{proposition}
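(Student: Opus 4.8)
The plan is to reduce all four equalities to two elementary facts about the equivalence relations $E_1,E_2$. The first, which is the only real content of the proposition, is a \emph{saturation lemma}: for every $C\subseteq G$ the derivation $C^{\prime}\subseteq M$ is a union of $E_2$-classes, and dually for every $D\subseteq M$ the derivation $D^{\prime}\subseteq G$ is a union of $E_1$-classes. To prove this I would start from the characterisation $m\in C^{\prime}\Longleftrightarrow C\subseteq I^{-1}(m)$, immediate from the definition of the derivation. If $m_1 E_2 m_2$, then by definition of $E_2$ we have $I^{-1}(m_1)=I^{-1}(m_2)$, so $C\subseteq I^{-1}(m_1)$ iff $C\subseteq I^{-1}(m_2)$, i.e. $m_1\in C^{\prime}$ iff $m_2\in C^{\prime}$; hence $C^{\prime}$ is $E_2$-saturated. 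The statement for $D^{\prime}$ follows symmetrically from $g\in D^{\prime}\Longleftrightarrow D\subseteq I(g)$ together with the definition of $E_1$.

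Next I would record a \emph{fixed-point lemma}: if $X\subseteq W$ is a union of $E$-classes for an equivalence relation $E$ on $W$, then $\underline{X}_{E}=X=\overline{X}^{E}$. One inclusion in each case is immediate, since an equivalence relation is reflexive and transitive and so Proposition \ref{pra}(v) gives $\underline{X}_{E}\subseteq X\subseteq \overline{X}^{E}$. For the reverse inclusions I would invoke saturation: if $m\in X$ then the whole class $E(m)$ lies in $X$, so $m\in\underline{X}_{E}$; and if $m\in\overline{X}^{E}$, choosing $m_1\in E(m)\cap X$ forces $m$ into the class of $m_1\in X$, whence $m\in X$.

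Finally I would assemble the four identities. As $\underline{A}_{E_1}\subseteq G$, the saturation lemma makes $(\underline{A}_{E_1})^{\prime}$ a union of $E_2$-classes, so the fixed-point lemma yields $\underline{(\underline{A}_{E_1})^{\prime}}_{E_2}=(\underline{A}_{E_1})^{\prime}$; applying the same reasoning to $\overline{A}^{E_1}\subseteq G$ gives $\overline{(\overline{A}^{E_1})^{\prime}}^{E_2}=(\overline{A}^{E_1})^{\prime}$, which is part (1). Part (2) is entirely dual, using that $(\underline{B}_{E_2})^{\prime}$ and $(\overline{B}^{E_2})^{\prime}$ are unions of $E_1$-classes. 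The one point requiring care is more a matter of recognition than of difficulty: spotting that \emph{every} derivation is automatically saturated with respect to the ``same attribute-set'' (resp.\ ``same object-set'') relation. Once this is seen, no computation with the particular sets $\underline{A}_{E_1}$, $\overline{A}^{E_1}$, $\underline{B}_{E_2}$, $\overline{B}^{E_2}$ is needed, since the conclusion depends only on their being subsets of $G$ (resp.\ $M$).
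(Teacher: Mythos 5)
Your proof is correct and follows essentially the same route as the paper's: both establish one inclusion from Proposition \ref{pra}(II) (reflexivity and transitivity of the equivalence relations) and the reverse inclusion from the fact that $E_2$-equivalent attributes have identical object sets $I^{-1}(\cdot)$ (dually for $E_1$). The only difference is presentational: you factor the argument into a general saturation lemma (any derivation $C^{\prime}$ is a union of $E_2$-classes) plus a fixed-point lemma, which makes explicit that the paper's unfolding of $\underline{A}_{E_1}$ as $\{g : E_1(g)\subseteq A\}$ is superfluous --- the identities hold with an arbitrary subset of $G$ (resp.\ $M$) in place of the inner approximations.
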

 \begin{proof}
 	1. By Proposition \ref{pra}(v),   $\underline{(\underline{A}_{E_{1}})^{\prime}}_{E_{2}}\subseteq (\underline{A}_{E_{1}})^{\prime}$. Let $m\in (\underline{A}_{E_{1}})^{\prime}$. Then  $gIm$ for all $g\in\underline{A}_{E_{1}} $, which implies that  $gIm$ for all $g\in G$ such that $E_{1}(g)\subseteq A$. Let $m_{1}\in E_{2}(m)$, which implies that $gIm$ if and only if $gIm_{1}$ for all $g\in G$. So $gIm_{1}$ for all $g\in G$, $E_{1}(g)\subseteq A$, which implies that $gIm_{1}$ for all $g\in \underline{A}_{E_{1}}$. So $m_{1}\in (\underline{A}_{E_{1}})^{\prime}$, which implies that $E_{2}(m)\subseteq (\underline{A}_{E_{1}})^{\prime}$. So $m\in \underline{(\underline{A}_{E_{1}})^{\prime}}_{E_{2}} $, whence $(\underline{A}_{E_{1}})^{\prime}\subseteq \underline{(\underline{A}_{E_{1}})^{\prime}}_{E_{2}}$. So $\underline{(\underline{A}_{E_{1}})^{\prime}}_{E_{2}}= (\underline{A}_{E_{1}})^{\prime}$. By Proposition \ref{pra}(v),  $(\overline{A}^{E_{1}})^{\prime}\subseteq \overline{(\overline{A}^{E_{1}})^{\prime}}^{E_{2}}$. Let $m\in \overline{(\overline{A}^{E_{1}})^{\prime}}^{E_{2}}$. Then $E_{2}(m)\cap (\overline{A}^{E_{1}})^{\prime}\neq\emptyset$, which implies that there exists $m_{1}\in (\overline{A}^{E_{1}})^{\prime}$ such that $m_{1}\in E_{2}(m)$. Now  for all $g\in \overline{A}^{E_{1}}$,  $gIm_{1}$,  which implies that for all $g\in \overline{A}^{E_{1}}$, $gIm$ , as $m_{1}\in E_{2}(m)$. So $m\in (\overline{A}^{E_{1}})^{\prime} $, whence $\overline{(\overline{A}^{E_{1}})^{\prime}}^{E_{2}}\subseteq (\overline{A}^{E_{1}})^{\prime}$. So $\overline{(\overline{A}^{E_{1}})^{\prime}}^{E_{2}}=(\overline{A}^{E_{1}})^{\prime}$.
 	\vskip 3pt
 	\noindent 2. The proof is similar to that of 1.
 \end{proof}

\begin{corollary}
	\label{propertyofapprox}
	{\rm 
	
	\noindent	
		\begin{enumerate} 
		\item The lower and upper approximations of semiconcepts of $\mathbb{K}$ are definable semiconcepts of $\mathbb{KC}_{SD}$.
		\item If $(A, B)$ is a definable semiconcept of $\mathbb{KC}_{SD}$ then $\underline{(A, B)}=(A, B)$ and $\overline{(A, B)}=(A, B)$.
		\item For $(A, B)\in \mathfrak{H}(\mathbb{K})$, $\underline{(\underline{(A, B)})}=\underline{(A, B)}$ and $\overline{(\overline{(A, B)})}=\overline{(A, B)}$.
		\item A left semiconcept $(A, B)$ of $\mathbb{K}$ is definable if and only if $A$ is a category in $(G, E_{1})$.	
		
		\item A right semiconcept $(A, B)$ of $\mathbb{K}$ is definable if and only if $B$ is a category in  $(M, E_{2})$.
			
		\end{enumerate}}
\end{corollary}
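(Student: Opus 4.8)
The plan is to reduce the whole corollary to two earlier results: Proposition~\ref{defsemi}, which records that the $I$-derivation of a lower (resp.\ upper) approximation is again fixed by the appropriate approximation operator, and Proposition~\ref{pra}, whose rough-set identities all apply here because $E_{1}$ and $E_{2}$ are equivalence relations, hence reflexive, transitive and Pawlakian. Throughout I will use the equivalence, valid in a Pawlakian space, that a set is a category if and only if it is fixed by the lower approximation if and only if it is fixed by the upper approximation: the implications ``$\underline{A}=A$ (or $\overline{A}=A$) $\Rightarrow$ category'' are Proposition~\ref{pra}(vii),(viii), while ``category $\Rightarrow \underline{A}=A=\overline{A}$'' is immediate from the definition of a category as a union of equivalence classes. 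I will also use that, by Theorem~\ref{cs}(1), every semiconcept is either a concept, a left semiconcept $(A,A^{\prime})$, or a right semiconcept $(B^{\prime},B)$, and that by Definition~\ref{pmapro} definability of $(A,B)$ means exactly that $A$ is a category in $(G,E_{1})$ and $B$ is a category in $(M,E_{2})$.

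First I would prove the characterizations (4) and (5), since they isolate the only nontrivial ingredient. For a left semiconcept $(A,A^{\prime})$ the forward implication is immediate from Definition~\ref{pmapro}. For the converse, suppose $A$ is a category; then $\underline{A}_{E_{1}}=A$, so $(\underline{A}_{E_{1}})^{\prime}=A^{\prime}$, and Proposition~\ref{defsemi}(1) gives $\underline{A^{\prime}}_{E_{2}}=A^{\prime}$, whence $A^{\prime}$ is a category in $(M,E_{2})$ by Proposition~\ref{pra}(vii); thus $(A,A^{\prime})$ is definable, proving (4). The argument for a right semiconcept $(B^{\prime},B)$ is symmetric, using Proposition~\ref{defsemi}(2) to pass from ``$B$ a category'' to ``$B^{\prime}$ a category in $(G,E_{1})$'', which gives (5).

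Next I would settle (1) through the three cases of Definition~\ref{approx meet semi}. If $(A,B)$ is a concept it equals both of its approximations and is definable by Proposition~\ref{definblconcept}. If $(A,B)=(A,A^{\prime})\in\mathfrak{H}(\mathbb{K})_{\sqcap}\setminus\mathcal{B}(\mathbb{K})$, then $\underline{(A,B)}=(\underline{A}_{E_{1}},(\underline{A}_{E_{1}})^{\prime})$ is a left semiconcept whose first component is a category by Proposition~\ref{pra}(vi),(vii), so it is definable by (4); the upper approximation is handled identically via Proposition~\ref{pra}(vi),(viii). The right-semiconcept case is symmetric, using Proposition~\ref{defsemi}(2). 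Part (2) then follows because, for a definable $(A,B)$, the component entering the relevant clause of Definition~\ref{approx meet semi} is a category and hence fixed by both approximations, so each formula returns $(A,B)$ unchanged (the concept case being trivial by Definition~\ref{approx meet semi}). Finally (3) is immediate from (1) and (2): $\underline{(A,B)}$ is definable by (1), so applying the lower approximation to it leaves it fixed by (2), and dually for the upper approximation.

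The only real obstacle is the converse halves of (4) and (5), and these rest squarely on Proposition~\ref{defsemi}: the genuinely context-dependent fact that the $I$-derivation of an $E_{1}$-category is an $E_{2}$-category, and conversely. Everything else is case bookkeeping together with the standard idempotence and category characterizations of Proposition~\ref{pra}. I would therefore present Proposition~\ref{defsemi} as the workhorse, prove (4) and (5) first, and keep the remaining case analysis terse.
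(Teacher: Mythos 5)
Your proposal is correct and follows essentially the same route as the paper: both reduce everything to Proposition~\ref{defsemi} together with the category characterizations in Proposition~\ref{pra}, and both handle parts (1) and (2) by the same three-way case split from Definition~\ref{approx meet semi}. Reordering the argument to prove (4) and (5) first is a harmless reorganization, not a different method.
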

\begin{proof}
	1. The proof follows  from  Propositions \ref{definblconcept},  \ref{defsemi} and  \ref{pra}(vii and viii).
	
	\noindent 2. Let $(A, B)$ be a definable semiconcept. If $(A, B) \in \mathcal{B}(\mathbb{K})$, the proof follows from  Definition \ref{approx meet semi}(a).  
	
	\noindent Let $(A, B)\in\mathfrak{H}(\mathbb{K})_{\sqcap}\setminus \mathcal{B}(\mathbb{K})$. Then $\underline{(A, B)}=(\underline{A}_{E_{1}}, (\underline{A}_{E_{1}})^{\prime})=(A, A^{\prime})$, as $A$ is a category in $(G, E_{1})$. So  $\underline{(A, B)}= (A, A^{\prime})=(A, B)$.
	Similarly, one gets the result for the  case when $(A, B)\in\mathfrak{H}(\mathbb{K})_{\sqcup}\setminus \mathcal{B}(\mathbb{K})$.\\
		\noindent Similar to the above proof, we can show that $\overline{(A,B)}=(A, B)$.
		
	\noindent	3. The proof follows from  1 and 2.
	
	\noindent 4. Let $(A, B)$ be a left semiconcept, so that $(A, B)=(A, A^{\prime})$. 
	If $A$ is a category then by Proposition \ref{defsemi}(1) it follows that $A^{\prime}$ is a category. So $(A, B)$ is definable semiconcept.  The other direction follows from the definition of definability of semiconcepts.
	
	\noindent 5. The proof   is similar to that  of 4.
	\end{proof}
Now, we prove  sequents in $\textbf{MPDBL5}$ that will yield fundamental properties of the approximations of semiconcepts, as will be seen in Proposition \ref{property of approx}  below.
\begin{proposition}
	\label{drivablity of appx sqeuent}
	{\rm  Let $ \gamma\in \mathfrak{F}_{1}$. 
		 Then the following sequents are derivable in \textbf{MPDBL5}.
		
		\vspace{.3cm}
		\noindent$\begin{array}{ll}
		(1)~\blacklozenge\gamma\vdash\gamma\sqcup\gamma & (2)~\gamma\sqcap\gamma\vdash\lozenge\gamma \\
		\end{array}$}

\end{proposition}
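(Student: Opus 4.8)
The plan is to recognise that these two sequents are precisely the syntactic counterparts of Lemma~\ref{tdBadual}(2), which states that in any tpdBa one has $a\sqcap a\sqsubseteq \textbf{I}^{\delta}(a)$ and $\textbf{C}^{\delta}(a)\sqsubseteq a\sqcup a$. Since $\square,\blacksquare$ are interpreted by $\textbf{I},\textbf{C}$ and, by Note~\ref{dual-ldbalg-ope}, $\lozenge,\blacklozenge$ correspond to the duals $\textbf{I}^{\delta},\textbf{C}^{\delta}$, the sequent $\gamma\sqcap\gamma\vdash\lozenge\gamma$ is the image of the first inequality and $\blacklozenge\gamma\vdash\gamma\sqcup\gamma$ that of the second. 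One could therefore simply invoke soundness--completeness of \textbf{MPDBL4} (Theorem~\ref{sundandcompdbl}(2)): both sequents are valid in every tpdBa by Lemma~\ref{tdBadual}(2), hence provable in \textbf{MPDBL4}, and since \textbf{MPDBL5} extends \textbf{MPDBL4} by adjoining axioms, they are provable in \textbf{MPDBL5} as well. However, a direct syntactic derivation is short and self-contained, so I would present that instead.

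For sequent $(2)$, $\gamma\sqcap\gamma\vdash\lozenge\gamma$, I would unfold $\lozenge\gamma=\neg\square\neg\gamma$ and start from axiom $19a$ instantiated at $\neg\gamma$, namely $\square\neg\gamma\vdash\neg\gamma$. Applying the contraposition rule $(R3)$ gives $\neg\neg\gamma\vdash\neg\square\neg\gamma$. By Theorem~\ref{thempdbl}(10a) we have $\gamma\sqcap\gamma\dashv\vdash\neg\neg\gamma$, so in particular $\gamma\sqcap\gamma\vdash\neg\neg\gamma$; chaining this with the previous sequent by transitivity $(R4)$ yields $\gamma\sqcap\gamma\vdash\neg\square\neg\gamma$, i.e. $\gamma\sqcap\gamma\vdash\lozenge\gamma$.

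For sequent $(1)$, $\blacklozenge\gamma\vdash\gamma\sqcup\gamma$, the argument is exactly dual. Unfolding $\blacklozenge\gamma=\lrcorner\blacksquare\lrcorner\gamma$, I would begin from axiom $19b$ instantiated at $\lrcorner\gamma$, namely $\lrcorner\gamma\vdash\blacksquare\lrcorner\gamma$, apply the dual contraposition rule $(R3)'$ to obtain $\lrcorner\blacksquare\lrcorner\gamma\vdash\lrcorner\lrcorner\gamma$, and then use Theorem~\ref{thempdbl}(10b), $\lrcorner\lrcorner\gamma\dashv\vdash\gamma\sqcup\gamma$, together with transitivity $(R4)$ to conclude $\lrcorner\blacksquare\lrcorner\gamma\vdash\gamma\sqcup\gamma$, i.e. $\blacklozenge\gamma\vdash\gamma\sqcup\gamma$.

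There is no substantial obstacle here: both derivations use only the \textbf{MPDBL4} axioms $19a$ and $19b$, the contraposition rules, the double-negation equivalences of Theorem~\ref{thempdbl}, and transitivity, so they in fact already go through in \textbf{MPDBL4} and a fortiori in \textbf{MPDBL5}. The only point requiring care is keeping the direction of the entailments straight when contraposing, since $(R3)$ and $(R3)'$ reverse the turnstile; matching the result against the intended algebraic statement of Lemma~\ref{tdBadual}(2) is the cleanest way to confirm that the chosen instantiations ($\neg\gamma$ for $19a$ and $\lrcorner\gamma$ for $19b$) are the right ones.
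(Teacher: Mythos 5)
Your derivation of $(1)$ — axiom $19b$ at $\lrcorner\gamma$, contraposition via $(R3)'$, Theorem~\ref{thempdbl}(10b), and transitivity — is exactly the proof given in the paper, and your dual derivation of $(2)$ matches what the paper leaves as "similar." The proposal is correct and takes essentially the same approach; your added observation that both sequents already go through in \textbf{MPDBL4} is also accurate.
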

\begin{proof}
	We give the  proof for (1). The proof of (2) is similar. 

\noindent (1) 
	\begin{center}
		$\infer{\blacklozenge\gamma\vdash\gamma\sqcup\gamma}{\infer{\lrcorner\blacksquare\lrcorner\gamma\vdash\gamma\sqcup\gamma~~(R4)}{\infer{(R3)^{\prime}~~\lrcorner\blacksquare\lrcorner\gamma\vdash\lrcorner\lrcorner\gamma ~~ \lrcorner\lrcorner \gamma\vdash\gamma\sqcup\gamma~~(\mbox{Theorem \ref{thempdbl}(10b))}}{\lrcorner\gamma\vdash\blacksquare\lrcorner \gamma~~(19b)}}}$
	\end{center}
\end{proof}

\begin{proposition}
	\label{property of approx} {\rm  For any $(A, B)\in \mathfrak{H}(\mathbb{K})$, $\underline{(A, B)}\sqsubseteq (A, B)$ and $(A, B)\sqsubseteq \overline{(A, B)}$.
}
\end{proposition}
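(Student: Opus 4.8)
The plan is to reduce the statement to elementary inclusions between subsets of $G$ and $M$, after splitting $\mathfrak{H}(\mathbb{K})$ into the three pieces used in Definition~\ref{approx meet semi} (namely $\mathcal{B}(\mathbb{K})$, $\mathfrak{H}(\mathbb{K})_{\sqcap}\setminus\mathcal{B}(\mathbb{K})$, and $\mathfrak{H}(\mathbb{K})_{\sqcup}\setminus\mathcal{B}(\mathbb{K})$, which by Theorem~\ref{cs}(1) exhaust $\mathfrak{H}(\mathbb{K})$). First I would record the concrete description of the order $\sqsubseteq$ on $\underline{\mathfrak{H}}(\mathbb{K})$: unwinding $x\sqsubseteq y \iff x\sqcap y = x\sqcap x$ and $x\sqcup y = y\sqcup y$ with the explicit formulas for $\sqcap,\sqcup$ on semiconcepts shows that, for $(A_1,B_1),(A_2,B_2)\in\mathfrak{H}(\mathbb{K})$, one has $(A_1,B_1)\sqsubseteq(A_2,B_2)$ if and only if $A_1\subseteq A_2$ and $B_2\subseteq B_1$. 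This is the single computational fact on which the whole argument rests.

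Next I would treat the three cases. If $(A,B)\in\mathcal{B}(\mathbb{K})$, both approximations equal $(A,B)$ by Definition~\ref{approx meet semi}, and reflexivity of $\sqsubseteq$ finishes it. If $(A,B)\in\mathfrak{H}(\mathbb{K})_{\sqcap}\setminus\mathcal{B}(\mathbb{K})$, so $B=A'$, then $\underline{(A,B)}=(\underline{A}_{E_{1}},(\underline{A}_{E_{1}})')$ and $\overline{(A,B)}=(\overline{A}^{E_{1}},(\overline{A}^{E_{1}})')$. Since $E_1$ is an equivalence relation (hence reflexive and transitive), Proposition~\ref{pra}(v) gives $\underline{A}_{E_{1}}\subseteq A\subseteq\overline{A}^{E_{1}}$; applying the antitone derivation operator $()'$ yields $A'\subseteq(\underline{A}_{E_{1}})'$ and $(\overline{A}^{E_{1}})'\subseteq A'$. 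By the order description these are exactly $\underline{(A,B)}\sqsubseteq(A,B)$ and $(A,B)\sqsubseteq\overline{(A,B)}$. The case $(A,B)\in\mathfrak{H}(\mathbb{K})_{\sqcup}\setminus\mathcal{B}(\mathbb{K})$ is dual: here $A=B'$, the approximations are built from $\overline{B}^{E_{2}}$ and $\underline{B}_{E_{2}}$, and the same two ingredients, Proposition~\ref{pra}(v) for $E_2$ and antitonicity of $()'$, give the two inequalities.

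A more structural alternative, which I would record as well, runs entirely inside the complex algebra. Because $\mathbb{KC}_{SD}$ is reflexive and transitive, Theorem~\ref{pure complex algebra]} makes $\underline{\mathfrak{H}}^{+}(\mathbb{KC}_{SD})$ a tpdBa with $\textbf{I}=f_{E_{1}}$ and $\textbf{C}=f_{E_{2}}$. The Observation following Definition~\ref{approx meet semi} identifies the lower and upper approximations of a left semiconcept with $\textbf{I}$ and $\textbf{I}^{\delta}$, and those of a right semiconcept with $\textbf{C}^{\delta}$ and $\textbf{C}$. Thus the four required inequalities become $\textbf{I}(x)\sqsubseteq x$, $x\sqsubseteq\textbf{I}^{\delta}(x)$, $\textbf{C}^{\delta}(x)\sqsubseteq x$, and $x\sqsubseteq\textbf{C}(x)$. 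The two outer ones are precisely tpdBa axioms 4a and 4b of Definition~\ref{DBA with operators}; the two inner ones come from Lemma~\ref{tdBadual}(2), which only delivers $x\sqcap x\sqsubseteq\textbf{I}^{\delta}(x)$ and $\textbf{C}^{\delta}(x)\sqsubseteq x\sqcup x$, and here the case split does the real work, since a left semiconcept satisfies $x\sqcap x=x$ and a right semiconcept satisfies $x\sqcup x=x$.

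I do not expect a serious obstacle; the proof is essentially bookkeeping. The one place that requires care, and the reason the left/right case split cannot be avoided, is exactly the \emph{expanding} directions $(A,B)\sqsubseteq\overline{(A,B)}$ for a left semiconcept and $\underline{(A,B)}\sqsubseteq(A,B)$ for a right semiconcept: the relevant monotone operator ($\textbf{I}^{\delta}$, respectively $\textbf{C}^{\delta}$) is only related to $x$ up to the idempotent $x\sqcap x$, respectively $x\sqcup x$, so one must invoke that the semiconcept is idempotent in the correct sort. Everything else is reflexivity of $E_1,E_2$ through Proposition~\ref{pra}(v) together with antitonicity of the Galois derivation.
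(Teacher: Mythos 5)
Your proposal is correct, and both of your arguments go through; but neither is the route the paper takes. The paper's proof is deliberately \emph{logical}: for a left semiconcept it picks $p\in\textbf{OV}$ with $v(p)=(A,B)$ in a model based on $\mathbb{KC}_{SD}$, invokes the valid \textbf{MPDBL5} sequents $\square p\vdash p$ and $p\sqcap p\vdash\lozenge p$, and uses soundness (Proposition \ref{satismdbl1}) plus the identification $v(\square p)=\underline{v(p)}$, $v(\lozenge p)=\overline{v(p)}$ from Proposition \ref{modal valuation}; for a right semiconcept it uses $P\in\textbf{PV}$ and the derived sequents $\blacklozenge P\vdash P\sqcup P$ and $P\vdash\blacksquare P$ (Proposition \ref{drivablity of appx sqeuent}), absorbing the residual $P\sqcup P$ via $v(P)\sqcup v(P)=v(P)$. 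That is exactly the same idempotency subtlety you isolate, just discharged syntactically through axioms 15a/15b rather than semantically. What the paper's detour buys is the point of Section \ref{rough set approx}: the approximation properties are \emph{expressed by} sequents of \textbf{MPDBL5}, which is what Observation \ref{upperlowrapprox} then extracts. What your versions buy is self-containedness: the componentwise description of $\sqsubseteq$ plus Proposition \ref{pra} and antitonicity of $()'$ (or, in the algebraic variant, tpdBa axioms 4a/4b and Lemma \ref{tdBadual}(2) applied in $\underline{\mathfrak{H}}^{+}(\mathbb{KC}_{SD})$, which is a tpdBa by Theorem \ref{pure complex algebra]}) prove the proposition without any appeal to the logic or its soundness theorem. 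One small point in your favour: your reading of the lower/upper approximations of a right semiconcept as $\textbf{C}^{\delta}$ and $\textbf{C}$ respectively is the one consistent with Definition \ref{approx meet semi}(c), and is the correct pairing to use.
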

\begin{proof}
	 Let $(A, B)\in \mathfrak{H}(\mathbb{K})$. If $(A, B) \in \mathcal{B}(\mathbb{K})$, the results are trivially true. 
	
	\noindent Case I: Let $(A, B)\in\mathfrak{H}(\mathbb{K})_{\sqcap}\setminus \mathcal{B}(\mathbb{K})$. We consider a model $\mathbb{M}:=(\mathbb{KC}_{SD}, v)$ and $p\in \textbf{OV}$ such that $v(p)=(A, B)$. As $\square p\vdash p$ is a valid sequent of $\textbf{MPDBL5}$, $v(\square p)\sqsubseteq v(p)$,  by Proposition \ref{satismdbl1}. Using Proposition \ref{modal valuation} and Definition \ref{approx meet semi}(b), $v(\Box p)= (\underline{A}_{E_{1}},(\underline{A}_{E_{1}})^{\prime})= \underline{v(p)}$. So we get $\underline{v(p)}\sqsubseteq v(p)$, which implies that $\underline{(A, B)}\sqsubseteq (A, B)$.
	\vskip 3pt 
	\noindent Case II: Let $(A, B)\in\mathfrak{H}(\mathbb{K})_{\sqcup}\setminus \mathcal{B}(\mathbb{K})$. We consider a model $\mathbb{M}:=(\mathbb{KC}_{SD}, v)$ and $P\in \textbf{PV}$ such that $v(P)=(A, B)$. By Proposition \ref{drivablity of appx sqeuent}(1), $\blacklozenge P\vdash P\sqcup P$.  $v(\blacklozenge P)\sqsubseteq v(P\sqcup P)$, by Proposition \ref{satismdbl1}. Using Proposition \ref{modal valuation} and Definition \ref{approx meet semi}(c), 
	$v(\blacklozenge P)= ((\overline{B}^{E_{2}})^{\prime}, \overline{B}^{E_{2}})=\underline{v(P)}$. Hence  $\underline{v(P)}\sqsubseteq v(P\sqcup P)=v(P)\sqcup v(P)= v(P)$, as $v(P) \in \mathfrak{H}(\mathbb{K})_{\sqcup}$ which implies that $\underline{(A, B)}\sqsubseteq (A, B)$.
	\vskip 3pt 
	\vskip 3pt
	
\noindent 	Similar to the above derivations,  using the valid sequents $P\vdash\blacksquare P$ and  $p\sqcap p\vdash\lozenge p$  of $\textbf{MPDBL5}$ (the last from Proposition \ref{drivablity of appx sqeuent}(2), we get the property    $(A, B)\sqsubseteq \overline{(A, B)}$ in $\mathfrak{H}(\mathbb{K})$.
	\end{proof}

	\begin{observation}
	\label{upperlowrapprox}
	{\rm From the proof of Proposition \ref{property of approx}, we have the  following.
		
		\begin{enumerate}
		
		\item For a formula $\alpha$, if $v(\alpha)$ is a left semiconcept of $\mathbb{K}$ then $v(\square\alpha)$ is its lower approximation and $v(\lozenge\alpha)$ is its upper approximation. Moreover, the valid sequents $\square\alpha\vdash\alpha$ and $\alpha\sqcap\alpha\vdash\lozenge\alpha$ respectively represent the properties that the lower approximation  
		of a left semiconcept 
		lies ``below'' the left semiconcept, while its upper approximation 
		lies ``above" it.
		\item On the other hand, if $v(\alpha)$ is a right semiconcept of $\mathbb{K}$ then $v(\blacklozenge\alpha)$ is its lower approximation and $v(\blacksquare\alpha)$ is its upper approximation. The valid sequents $\blacklozenge\alpha\vdash\alpha\sqcup\alpha$ and $\alpha\vdash\blacksquare\alpha$  translate into properties of approximations of right semiconcepts: the lower approximation  
		of a right semiconcept 
		lies below it, while its upper approximation 
		lies above it.		
%
%
		\end{enumerate}	}										
\end{observation}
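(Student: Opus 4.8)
The plan is to exploit the three-way partition $\mathfrak{H}(\mathbb{K}) = (\mathfrak{H}(\mathbb{K})_{\sqcap}\setminus\mathcal{B}(\mathbb{K}))\cup(\mathfrak{H}(\mathbb{K})_{\sqcup}\setminus\mathcal{B}(\mathbb{K}))\cup\mathcal{B}(\mathbb{K})$ and to let $\textbf{MPDBL5}$ do the work, via the fact that over the reflexive, symmetric and transitive Kripke context $\mathbb{KC}_{SD}$ the modal connectives translate exactly into the approximation operators of Definition \ref{approx meet semi} (Proposition \ref{modal valuation}), while validity of a sequent translates into $\sqsubseteq$ between the semiconcepts interpreting its two sides (Proposition \ref{satismdbl1}). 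The case $(A,B)\in\mathcal{B}(\mathbb{K})$ is immediate, since then $\underline{(A,B)}=(A,B)=\overline{(A,B)}$ by Definition \ref{approx meet semi}(a) and $\sqsubseteq$ is reflexive, being a partial order on the pdBa $\underline{\mathfrak{H}}(\mathbb{K})$.

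For a left semiconcept, i.e. $(A,B)\in\mathfrak{H}(\mathbb{K})_{\sqcap}\setminus\mathcal{B}(\mathbb{K})$, I would fix a model $\mathbb{M}:=(\mathbb{KC}_{SD},v)$ and an object variable $p$ with $v(p)=(A,B)$; this is legitimate because object variables are interpreted in $\mathfrak{H}(\mathbb{K})_{\sqcap}$. The $\textbf{MPDBL4}$ axiom $\square p\vdash p$ together with Proposition \ref{satismdbl1} gives $v(\square p)\sqsubseteq v(p)$, and by Proposition \ref{modal valuation}(1) with $R=E_{1}$ we have $v(\square p)=(\underline{A}_{E_{1}},(\underline{A}_{E_{1}})^{\prime})=\underline{(A,B)}$ by Definition \ref{approx meet semi}(b), yielding $\underline{(A,B)}\sqsubseteq(A,B)$. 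Dually, the derivable sequent $p\sqcap p\vdash\lozenge p$ of Proposition \ref{drivablity of appx sqeuent}(2) gives $v(p\sqcap p)\sqsubseteq v(\lozenge p)$; since $v(p)\in\mathfrak{H}(\mathbb{K})_{\sqcap}$ we have $v(p\sqcap p)=v(p)=(A,B)$, while Proposition \ref{modal valuation}(3) identifies $v(\lozenge p)=(\overline{A}^{E_{1}},(\overline{A}^{E_{1}})^{\prime})=\overline{(A,B)}$, giving $(A,B)\sqsubseteq\overline{(A,B)}$.

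The right semiconcept case $(A,B)\in\mathfrak{H}(\mathbb{K})_{\sqcup}\setminus\mathcal{B}(\mathbb{K})$ is handled symmetrically, now choosing a property variable $P$ with $v(P)=(A,B)$, property variables being interpreted in $\mathfrak{H}(\mathbb{K})_{\sqcup}$. The derivable sequent $\blacklozenge P\vdash P\sqcup P$ of Proposition \ref{drivablity of appx sqeuent}(1), together with $v(P\sqcup P)=v(P)$ and the identification $v(\blacklozenge P)=((\overline{B}^{E_{2}})^{\prime},\overline{B}^{E_{2}})=\underline{(A,B)}$ from Proposition \ref{modal valuation}(4) and Definition \ref{approx meet semi}(c), yields $\underline{(A,B)}\sqsubseteq(A,B)$; while the axiom $P\vdash\blacksquare P$ and $v(\blacksquare P)=((\underline{B}_{E_{2}})^{\prime},\underline{B}_{E_{2}})=\overline{(A,B)}$ give $(A,B)\sqsubseteq\overline{(A,B)}$.

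The verifications that remain are all bookkeeping: confirming that for $\mathbb{KC}_{SD}$ the relations $R,S$ are $E_{1},E_{2}$, so that the modal translations of Proposition \ref{modal valuation} literally match the component-wise approximation formulas of Definition \ref{approx meet semi}, and that $v(p\sqcap p)=v(p)$, $v(P\sqcup P)=v(P)$ hold thanks to the idempotency constraints on the values of object and property variables. There is no genuine obstacle; the one point requiring care is ensuring that the chosen variable can actually be assigned the given semiconcept, which is precisely why the left/right split is used: a semiconcept outside $\mathcal{B}(\mathbb{K})$ lies in exactly one of $\mathfrak{H}(\mathbb{K})_{\sqcap}$, $\mathfrak{H}(\mathbb{K})_{\sqcup}$ by Theorem \ref{cs}, matching the interpretation constraints for object respectively property variables.
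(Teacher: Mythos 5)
Your proposal is correct and follows essentially the same route as the paper: the Observation is stated as a direct reading-off of the proof of Proposition \ref{property of approx}, and your reconstruction of that proof (the three-way split, the use of $\square p\vdash p$, $p\sqcap p\vdash\lozenge p$, $\blacklozenge P\vdash P\sqcup P$, $P\vdash\blacksquare P$ together with Propositions \ref{satismdbl1} and \ref{modal valuation} over $\mathbb{KC}_{SD}$) matches the paper's argument step for step. The identifications $v(\square\alpha)=\underline{(A,B)}$, $v(\lozenge\alpha)=\overline{(A,B)}$ for left semiconcepts and $v(\blacklozenge\alpha)=\underline{(A,B)}$, $v(\blacksquare\alpha)=\overline{(A,B)}$ for right semiconcepts are exactly what the Observation records.
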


\noindent Let $\underline{(A, B)}=(X, Y)$. By Proposition \ref{defsemi}, it follows that the extent $X$ and intent $Y$ are categories in $(G, E_{1})$ and $(M, E_{2})$, respectively. By Proposition \ref{property of approx}, we have $X\subseteq A$ and $B\subseteq Y$, which implies that $X\subseteq \underline{A}\subseteq A$ and $\overline{B}\subseteq Y$. From  $X\subseteq \underline{A}\subseteq A$, we  can say that the objects in the extent $X$ are certainly classified as objects in the extent $A$ of the concept $(A, B)$. Now $\overline{B}\subseteq Y$ implies that $Y^{c}\subseteq \underline{(B^{c})}$. So we can say that  properties in  $Y^{c}$  are certainly classified as properties in the intent $B^{C}$ of $\lrcorner (A, B)$.
Similarly, we have  $\underline{A}\subseteq ext(\overline{(A, B)})$ and $int(\overline{(A, B)})\subseteq \overline{B}$. So the properties in $int(\overline{(A, B)})$  are the  properties that  possibly  belong to  the intent $B$ of the concept $(A, B)$. Now  $\underline{A}\subseteq ext(\overline{(A, B)})$ implies that $ ext(\overline{(A, B)})^{c}\subseteq \overline{(A^{c})}$. So the objects  in the set  $ ext(\overline{(A, B)})^{c}$ are the objects that possibly belong to the extent $A^{C}$ of $\neg(A, B)$.

We end the section by making a comparison with the work in \cite{saquer2001concept}. Consider a semiconcept $(A, A^{\prime})$ that is not a concept. According to \cite{saquer2001concept}, this is a non-definable concept such that $A^{\prime}$ is feasible and A is not. As mentioned in Section \ref{Appropefca}, the lower and upper approximations of $(A, A^{\prime})$ as given in \cite{saquer2001concept} are \\$\underline{(A, A^{\prime})}:=((\underline{A}_{E_{1}})^{\prime\prime}\cap A^{\prime\prime}, ((\underline{A}_{E_{1}})^{\prime\prime}\cap A^{\prime\prime})^{\prime} )=((\underline{A}_{E_{1}})^{\prime\prime}, (\underline{A}_{E_{1}})^{\prime})$ and $\overline{(A, A^{\prime})}=((\overline{A}^{E_{1}})^{\prime\prime}, (\overline{A}^{E_{1}})^{\prime} )$. \\Now observe that $\underline{(A, A^{\prime})}$ and $\overline{(A, A^{\prime})}$ are both concepts. In our case, the lower and upper approximations of a semiconcept that is not a concept, may not be a concept: 
recall  Example \ref{exampleapprox}. The lower approximation  $(\{ Leech, Bream\}, \{a,b,g\})$ of the semiconcept  $(\{Leech,Bream , Dog\}, \{a, g\})$  is not a concept.
Furthermore,  in our case (as pointed out in Observation \ref{upperlowrapprox}),
the lower approximation of a semiconcept lies below the semiconcept, while  its upper approximation lies above  it -- thereby justifying  that these are {\it lower} and {\it upper} approximations of the semiconcept. In general, these properties do not hold  for the lower and upper approximations of concepts defined in \cite{saquer2001concept}.

\section{Conclusions}
\label{conclusion}

The hyper-sequent calculus \textbf{PDBL} is defined  for the class of pdBas and extended to \textbf{MPDBL} for the class of pdBaos.  For any set $\Sigma$ of sequents   in $\textbf{MPDBL}$, $\textbf{MPDBL}\Sigma$ is defined.
As  particular cases of $\textbf{MPDBL}\Sigma$, the logics \textbf{MPDBL4} (for tpdBas) and $\textbf{MPDBL5}$ are obtained.  This presents a technique for constructing various logics in general, that may represent attributes of pdBaos and associated classes of Kripke contexts.

  For conceptual knowledge, it is established that using \textbf{PDBL} and its models, one can express the basic notion of concept, and the relations ``object belongs to a concept'', ``property abstracts from a concept'' and ``a concept is a subconcept of another concept''.
Further, the basic notions of objects and attributes and the relation ``an object has an attribute" are expressible by using the named $\textbf{PDBL}$ models.
When interpretations of $\textbf{PDBL}$ are restricted to the collection of named models, the logic is sound
with respect to the class of of all contexts. An open question  then is, whether a logic may possibly be derived from $\textbf{PDBL}$ that would be complete with respect to this restricted collection of models.

This work attempts to highlight the significance of  semiconcepts of a context from different points of view. In particular, the observations of Section \ref{sematics to meaning1} indicate that a mathematical model  for knowledge related to semiconcepts may well be defined, akin to the conceptual knowledge system defined in \cite{lpwille,WILLE1992493}. The significance of such a model may be worth exploring.

\newpage
\begin{appendices}
{\rm \section*{Proofs}\label{secA1}	

\noindent 	{\it Proof of Theorem \ref{thempdbl}:}
	The proofs are straightforward and one makes use of axioms 2a, 3a, 4a, Proposition \ref{drive rule1}  and the rule $(R4)$ in most cases. 
	$1a$: 
	\begin{center}
		$\infer{(\alpha\sqcap\beta)\vdash(\beta\sqcap\alpha)~(R4)}{4a~(\alpha\sqcap\beta)\vdash(\alpha\sqcap\beta)\sqcap(\alpha\sqcap\beta) &\infer{(\alpha\sqcap\beta)\sqcap(\alpha\sqcap\beta)\vdash\beta\sqcap\alpha~(R6)}{3a~\alpha\sqcap\beta\vdash\beta & \alpha\sqcap\beta\vdash\alpha~2a}}$
	\end{center}
	
\noindent	Interchanging $\alpha$ and $\beta$ in the above, we get  $(\beta\sqcap\alpha)\vdash(\alpha\sqcap\beta)$. \\
	
	  


		
	{\small		\noindent $2a.$
		
		$\infer{(\alpha\sqcap\beta)\sqcap\gamma\vdash\beta\sqcap\gamma~(R4)~\mbox{--~(I)}}{4a~(\alpha\sqcap\beta)\sqcap\gamma\vdash((\alpha\sqcap\beta)\sqcap\gamma)\sqcap((\alpha\sqcap\beta)\sqcap\gamma) & \infer{((\alpha\sqcap\beta)\sqcap\gamma)\sqcap((\alpha\sqcap\beta)\sqcap\gamma)\vdash\beta\sqcap\gamma~(R6)}{ \infer{(R4)~(\alpha\sqcap\beta)\sqcap\gamma\vdash\beta}{2a~(\alpha\sqcap\beta)\sqcap\gamma\vdash(\alpha\sqcap\beta) & \alpha\sqcap\beta\vdash\beta~3a}& (\alpha\sqcap\beta)\sqcap\gamma\vdash\gamma~3a}}$
		
	Now,\\
		$\infer{(\alpha\sqcap\beta)\sqcap\gamma\vdash\alpha\sqcap(\beta\sqcap\gamma)~(R4)}{4a~(\alpha\sqcap\beta)\sqcap\gamma\vdash((\alpha\sqcap\beta)\sqcap\gamma)\sqcap((\alpha\sqcap\beta)\sqcap\gamma)&\infer{((\alpha\sqcap\beta)\sqcap\gamma)\sqcap((\alpha\sqcap\beta)\sqcap\gamma)\vdash\alpha\sqcap(\beta\sqcap\gamma)~(R6)}{ \infer{(\alpha\sqcap\beta)\sqcap\gamma\vdash\alpha~(R4)}{2a~(\alpha\sqcap\beta)\sqcap\gamma\vdash \alpha\sqcap\beta & \alpha\sqcap\beta\vdash\alpha~2a}& (\alpha\sqcap\beta)\sqcap\gamma\vdash\beta\sqcap\gamma~\mbox{(from (I) above)} }}$
		
		Similarly we can show that $\alpha\sqcap(\beta\sqcap\gamma)\vdash(\alpha\sqcap\beta)\sqcap\gamma$. \\		
		
		\noindent $3a.$\\
		$\infer{(\alpha\sqcap\alpha)\sqcap\beta\vdash\alpha\sqcap\beta~(R4)}{4a~(\alpha\sqcap\alpha)\sqcap\beta\vdash((\alpha\sqcap\alpha)\sqcap\beta)\sqcap((\alpha\sqcap\alpha)\sqcap\beta)&\infer{((\alpha\sqcap\alpha)\sqcap\beta)\sqcap((\alpha\sqcap\alpha)\sqcap\beta)\vdash\alpha\sqcap\beta~(R6)}{\infer{(R4)~(\alpha\sqcap\alpha)\sqcap\beta\vdash\alpha}{2a~(\alpha\sqcap\alpha)\sqcap\beta\vdash\alpha\sqcap\alpha & \alpha\sqcap\alpha\vdash\alpha~2a}&(\alpha\sqcap\alpha)\sqcap\beta\vdash\beta~3a}}$\\
		
		$\infer{\alpha\sqcap\beta\vdash(\alpha\sqcap\alpha)\sqcap\beta~(R4)}{4a~\alpha\sqcap\beta\vdash(\alpha\sqcap\beta)\sqcap(\alpha\sqcap\beta)&\infer{(\alpha\sqcap\beta)\sqcap(\alpha\sqcap\beta)\vdash(\alpha\sqcap\alpha)\sqcap\beta~(R6)}{\infer{(R4)~\alpha\sqcap\beta\vdash\alpha\sqcap\alpha}{4a~\alpha\sqcap\beta\vdash(\alpha\sqcap\beta)\sqcap(\alpha\sqcap\beta)&\infer{(\alpha\sqcap\beta)\sqcap(\alpha\sqcap\beta)\vdash\alpha\sqcap\alpha~(R6)}{2a~(\alpha\sqcap\beta)\vdash\alpha & (\alpha\sqcap\beta)\vdash\alpha~2a}}& \alpha\sqcap\beta\vdash\beta~3a}}$\\
		\vskip 15pt
		$(4a)$ follows from axiom 2a and $(R3)$.
		\vskip 15pt
		$5a.$ \\
		$\infer{\alpha\sqcap(\alpha\sqcup\beta)\vdash\alpha\sqcap\alpha~(R4)}{4a~\alpha\sqcap(\alpha\sqcup\beta)\vdash(\alpha\sqcap(\alpha\sqcup\beta))\sqcap(\alpha\sqcap(\alpha\sqcup\beta))&\infer{(\alpha\sqcap(\alpha\sqcup\beta))\sqcap(\alpha\sqcap(\alpha\sqcup\beta))\vdash\alpha\sqcap\alpha~(R6)}{2a~\alpha\sqcap(\alpha\sqcup\beta)\vdash\alpha & \alpha\sqcap(\alpha\sqcup\beta)\vdash\alpha~2a}}$\\
		$6a.$ Proof is identical to that of $5a.$
		\vskip 15pt
		\noindent $(7a), (8a)$ follow from axiom 11a. 
		\vskip 15pt
	\noindent	Note that
	the proofs of $(ib), i=1,2,3,4,5,6,7,8,$ are obtained using the axioms and rules dual to those used to derive $(ia)$.}
	
\vskip 15pt
	
\noindent {\it Proof of Proposition \ref{ldbm-dba}}		For $1\implies2$, we make use of  $(R1)^{\prime}$, $(R4)$, axiom 2a and Theorem \ref{thempdbl}(2a, 3a). 
		\begin{prooftree}
			\AxiomC{$\alpha\vdash\beta$}
			\UnaryInfC{$\alpha\sqcap\alpha\vdash\alpha\sqcap\beta$}
			\UnaryInfC{$\alpha\sqcap\beta\vdash\alpha$}
			\UnaryInfC{$\alpha\sqcap(\alpha\sqcap\beta)\vdash\alpha\sqcap\alpha~~~\alpha\sqcap\beta\vdash\alpha\sqcap(\alpha\sqcap\beta)$}
			\UnaryInfC{$\alpha\sqcap\beta\vdash\alpha\sqcap\alpha$}
		\end{prooftree}
		So $\alpha\sqcap\alpha\dashv\vdash\alpha\sqcap\beta$, which implies that $[\alpha]\sqcap [\alpha]=[\alpha\sqcap\alpha]=[\alpha\sqcap\beta]=[\alpha]\sqcap [\beta]$. Dually we can show that $[\alpha]\sqcup [\beta]=[\beta]\sqcup [\beta]$. Therefore $[\alpha]\sqsubseteq [\beta]$.\\
		For $2\implies 1$, suppose $[\alpha]\sqsubseteq [\beta]$. Then $[\alpha]\sqcap [\beta]=[\alpha]\sqcap [\alpha]$. So $[\alpha\sqcap\beta]=[\alpha\sqcap\alpha]$. Similarly we can show that $[\alpha\sqcup\beta]=[\beta\sqcup\beta]$. Therefore $\alpha\sqcap\beta\dashv\vdash\alpha\sqcap\alpha$ and $\alpha\sqcup\beta\dashv\vdash\beta\sqcup\beta$. Now using (R5), $\alpha\vdash\beta$.}
\end{appendices}

	
	

\end{document}